\newtheorem{thm}{Theorem}[section]
\newtheorem{prop}[thm]{Proposition}
\newtheorem{lemma}[thm]{Lemma}
\newtheorem{cor}[thm]{Corollary}
\newtheorem{propdef}[thm]{Proposition-Definition}
\theoremstyle{definition}
\newtheorem{defn}[thm]{Definition}
\newtheorem{definition}[thm]{Definition}
\theoremstyle{remark}
\newtheorem{rmk}[thm]{Remark}
\newtheorem{ex}[thm]{Example}
\newcommand{\ol}[1]{\overline{#1}}
\newcommand{\fw}[1]{{\operatorname{fw}\mathopen{}\left(#1\right)\mathclose{}}}
\newcommand{\str}[2]{{\operatorname{\mathfrak{st}}_{#1}\mathopen{}\left(#2\right)\mathclose{}}}
\def\st{{\mathfrak{st}}} 
\newcommand{\Z}{\mathbb{Z}}
\newcommand{\A}{\mathcal{A}}
\newcommand{\B}{\mathcal{B}}
\newcommand{\inv}{\operatorname{inv}}
\newcommand{\sgn}{\operatorname{sgn}}
\newcommand{\ch}{\operatorname{charge}}
\newcommand{\lch}{\operatorname{lch}}
\newcommand{\dom}{\Omega_{\mathrm{dom}}}
\newcommand{\Gup}{G^{\mathrm{up}}}
\newcommand{\fix}{\operatorname{fix}}
\newcommand{\abs}[1]{\left\lvert#1\right\rvert}
\newcommand{\ceil}[1]{\left\lceil#1\right\rceil}
\newcommand{\typeA}{\textrm{A}}
\author{Michael Chmutov}
\address{Department of Mathematics, University of Minnesota, Minneapolis, MN 55455, USA}
\email{mchmutov@umn.edu}
\author{Joel Brewster Lewis}
\address{Department of Mathematics, University of Minnesota, Minneapolis, MN 55455, USA}
\email{jblewis@umn.edu}
\author{Pavlo Pylyavskyy}
\address{Department of Mathematics, University of Minnesota, Minneapolis, MN 55455, USA}
\email{ppylyavs@umn.edu}
\thanks{M.C. was partially supported by NSF grant DMS-1503119. J.B.L. was partially supported by NSF grant DMS-1401792.  P.P. was partially supported by NSF grants DMS-1148634, DMS-1351590, and a Sloan Fellowship.}
\title{Monodromy in Kazhdan-Lusztig cells in affine type A.}
\begin{document}
\begin{abstract}
We use the affine Robinson-Schensted correspondence to describe the structure of bidirected edges in the Kazhdan-Lusztig cells in affine type A. Equivalently, we give a comprehensive description of the Knuth equivalence classes of affine permutations. 
\end{abstract}
\maketitle

\setcounter{tocdepth}{1}
\tableofcontents

\section{Introduction}

\subsection{Cells in Kazhdan-Lusztig theory}
In a groundbreaking paper \cite{KL}, Kazhdan and Lusztig laid a basis for a new approach to representation theory of Hecke algebras. Since then, this approach has been significantly developed, and is called \emph{Kazhdan-Lusztig theory}. (For a nice introduction to Kazhdan-Lusztig theory, see \cite[Ch.~7]{humphreys}.) Of particular importance in this theory are the objects called \emph{cells}.   Briefly, their definition is as follows. Each Hecke algebra is associated with a Coxeter group $W$. Kazhdan and Lusztig define a pre-order $\leq_L$ on
elements of $W$. Some pairs $v,w$ of elements of $W$ satisfy both $v \leq_L w$ and $w \leq_L v$, in which case we say that they are left-equivalent, denoted $v \sim_L w$. Similarly one can define right equivalence $\sim_R$. The respective equivalence classes are called the \emph{left cells} and the \emph{right cells}.

Another way to describe cells is via the Kazhdan-Lusztig $W$-graph; it is a certain directed graph whose vertices are the elements of $W$. The graph has the property that $v \leq_L w$ precisely when there is a directed path from $v$ to $w$. Thus the cells are the strongly connected components of the $W$-graph. Some edges of the $W$-graph are bidirected, i.e., between a pair of vertices $v$ and $w$ there is an edge $v\to w$ and an edge $w\to v$. In this case, of course, $v$ and $w$ belong to the same Kazhdan-Lusztig cell.

\subsection{Type A}
In (finite) type A, when $W$ is the symmetric group, the Kazhdan-Lusztig cell structure corresponds to something very familiar to combinatorialists, the \emph{Robinson-Schensted correspondence}. This is a bijective correspondence between elements of the symmetric group and pairs $(P, Q)$ of standard Young tableaux of the same shape. It is well known \cite{BV, KL, GM, A} that
\begin{itemize}
 \item two permutations lie in the same left cell if and only if they have the same \emph{recording tableau} $Q$, and
 \item two permutations lie in the same right cell if and only if they have the same \emph{insertion tableau} $P$.
\end{itemize}
The bidirected edges of the Kazhdan-Lusztig graph in this case are called \emph{Knuth moves}, and one can go between any two permutations with the same insertion tableau (and hence between any two choices of the recording tableau $Q$) via a series of Knuth moves.

\subsection{Affine type A}
In affine type A, when $W$ is an affine symmetric group, Chmutov, Pylyavskyy, and Yudovina \cite{cpy} described, via a combinatorial algorithm called the Affine Matrix Ball Construction (AMBC), a bijection $W\to\dom$, where $\dom$ is the set of triples $(P,Q,\rho)$ such that $P$ and $Q$ are tabloids of the same shape and $\rho$ is an integer vector (called a \emph{dominant weight}) satisfying certain inequalities that depend on $P$ and $Q$. Relying on the work of Shi on Kazhdan-Lusztig cells in affine type A, they show that this bijection affords a description of cells analogous to the non-affine case: fixing the tabloid $Q$ gives all affine permutations in a left cell while fixing the tabloid $P$ gives all affine permutations in a right cell. The bidirected edges in this case (what Shi called \emph{star operations} \cite{shi}) are natural analogues of Knuth moves -- see Section \ref{sec:knuth on perms} for the definition.

\subsection{Summary of results}
The main accomplishment of this paper is to precisely describe the Knuth equivalence classes of affine permutations, i.e., the equivalence classes of the relation generated by Knuth moves. In the language of \cite{stembridge}, we describe the \emph{Kazhdan-Lusztig molecules}. Unlike the case of finite type A, most Kazhdan-Lusztig cells are composed of many molecules.  This multiplicity comes in two varieties.  First, while Knuth moves preserve the $P$ tabloid, not all $Q$ tabloids can be reached from a given one using Knuth moves; the description of which ones can be reached, given in Section \ref{sec:charge}, is in terms of a variant of the \emph{charge statistic}. Second, even among affine permutations having the same $P$ and $Q$ tabloids, one cannot reach every dominant weight $\rho$ from every other using Knuth moves; the different vectors $\rho$ that can be reached are the subject of Section \ref{sec:monodromy}, and they depend on the shape of the tabloids.

Along the way, we improve our understanding of some combinatorial aspects of AMBC analogous to the combinatorics of the Robinson-Schensted correspondence. We show how to read off the left and right descent sets of an affine permutation from its image under AMBC (Proposition \ref{prop:descents}), as well as how to read off its sign (Theorem \ref{thm:main conjecture v2}). We describe precisely how taking the inverse of or doing a Knuth move on an affine permutation affects its image under AMBC (Proposition~\ref{prop:inverses} and Theorem \ref{thm:Knuth move action}). In Section~\ref{sec:symmetries}, we briefly discuss how the symmetry of the Dynkin diagram appears in our setting. Finally, we give a better description, in terms of charge, for the inequalities satisfied by an integer vector for it to be dominant (Theorem~\ref{thm:dominance constants}).  The appearance of charge suggests a connection to crystal graphs, which we describe in Section~\ref{sec:crystals}.

Section~\ref{sec:background} contains the background information, including a summary of the main results of \cite{cpy}, essential to understanding the rest of the paper. Sections~\ref{sec:Knuth moves} and~\ref{sec:signs} improve our understanding of the various combinatorial aspects of AMBC. Sections \ref{sec:charge intro}, \ref{sec:monodromy} and  \ref{sec:charge} describe the Knuth equivalence classes of permutations in terms of AMBC; these sections rely heavily on the material in Section~\ref{sec:Knuth moves} but may be read independently of Section~\ref{sec:signs}. 

There is a program available to compute AMBC \cite{Program}; the reader may want to use it to explore additional examples.

\section{Background}
\label{sec:background}
\subsection{Notational preliminaries}
\label{sec:notational preliminaries}
For the duration of this paper, $n$ will be a fixed positive integer. Let $[n] := \{1,\dots, n\}$. For each $i\in\Z$, denote by $\ol{i}$ the residue class $i+n\Z$, and let $[\ol{n}]:= \{\ol{1},\dots, \ol{n}\}$.

The \emph{symmetric group} $S_n$ is the Weyl group of type $\typeA_{n-1}$.  We may variously think of its elements as bijections $[n] \to [n]$, as words of length $n$ containing each element of $[n]$ exactly once, or as $n \times n$ permutation matrices.  The \emph{extended affine symmetric group} $\widetilde{S_n}$ is the extended affine Weyl group of type $\widetilde{\typeA}_{n-1}$; it consists of bijections $w:\Z\to\Z$ such that 
\[
w(i + n) = w(i) + n \qquad \textrm{ for all } i.
\]
The elements of $\widetilde{S_n}$ are called \emph{extended affine permutations}. We typically abbreviate this term to \emph{permutations}, and we distinguish the elements of $S_n$ by the name \emph{finite permutations}.
Denote by $\widetilde{S}_n^0$ the \emph{affine symmetric group}, i.e., the affine Weyl group of type $\widetilde{\typeA}_{n-1}$; it consists of permutations $w\in\widetilde{S_n}$ such that 
\[
\sum_{i=1}^n \left(w(i) - i\right) = 0.
\]
Note that $S_n$ naturally embeds into $\widetilde{S}_n^0\subsetneq\widetilde{S_n}$: a finite permutation $w$ can be sent to the unique affine permutation that takes the same values as $w$ on $[n]$.

A \emph{partial (extended affine) permutation} is a pair $(U, w)$ where $U\subseteq\Z$ has the property that $(x\in U) \Leftrightarrow (x+n\in U)$ and $w:U\to\Z$ an injection such that $w(i+n) = w(i)+ n$. We suppress the explicit mention of the subset $U$ in the notation and just refer to the partial permutation $w$. Any permutation may be viewed as a partial permutation with $U = \Z$. 

A permutation is determined by its values on $1,\dots, n$. The \emph{window notation} for a permutation $w$ is $[w(1), \dots, w(n)]$.	A partial permutation $w$ is also determined by its values on $1,\dots, n$, except it may not be defined on some of them. 

We often think of permutations in terms of pictures such as the one in Figure \ref{fig:proper numbering}, extending the notion of a permutation matrix to the affine case.  More precisely, on the plane we draw an infinite matrix; the rows are labeled by $\Z$, increasing downward, and the columns are labeled by $\Z$, increasing to the right (usual matrix coordinates). The positions in this matrix are called \emph{cells}; there will never be confusion with cells of the Kazhdan-Lusztig variety.  To distinguish the $0$-th row, figures have a solid red line between the $0$-th and $1$-st rows, and similarly for columns. We also put dashed red lines every $n$ rows and columns.  If $w(i)=j$ then we place a ball in the $i$-th row and $j$-th column.  For example, the cell $(1,4)$ in Figure \ref{fig:proper numbering} contains a ball.  The balls of a partial permutation will also be referred to by their matrix coordinates. Thus, formally, both balls and cells are just ordered pairs of integers, and we use the word ``ball'' to indicate that the relevant partial permutation takes a certain value on a certain input. For a partial permutation $w$, we denote by $\B_w$ the collection of balls of $w$ (a subset of $\Z\times\Z$).

For an integer $k$ and a ball $b=(i,j)$, the ball $b' = b + k(n, n) = (i+kn, j+kn)$ is the \emph{$k(n,n)$-translate} of $b$. Two balls $b$ and $b'$ are \emph{translates} if for some $k$ one is a $k(n,n)$-translate of the other.  The set of all translates of a ball or set of balls is a \emph{translation class}.

We often assign numbers to balls of permutations, as well as to other cells of $\Z \times \Z$. For a partial permutation $w$, a \emph{numbering} of $w$ is a function $d:\B_w\to\Z$. A numbering $d$ of $w$ is \emph{semi-periodic} with \emph{period} $m$ if we have $d(b+(n,n)) = d(b) + m$ for every $b\in\B_w$. When referring to a numbering in pictures, we write the number $d(b)$ inside the ball $b$ as in Figure \ref{fig:proper numbering}, where we show a semi-periodic numbering of period 3.

We frequently use compass directions (north, east, etc.)\ to describe relative positions of balls or cells, with north being toward the top of the page (smaller row numbers) and east being toward the right of the page (larger column numbers). Adding the modifier ``directly'' constrains one of the two coordinates: a cell $(i,j)$ is \emph{directly south} of $(i',j')$ if $i\geqslant i'$ and $j=j'$. By a composite direction (e.g., northeast) we mean north and east. The relations are weak by default: a cell $(i,j)$ is \emph{southwest of $(i',j')$} if $i\geqslant i'$ and $j\leqslant j'$. Directions define partial orders on $\Z\times\Z$: we say $(i,j)\leqslant_{SW} (i',j')$ if $(i,j)$ is southwest of $(i',j')$. 

A \emph{partition} $\lambda$ is a finite, weakly decreasing sequence of positive integers.  We typically treat a partition $\lambda = \langle \lambda_1, \lambda_2, \ldots \rangle$ as equivalent to its \emph{Young diagram}, a left-justified collection of rows of boxes with top row having $\lambda_1$ boxes, the row below it having $\lambda_2$ boxes, and so on.  The number of rows of $\lambda$ is denoted $\ell(\lambda)$.

Given a partition $\lambda$ having $n$ boxes, a \emph{tabloid} of \emph{shape} $\lambda$ is an equivalence class of fillings of the Young diagram of $\lambda$ with $[\ol{n}]$, where two fillings are considered equivalent when one is obtained from the other by permuting elements within rows.\footnote{Classically, one considers fillings with integers, but nothing is lost by this slightly nonstandard choice.}  Some examples are shown of Figure \ref{fig:tabloids}; we draw representatives of the equivalence classes and keep in mind that entries within rows can be permuted.  Throughout this paper, all tabloids will be filled with distinct residue classes.

\begin{figure}
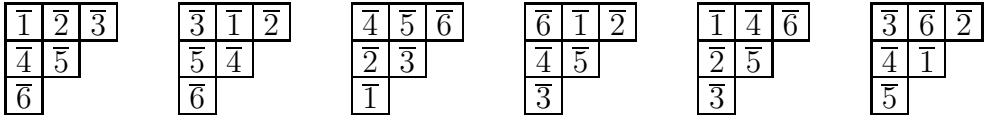

\[
\tableau[sY]{\ol{1}, \ol{2}, \ol{3} \\ \ol{4}, \ol{5}\\ \ol{6}} \qquad 
\tableau[sY]{\ol{3}, \ol{1}, \ol{2} \\ \ol{5}, \ol{4}\\ \ol{6}} \qquad 
\tableau[sY]{\ol{4}, \ol{5}, \ol{6} \\ \ol{2}, \ol{3}\\ \ol{1}} \qquad 
\tableau[sY]{\ol{6}, \ol{1}, \ol{2} \\ \ol{4}, \ol{5}\\ \ol{3}}\qquad 
\tableau[sY]{\ol{1}, \ol{4}, \ol{6} \\ \ol{2}, \ol{5}\\ \ol{3}} \qquad 
\tableau[sY]{\ol{3}, \ol{6}, \ol{2} \\ \ol{4}, \ol{1}\\ \ol{5}} \qquad 
\]
\caption{Several tabloids of shape $\langle 3, 2, 1 \rangle$.  The first two tabloids are equal, since they differ only by permuting elements within rows.}
\label{fig:tabloids}
\end{figure}

Several special tabloids will be important in this paper. The first is the \emph{reverse row superstandard tabloid} of shape $\lambda = \langle \lambda_1, \ldots, \lambda_k \rangle$ with \emph{start} at $i$: this is the tabloid whose last row has entries
\[
\ol{i}, \; \ol{i+1}, \; \ldots, \; \ol{i+\lambda_k-1},
\] 
whose next-to-last row has entries
\[
\ol{i+\lambda_k}, \; \ol{i+\lambda_k+1}, \; \ldots, \; \ol{i+\lambda_k+\lambda_{k - 1}-1},
\]
and so on. Thus the third and fourth tabloids in Figure~\ref{fig:tabloids} are the reverse row superstandard tabloids of shape $\langle 3, 2, 1 \rangle$ with starts at $1$ and $3$, respectively. Similarly, one can define the \emph{column superstandard tabloid} of shape $\lambda$ with start at $i$ as the tabloid that has $\ol{i}$ in the first row, $\ol{i+1}$ in the second row, \ldots, $\ol{i+\ell(\lambda)-1}$ in the last row, $\ol{i+\ell(\lambda)}$ in the first row, and so on. The last two tabloids in the figure are column superstandard tabloids of shape $\langle 3, 2, 1 \rangle$ with starts at $1$ and $3$, respectively. 

If $T$ is a tabloid then we denote by $T_i$ the $i$-th row of $T$, viewed as a one-row tabloid or, equivalently, as a subset of $[\ol{n}]$.  Similarly, we denote by $T_{i, i + 1}$ the tabloid consisting of the $i$-th and $(i + 1)$-st rows of $T$, and by $T_{[i, j]}$ the tabloid consisting of all rows of $T$ with indices between $i$ and $j$, inclusive.

\subsection{An analogue of the Robinson-Schensted correspondence}
\label{sec:intro ars}

The paper \cite{cpy} describes a bijection 
\[
\Phi:\widetilde{S_n}\to\dom,
\]
where $\dom$ is the set of triples $(P,Q,\rho)$ such that $P$ and $Q$ are tabloids of the same shape of size $n$ and $\rho$ is an integer vector satisfying certain inequalities depending on $P$ and $Q$. In this section, we give the relevant definitions and outline the construction of the bijection via an algorithm called the Affine Matrix-Ball Construction (AMBC). A detailed summary of these results occupies the first part of \cite{cpy}; while the present work is not completely independent of that part of \cite{cpy}, we hope that the short version provided here is sufficient to understand most of the new results in the present paper.

Define $\Omega$ to be the collection of triples $(P,Q,\rho)$ where $P$ and $Q$ are tabloids of the same shape $\lambda$ of size $n$ and $\rho\in\Z^{\ell(\lambda)}$. Suppose $(P,Q,\rho)\in\Omega$ and $P$ and $Q$ have shape $\lambda$. For $i$ such that $\lambda_{i-1} = \lambda_i$, there are associated integers $r_i(P, Q)$ called \emph{offset constants} (which are defined precisely in Definition~\ref{def:dominance}). If $(P,Q,\rho)$ satisfies the conditions $\rho_i\geqslant\rho_{i-1} + r_i(P, Q)$ then we say that $\rho$ is \emph{dominant} with respect to $(P,Q)$; the pair $(P,Q)$ is usually clear from the context. We define
\[
\dom = \{(P,Q,\rho)\in\Omega : \rho\text{ is dominant}\}.
\]

A second algorithm \cite[\S4]{cpy}, referred to as the \emph{backward algorithm}, gives a surjection $\Psi:\Omega\to\widetilde{S_n}$. By \cite[Thms.~5.11 \& 6.3]{cpy}, the restriction of $\Psi$ to $\dom$ is equal to $\Phi^{-1}$. 

We now briefly describe AMBC, which is closely related to Viennot's geometric construction \cite{viennot_shadow} for the Robinson-Schensted correspondence, called the Matrix-Ball Construction by Fulton \cite{fulton_yt}.  The first step is to produce a special numbering of the balls of the permutation, called a \emph{channel numbering}, as in Figure~\ref{fig:proper numbering}.  This numbering partitions the balls into equivalence classes having the same number; for each class, we form the \emph{zig-zag} having those balls as inner corners, as in Figures~\ref{fig:forward ex 1} and~\ref{fig:forward ex 2}.  We get a new partial permutation from the outer corners of the zig-zags, and we iterate.  At each step of the iteration, we record basic data about the \emph{back corner-posts} of the zig-zags (illustrated by $*$'s in Figures~\ref{fig:forward ex 1} and~\ref{fig:forward ex 2}): their column-indices are recorded in a row of $P$, the row-indices in a row of $Q$, and their \emph{altitude} (a shift parameter distinguishing among the collections occupying the same collection of rows and columns) in an entry of $\rho$.  The remainder of this section is concerned with providing the details behind this summary.

\subsubsection{Channel numberings}
\label{sec:channel numberings}

\begin{figure}
\[
\resizebox{.6\textwidth}{!}{\input{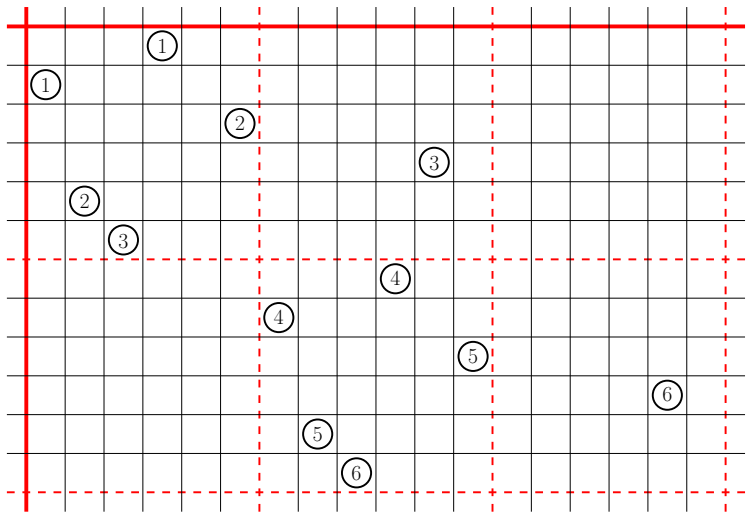}}
\] 
	\caption{A proper numbering for the (extended, affine) permutation $[4,1,6,11,2,3]$. The numbering is semi-periodic with period $3$.}
\label{fig:proper numbering}
\end{figure}

\begin{defn}
Suppose we have a collection $C$ of cells that is invariant under translation by $(n,n)$ and forms a chain in the partial ordering $\leqslant_{SE}$. Then the \emph{density} of $C$ is the number of distinct translation classes in $C$.
\end{defn}
In Figure \ref{fig:proper numbering}, the collection consisting of balls $(2,1), (5,2), (6,3),$ and their translates has density $3$.

\begin{defn}
Suppose $w$ is a partial permutation. Then $C\subseteq \B_w$ is a \emph{channel} if all of the following hold:
\begin{itemize}
\item $C$ is invariant under translation by $(n,n)$,
\item $C$ forms a chain in the partial ordering $\leqslant_{SE}$, and
\item the density of $C$ is maximal among all subsets of $\B_w$ satisfying the first two conditions. 
\end{itemize}
\end{defn}
In Figure \ref{fig:proper numbering}, the collection consisting of balls $(2,1), (5,2), (6,3),$ and their translates is a channel. On the other hand, balls $(1,4)$ and $(3,6)$ are not part of any channel, since no translation-invariant chain of the ordering $\leqslant_{SE}$ with density $3$ passes through both.

A curious fact about channels is that the collection of southwest-most (or northeast-most) balls of a union of two channels is again a channel (see \cite[Prop.~3.13]{cpy} and the comment following it). Thus there is always a southwest-most channel (in Figure \ref{fig:proper numbering}, it is the channel formed by taking the balls $(2,1), (5,2), (6,3)$ and their translates) and a northeast-most channel. In AMBC, we need to pick a distinguished channel at each step; we pick the southwest one.

To perform a step of AMBC requires a special kind of numbering of a permutation, which we define now.

\begin{defn}
For a partial permutation $w$, a function $d:\B_w\to\Z$ is a \emph{proper numbering} if it is
\begin{itemize}
\item monotone: for any $b, b'\in \B_w$, if $b$ lies strictly northwest of $b'$ then $d(b) < d(b')$, and
\item continuous: for any $b'\in \B_w$ there exists $b$ northwest of it with $d(b) = d(b')-1$.
\end{itemize}
\end{defn}
An example of a proper numbering is given in Figure \ref{fig:proper numbering}.

\begin{prop}[{\cite[Prop.~3.4]{cpy}}]
\label{prop: period of proper numbering}
Given a partial permutation $w$, let $m(w)$ denote the density of the channels of $w$.  All proper numberings of $w$ are semi-periodic with period $m(w)$.
\end{prop}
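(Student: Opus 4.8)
The plan is to prove the equivalent assertion that $d(b+(n,n)) = d(b)+m(w)$ for every $b\in\B_w$, and I would organize the argument into three stages: first pin down $d$ along a channel, then give a crude bound on the ``local period'' $d(b+(n,n))-d(b)$ for an arbitrary ball, and finally upgrade that to the exact value $m(w)$ for all balls.

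For the first stage, fix a channel $C$ of density $m:=m(w)$ and list its balls as a bi-infinite sequence $\ldots, c_{-1}, c_0, c_1, \ldots$ with each $c_i$ weakly northwest of $c_{i+1}$ and $c_{i+m}=c_i+(n,n)$. Monotonicity gives $d(c_i)<d(c_{i+1})$, and I claim equality holds after adding $1$. If instead $d(c_{i+1})\geqslant d(c_i)+2$, then iterating the continuity condition starting from $c_{i+1}$ produces a ball $b$ with $d(c_i)<d(b)<d(c_{i+1})$ lying strictly northwest of $c_{i+1}$; since $d(b)>d(c_i)$, monotonicity forbids $b$ from lying strictly northwest of $c_i$, and one then argues that $b$ (or another ball of the same $d$-value, obtained by continuing the descent) may be taken with $c_i$ weakly northwest of it. Then $C\cup\{b+k(n,n):k\in\Z\}$ is again a translation-invariant $\leqslant_{SE}$-chain, and it has density $m+1$ because $b$ lies strictly between the consecutive channel balls $c_i$ and $c_{i+1}$ and so represents a new translation class; this contradicts the maximality of channel density. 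Hence $d(c_{i+1})=d(c_i)+1$ for all $i$, so in particular $d(c_i+(n,n))=d(c_{i+m})=d(c_i)+m$, and the claim holds on any channel.

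For the remaining balls, set $f(b):=d(b+(n,n))-d(b)$, which is at least $1$ by monotonicity. Iterating continuity from $b+(n,n)$ yields a $\leqslant_{SE}$-chain of $f(b)+1$ balls descending from $b+(n,n)$ to some ball $z$ with $d(z)=d(b)$; if this chain contains two balls in a common translation class, then $d(z)=d(b)$ together with monotonicity forces $z=b$, so the $(n,n)$-translation hull of the chain is a translation-invariant $\leqslant_{SE}$-chain of density $f(b)$ and $f(b)\leqslant m$, while otherwise the chain has at most $n$ translation classes and $f(b)\leqslant n-1$; either way $f$ is bounded. To pin $f$ down to $m$, I would show that if $c_p$ is the southeast-most channel ball weakly northwest of $b$, then $d(b)-d(c_p)$ equals the maximal length of a $\leqslant_{SE}$-chain from $c_p$ to $b$: the inequality ``$\geqslant$'' is monotonicity, and for ``$\leqslant$'' one steers a continuity descent from $b$ so that it stays weakly northeast of $c_p$ until it reaches $c_p$ itself, using that any predecessor lying strictly northwest of $c_p$ would have $d$-value below $d(c_p)$ and so cannot occur while the descent is still at level at least $d(c_p)$. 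Since this maximal chain length is visibly translation-invariant, and since $c_p$ is replaced by $c_p+(n,n)$ (with $d(c_p+(n,n))=d(c_p)+m$ by the first stage) when $b$ is replaced by $b+(n,n)$, this yields $d(b+(n,n))-d(b)=m$, as required.

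The main obstacle is the comparability bookkeeping in the first and third stages: in the first, genuinely placing the intermediate ball between two consecutive channel balls rather than merely northwest of $c_{i+1}$; in the third, preventing a continuity descent from ``slipping past'' $c_p$ to a ball of the same $d$-value that is incomparable to $c_p$. Both are points where I expect to need the southwest-most choice of channel together with the structural facts about channels established in \cite{cpy} (notably that the southwest-most balls of a union of channels again form a channel), and they are where the argument will require the most care.
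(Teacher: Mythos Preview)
The paper does not prove this proposition at all: it is quoted from \cite[Prop.~3.4]{cpy} and used as a black box, so there is no ``paper's proof'' to compare against.

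As for your attempt itself, the outline is reasonable, but the gaps you flag are genuine and not obviously fillable with the tools you invoke. In Stage~1, when $d(c_{i+1})\geqslant d(c_i)+2$ and continuity produces a ball $b$ with $d(c_i)<d(b)<d(c_{i+1})$ strictly northwest of $c_{i+1}$, you correctly note that $b$ need not be comparable to $c_i$; but neither ``continuing the descent'' nor replacing $c_i$ by $b$ in the channel is guaranteed to produce a translation-invariant $\leqslant_{SE}$-chain of density exceeding $m$, since $b$ may be northeast of $c_i$ and yet incomparable to $c_{i-1}$ as well.

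Stage~3 is more problematic. Your central claim---that for an \emph{arbitrary} proper numbering $d$, the difference $d(b)-d(c_p)$ equals the maximal $\leqslant_{SE}$-chain length from $c_p$ to $b$---amounts to asserting that every proper numbering agrees locally with the channel numbering, which is close to what you are trying to prove. The steering argument you sketch does not work as stated: continuity guarantees \emph{some} predecessor with the right $d$-value, but gives no control over its position relative to $c_p$. A predecessor $a'$ with $d(a')\geqslant d(c_p)$ cannot be strictly northwest of $c_p$, but it can perfectly well lie northeast (or southwest) of $c_p$, escaping the cone in which you want to confine the descent. Invoking the southwest channel does not obviously help, since its characterizing property (from \cite[Prop.~3.13]{cpy}) constrains how \emph{other channels} sit relative to it, not how arbitrary balls or continuity-descents do. You would need a substantially different argument---for instance, a direct counting or averaging argument relating the period to the total number of translation classes---to close this gap.
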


\begin{defn}
\label{defn:path}
Suppose $w$ is a partial permutation and $C$ is a channel of $w$. For a ball $b\in\B_w$ and $k\in\Z^{\geqslant 0}$, a \emph{path} of length $k$ from $b$ to $C$ is a sequence of balls $(b_0, b_1, \ldots, b_k)$ such that $b_0 = b$, $b_k \in C$, and $b_{i + 1}$ lies strictly northwest of $b_{i}$ for all $i$.
\end{defn}

Suppose we have a channel $C$ of some partial permutation $w$. We can ignore all other balls of $w$ and ask for proper numberings of just $C$ itself. It is clear that up to an overall shift there is just one of them, with the balls numbered consecutively by all the integers as one moves from northwest to southeast. We can use this proper numbering of $C$ to produce a proper numbering of all the balls of $w$.

\begin{defn}
Suppose $C$ is a channel of $w$ and $\tilde{d}$ is a proper numbering of $C$. Define the \emph{channel numbering} $d_w^C : \B_w \to \Z$ of $w$ by
\[
d_w^C(b) := \max_{k} \max_{(b_0, b_1, \dots, b_k)} (\tilde{d}(b_k) + k),
\]
where the second maximum is taken over all paths from $b$ to $C$. 
\end{defn}

Because the number of paths is infinite, it seems \emph{a priori} that $\tilde{d}(b_k) + k$ could be unbounded and so $d_w^C(b)$ undefined; however, by \cite[Prop.~3.9]{cpy} this is not the case. Once we know that $d_w^C(b)$ is finite for any ball $b$, it is easy to see that $d_w^C$ is, in fact, a proper numbering.

\begin{rmk}
Given a channel, there is an infinite family of channel numberings associated with it: they differ by shifting the numbers of all balls by the same amount.  In this paper, the distinctions between these numberings never matter, and we will use the phrase ``the channel numbering'' to refer to a (locally fixed, but) arbitrary choice among them.
\end{rmk}

\subsubsection{Zig-zags}

\begin{figure}
\centering
\resizebox{.8\textwidth}{!}{\input{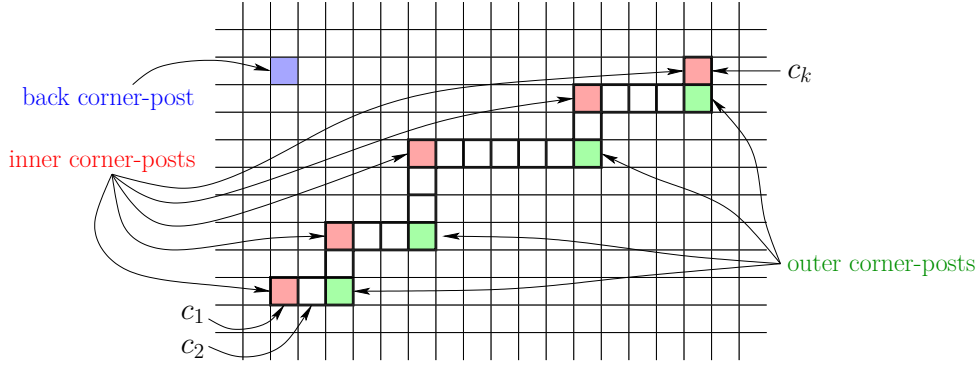}} 
\caption{The different corner-posts of a zig-zag.}
\label{fig:zig-zag posts}
\end{figure}

The definition of AMBC involves certain collections of cells called \emph{zig-zags}.

\begin{defn}
A \emph{zig-zag} is a non-empty sequence $(c_1, c_2, \ldots, c_k)$ of cells such that both of the following hold:
\begin{itemize}
\item for $1\leqslant i< k$, $c_{i+1}$ is adjacent to and either directly north or directly east of $c_i$, and
\item if $k\geqslant 2$, then $c_2$ is directly east of $c_1$ and $c_k$ is directly north of $c_{k-1}$.
\end{itemize}
Given a zig-zag $Z = (c_1, c_2, \ldots, c_k)$, we say that
\begin{itemize}
\item the \emph{back corner-post} is the cell in the same column as $c_1$ and the same row as $c_k$,
\item the \emph{inner corner-posts} are the cells of $Z$ such that no cell directly north or directly west of them is in $Z$, and
\item if $k \geqslant 2$, the \emph{outer corner-posts} are the cells of $Z$ such that no cell directly south or directly east of them is in $Z$; if $k = 1$ then there are no outer corner-posts of $Z$.
\end{itemize}
\end{defn}

These definitions are illustrated in Figure \ref{fig:zig-zag posts}.  Notice that the inner and outer corner-posts are always part of the zig-zag. The back-corner post is not usually part of the zig-zag; the only exception is a degenerate zig-zag with one cell, whose back corner-post coincides with its inner corner-post (and which has no outer corner-post).

The zig-zags that appear in AMBC are attached to a proper numbering of a permutation.
\begin{defn}
Given a proper numbering $d$ of a partial permutation $w$, the collection of \emph{zig-zags corresponding to $d$} is the collection $\{Z_i\}_{i\in\Z}$ where $Z_i$ is the unique zig-zag whose inner corner-posts are precisely the balls of $w$ labeled $i$ by $d$.
\end{defn}

The number of translation classes of zig-zags is the number $m(w)$ appearing in Proposition~\ref{prop: period of proper numbering}.

\subsubsection{Streams}
\label{sec:streams}

\begin{figure}
\centering
\resizebox{.4\textwidth}{!}{\input{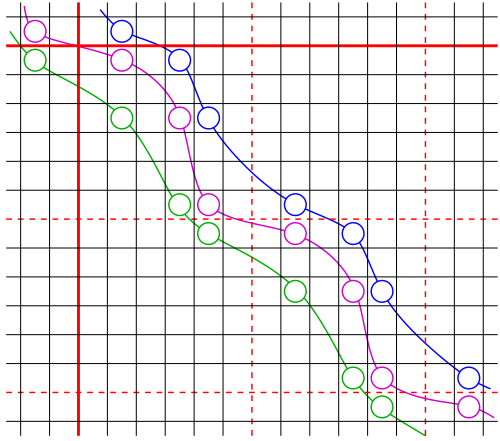}}
\caption{Streams of altitude $-1$ (green), $0$ (purple), and $1$ (blue) for $A = \{\ol{1},\ol{3},\ol{6}\}$ and $B = \{\ol{2},\ol{4},\ol{5}\}$.}
\label{fig:streams}
\end{figure}

The definition of AMBC involves a certain collection of cells called a \emph{stream}:
a set that is invariant under translation by $(n,n)$ and forms a chain in the partial ordering $\leqslant_{SE}$.
Since streams are translation-invariant, if a stream has a cell with column $k$, then it has cells in all the columns of $\ol{k}$. 

\begin{defn}
For any cell $c = (c_1, c_2)$, let $D(c)$ be the \emph{block diagonal} of $c$, 
\[
D(c) := \ceil{\frac{c_2}{n}} - \ceil{\frac{c_1}{n}}.
\] 
\end{defn}
Thus, if one of the translates of $c$ is in $[n]\times [n]$ then $D(c) = 0$; if a translate is in $\{n+1,n+2,\ldots,2n\}\times [n]$ then $D(c) = -1$; etc. 

\begin{defn}
For any translation-invariant collection $X$ of cells, we define $D(X) = \sum_{x\in Y} D(x)$, where $Y$ is a subset of $X$ containing one representative of each translation class.  If $X$ is a stream, we call $D(X)$ the \emph{altitude} of $X$.
\end{defn}

In \cite[\S3.4]{cpy}, a different definition was given of altitude of a stream; however, it is an easy exercise to see that they are equivalent.  The following result shows that the set of rows, set of columns, and altitude uniquely specify a stream.

\begin{prop}[{essentially \cite[Lem.~3.23 \& Prop.~3.24]{cpy}}]
Given two subsets $A, B$ of $[\ol{n}]$ of the same size and an integer $r$, there is a unique stream of altitude $r$ whose balls lie in rows indexed by $A$ and in columns indexed by $B$.
\end{prop}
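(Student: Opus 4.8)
The plan is to reduce everything to the description of streams by their block-diagonal behavior, and to build the desired stream by hand. Fix $A,B\subseteq[\ol n]$ of the same size, say $|A|=|B|=s$, and fix $r\in\Z$. A stream $X$ with rows in $A$ and columns in $B$ is, by definition, a translation-invariant ($(n,n)$-periodic) chain in $\leqslant_{SE}$. First I would observe that such a chain is completely determined by the data of a single "period": choose the representatives of the $s$ translation classes that lie in a suitable fundamental strip, list them in $\leqslant_{SE}$ order as $c^{(1)}<_{SE}c^{(2)}<_{SE}\cdots<_{SE}c^{(s)}$, and note that the $(n,n)$-translate of $c^{(1)}$ must come immediately after $c^{(s)}$ in the chain. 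Since the chain is a chain in $\leqslant_{SE}$ (rows strictly increasing going one way, columns strictly increasing the other), within one period the rows of the $c^{(i)}$ run through $A$ in increasing residue-order and the columns run through $B$ in increasing residue-order; this is forced, so the \emph{residues} of the rows and of the columns of the $s$ representatives are determined by $A$ and $B$ alone, and the only freedom is the integer "winding" — how many times one wraps around in the column direction relative to the row direction as one traverses a period. This is exactly what the block diagonal $D$ measures.

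Next I would make the correspondence between that winding freedom and the altitude precise. Having fixed the forced residue pattern, a choice of stream amounts to choosing, for each of the $s$ representatives, an actual integer cell in its translation class — equivalently, choosing the block diagonals $D(c^{(1)}),\dots,D(c^{(s)})$ — subject to the single constraint that consecutive cells (including the wrap-around from $c^{(s)}$ to $c^{(1)}+(n,n)$) satisfy $\leqslant_{SE}$. Because the residues are fixed, the $\leqslant_{SE}$ constraint between $c^{(i)}$ and $c^{(i+1)}$ translates into an inequality of the form $D(c^{(i+1)})\geqslant D(c^{(i)})$ or $D(c^{(i+1)})\geqslant D(c^{(i)})-1$ depending on whether the column residue increases or "wraps," and similarly for the row; summing these around the full period and using $D(X)=\sum_{i=1}^s D(c^{(i)})$, one sees that $X$ is a valid stream if and only if the block diagonals form a weakly increasing (up to the forced unit drops) sequence, and that the whole configuration is rigid once the sum $D(X)$ is fixed. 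In other words: there is at most one valid assignment of block diagonals with a given total, so the stream is unique if it exists, and conversely every integer $r$ is achieved because one can always shift all block diagonals up or down by the same amount (which shifts $D(X)$ by $s$) and redistribute a remainder of size between $0$ and $s-1$ in the unique staircase-compatible way. I would then simply cite \cite[Lem.~3.23 \& Prop.~3.24]{cpy} for the substance of this rigidity-and-existence analysis, since the statement is attributed to those results, and note that the only new content is matching their notion of altitude with $D(X)$, which is the "easy exercise" already flagged in the text before the proposition.

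The main obstacle, such as it is, is bookkeeping rather than mathematics: one must set up the fundamental strip and the ordering of the $s$ representatives carefully enough that the chain condition really does become the clean staircase inequality on block diagonals, and one must handle the wrap-around term correctly so that the count of unit drops is exactly right (it should equal the number of "descents" in the column residues plus one for the period-wrap, and likewise for rows, and these combine so that the freedom is a single integer parameter). Once that is done, uniqueness is immediate (the staircase with prescribed sum is unique) and surjectivity onto $r\in\Z$ follows from the global shift. I would keep the written proof short, doing the residue-rigidity observation explicitly and then invoking the cited lemmas of \cite{cpy} for the existence/uniqueness of the stream with prescribed altitude, with a sentence verifying $D(X)$ agrees with their altitude.
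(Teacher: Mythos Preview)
The paper does not prove this proposition at all: it is stated as background, attributed to \cite[Lem.~3.23 \& Prop.~3.24]{cpy}, and immediately followed by the notation $\st_r(A,B)$ and an example. So there is no ``paper's own proof'' to compare against; your plan to ultimately cite those results from \cite{cpy} is exactly what the paper does.

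That said, your sketch tries to do more than the paper, and it contains a real imprecision worth fixing before you write it up. You claim that ``within one period the rows of the $c^{(i)}$ run through $A$ in increasing residue-order and the columns run through $B$ in increasing residue-order; this is forced,'' and then that the remaining freedom is in choosing block diagonals $D(c^{(1)}),\dots,D(c^{(s)})$. This conflates two things. Once you list the $s$ period-representatives in $\leqslant_{SE}$ order, the \emph{row} integers are strictly increasing and the \emph{column} integers are strictly increasing; but the column \emph{residues} need not be in increasing broken order (they can wrap). Equivalently, the row-sequence and the column-sequence are each determined up to an index shift, and the genuine free integer parameter is the \emph{relative} shift between them --- i.e., the pairing of row-residues with column-residues. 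That single shift is what changes the altitude by~$1$ (as the paper's example ``move each cell east into the next available column'' illustrates). If you instead fix the row--column pairing first, there is no remaining freedom in the block diagonals beyond a global $(n,n)$-translate, which does not change the stream. So the correct one-line summary is: a stream with rows $A$ and columns $B$ is determined by a single integer offset, and altitude is a bijection from that offset to $\Z$. Your ``winding'' intuition is right; the paragraph about independently choosing the $D(c^{(i)})$ subject to staircase inequalities is the part that doesn't match the actual degrees of freedom.
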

This stream will be denoted $\st_r(A, B)$; the triple $A, B, r$ is called its \emph{defining data}.

\begin{ex}
 Let $n=6$, $A = \{\ol{1},\ol{3},\ol{6}\}$, and $B = \{\ol{2},\ol{4},\ol{5}\}$. The streams $\st_{-1}(A,B)$, $\st_{0}(A,B)$, and $\st_{1}(A,B)$ are shown in Figure \ref{fig:streams}. The stream $\st_{0}(A,B)$ is the unique choice with all of its cells in translations of $[n]\times [n]$; $\st_{1}(A,B)$ is obtained from $\st_0(A, B)$ by moving the cell in every row from its column east into the next available column in $\bigcup B$.
\end{ex}

The streams that appear in AMBC come from a proper numbering of a permutation.
\begin{defn}
Suppose $w$ is a partial permutation and $d:\B_w\to\Z$ is the southwest channel numbering; let $\{Z_i\}_{i\in\Z}$ be the collection of zig-zags corresponding to $d$. For each $i$, let $b_i$ be the back corner-post of $Z_i$. Then $\st(w) := \{b_i\}_{i\in\Z}$ is a stream.
\end{defn}

\subsubsection{The algorithm}

\begin{figure}
\centering
\resizebox{.7\textwidth}{!}{\input{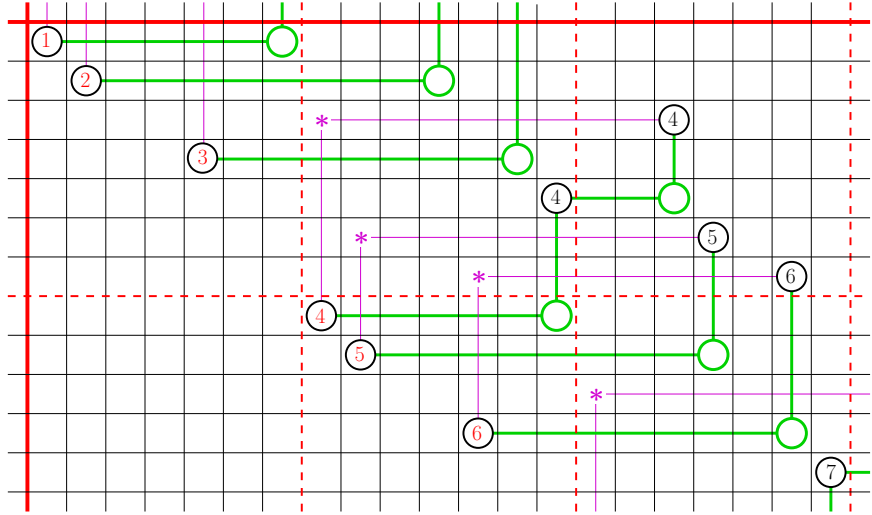}}
\caption{First step of AMBC for $w = [1,2,17,5,14,18,20]$. The southwest channel numbering is shown with the balls in the channel numbered in red. The balls of $\fw{w}$ are shown in green. The cells of $\st(w)$ are marked by $*$'s. }
\label{fig:forward ex 1}
\end{figure}

We now define the algorithm AMBC, giving the map $\Phi:\widetilde{S_n}\to\Omega$. 
\begin{defn}
Suppose $w$ is a partial permutation and $d:\B_w\to\Z$ is the southwest channel numbering. Define $\fw{w}$ to be the permutation whose balls are located at the outer corner-posts of all the zig-zags corresponding to $d$. 
\end{defn}
The algorithm is as follows.
\begin{itemize}
 \item Input $w\in \widetilde{S_n}$.
 \item Initialize $(P,Q,\rho)$ to $(\varnothing,\varnothing,\varnothing)$.
 \item Repeat until $w$ is the empty partial permutation:
\begin{itemize} 
 \item Record the defining data of $\st(w)$ in the next row of $P$, $Q$, and $\rho$.
 \item Reset $w$ to $\fw{w}$. 
\end{itemize}
 \item Output $(P,Q,\rho)\in\Omega$.
\end{itemize}

\begin{ex}
Let $n = 7$ and consider the permutation $w=[1,2,17,5,14,18,20]$.  In the first step of AMBC, shown in Figure~\ref{fig:forward ex 1}, $w$ is numbered in the SW channel numbering; the stream consisting of the back corner-posts for the zig-zags corresponding to this numbering has elements in rows $\ol{3}$, $\ol{6}$, and $\ol{7}$ and columns $\ol{1}$, $\ol{2}$, and $\ol{5}$, and has altitude $3$.  Thus, the first row of $P$ is $\tableau[sY]{\ol{1}, \ol{2}, \ol{5}}$, the first row of $Q$ is $\tableau[sY]{\ol{3}, \ol{6}, \ol{7}}$, and the first row of $\rho$ is $3$.  The second step is shown in Figure~\ref{fig:forward ex 2}, and produces second rows $\tableau[sY]{\ol{4}, \ol{6}, \ol{7}}$, $\tableau[sY]{\ol{2}, \ol{4}, \ol{5}}$ and $3$ for $P$, $Q$, and $\rho$.  Finally, the third step begins with the outer corner-posts from the second step; these form a partial permutation whose balls are exactly the translates of $(1, 10)$.  Thus the stream for this permutation also consists of translates of the single cell $(1, 10)$, and so at the end of this step we set the third row of $P$ to $\tableau[sY]{\ol{3}}$, the third row of $Q$ to $\tableau[sY]{\ol{1}}$, and the third row of $\rho$ to $1$.  The resulting triple $(P,Q,\rho)$ is
\[
\left(
\quad
\tableau[sY]{\ol{1}&\ol{2}&\ol{5}\\\ol{7}&\ol{4}&\ol{6}\\\ol{3}}
\quad,\qquad
\tableau[sY]{\ol{7}&\ol{3}&\ol{6}\\\ol{2}&\ol{4}&\ol{5}\\\ol{1}}
\quad,\qquad
\begin{pmatrix} 3 \\ 3 \\ 1 \end{pmatrix}
\;
\right).
\]
\end{ex}

\begin{figure}
\centering
\resizebox{.7\textwidth}{!}{\input{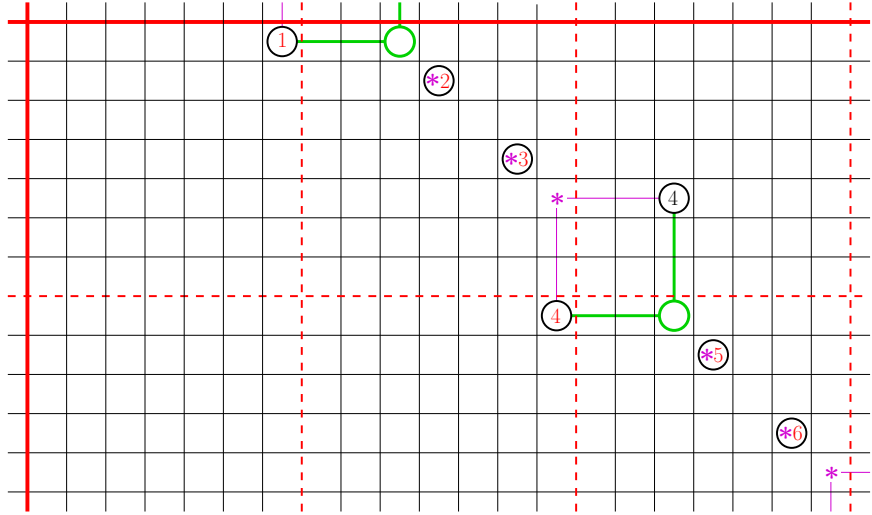}}
\caption{Second step of AMBC for $w=[1,2,17,5,14,18,20]$.}
\label{fig:forward ex 2}
\end{figure}


\subsection{Zig-zags from a proper numbering}

The following simple facts about how the zig-zags associated to a proper numbering lie in the plane will occur repeatedly in the arguments that follow.  
Both halves of the result are illustrated in Figure \ref{fig:zig-zag positions}.
\begin{figure}
\centering
\resizebox{.9\textwidth}{!}{\input{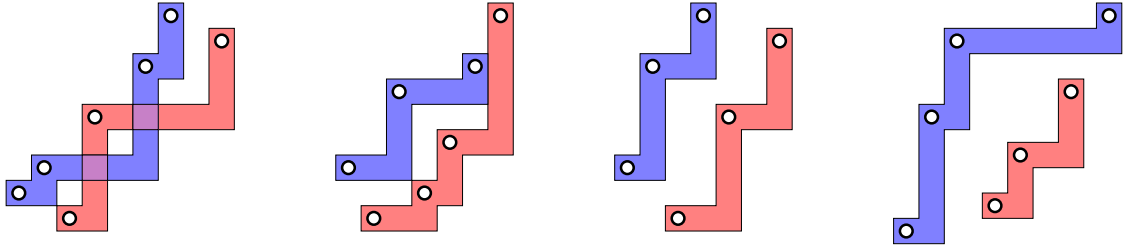}}
\caption{The blue zig-zag has balls of $w$ with lower value of $d$ than the red zig-zag. The two zig-zag positions shown on the left cannot occur: the first satisfies condition (b) of Proposition \ref{prop:zig-zags} but violates condition (a), while the second satisfies condition (a) but violates condition (b). The two positions on the right are valid.}
\label{fig:zig-zag positions}
\end{figure}

\begin{prop}
\label{prop:zig-zags}
Suppose that $w$ is a partial permutation, with balls labeled by
a proper numbering $d$.  Divide the balls into zig-zags according to $d$.
\begin{compactenum}[(a)]
\item If $b = (k, w(k))$ and $c = (\ell, w(\ell))$ are consecutive balls in a zig-zag of $w$ and $k < \ell$, then there are no balls of $w$ having larger value of $d$ northwest of the cell $(\ell, w(k))$.
\item If $b$ is the northeast (resp.\ southwest) ball of one zig-zag and $c$ is the northeast (resp.\ southwest) ball of another zig-zag and $d(b) < d(c)$, then $c$ lies strictly south (resp.\ east) of $b$.
\end{compactenum}
\end{prop}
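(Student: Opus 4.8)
The plan is to reduce everything to two features of a proper numbering. From \emph{monotonicity}, no ball with larger $d$-value can sit strictly northwest of a ball with smaller $d$-value. From \emph{continuity}, given any ball $b'$ with $d(b') = m$ and any $i < m$, iterating the continuity condition produces a chain $b' = a_m, a_{m-1}, \dots, a_i$ with $d(a_t) = t$ and each $a_{t-1}$ strictly northwest of $a_t$; since ``strictly northwest'' is transitive, $a_i$ is a ball labeled $i$ -- hence an inner corner-post of $Z_i$ -- lying strictly northwest of $b'$. I would record this \emph{descent-chain observation} once at the start. I also use the shape of a single zig-zag: its inner corner-posts form a $\leqslant_{SW}$-chain whose consecutive members differ strictly in both coordinates, so the northeast-most has the smallest row and the southwest-most the smallest column, and no inner corner-post has column (equivalently row) strictly between those of two consecutive ones.

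For part (b), take the northeast case; the southwest case is the same argument with the roles of rows and columns interchanged. Let $b$ be the northeast-most ball of its zig-zag and $c$ that of another, with $i := d(b) < d(c)$. Apply the descent-chain observation to $c$ to get a ball $a$ with $d(a) = i$ strictly northwest of $c$; in particular the row of $a$ is less than the row of $c$. Since $a$ is an inner corner-post of $Z_i$ and $b$ is the northeast-most such, $a$ lies southwest of $b$, so the row of $a$ is at least the row of $b$. Combining, the row of $b$ is strictly less than the row of $c$, i.e., $c$ is strictly south of $b$.

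For part (a), write the two consecutive balls of $Z_i$ as $b = (k, w(k))$ and $c = (\ell, w(\ell))$ with $k < \ell$, so that $b$ is the more northeasterly, $w(\ell) < w(k)$, and the cell $(\ell, w(k))$ is precisely the outer corner-post of $Z_i$ between them (the zig-zag runs east along row $\ell$ from $c$ to $(\ell, w(k))$, then north along column $w(k)$ to $b$). Suppose a ball $b'' = (p, q)$ with $d(b'') > i$ lies northwest of $(\ell, w(k))$. A short case analysis -- using that each row and column of $w$ contains exactly one ball, together with monotonicity against $b$ and against $c$ -- rules out every position for $b''$ except $k < p < \ell$ and $w(\ell) < q < w(k)$. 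Now apply the descent-chain observation to $b''$: it yields an inner corner-post $a = (p', q')$ of $Z_i$ with $p' < p < \ell$ and $q' < q < w(k)$. Since no inner corner-post of $Z_i$ has column strictly between $w(\ell)$ and $w(k)$, we get $q' \leqslant w(\ell)$; but $q' = w(\ell)$ forces $a = c$ and $p' = \ell$, while $q' < w(\ell)$ forces $a$ strictly southwest of $c$ in the chain and hence $p' > \ell$ -- each contradicting $p' < \ell$. This contradiction proves (a).

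I expect the only genuinely fiddly part to be (a): correctly identifying $(\ell, w(k))$ as the outer corner-post lying between two consecutive inner corner-posts, and running the case analysis that confines the hypothetical offending ball $b''$ to the open box strictly between $b$ and $c$ before the descent-chain delivers the contradiction. Part (b) is then essentially immediate from the descent-chain observation together with the zig-zag shape, in both the northeast and southwest versions.
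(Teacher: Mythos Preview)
Your proof is correct and follows essentially the same approach as the paper's. Both use iterated continuity (your ``descent-chain observation'') to produce a ball labeled $i$ strictly northwest of the offending ball, then use monotonicity and the consecutiveness of $b$ and $c$ in $Z_i$ to derive a contradiction; the paper organizes (a) as a three-region covering of the cells northwest of $(\ell,w(k))$ rather than first confining $b''$ to the open box, but this is only a cosmetic reordering of the same logic. One small wording point: for a \emph{partial} permutation each row and column contains \emph{at most} one ball, not exactly one, but your argument only needs ``at most one'' so nothing is affected.
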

\begin{proof}
For part (a), suppose for contradiction that there is such a ball $a$.  By continuity of $d$ (possibly applied many times), there is a ball northwest of $a$ with label $d(b) = d(c) < d(a)$.  Since $b$ and $c$ are given as consecutive balls in their zig-zag, this ball cannot be in the rectangle having $b$ and $c$ as vertices; however, by the monotonicity of $d$ it also cannot be northwest of $b$ or of $c$.  But every cell northwest of $(\ell, w(k))$ falls into one of these three sets; this is a contradiction, so no such $a$ exists, as claimed.

For part (b), suppose that $c$ is the northeast ball in its zig-zag and $d(b) < d(c)$.  By continuity of $d$, there is some ball $b'$ of $w$ northwest of $c$ such that $d(b') = d(b)$.  The northeast ball in this zig-zag is at least as far north as $b'$, hence is north of $c$, as claimed.
\end{proof}

\section{Knuth moves, descents, and inverses under AMBC}
\label{sec:Knuth moves}

The main result of this section is Theorem~\ref{thm:Knuth move action}, which describes precisely how making a Knuth move to a permutation affects its image under AMBC.  It was already shown in \cite[Lem.~12.3]{cpy} that the $P$-tabloid does not change; we furthermore show that the $Q$-tabloid changes by a Knuth move and that $\rho$ changes in a predictable way.  This result is crucial to our analysis of the Kazhdan-Lusztig dual equivalence graph in Sections~\ref{sec:monodromy} and~\ref{sec:charge}.  

Theorem~\ref{thm:Knuth move action} is also interesting in that it provides evidence that AMBC is the ``correct'' analogue of the Robinson-Schensted correspondence, as it interacts in the appropriate ways with other combinatorial constructions.  The first steps towards the proof also are of this form, showing that AMBC respects inverses and descent sets of permutations in the same way as the Robinson-Schensted correspondence.

\subsection{Inverses}
\label{sec:inverses}

 In the finite case, it is a classical result of Sch\"utzenberger \cite{Schutzenberger} (or see \cite[Thm.~7.13.1]{EC2}) that taking the inverse of a permutation $w$ exchanges the insertion and recording tableaux in its image under the Robinson-Schensted correspondence. Here we show the analogous result for AMBC.

\begin{prop}
\label{prop:inverses}
Suppose $\Phi(w) = (P,Q,\rho)$. Then $\Phi(w^{-1}) = (Q,P, (-\rho)')$, where $(-\rho)'$ is the dominant representative of $-\rho$ in the fiber under $\Psi(Q, P, -)$.
\end{prop}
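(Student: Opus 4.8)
The plan is to show that the Affine Matrix-Ball Construction is equivariant under the transpose-type symmetry that sends a partial permutation $w$ to $w^{-1}$ (i.e., reflecting all balls across the main diagonal). Concretely, write $\tau$ for the map on $\Z \times \Z$ sending $(i,j) \mapsto (j,i)$; then $\B_{w^{-1}} = \tau(\B_w)$, and I would track how every ingredient in one step of AMBC transforms under $\tau$. First I would observe that $\tau$ exchanges the orderings $\leqslant_{SE}$ and $\leqslant_{SW}$ only up to the fact that $\leqslant_{SE}$ is symmetric under swapping coordinates — in fact $(i,j) \leqslant_{SE} (i',j')$ iff $(j,i) \leqslant_{SE} (j',i')$ — so $\tau$ carries channels of $w$ to channels of $w^{-1}$ and preserves density; moreover it carries the southwest-most channel to the southwest-most channel (reflecting a chain across the diagonal preserves "southwest-most" since that notion is symmetric in the two coordinates). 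Consequently $\tau$ carries the southwest channel numbering $d_w^C$ to the southwest channel numbering $d_{w^{-1}}^{\tau(C)}$, because the defining formula in terms of paths (Definition of $d_w^C$) only refers to the strictly-northwest relation, which is $\tau$-symmetric.

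Next I would check the effect of $\tau$ on zig-zags: reflecting across the diagonal turns a zig-zag (an east-then-north staircase) into another zig-zag, swapping the roles of "directly north" and "directly east", hence swapping inner and ... keeping inner corner-posts as inner corner-posts (inner corner-posts are defined symmetrically, via "no cell directly north or directly west"), and likewise outer corner-posts go to outer corner-posts. Crucially, the back corner-post of a zig-zag — the cell in the column of $c_1$ and row of $c_k$ — is sent by $\tau$ to the back corner-post of the reflected zig-zag. Therefore $\tau$ carries $\st(w)$ to $\st(w^{-1})$ and $\fw{w}$ to $\fw{w^{-1}}$; by induction on the number of AMBC steps (the shape, hence the number of rows of $P$ and $Q$, is unchanged), the $P$-tabloid recorded for $w$ — which reads off column-indices of back corner-posts — becomes the $Q$-tabloid for $w^{-1}$ — which reads off row-indices — and vice versa. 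This gives $\Phi(w^{-1}) = (Q,P,\sigma)$ for some $\sigma$.

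It remains to identify $\sigma$. The altitude of a stream $X$ is $D(X) = \sum D(x)$ with $D(c_1,c_2) = \ceil{c_2/n} - \ceil{c_1/n}$, and $D(\tau(c)) = -D(c)$; summing over a representative set of translation classes, $D(\tau(X)) = -D(X)$. So at each step the altitude recorded for $w^{-1}$ is the negative of the altitude recorded for $w$, i.e. $\sigma = -\rho$ as an element of $\Z^{\ell(\lambda)}$. But $\Phi(w^{-1})$ must lie in $\dom$, so $\sigma$ must be the dominant representative with respect to $(Q,P)$; since $\Phi = \Psi|_{\dom}$ and $\Psi$ is constant on the fibers of the $\rho$-coordinate in the appropriate sense (two triples in $\Omega$ with the same $P,Q$ and with $\rho$'s differing in the manner that $\Psi$ cannot distinguish map to the same permutation), the vector $-\rho$ and its dominant representative $(-\rho)'$ in the fiber under $\Psi(Q,P,-)$ have the same image $w^{-1}$, and $(-\rho)'$ is the one in $\dom$. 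Hence $\Phi(w^{-1}) = (Q,P,(-\rho)')$.

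The main obstacle I anticipate is the bookkeeping around altitudes and the backward-algorithm fiber: $D(\tau(X)) = -D(X)$ is clean, but because AMBC always selects the \emph{southwest} channel, the raw altitude produced by the reflected algorithm need not be dominant for $(Q,P)$ — one only knows it lies in the correct $\Psi$-fiber — so care is needed to invoke $\Psi|_\dom = \Phi^{-1}$ and the fiber description from \cite{cpy} rather than claiming an equality of vectors on the nose. A secondary subtlety is confirming that "southwest-most channel" is genuinely $\tau$-invariant; this should follow because the southwest-most channel is characterized intrinsically (via \cite[Prop.~3.13]{cpy}) as the one obtained by iteratively taking southwest-most balls, and that operation commutes with $\tau$.
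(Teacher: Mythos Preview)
Your proposal contains a genuine error: reflection across the main diagonal does \emph{not} send the southwest-most channel to the southwest-most channel. The direction ``southwest'' means large row index and small column index; after swapping coordinates this becomes small row index and large column index, i.e., northeast. Thus $\tau$ exchanges the southwest-most and northeast-most channels, exactly as the paper notes (``switches `SW' for `NE'\,''). Your justification that ``southwest-most'' is symmetric in the two coordinates is simply false.

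This matters for the structure of the argument. Because the standard AMBC always uses the southwest channel numbering, running $\Phi$ on $w^{-1}$ is \emph{not} the mirror image of running $\Phi$ on $w$; rather, running AMBC on $w^{-1}$ with the \emph{northeast} channel numbering is the mirror of the standard run on $w$. That nonstandard run produces the triple $(Q,P,-\rho)$, but this is not $\Phi(w^{-1})$; one only knows (via a strong form of the $\Phi$--$\Psi$ relationship, \cite[Prop.~5.2]{cpy}) that $\Psi(Q,P,-\rho) = w^{-1}$, and then \cite[Thm.~6.3]{cpy} gives $\Phi(w^{-1}) = (Q,P,(-\rho)')$. Your final paragraph gestures at the right conclusion but arrives there inconsistently: if your claimed $\tau$-invariance of the SW channel held, then $\Phi(w^{-1})$ would equal $(Q,P,-\rho)$ on the nose and $-\rho$ would automatically be dominant, contradicting your own caveat. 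The fix is to abandon the SW-invariance claim and instead argue, as the paper does, via the NE channel numbering and the backward algorithm $\Psi$.
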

\begin{proof}
Inverting a permutation reflects its matrix across the main diagonal; this switches rows for columns, switches ``SW'' for ``NE,'' and switches ``E'' for ``S.'' Moreover, given a stream, reflecting it in the main diagonal negates the altitude of the stream. Thus, when we apply each step of AMBC to the inverse permutation $w^{-1}$, \emph{using the NE channel numbering} (instead of the usual SW channel numbering) at each step, we recover exactly the same steps as applying AMBC to $w$ using the SW channel numbering, with the following adjustments: first, because the roles of rows and columns are switched, data recorded for $w$ in the $P$-tabloid is recorded for $w^{-1}$ in the $Q$-tabloid and vice-versa; and second, because altitudes of streams are negated, each entry of $\rho(w^{-1})$ is the negation of the corresponding entry in $\rho(w)$. It follows from a strong form of the inverse relationship between $\Phi$ and $\Psi$ \cite[Prop.~5.2]{cpy} that $\Psi(Q,P,-\rho) = w^{-1}$.  Finally, we have by \cite[Thm.~6.3]{cpy} that $\Phi(w^{-1}) = \Phi(\Psi(Q, P, -\rho)) = (Q, P, (-\rho)')$ where $(-\rho)'$ is the dominant representative of $-\rho$.
\end{proof}

\begin{rmk}
\label{rmk:not obvious}
It is not obvious from the definition of dominance that applying the operation $(P, Q, \rho) \mapsto (Q, P, (-\rho)')$ twice returns the original triple (as it must do, since $(w^{-1})^{-1} = w$): the operation ``take the dominant representative'' depends on $P$ and $Q$ in a nontrivial way.  This oddity is explained in Section \ref{sec:charge intro}, following the description of the offset constants in Theorem~\ref{thm:dominance constants}.
\end{rmk}

\subsection{Descent sets and the $\tau$-invariant}

In the finite case, the descent set (appropriately defined) of the $Q$-tableau of $w$ is equal to the (right) descent set of $w$ (i.e., the set of integers $i$ such that $w(i) > w(i+1)$), and the descent set of the $P$-tableau is the descent set of $w^{-1}$ \cite[Lem.~7.23.1]{EC2}.  Here we show the analogous result for affine permutations and AMBC.

\begin{defn}
For a partial permutation $w$, the \emph{right descent set} $R(w)$ of $w$ is defined by 
\[
R(w) = \{\ol{i}\in[\ol{n}] : w(i) > w(i+1)\}.
\]
Similarly, the \emph{left descent set} $L(w)$ is defined by
\[
L(w) = \{\ol{i}\in[\ol{n}] : w^{-1}(i) > w^{-1}(i+1)\}.
\]
\end{defn}
So $\ol{i}$ is in $R(w)$ (resp.~$L(w)$) precisely when the ball in the $(i+1)$-st row (resp.~column) is west (resp.~north) of the ball in the $i$-th row (resp.~column).  These definitions agree with the usual notions from Coxeter theory.

We call the analogue of the descent set for tabloids the \emph{$\tau$-invariant}, in reference to Vogan's (generalized) $\tau$-invariant \cite{vogan-tau}.  
\begin{defn}
For a tabloid $T$ filled with all the elements of $[\ol{n}]$, define the \emph{$\tau$-invariant} by
\[
\tau(T) := \{\ol{i}\in [\ol{n}]: \ol{i} \text{ lies in a strictly higher row of } T \text{ than } \ol{i + 1}\}.
\]
\end{defn}
\begin{ex}
\label{ex:tabloid tau}
The tabloid 
\[
T := \tableau[sY]{\ol{2}, \ol{9}, \ol{5}\\ \ol{8}, \ol{7}, \ol{6}\\ \ol{3}, \ol{1}\\ \ol{4}}
\]
has $\tau(T) = \{\ol{2}, \ol{3}, \ol{5}, \ol{9}\}$.
\end{ex}

\begin{prop}
\label{prop:descents}
For any permutation $w$, $L(w) = \tau(P(w))$ and $R(w) = \tau(Q(w))$.
\end{prop}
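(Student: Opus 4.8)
The plan is to prove $R(w) = \tau(Q(w))$ by analyzing how a right descent of $w$ manifests in the stream $\st(w)$ recorded in the first step of AMBC, and then to invoke Proposition~\ref{prop:inverses} to deduce the statement about $L(w)$ and $P(w)$. Indeed, once $R(w) = \tau(Q(w))$ is known for all permutations, applying it to $w^{-1}$ gives $R(w^{-1}) = \tau(Q(w^{-1})) = \tau(P(w))$, and by definition $R(w^{-1}) = L(w)$ (reflecting the matrix across the diagonal swaps the two descent sets). So the whole proposition reduces to the single claim about $R(w)$ and $Q(w)$.

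For the claim about $R$ and $Q$: recall $\ol{i} \in R(w)$ exactly when the ball in row $i+1$ lies strictly west of the ball in row $i$, and $\ol{i} \in \tau(Q(w))$ exactly when the row-index $i$ appears in a strictly higher row of the tabloid $Q(w)$ than $i+1$. The key point is to track which row of $Q$ records the back corner-post lying in matrix-row $i$ versus matrix-row $i+1$. In the first step of AMBC, the southwest channel numbering $d$ is applied to $w$, the zig-zags $\{Z_j\}$ are formed, and the back corner-posts $b_j$ form the stream $\st(w)$; the set of matrix-rows occupied by these back corner-posts becomes the first row of $Q$, while the outer corner-posts pass to $\fw{w}$ and recursively fill the later rows of $Q$. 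So I would first understand, for a single step, the relationship between a right descent at $\ol{i}$ and whether row $i$ is occupied by a back corner-post (hence recorded in row $1$ of $Q$) or passed down to $\fw{w}$ via an outer corner-post (hence recorded in a lower row). The heart of the matter is a local analysis near the two balls $(i, w(i))$ and $(i+1, w(i+1))$: when these are "out of order" ($w(i) > w(i+1)$), I expect that the ball in row $i+1$ forces the back corner-post structure so that row $i+1$'s recorded entry lands strictly below row $i$'s, using Proposition~\ref{prop:zig-zags} to control the relative positions of the zig-zags through these two balls and of their back corner-posts. Conversely, when $w(i) < w(i+1)$ the two balls are "in order" and I expect row $i$ is recorded at least as high as (in fact strictly higher consideration needs care, but the contrapositive suffices) row $i+1$. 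One must also handle the bookkeeping that both $R$ and $\tau$ are subsets of the residue classes $[\ol n]$, so the comparison of rows $i$ and $i+1$ is really a statement about residues, and translation-invariance of everything in sight makes this consistent.

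The cleanest route may be to set up a direct combinatorial correspondence: show that for each consecutive pair of rows, passing from the partial permutation $w$ to $\fw{w}$ together with the stream data either (i) records both of rows $i,i+1$ in the current row of $Q$, (ii) records one and defers the other, or (iii) defers both, and that in case (ii) the one recorded now vs. later is determined precisely by the sign of $w(i) - w(i+1)$; then induct on the number of AMBC steps, checking that deferred pairs keep their relative order under $\fw{}$. I expect the main obstacle to be exactly this inductive compatibility — verifying that the relative vertical position of the balls/corner-posts in consecutive rows is preserved (in the appropriate sense) when passing to $\fw{w}$, so that a descent structure is neither created nor destroyed by one step of the algorithm. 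This is where Proposition~\ref{prop:zig-zags}(a) and (b) should do the real work, controlling where the outer corner-posts of consecutive zig-zags sit relative to one another; the base case (a single-step permutation, where $Q$ has one row and every ball is a back corner-post, so $\tau(Q) = [\ol n] = R(w)$ trivially when $w$ is a full cycle-type permutation with $m(w)=1$... — more precisely the base case is when $\fw{w}$ is empty) should be immediate, but getting the induction to close cleanly without a morass of case analysis on zig-zag shapes is the delicate part.
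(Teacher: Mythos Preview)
Your plan is correct and is essentially the mirror image of the paper's proof. The paper proves $L(w)=\tau(P(w))$ directly and then invokes Proposition~\ref{prop:inverses}, whereas you propose to prove $R(w)=\tau(Q(w))$ directly and then invoke inverses. These are dual arguments: the paper tracks which \emph{column} index is the column of the \emph{southwest} ball of a zig-zag (hence recorded in the current row of $P$), while your version tracks which \emph{row} index is the row of the \emph{northeast} ball of a zig-zag (hence recorded in the current row of $Q$). Proposition~\ref{prop:zig-zags} is stated symmetrically in its two halves, so either side goes through by the same mechanism.

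One comment: the induction is cleaner than you fear. In the paper's version, given $\ol{i}\in L(w)$ with $b$ the ball in column $i$ and $c$ the ball in column $i+1$, one first observes $d(c)\le d(b)$ and that (by Proposition~\ref{prop:zig-zags}(b)) $c$ cannot be the southwest ball of its zig-zag; hence $\ol{i+1}$ is \emph{not} recorded in the current row of $P$. Then there are only two cases: either $b$ \emph{is} the southwest ball of its zig-zag (so $\ol{i}$ is recorded now, done), or it is not, in which case Proposition~\ref{prop:zig-zags}(a) shows the descent persists in $\fw{w}$ and one recurses. No three-way split is needed, and the converse direction follows by the same two-case analysis. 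Your row/northeast version will have exactly the same shape.
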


\begin{proof}
It is sufficient to show $L(w) = \tau(P(w))$; the other statement follows from Proposition~\ref{prop:inverses}. 

First, suppose $\ol{i}\in L(w)$, so that the ball $b$ in the $i$-th column is south of the ball $c$ in the $(i+1)$-st column.  It follows that every ball strictly northwest of $c$ is also northwest of $b$, and so by the continuity and monotonicity of the southwest channel numbering $d$ we have $d(c)\leqslant d(b)$.  

Consider the application of a single forward step of AMBC to $w$.  The numbering $d$ induces a set of zig-zags.  Since $b$ is west of $c$, we have by Proposition~\ref{prop:zig-zags}(b) that $c$ is not the southwest ball in its zig-zag, and so there is a ball of $\fw{w}$ in the $(i+1)$-st column; moreover, by Proposition~\ref{prop:zig-zags}(a) this ball is north of $b$. Either $b$ is the southwest ball of its zig-zag, or not. In the first case, $\ol{i}$ appears in the first row of $P(w)$ while $\ol{i+1}$ appears in a lower row. In the second case, neither $\ol{i}$ nor $\ol{i + 1}$ appears in the first row of $P(w)$, and the ball in the $i$-th column of $\fw{w}$ is south of the ball in the $(i+1)$-st column; thus we can repeat the argument until we end up in the first case.

Conversely, if $\ol{i}\notin L(w)$ then the ball $b$ in the $i$-th column is north of the ball $c$ in the $(i+1)$-st column. In this case $d(b) < d(c)$ by monotonicity. If $c$ is the southwest ball of its zig-zag then $\ol{i + 1}$ appears in the first row of $P(w)$; thus $\ol{i} \notin \tau(P(w))$, as claimed.  Alternatively, $c$ is not the southwest ball of its zig-zag. Then the ball $c'$ southwest of $c$ in its zig-zag is west of $b$, and by Proposition~\ref{prop:zig-zags}(b) $b$ is not the southwest ball of its zig-zag. Moreover, it follows from Proposition~\ref{prop:zig-zags}(a) that the ball of $\fw{w}$ in the $i$-th column must be north of the ball in the $(i+1)$-st column. Thus we can repeat the argument to see that $\ol{i+1}$ cannot lie in a lower row of $P(w)$ than $\ol{i}$, as claimed.
\end{proof}

\subsection{Knuth moves}
Graphs whose vertices are combinatorial objects such as permutations or tableaux and whose edges are analogues of Knuth moves occur frequently in the study of the symmetric group -- see, for example, \cite{hrt, assaf2, blasiak-fomin}.  In the affine setting, we deal with two kinds of such graphs, one on the set of permutations and one on the set of tabloids.

\subsubsection{Knuth moves for permutations and tabloids}
\label{sec:knuth on perms}

In the finite setting, Knuth moves are certain elementary operations on finite permutations, interchanging adjacent entries if one of the neighboring entries is numerically between them -- see \cite[Ch.~7 App.~1]{EC2}.  The definition of Knuth moves in the affine case is extremely similar.

\begin{defn}
\label{def:Knuth move perm}
Let $i\in\Z$. Two partial permutations $w$ and $w'$ are \emph{connected by a Knuth move at position $\ol{i}$} if all of the following hold:
\begin{itemize}
\item for all $j$ such that $j\equiv i\pmod{n}$, we have $w'(j) = w(j+1)$ and $w'(j+1) = w(j)$;
\item for all $j$ such that $j\not\equiv i\pmod{n}$, $j\not\equiv i+1\pmod{n}$, we have  $w'(j) = w(j)$; and
\item at least one of $w(i+2)$ and $w(i-1)$ is numerically between $w(i)$ and $w(i+1)$.
\end{itemize}
\end{defn}
For example, the permutation $w = [3, 1, 2]$ is connected by a Knuth move to $[1, 3, 2]$ (because $w(3) = 2$ has value between $w(1) = 3$ and $w(2) = 1$) and to $[-1, 1, 6]$ (because $w(2) = 1$ has value between $w(0) = -1$ and $w(1) = 3$) but not to $[3, 2, 1]$ (because neither $w(1) = 3$ nor $w(4) = 6$ have value between $w(2) = 1$ and $w(3) = 2$).

We call the translation class of the ball $(i+2,w(i+2))$ or $(i-1,w(i-1))$ mentioned in the last part of the definition a \emph{witness} to the Knuth move.  

An alternative way to describe the condition when exchanging the $i$-th and $(i+1)$-st entries of a finite permutation constitutes a Knuth move is via the right descent set. It is not difficult to see that the condition on the witness precisely means that the right descent sets of of the two permutations are incomparable under the containment partial ordering. Similarly, we can give an alternate condition in the affine case.

\begin{prop}
\label{prop:Knuth on descents}
Suppose $w$ is a partial permutation such that $w(i), w(i+1)$, and $w(i+2)$ are defined, and precisely one of $\ol{i}$ and $\ol{i+1}$ is in $R(w)$. Then there exists a unique permutation $w'$ connected to $w$ by a Knuth move whose right descent set contains precisely the other of $\ol{i}$ and $\ol{i+1}$.
\end{prop}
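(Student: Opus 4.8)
The plan is to localize the entire question to the three values $a := w(i)$, $b := w(i+1)$, $c := w(i+2)$, all of which are defined by hypothesis. Saying that precisely one of $\ol i$, $\ol{i+1}$ lies in $R(w)$ means that in $(a,b,c)$ either $a > b < c$ (a ``valley,'' equivalent to $\ol i\in R(w)$ and $\ol{i+1}\notin R(w)$) or $a < b > c$ (a ``peak,'' equivalent to $\ol{i+1}\in R(w)$ and $\ol i\notin R(w)$). The two cases are entirely parallel, so I would treat the valley case and construct the unique $w'$ with $\ol{i+1}\in R(w')$ and $\ol i\notin R(w')$. Note that $a\neq c$: if $\ol i=\ol{i+2}$ (which forces $n=2$) then $c=a+2$, and otherwise $a$ and $c$ are distinct values of the injection $w$.

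For existence I would exhibit a single Knuth move, chosen by comparing $a$ and $c$ (the one pair in the triple not already ordered by the valley hypothesis). Suppose first $n\geqslant 3$. If $a>c$, then $b<c<a$, so the value $c=w(i+2)$ is numerically between $w(i)$ and $w(i+1)$ and the Knuth move $w'$ at position $\ol i$ (swapping the values in residues $\ol i$ and $\ol{i+1}$, with witness the ball $(i+2,c)$) is defined; on positions $i,i+1,i+2$ it sends $(a,b,c)\mapsto(b,a,c)$, a peak, whence $\ol{i+1}\in R(w')$ and $\ol i\notin R(w')$. If instead $a<c$, then $b<a<c$, so $a=w(i)$ is between $w(i+1)$ and $w(i+2)$, the Knuth move $w'$ at position $\ol{i+1}$ (with witness $(i,a)$) is defined, and it sends $(a,b,c)\mapsto(a,c,b)$, again a peak with the same conclusion. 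Since $a\neq c$ exactly one alternative occurs, and in both the witness used lies among the positions $i,i+1,i+2$ where $w$ is assumed defined.

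For uniqueness I would show that no other Knuth move from $w$ can have the required right descent set. A Knuth move at position $\ol j$ changes $w$ only on residues $\ol j$ and $\ol{j+1}$; since membership of $\ol i$ in a right descent set depends only on the values in residues $\ol i,\ol{i+1}$ and membership of $\ol{i+1}$ only on residues $\ol{i+1},\ol{i+2}$, deleting the descent at $\ol i$ forces $\ol j\in\{\ol{i-1},\ol i,\ol{i+1}\}$ while creating one at $\ol{i+1}$ forces $\ol j\in\{\ol i,\ol{i+1},\ol{i+2}\}$; in the valley case both are needed, so $\ol j\in\{\ol i,\ol{i+1}\}$. Each of those two positions has a unique Knuth partner, and the computation above shows the partner at $\ol i$ has the required descents exactly when $a>c$ and the partner at $\ol{i+1}$ exactly when $a<c$; hence precisely one $w'$ works. (That a move violating the witness condition could not accidentally produce the same permutation is automatic: once the position is fixed, the underlying swap is forced.)

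The step needing the most care, and the main obstacle, is the bookkeeping for small $n$, where the residues $\ol{i-1},\ol i,\ol{i+1},\ol{i+2}$ collide. When $n=3$ one has $\ol{i-1}=\ol{i+2}$, so the ``intersection of two three-element sets'' step of the uniqueness argument no longer isolates $\{\ol i,\ol{i+1}\}$ and the extra position must be excluded by hand; when $n=2$ the move at $\ol{i+1}$ also changes $w(i)$, so the clean formula for the new triple fails and the two descents must be recomputed directly; and $n=1$ is vacuous. In every degenerate case the working fact is still that in a valley $b$ lies strictly below both $a$ and $c$ (and the reverse in a peak), which is exactly what I would use to rule out the stray candidate moves.
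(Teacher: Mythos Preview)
Your argument is correct and follows the same approach as the paper: localize to the triple $(a,b,c)=(w(i),w(i+1),w(i+2))$, observe that $b$ is extreme, and choose the swap according to which of $a,c$ is the middle value. The paper's proof is much terser---it simply asserts that ``no other Knuth move can have this effect''---whereas you spell out why only positions $\ol{i}$ and $\ol{i+1}$ are candidates and check that the wrong one fails to produce the required descent pattern. Your attention to the $n\leqslant 3$ degeneracies goes beyond what the paper addresses. One small wording point: your phrase ``each of those two positions has a unique Knuth partner'' is slightly inaccurate, since the swap at the non-chosen position need not be a valid Knuth move at all (there may be no witness); but your argument does not actually depend on this, since you only need that \emph{if} it were valid, its descent pattern would still be wrong.
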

\begin{proof}
If exactly one of $\ol{i}, \ol{i + 1}$ belongs to $R(w)$ then $w(i + 1)$ is either the largest or smallest of $w(i), w(i + 1), w(i + 2)$, and so one of $w(i)$, $w(i + 2)$ lies numerically between $w(i + 1)$ and the other.  Thus a Knuth move that switches $w(i + 1)$ with one of $w(i), w(i + 2)$ is possible, witnessed by the other of $w(i), w(i + 2)$.  It is straightforward to verify that this has the desired effect on the descent set $R$, and that no other Knuth move can have this effect.
\end{proof}


In the finite case, Knuth moves preserve the Robinson-Schensted insertion tableau and induce transpositions of entries (which are also called Knuth moves) in the recording tableau. We give the corresponding definition of Knuth moves on tabloids.

\begin{defn}
Two tabloids $T$ and $T'$ are \emph{connected by a Knuth move} if $T'$ is obtained from $T$ by exchanging $\ol{i}$ and $\ol{i+1}$ and $\tau(T)$ and $\tau(T')$ are incomparable with respect to inclusion. 
\end{defn}

\begin{ex}
The tabloid 
\[
T' := \tableau[sY]{\ol{2}, \ol{1}, \ol{5}\\ \ol{8}, \ol{7}, \ol{6}\\ \ol{3}, \ol{9}\\ \ol{4}}
\]
is connected by a Knuth move to the tabloid $T$ from Example \ref{ex:tabloid tau}: it is obtained by exchanging $\ol{9}$ and $\ol{10} = \ol{1}$, and 
$\tau(T') =  \{\ol{2}, \ol{3}, \ol{5}, \ol{8}\}$ is not contained in and does not contain $\tau(T) = \{\ol{2}, \ol{3}, \ol{5}, \ol{9}\}$.
\end{ex}

\subsubsection{Knuth moves and AMBC}
In this section we describe how a Knuth move on permutations looks after taking images under AMBC.

\begin{thm}
\label{thm:Knuth move action}
Suppose $w$ is a partial permutation and $w'$ differs from $w$ by a Knuth move. Then $P(w) = P(w')$ and
$Q(w')$ differs from $Q(w)$ by a Knuth move.
Moreover, if the Knuth move on the $Q$-tabloids exchanges $\ol{i}$ and $\ol{i+1}$ for some $\ol{i}\neq \ol{n}$ then $\rho(w) = \rho(w')$; if instead $\ol{i} = \ol{n}$ is in row $k$ of $Q(w)$ while $\ol{i+1} = \ol{1}$ is in row $k'$, then $\rho(w')$ differs from $\rho(w)$ by subtracting $1$ from row $k$ and adding $1$ to row $k'$. 
\end{thm}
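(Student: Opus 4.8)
The plan is to reduce the statement to a careful local analysis of a single forward step of AMBC, by tracking what a Knuth move does to the balls of $w$ and then following those two balls (and their translates) through the zig-zag construction. Throughout, I would use Proposition~\ref{prop:zig-zags} as the main tool, together with Proposition~\ref{prop:descents} (which identifies $\tau(Q(w))$ with $R(w)$) to control where the relevant residues land in $Q$. The fact that $P(w)=P(w')$ is already available from \cite[Lem.~12.3]{cpy}, so the real content is the behavior of $Q$ and $\rho$.

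First I would set up notation: a Knuth move at position $\ol{i}$ swaps the balls in columns $\ldots, i, i+1, \ldots$ (and all $n$-translates), leaving all other balls fixed. Because the third ball $w(i+2)$ or $w(i-1)$ is a witness, precisely one of $\ol i, \ol{i+1}$ is in $R(w)$ and the other is in $R(w')$ (Proposition~\ref{prop:Knuth on descents}); by Proposition~\ref{prop:descents} this translates into a statement about which of $\ol i, \ol{i+1}$ is in $\tau(Q(w))$ versus $\tau(Q(w'))$. The key structural claim I would prove is that the southwest channel, the channel numbering $d$, and hence the entire collection of zig-zags are \emph{unchanged} by the Knuth move except in the immediate vicinity of the two swapped balls — more precisely, the set of back corner-posts $\st(w)$ coincides with $\st(w')$ in every translation class except possibly the one(s) containing the swapped balls, and even there the set of rows and columns of the stream is unchanged. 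Here I expect to lean on the witness condition: the witness ball forces $d$ to be locally rigid, so the swap only reshuffles $b=(k,w(k))$ and $c=(k{+}1,w(k{+}1))$ within a single zig-zag (or between two consecutive zig-zags) without altering which columns and rows the back corner-posts occupy. Once that is established, the rows of $P$ and $Q$ produced at this step are unchanged except that $\ol i$ and $\ol{i+1}$ may trade rows within $Q$ — which is exactly a Knuth move on $Q$, with incomparability of $\tau$-invariants coming from Proposition~\ref{prop:descents} applied to $w$ and $w'$ and the fact that $R(w), R(w')$ are incomparable.

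For the $\rho$ statement I would compute altitudes directly using the block-diagonal description $D(\st_r(A,B)) = \sum_{x} D(x)$. If $\ol i \ne \ol n$, then swapping columns $i$ and $i+1$ does not cross a dashed line (a multiple of $n$), so the $\ceil{c_2/n}$ values of the affected back corner-posts are unchanged, and the altitude at that step is the same: $\rho(w)=\rho(w')$. If $\ol i = \ol n$, the swap exchanges a ball just west of a dashed vertical line with one just east of it, so one back corner-post moves from block diagonal value $D$ to $D{+}1$ and the stream in row $k'$ (where $\ol{i+1}=\ol 1$ ends up) gains $1$ in altitude while the stream in row $k$ (where $\ol i = \ol n$ sits) loses $1$; summing over the step gives exactly the claimed modification of $\rho$. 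I would double-check the bookkeeping of which row is which by comparing the defining data recorded in $P$ versus $Q$ at that step. Finally, because the first step changes only these localized data, the remaining steps of AMBC are applied to the \emph{same} partial permutation $\fw w = \fw{w'}$ (the outer corner-posts are unaffected by the local reshuffling), so no further changes to $P$, $Q$, or $\rho$ occur and the theorem follows by induction on the number of AMBC steps.

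The main obstacle I anticipate is the structural claim that the channel numbering and zig-zags are genuinely unchanged outside a bounded region: a Knuth move is a local operation on $w$, but channels and channel numberings are global objects, so I need the witness condition to do real work in showing that the southwest-most channel is not perturbed and that the labels $d(b)$ elsewhere are stable. I would handle this by a case analysis on whether the witness is $(i{+}2, w(i{+}2))$ or $(i{-}1, w(i{-}1))$ and on the relative vertical positions of $b$, $c$, and the witness, using Proposition~\ref{prop:zig-zags}(a) to rule out stray balls of higher $d$-value in the critical rectangles and (b) to pin down which ball is the northeast/southwest corner of its zig-zag before and after the swap. This is the step where the ``$\ol i \neq \ol n$ vs.\ $\ol i = \ol n$'' dichotomy for $\rho$ must also be verified to interact correctly with the geometry, rather than just the arithmetic of $D(\cdot)$.
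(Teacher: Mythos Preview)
Your proposal has a genuine gap at the step where you claim that ``the remaining steps of AMBC are applied to the \emph{same} partial permutation $\fw w = \fw{w'}$ (the outer corner-posts are unaffected by the local reshuffling).'' This is false, and it is precisely where the work lies. In the paper's analysis (Lemma~\ref{lem:first stage}) there are two possibilities after one forward step: either (i) $\st(w)=\st(w')$ but $\fw{w}$ and $\fw{w'}$ differ by a \emph{new} Knuth move (possibly at a shifted position, e.g.\ $\ol{k+1}$ rather than $\ol{k}$), or (ii) the streams already differ by a one-row shift and $\fw{w}$, $\fw{w'}$ differ by sliding the balls in a single row class by one cell. Case~(i) can repeat for several forward steps before case~(ii) is ever reached, so you cannot terminate the induction after one step; the Knuth move propagates through the algorithm until it finally ``discharges'' as a single row-swap in $Q$. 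Your argument, as written, would give $Q(w)=Q(w')$, which contradicts the theorem.

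A second, related issue: you assume the swap in $Q$ occurs at the same index $\ol i$ as the Knuth move on $w$, and base your $\rho$ computation on whether that $\ol i$ equals $\ol n$. But because the Knuth move can migrate from position $\ol k$ to $\ol{k\pm 1}$ at each forward step (this is exactly what happens in the paper's Case~\textbf{AS}), the pair $\ol{\ell-1},\ol{\ell}$ eventually swapped in $Q$ need not be $\ol i,\ol{i+1}$. The correct criterion for whether $\rho$ changes is whether \emph{that} $\ol\ell$ equals $\ol 1$, not whether the original $\ol i$ does. (There is also a persistent row/column mix-up in your write-up: a Knuth move at position $\ol i$ swaps balls in \emph{rows} $i,i+1$, and $Q$ records row-indices of stream cells; your altitude argument should track which horizontal dashed line is crossed.)

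What the paper actually does is: explicitly define a candidate numbering $d'$ on $\B_{w'}$, prove it is proper and in fact the southwest channel numbering (Lemmas~\ref{lemma: a certain numbering is proper}--\ref{lemma:this is the southwest channel numbering}), and then carry out a case analysis (\textbf{AD}, \textbf{BD}, \textbf{AS}) to establish the dichotomy above. Only after that does an induction on forward steps go through, with a separate easy argument for the trailing ``row-shift'' phase once case~(ii) is reached. Your identification of Proposition~\ref{prop:zig-zags} as the workhorse and of Proposition~\ref{prop:descents} for the $\tau$-incomparability is right; what is missing is the two-case propagation lemma and the recognition that $\fw w \ne \fw{w'}$ in general.
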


\begin{ex}
\label{ex:Knuth move pre-example}
Consider the permutation $w = [1,4,6,2,5,3]$; under AMBC it corresponds to 
\[
P = Q = \tableau[sY]{\ol{1}, \ol{2}, \ol{3} \\ \ol{4}, \ol{5} \\ \ol{6}},\quad \rho = \begin{pmatrix}0\\0\\0\end{pmatrix}.
\]
Since $w(4) < w(2) < w(3)$, we can apply a Knuth move at position $3$.  This yields $[1,4,2,6,5,3]$, which corresponds under AMBC to the triple
\[
P = \tableau[sY]{\ol{1}, \ol{2}, \ol{3} \\ \ol{4}, \ol{5} \\ \ol{6}},\quad Q' = \tableau[sY]{\ol{1}, \ol{2}, \ol{4} \\ \ol{3}, \ol{5} \\ \ol{6}},\quad \rho = \begin{pmatrix}0\\0\\0\end{pmatrix}.
\]
\end{ex}
\begin{ex}
\label{ex:Knuth move example}
Again consider $w = [1,4,6,2,5,3]$.  Since $w(6) < w(5) < w(7) = 7$, we can apply a Knuth move to $w$ at position $6$.  This yields $[-3,4,6,2,5,7]$, which corresponds under AMBC to the triple
\[
P = \tableau[sY]{\ol{1}, \ol{2}, \ol{3} \\ \ol{4}, \ol{5} \\ \ol{6}},\quad Q'' = \tableau[sY]{\ol{6}, \ol{2}, \ol{3} \\ \ol{4}, \ol{5} \\ \ol{1}},\quad \rho' = \begin{pmatrix}1\\0\\-1\end{pmatrix}.
\]
\end{ex}

\begin{rmk}
\label{rmk:change of streams under Knuth}
The condition on how the weight changes may be formulated in terms of the streams encoded by the rows. Suppose that $Q$ and $Q'$ differ by a Knuth move exchanging $\ol{i}$ and $\ol{i+1}$, that $\ol{i}$ is in row $k$ of $Q$, and that and $\ol{i+1}$ is in row $k'$ of $Q$. Suppose $\rho'$ differs from $\rho$ as described in the theorem. Then the stream $S'$ represented by row $k$ of $(P,Q',\rho')$ differs from the stream $S$ represented by row $k$ of $(P,Q,\rho)$ by shifting the elements with row indices in $\ol{i}$ south by one cell, and similarly the stream $T'$ encoded by row $k'$ of $(P, Q', \rho')$ differs from the stream $T$ represented by row $k'$ of $(P,Q,\rho)$ by shifting the elements with row indices in $\ol{i + 1}$ north by one cell.

\begin{figure}
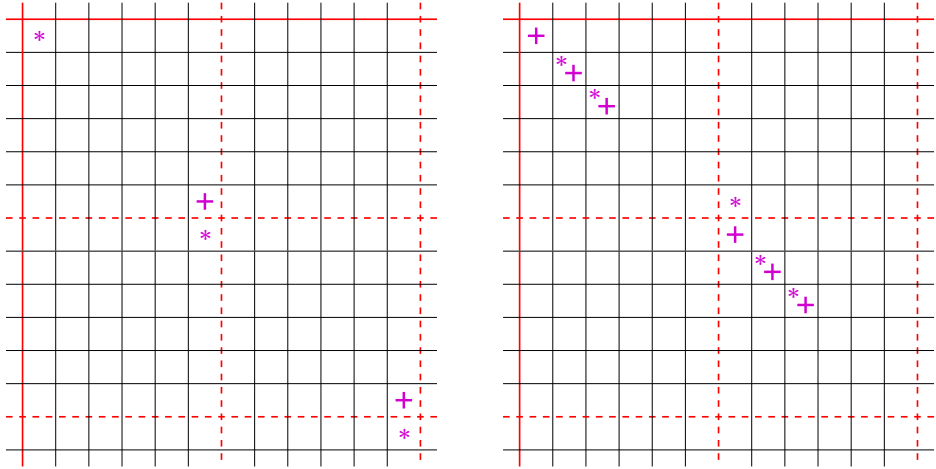

\centering
\resizebox{.35\textwidth}{!}{\input{figures/knuth_on_streams.pspdftex}}\qquad \resizebox{.35\textwidth}{!}{\input{figures/knuth_on_streams_2.pspdftex}}
\caption{Streams for the permutation $w = [1, 4, 6, 2, 5, 3]$ ($+$'s) and its image $[-3, 4, 6, 2, 5, 7]$ ($*$'s) under a Knuth move; see Remark~\ref{rmk:change of streams under Knuth}.  On the left, the streams $S, S'$ encoded by the third row of the images under AMBC. On the right, the streams $T, T'$ encoded by the first row.}
\label{fig:knuth on streams}
\end{figure}

For example, with $w$ as in Example~\ref{ex:Knuth move example}, the streams $S$ and $S'$ (shown on the left of Figure~\ref{fig:knuth on streams}) are encoded by the third row of the triples, while the streams $T$ and $T'$ (shown on the right of the figure) are encoded by the first row of the triples. The stream $S$ contains only one translation class of cells, in row $\ol{n}$; to obtain the stream $S'$, one shifts each of those elements one cell south. The stream $T$ contains three translation classes of cells; to obtain the stream $T'$, one shifts the elements in rows indexed by $\ol{1}$ one cell north.
\end{rmk}

\subsubsection{The proof of Theorem \ref{thm:Knuth move action}}
\label{sec:knuth proof}
We fix some notation for the duration of this section.  Let $w$ be a partial permutation and let $w'$ be the permutation obtained from it by a Knuth move, affecting balls in rows $\ol{k}$ and $\ol{k + 1}$. Without loss of generality, assume $w(k+1) < w(k)$. Let $b = (k+1, w(k+1))\in\B_w$, $c = (k, w(k))\in\B_w$, $b' = (k, w(k+1))\in\B_{w'}$, and $c' = (k+1, w(k))\in\B_{w'}$. This situation is shown in Figure \ref{fig:knuth positions}.

\begin{figure}
\centering
\resizebox{.4\textwidth}{!}{\input{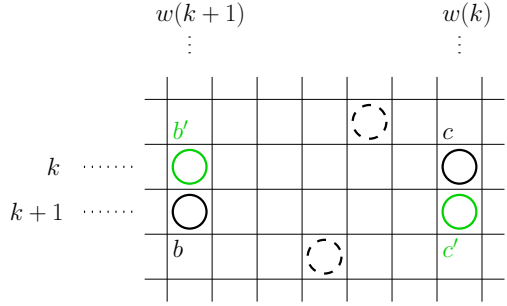}}
\caption{The positions of the balls in Section \ref{sec:knuth proof}. At least one of the dashed balls must be present as a witness for the Knuth move.}
\label{fig:knuth positions}
\end{figure}

Let $d$ be the southwest channel numbering of $w$.  Since every ball strictly northwest of $b$ is also northwest of $c$, we have by the properness of the numbering that $d(c) \geqslant d(b)$. Define a semi-periodic numbering $d'$ of $w'$ as follows: for $a\in\B_{w'}$,
\begin{align*}
d'(a) & = d(a) \quad\qquad \text{ if $a$ is not a translate of } b' \text{ or } c',\\
d'(b') & = d(b), \quad\qquad \text{and}\\
d'(c') & = \begin{cases}
           d(c) & \text{if } d(c) > d(b),\\
           d(c) + 1 & \text{if } d(c) = d(b),
           \end{cases}
\end{align*}
and extend the numbering to all other balls by semi-periodicity.  
In other words, to get from $w$ to $w'$, we shift $b$ (together with its translates) one cell north and shift $c$ one cell south; the numbering $d'$ is obtained by keeping the numbering of the balls being shifted, unless $d(b) = d(c)$ (in which case keeping the numbering would yield a non-monotone numbering).  As the next lemma shows, this adjustment results in a proper numbering.

\begin{defn}
The proofs that follow are divided into cases. We denote by {\bf A} (for \emph{after}) the case when $w(k+2)$ is between $w(k)$ and $w(k+1)$, and by {\bf B} (for \emph{before}) the case when $w(k-1)$ is between $w(k)$ and $w(k+1)$. We denote by {\bf S} (for \emph{same}) the case when $d(b) = d(c)$, and by {\bf D} (for \emph{different}) the case when $d(c) > d(b)$. 
\end{defn}
{\it A priori} the preceding definition allows four sub-cases.  However, the case {\bf BS} is impossible when $d$ is proper: using Proposition~\ref{prop:zig-zags}, one can show that for a witness $a$ in the row before $b$ and $c$, one must have $d(b) \leqslant d(a) < d(c)$, and this contradicts $d(b) = d(c)$.

\begin{lemma}
\label{lemma: a certain numbering is proper}
The numbering $d'$ described above is proper.
\end{lemma}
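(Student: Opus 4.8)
The plan is to verify the two defining conditions of a proper numbering — monotonicity and continuity — directly, using that $d$ is proper and that $\B_{w'}$ is obtained from $\B_w$ by replacing the translation classes of $b=(k+1,w(k+1))$ and $c=(k,w(k))$ with those of $b'=(k,w(k+1))$ and $c'=(k+1,w(k))$, while $d'=d$ on every common ball. Two elementary observations will be used repeatedly: (i) for any ball $a$ of $w$ one has $\max\{d(x):x\in\B_w,\ x\text{ strictly northwest of }a\}=d(a)-1$ (immediate from monotonicity and continuity of $d$); and (ii) in case \textbf{S} the balls $b$ and $c$ lie in adjacent rows with nothing between them, hence are consecutive inner corner-posts of their common zig-zag, so Proposition~\ref{prop:zig-zags}(a) applies with back-corner cell exactly $c'$, yielding: no ball of $w$ of value exceeding $d(b)=d(c)$ lies northwest of $c'$.

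For monotonicity it suffices to compare $b'$ and $c'$ (and their translates) with the remaining balls, since monotonicity among common balls is inherited from $d$. Because $b'$ is directly north of $b$ and directly west of $c$, any ball of $w'$ strictly northwest (resp.\ southeast) of $b'$ is a common ball strictly northwest (resp.\ southeast) of $b$, so these comparisons reduce to $d(b)=d'(b')$; likewise, using that $c'$ is directly south of $c$ and directly east of $b$, comparisons of $c'$ with northwestern balls reduce to $c$, and $d'(b')<d'(c')$ is immediate from $d(c)\geqslant d(b)$. The one genuinely new point is comparing $c'$ with a ball $y$ of $w'$ strictly to its southeast. In case \textbf{D} this is immediate, since $d'(c')=d(c)$ and $y$ is strictly southeast of $c$. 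In case \textbf{S} — which forces sub-case \textbf{A}, since \textbf{BS} is impossible — we have $d'(c')=d(c)+1$, so we must exclude $d(y)=d(c)+1$: here the witness $f=(k+2,w(k+2))$ is strictly southeast of $b$, hence $d(f)\geqslant d(b)+1=d(c)+1$, and any ball strictly southeast of $c'$ lies strictly southeast of $f$ as well (it must be in a row below $f$, as otherwise it would be $f$ itself), so its value is at least $d(c)+2$. Together with observation (ii), this gives monotonicity at $c'$.

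For continuity I will produce, for each ball $a$ of $w'$, a ball of $w'$ strictly northwest of it of $d'$-value $d'(a)-1$. For $c'$ in case \textbf{S} this is simply $b'$. For $c'$ in case \textbf{D}, and for $b'$ in all cases, one takes a $d$-continuity predecessor of $c$ (resp.\ of $b$): the balls of $w$ strictly northwest of $b'$ (resp.\ $c'$) coincide with those strictly northwest of $b$ (resp.\ $c$), so if the predecessor is a common ball it serves directly, and if it is a translate of $b$ or $c$ one substitutes the corresponding translate of $b'$ or $c'$, which has the matching $d'$-value and remains strictly northwest after a short coordinate check. The same substitution resolves continuity for a common ball whose $d$-predecessor happens to be a translate of $b$ or $c$.

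I expect the monotonicity check at $c'$ in case \textbf{S} to be the crux: it is the only place where being in sub-case \textbf{A} (equivalently, the impossibility of \textbf{BS}) is essential, and the only place where Proposition~\ref{prop:zig-zags} is used in an essential way. The continuity argument is longer but routine once one is attentive to translates — the semi-periodicity of $d$ and $d'$ ensures that a predecessor missing near a window boundary is always supplied by a suitable translate of $b'$ or $c'$ carrying the correct label.
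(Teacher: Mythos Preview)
Your proof is correct and follows essentially the same approach as the paper: both handle case \textbf{D} by observing that the bijection $f$ preserves numbering values and NW--SE relations, and handle case \textbf{AS} by using the witness in row $k+2$ (your $f$, the paper's $a$) to show that every ball strictly southeast of $c'$ is also strictly southeast of that witness. Your version spells out more of the details---separate checks at $b'$, $c'$, and common balls, and an explicit appeal to Proposition~\ref{prop:zig-zags}(a) for the northwest side of $c'$---whereas the paper compresses the \textbf{AS} case into the single observation that $d(a)=d'(c')$ together with ``every ball southeast of $c'$ is also southeast of $a$.''
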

\begin{proof}
We must check the monotonicity and continuity of the function $d'$.  In case {\bf D}, this is straightforward: no ball numbers change and no relation $a \leqslant_{SE} a'$ between two balls is destroyed, so $d'$ is continuous; and the only newly created relations in the SE order are translates of $b' >_{SE} c'$, and these are compatible with monotonicity since $d'(b') = d(b) < d(c) = d'(c')$.

Suppose instead that we are in case {\bf AS}. Let $a = (k+2, w(k+2))$.  Every ball strictly northwest of $a$ is strictly northwest of $b$, strictly northwest of $c$, or is $b$ itself.  Since $d(b) = d(c)$, the largest label of such a ball is $d(b)$, and thus $d'(a) = d(a) = d(b) + 1 = d'(c')$.  Then to check continuity and monotonicity of $d'$ it suffices to observe that every ball southeast of $c'$ in $w'$ is also southeast of $a$.
\end{proof}

\begin{defn}
We denote by $f$ the natural bijection $f:\B_w\to\B_{w'}$ that commutes with translation by $(n,n)$, has $f(b) = b'$ and $f(c) = c'$, and is the identity on balls that do not lie in rows congruent to $k$ or $k+1$ modulo $n$.
\end{defn}

\begin{lemma}
\label{lemma:two channels intersect}
If $C$ is the southwest channel of $w$ then $f(C)$ is a channel of $w'$ that intersects the southwest channel of $w'$.
\end{lemma}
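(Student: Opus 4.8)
The plan is to verify the two assertions about $f(C)$ separately: first, that $f(C)$ is a channel of $w'$ (i.e., it is translation-invariant, forms an $\leqslant_{SE}$-chain, and has maximal density), and second, that $f(C)$ must intersect the southwest channel of $w'$. Since $f$ commutes with translation by $(n,n)$, translation-invariance of $f(C)$ is immediate from that of $C$. The density of $f(C)$ equals the density of $C$, because $f$ sends translation classes bijectively to translation classes; so both the ``chain'' property and the ``maximal density'' property are the real content. For the chain property I would split into cases according to whether $C$ actually passes through the balls being moved. If $C$ contains neither (a translate of) $b$ nor (a translate of) $c$, then $f$ is the identity on $C$ and there is nothing to check beyond observing that no ball of $C$ lies in a row congruent to $k$ or $k+1$, so $C\subseteq\B_{w'}$ verbatim. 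If $C$ passes through $b$ or $c$, I would use the geometry in Figure~\ref{fig:knuth positions}: $b$ and $c$ are in adjacent rows $k+1$ and $k$ with $w(k+1)<w(k)$, so $b$ is strictly southwest of $c$; hence a chain through one of them, passing to the next element northwest and southeast, still forms a chain after the shift (moving $b$ north to $b'$ and $c$ south to $c'$), since $b'$ is still strictly northwest of $c'$ and the neighbors of $b,c$ along $C$ lie strictly northwest of $b$ and strictly southeast of $c$ respectively — this is exactly the kind of local check that Proposition~\ref{prop:zig-zags} is designed for, applied to the proper numbering $d$ (or rather, the observation that $C$-neighbors are strictly comparable in $\leqslant_{SE}$).

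**Maximality of the density.** For the second half of ``$f(C)$ is a channel,'' I need that the density $m(w')$ of $w'$ equals $m(w)$. One direction is free: $f(C)$ is a translation-invariant $\leqslant_{SE}$-chain in $\B_{w'}$ of density $m(w)$, so $m(w')\geqslant m(w)$. For the reverse, I would apply the symmetric construction: going from $w'$ back to $w$ is itself a Knuth move (swapping the roles of the two middle entries, with the same witness), and $f^{-1}$ is the analogous bijection $\B_{w'}\to\B_w$. By the same chain argument, if $C'$ is any channel of $w'$ then $f^{-1}(C')$ is a translation-invariant $\leqslant_{SE}$-chain in $\B_w$ of density equal to that of $C'$, so $m(w)\geqslant m(w')$. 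Hence $m(w)=m(w')$ and $f(C)$, having density $m(w)=m(w')$, is a channel of $w'$. (I should double-check the chain property of $f^{-1}(C')$ in the case $C'$ passes through $b'$ or $c'$: here $b'$ is in row $k$, $c'$ is in row $k+1$, and $b'$ is strictly northeast of $c'$ — wait, $b'=(k,w(k+1))$ and $c'=(k+1,w(k))$ with $w(k+1)<w(k)$, so $b'$ is strictly northwest of $c'$, good — so a chain through both remains a chain after pulling $b'$ south to $b$ and $c'$ north to $c$, provided no $C'$-neighbor gets in the way, which again follows from strict $\leqslant_{SE}$-comparability of consecutive chain elements. The one subtlety is whether $C'$ could contain both $b'$ and $c'$ with them consecutive; since $b'$ is strictly NW of $c'$ this is fine.)

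**Nonempty intersection with the southwest channel of $w'$.** This is the step I expect to be the main obstacle, since it is where the specific choice of ``southwest'' channel matters. Let $C'_{\mathrm{sw}}$ denote the southwest channel of $w'$. We know $f(C)$ is \emph{some} channel of $w'$; I want to show it shares a ball with $C'_{\mathrm{sw}}$. The tool is the fact quoted in Section~\ref{sec:channel numberings}: the southwest-most balls of a union of two channels of $w'$ again form a channel of $w'$ (from \cite[Prop.~3.13]{cpy}). Apply this to $f(C)$ and $C'_{\mathrm{sw}}$: their ``southwest envelope'' is a channel $E$, and since $C'_{\mathrm{sw}}$ is the southwest-most channel, $E$ must coincide with $C'_{\mathrm{sw}}$ (any ball of $E$ is southwest of or equal to a ball of $C'_{\mathrm{sw}}$ in the same row, but $C'_{\mathrm{sw}}$ is already southwest-most, so in fact $E=C'_{\mathrm{sw}}$). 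Now the balls of $E$ are drawn from $f(C)\cup C'_{\mathrm{sw}}$; if $E$ took no ball from $f(C)$ then $E=C'_{\mathrm{sw}}$ would be the southwest envelope meaning $C'_{\mathrm{sw}}$ is (weakly) southwest of $f(C)$ everywhere, which is automatic and gives nothing. So the slicker route is: since $C$ is the \emph{southwest} channel of $w$, I would argue directly that $f(C)$ is the southwest channel of $w'$ outside of possibly the rows $\ol k,\ol{k+1}$, and then compare positions in those two rows. Concretely, for any row $r\not\equiv k,k+1\pmod n$, the ball of $f(C)$ in row $r$ equals the ball of $C$ in row $r$, and I would show using minimality of $C$ together with $\B_w$ and $\B_{w'}$ agreeing in those rows that this is also the row-$r$ ball of the southwest channel of $w'$ — or at least that $f(C)$ and $C'_{\mathrm{sw}}$ agree in all but finitely many residue classes of rows, forcing (by translation-invariance and the chain structure) that they share infinitely many balls. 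The cleanest formulation, which I would adopt, is: take $E$ = southwest envelope of $f(C)$ and $C'_{\mathrm{sw}}$ as above, conclude $E = C'_{\mathrm{sw}}$; then observe that if $f(C)\cap C'_{\mathrm{sw}}=\varnothing$, every ball of $C'_{\mathrm{sw}}$ is \emph{strictly} southwest of the corresponding-row ball of $f(C)$, and then run the reverse Knuth-move argument — $f^{-1}(C'_{\mathrm{sw}})$ is a channel of $w$ lying strictly southwest of $C=f^{-1}(f(C))$ in every row, contradicting that $C$ is the southwest channel of $w$. (The one gap to fill carefully: ``strictly southwest in every row'' is preserved by $f^{-1}$; this needs the rows $\ol k, \ol{k+1}$ checked by hand, using that the shift in those two rows is by a single cell north/south and does not change the column, so strict southwest-ness of the columns is untouched, and the row comparison in rows $\ol k$ and $\ol{k+1}$ follows from the row comparison in the neighboring rows via the chain property.) This contradiction shows $f(C)\cap C'_{\mathrm{sw}}\neq\varnothing$, completing the proof.
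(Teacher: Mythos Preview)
Your overall strategy matches the paper's: show $f(C)$ is a channel, then assume $f(C)\cap C'_{\mathrm{sw}}=\varnothing$ and pull $C'_{\mathrm{sw}}$ back to produce a channel of $w$ strictly southwest of $C$, a contradiction. But there is a genuine gap, and it is exactly at the place you flagged and then dismissed.

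The map $f^{-1}$ does \emph{not} preserve $\leqslant_{SE}$-chains in general. You correctly compute that $b'=(k,w(k+1))$ is strictly northwest of $c'=(k+1,w(k))$, so a channel $C'$ of $w'$ may contain both. But then $f^{-1}(b')=b=(k+1,w(k+1))$ and $f^{-1}(c')=c=(k,w(k))$, and since $b$ is in the \emph{lower} row with the \emph{smaller} column, $b$ is strictly southwest of $c$: they are $\leqslant_{SE}$-incomparable. So $f^{-1}(C')$ is not a chain. Your parenthetical ``a chain through both remains a chain after pulling $b'$ south to $b$ and $c'$ north to $c$'' is simply false---the rows $k$ and $k+1$ are adjacent, so the two one-step shifts swap their row order. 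This breaks both your density argument ($m(w)\geqslant m(w')$ via symmetry) and your intersection argument ($f^{-1}(C'_{\mathrm{sw}})$ is a channel of $w$).

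The paper closes both holes differently. For the density, it does not appeal to symmetry at all: it invokes Lemma~\ref{lemma: a certain numbering is proper} (that $d'$ is proper) together with Proposition~\ref{prop: period of proper numbering} to conclude $m(w')=m(w)$ directly. For the intersection, it splits on whether $C'_{\mathrm{sw}}$ contains both $b'$ and $c'$. If not, $f^{-1}(C'_{\mathrm{sw}})$ \emph{is} a channel (since the only failure of $f^{-1}$ to preserve $<_{SE}$ is at the pair $b',c'$), and $f^{-1}$ preserves $<_{SW}$, giving the contradiction. If $C'_{\mathrm{sw}}$ contains both, then $f(C)$ contains neither (they are disjoint), so $f(C)=C$; the paper then replaces $\{b',c'\}$ in $C'_{\mathrm{sw}}$ by $\{(k-1,w(k-1)),c\}$ in case~\textbf{B} or by $\{b,(k+2,w(k+2))\}$ in case~\textbf{A} to manufacture a channel of $w$ with a ball strictly southwest of some ball of $C$. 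This is precisely where the Knuth-move witness is used, and your argument has no substitute for it.

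A smaller point: your phrase ``strictly southwest of the corresponding-row ball of $f(C)$'' presumes two channels occupy the same rows, which need not hold. The paper's formulation is that every ball of $C'_{\mathrm{sw}}$ is strictly southwest of \emph{some} ball of $f(C)$; that is what \cite[Prop.~3.13]{cpy} gives and what survives under $f^{-1}$.
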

\begin{proof}
Let $C_1$ be any channel of $w$.  For any two balls $a, a'$ in $\B_w$ such that $a <_{SE} a'$, one also has $f(a) <_{SE} f(a')$.  Thus $C'_1 := f(C_1)$ forms a chain in the southeast ordering.  By Lemma~\ref{lemma: a certain numbering is proper}, the numbering $d'$ is proper, so by Proposition~\ref{prop: period of proper numbering}  $w'$ has the same channel density as $w$.  Since $C_1$ has this density, $C'_1$ does as well, and so $C'_1$ is a channel for $w'$.  

Let $C'_{SW}$ be the southwest channel of $w'$.  By \cite[Prop.~3.13]{cpy} and the surrounding discussion, it follows that every ball of $C'_{SW}$ is weakly southwest of some ball in $C'_1$.  Suppose furthermore that $C'_1 \cap C'_{SW} = \varnothing$, so that we may replace the word ``weakly'' in the preceding sentence with ``strictly.''  We will show that there is a channel of $w$ that contains a ball strictly southwest of some ball of $C_1$.

Suppose first that $C'_{SW}$ contains at most one of $b'$ and $c'$.  Then $f^{-1}(C'_{SW})$ is a channel of $w$.  The map $f^{-1}$ is order-preserving for the order $<_{SW}$, so in this case every ball of $f^{-1}(C'_{SW})$ is strictly southwest of some ball of $C_1 = f^{-1}(C'_1)$, as desired.

Suppose instead that $C'_{SW}$ contains both $b'$ and $c'$. Then $C'_1$ contains neither, and so $C'_1 = C_1$. In case {\bf B}, removing $b'$ and $c'$ from $C'_{SW}$ and replacing them with $(k-1,w(k-1))$ and $c$ gives the desired channel of $w$. Similarly, in case {\bf A}, removing $b'$ and $c'$ from $C'_{SW}$ and replacing them with $b$ and $(k+2,w(k+2))$ gives the desired channel of $w$. 

We now take the contrapositive: if $C_1 = C$ is the southwest channel of $w$ then (again by \cite[Prop.~3.13]{cpy}) there is no channel of $w$ that contains a ball strictly southwest of some ball of $C$, and so $C'_1 = f(C)$ must have nonempty intersection with $C'_{SW}$, as claimed.
\end{proof}

Recall from Definition~\ref{defn:path} 
that a \emph{path} in a permutation $w$ is a sequence of balls of $w$ in which each ball is strictly northwest of the preceding one.

\begin{lemma}
\label{lemma:this is the southwest channel numbering}
The numbering $d'$ is the southwest channel numbering of $w'$.
\end{lemma}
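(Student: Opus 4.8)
The plan is to show that $d'$ coincides, up to an overall shift, with the southwest channel numbering $d_{w'}^{C'}$ of $w'$, where $C'$ denotes the southwest channel of $w'$. Lemma~\ref{lemma:two channels intersect} already supplies the crucial geometric input: $f(C)$, the $f$-image of the southwest channel $C$ of $w$, is a channel of $w'$ and it meets $C'$. The argument is a sandwich. For the easy inequality $d'\geqslant d_{w'}^{C'}$, note that $d'$ is proper by Lemma~\ref{lemma: a certain numbering is proper}, hence restricts to a consecutive numbering along any channel, so after normalizing we may assume $d'$ agrees with $d_{w'}^{C'}$ along $C'$; then for any ball $a\in\B_{w'}$ and any path from $a$ to $C'$, monotonicity of $d'$ forces the values to drop by at least $1$ at each step, so $d'(a)$ is at least the quantity recorded by that path, and maximizing over paths gives $d'(a)\geqslant d_{w'}^{C'}(a)$.

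The content is the reverse inequality, where we exploit the explicit construction: $d'$ is obtained from $d=d_w^C$ by transporting along $f$, except that in case \textbf{S} the value at each translate of $c'$ is raised by $1$. Fix $a\in\B_{w'}$ and set $\bar a:=f^{-1}(a)$. Because $d$ is the southwest channel numbering of $w$, there is an \emph{optimal} path in $w$ from $\bar a$ to $C$, that is, a path along which $d$ drops by exactly $1$ at each step and which ends on $C$. The plan is to argue that this path can be chosen, and its $f$-image suitably rerouted, so as to produce a path in $w'$ from $a$ to $f(C)$ along which $d'$ drops by exactly $1$ at each step. Such a path then records the value $d_{w'}^{f(C)}(a)$, and since intersecting channels of a fixed partial permutation carry the same channel numbering up to shift (part of the basic theory of channels developed in \cite{cpy}), we get $d'(a)\leqslant d_{w'}^{f(C)}(a)=d_{w'}^{C'}(a)$. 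Combining the two inequalities identifies $d'$ with $d_{w'}^{C'}$, which is exactly the assertion.

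The hard part is the middle step. The bijection $f$ preserves the relation $<_{SE}$ on balls (as noted in the proof of Lemma~\ref{lemma:two channels intersect}), so a path of $w$ maps to a sequence of balls of $w'$ that is weakly decreasing for $<_{SE}$; but near the translates of $b'$ and $c'$ two difficulties appear. A step of the path may cease to be \emph{strictly} northwest after applying $f$ (precisely when it lands on a translate of $c'$ from a ball of the same row $\overline{k+1}$, or leaves a translate of $b'$ within row $\overline{k}$); and in case \textbf{S} a step landing on a translate of $c'$ may fail to decrease $d'$, or dually create a drop of $2$. The resolution is to reroute the optimal path of $w$ so that it meets the translation classes of $b$ and $c$ only in controlled positions --- essentially it should visit $c$ only immediately before a translate of $b$, or avoid both altogether --- and then to invoke Proposition~\ref{prop:zig-zags}(a), together with the existence of a witness for the Knuth move (the same ingredient that ruled out case \textbf{BS}), to guarantee that such a rerouting exists while remaining optimal. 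This local analysis, carried out separately in cases \textbf{A}/\textbf{B} and \textbf{S}/\textbf{D}, is where the real work lies; the rest of the proof is bookkeeping with paths and normalizations.
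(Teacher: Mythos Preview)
Your overall strategy---transport an optimal path from $w$ to $w'$ via $f$, land on $f(C)$, and invoke Lemma~\ref{lemma:two channels intersect} to pass to the southwest channel---matches the paper's. The sandwich framing is fine, though the paper shortcuts it by citing \cite[Rem.~11.7]{cpy}, which characterizes the channel numbering directly: a proper numbering $d'$ is the channel numbering with respect to a channel $C''$ if and only if every ball admits a path to $C''$ along which $d'$ drops by exactly $1$. That collapses your two inequalities into one existence statement.

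Where your sketch goes astray is in the ``hard part.'' Your first worry, that a step of $f(p)$ may fail to be \emph{strictly} northwest, is in fact a non-issue: since $f$ preserves columns and the only balls in rows $\ol{k}$ and $\ol{k+1}$ of $w$ are translates of $c$ and $b$ (with $b$ southwest of $c$), one checks directly that $f$ preserves the strict northwest relation on $\B_w$ (the paper cites \cite[Rem.~11.2]{cpy} to assume $p$ has no repeated translation classes, which further simplifies this verification). So no rerouting is needed to make $f(p)$ a path, and cases \textbf{AD} and \textbf{BD} are immediate because $d'\circ f = d$ there. The only genuine difficulty is case \textbf{AS}, where $d'(c') = d(c)+1$ breaks the unit drops. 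Here your proposed fix (``visit $c$ only immediately before a translate of $b$'') does not work as stated, since $b$ is not northwest of $c$ in $w$. The paper's resolution is cleaner: using monotonicity of $d'$ one shows that if a translate of $c$ appears in $p$ at all, it must be the \emph{starting} ball, i.e., $x' = c'$; and for that one ball the desired path is obtained by prepending $c'$ to an already-constructed path from $b'$ (note $b'$ is northwest of $c'$ in $w'$ and $d'(c') = d'(b') + 1$). Case \textbf{BS} does not occur, so there is no separate \textbf{B} analysis to do. In short, your plan is sound but you are anticipating obstacles that are not there and missing the one short argument that handles the real obstacle.
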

\begin{proof}
Let $C$ be the southwest channel of $w$.  First, we show that for any $x'\in\B_{w'}$, there is a path in $\B_{w'}$ from $x'$ to $f(C)$ along which $d'$ decreases by $1$ at each step.  

Fix $x' \in\B_{w'}$ and let $x = f^{-1}(x')$.  By \cite[Rem.~11.7]{cpy}, there is a path $p = (x_0 = x, x_1, \dots, x_k)$ in $w$ beginning at $x$ and ending at $x_k \in C$ with $d$ decreasing by $1$ at each step.  By \cite[Rem.~11.2]{cpy}, we may choose $p$ so that it does not contain any pair of balls in the same translation class.  Then $p'=(f(x_0),\dots, f(x_k))$ is a path from $x' = f(x)$ to $f(C)$ in $w'$. 

In case {\bf D}, we have $d(x_i) = d'(f(x_i))$ and so $d'$ decreases by $1$ at each step of $p'$, as desired. 

In case {\bf AS}, if $p$ does not contain a translate of $c$ then $d'(f(x_i)) = d(x_i)$ and so $d'$ decreases by $1$ at each step of $p$, as desired.  Otherwise, we assume without loss of generality that $c$ is part of $p$.  In this case, $c$ must be the first entry of $p$: if $c = x_{i + 1}$ were northwest of $x_i$ and $d(c) + 1 = d(x_i)$, then $c'$ would be northwest of $f(x_i)$ and $d'(c') = d(c) + 1 = d(x_i)$; but this contradicts the monotonicity of $d'$ (Lemma~\ref{lemma: a certain numbering is proper}).  Finally, we consider the possibility $x' = c'$.  In this case, $b'$ is northwest of $c'$ and $d'(b') = d'(c') - 1$, and it follows from the preceding arguments in case {\bf AS} that there is a path from $b'$ to $f(C)$ with $d'$ decreasing by $1$ at each step; prepending $c'$ to this path gives the desired one.

By \cite[Rem.~11.7]{cpy}, it follows that $d'$ is the channel numbering of $w'$ with respect to $f(C)$.  Finally, since (by Lemma~\ref{lemma:two channels intersect}) $f(C)$ intersects the southwest channel of $w'$, we have by \cite[Rmk.~11.7 \& Lem.~3.15]{cpy} that $d'$ is the southwest channel numbering of $w'$, as claimed.
\end{proof}

\begin{lemma}
\label{lem:first stage}
Precisely one of the following holds:
\begin{itemize}
\item $\st(w) = \st(w')$ and $\fw{w}$ differs from $\fw{w'}$ by a Knuth move, or
\item for some $\ell\in\Z$, $\st(w)$ differs from $\st(w')$ by raising or lowering the cells in rows $\ol{\ell}$ by one row, and $\fw{w}$ differs from $\fw{w'}$ by respectively lowering the balls in rows $\ol{\ell-1}$ or raising those in rows $\ol{\ell+1}$ by one row.  
\end{itemize}
\end{lemma}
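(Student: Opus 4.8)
The plan is to read the change in $\st(w)$ and $\fw{w}$ directly off the relationship between the numberings $d$ and $d'$.  By Lemma~\ref{lemma:this is the southwest channel numbering}, $d'$ is the southwest channel numbering of $w'$, so the zig-zags of $w'$ are obtained from those of $w$ by applying the bijection $f$ to every ball (thus $b\mapsto b'$, $c\mapsto c'$, all other balls fixed) and, in case \textbf{S}, transferring $c'$ from the zig-zag of label $v:=d(b)$ to the zig-zag of label $v+1$.  Hence every zig-zag of $w'$ is, as a set of cells, the $f$-image of the corresponding zig-zag of $w$, except possibly for the zig-zag containing $b$ (resp.\ $b'$) and the one containing $c$ (resp.\ $c'$); in case \textbf{S} these are the zig-zags of labels $v$ and $v+1$, distinct since $d'(c')=v+1$.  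So $\st(w)$ and $\st(w')$ agree away from the back corner-posts of these one or two zig-zags and $\fw{w}$, $\fw{w'}$ agree away from their outer corner-posts, and the statement reduces to a local computation.  The basic tool is the elementary observation that when one inner corner-post $p$ of a zig-zag is moved one row north while keeping its column and remains an inner corner-post lying (in $\leqslant_{SW}$) between the same two neighbours, either $p$ was not the northeast-most inner corner-post, in which case the back corner-post is unchanged and exactly one outer corner-post — the one immediately northeast of $p$ — is shifted one row north with all other corner-posts fixed, or $p$ was the northeast-most inner corner-post, in which case the back corner-post moves one row north and no outer corner-post changes (and mirror statements for a southward move).

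In case \textbf{D}, $b$ moves one row north inside its zig-zag and $c$ moves one row south inside its (different) zig-zag.  First I would show that $b$ and $c$ cannot both be northeast-most: if $b=(k+1,w(k+1))$ were northeast-most in its zig-zag, then every ball labelled $d(b)$ would lie in rows $\geqslant k+1$, but continuity of $d$ applied from $c$ (with $d(c)>d(b)$) yields a ball labelled $d(b)$ strictly northwest of $c$, hence in a row $\leqslant k$ — a contradiction.  Applying the observation above, there are three cases.  If neither $b$ nor $c$ is northeast-most, then $\st(w)=\st(w')$, only the outer corner-posts immediately northeast of $b$ and of $c$ move, and the net effect is that $\fw{w'}$ is $\fw{w}$ with the values in positions $k$ and $k+1$ interchanged; one then checks that this interchange is a Knuth move.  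If $b$ is northeast-most (so $c$ is not), then $\st(w')$ is $\st(w)$ with its cell in row $\ol{k+1}$ raised to row $\ol{k}$ while $\fw{w'}$ is $\fw{w}$ with its ball in row $\ol{k}$ lowered to row $\ol{k+1}$ — the second alternative with $\ell=k+1$; the case that $c$ is northeast-most is symmetric and gives the second alternative with $\ell=k$.

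In case \textbf{S} — necessarily case \textbf{AS}, since \textbf{BS} is impossible — the ball $c$ leaves the label-$v$ zig-zag and enters the label-$(v+1)$ zig-zag as $c'$, while $b$ moves one row north inside the label-$v$ zig-zag.  A direct bookkeeping of corner-posts shows that the label-$v$ zig-zag loses exactly its outer corner-post at $(k+1,w(k))$ — the cell now occupied by $c'$ — and keeps its back corner-post and every other corner-post, and that $c'$ is inserted into the label-$(v+1)$ zig-zag immediately northeast of the witness $a=(k+2,w(k+2))$ (there being no room for another label-$(v+1)$ ball $\leqslant_{SW}$-between them), creating the outer corner-post $(k+2,w(k))$.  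There are two cases, according to whether $a$ is the northeast-most inner corner-post of the label-$(v+1)$ zig-zag.  If it is not, then one further outer corner-post of that zig-zag shifts one row north, its back corner-post is unchanged, $\st(w)=\st(w')$, and $\fw{w'}$ is $\fw{w}$ with the values in positions $k+1$ and $k+2$ interchanged, which one checks is a Knuth move.  If $a$ is northeast-most, then $c'$ becomes the northeast-most inner corner-post of that zig-zag, its back corner-post moves from row $\ol{k+2}$ to row $\ol{k+1}$, and $\fw{w'}$ is $\fw{w}$ with its ball in row $\ol{k+1}$ lowered to row $\ol{k+2}$ — the second alternative with $\ell=k+2$.

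The two alternatives are mutually exclusive (one has $\st(w)=\st(w')$, the other does not) and the cases above are exhaustive, so precisely one holds.  The main obstacle I anticipate is the two sub-cases in which $\st$ is unchanged: there one must verify that the indicated interchange of two adjacent entries of $\fw{w}$ is genuinely a Knuth move and not merely an adjacent transposition, i.e.\ exhibit a witness.  I would do this by tracking the original witness $a$ through the forward step and locating the corresponding outer corner-post, using Proposition~\ref{prop:zig-zags} to pin down its position.  Some additional care is needed for degenerate zig-zags consisting of a single inner corner-post, where some of the ``neighbours'' above do not exist.
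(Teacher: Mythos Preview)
Your plan is correct and follows essentially the same route as the paper: divide into cases \textbf{D} and \textbf{AS}, track how the zig-zags containing $b,c$ (resp.\ $b',c'$) change under $f$, and read off the effect on back corner-posts and outer corner-posts.  The paper's case analysis and the ``elementary observation'' about moving a single inner corner-post north/south are exactly what you describe.

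Two small points.  First, your argument in case \textbf{D} that $b$ cannot be northeast-most does not use anything about $c$, so it actually shows $b$ is \emph{never} northeast-most (the paper derives this from Proposition~\ref{prop:zig-zags}(b)); hence your sub-case ``$b$ is northeast-most (so $c$ is not)'' is vacuous and should be dropped.  Second, the Knuth-move verification you flag as the main obstacle really does split into \textbf{AD} versus \textbf{BD}: in \textbf{AD} the witness for $\fw{w}$ is in row $k+2$, while in \textbf{BD} it is in row $k-1$, and the arguments showing the relevant outer corner-post lies in the right column interval differ slightly (both use Proposition~\ref{prop:zig-zags}).  You will need to treat these separately when you fill in the details.
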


\begin{proof}
We begin with some preliminaries before moving on to case-analysis.  By Lemma~\ref{lemma:this is the southwest channel numbering}, the numbering $d'$ is the southwest channel numbering of $w'$, so the stream $\st(w')$ and permutation $\fw{w'}$ are computed in terms of this numbering.  Thus, in all cases, we consider the balls of $w$ and $w'$ to be divided into zig-zags corresponding to the numberings $d$ and $d'$, respectively. The position of every ball of $\fw{w}$ and every cell of $\st(w)$ is determined by two balls of $w$, and similarly for $w'$; we call these the \emph{parents}. Finally, since $d(b) \leqslant d(c)$ and $c$ is northeast of $b$, it follows from Proposition~\ref{prop:zig-zags}(b) that $b$ is never the northeast ball in its zig-zag.

\textbf{Case AD.}
\begin{figure}
\centering
\resizebox{.35\textwidth}{!}{\input{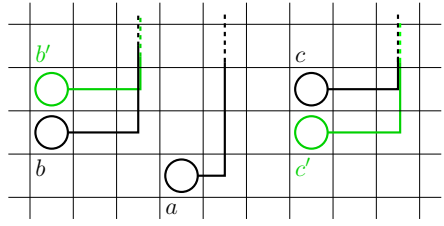}}
\caption{The case {\bf AD} of the proof of Lemma~\ref{lem:first stage} when $c$ is not the northeast ball of its zig-zag.}
\label{fig:knuth_ad_proof}
\end{figure}
Since we are in case ${\bf D}$, for every $x\in\B_w$, we have $d(x) = d'(f(x))$. Therefore, if a zig-zag $Z$ induced by $d$ does not contain a translate of $b$ or $c$, the same zig-zag (with the same inner-, outer-, and back-corner posts) is induced by $d'$.

Let $Z_b$ be the zig-zag containing $b$ induced by balls of $w$ and let $Z_{b'}$ be the zig-zag containing $b'$ induced by balls of $w'$.  As noted above, $b$ is not the northeast ball in $Z_b$.  Thus, the east parents of the back-corner posts of $Z_b$ and $Z_{b'}$ are the same, while their south parents are either the same (if $b$ is not the southwest ball of $Z_b$) or lie in the same column; thus the back-corner posts are equal.  Similarly, except the ball whose west parent is $b$, every ball of $\fw{w}$ in $Z_b$ has west parent unchanged by $f$ and north parent whose column is unchanged by $f$; thus, each of these balls is also a ball of $\fw{w'}$.  Finally, the balls of $\fw{w}, \fw{w'}$ with west parents $b, b'$ have the same north parent, so any ball with row index in $\ol{k + 1}$ in $\B_{\fw{w}}$ moves up one cell to be a ball of $\B_{\fw{w'}}$.

Now consider the zig-zags $Z_c$ and $Z_{c'}$.  Suppose first that $c$ is \emph{not} the northeast ball of its zig-zag. Then the same is true of~$c'$.  As in the previous paragraph, in this case $Z_c$ and $Z_{c'}$ have the same back-corner post, while any ball with row index in $\ol{k}$ in $\B_{\fw{w}}$ moves down one cell to be a ball of $\B_{\fw{w'}}$.  Therefore in this case $\st(w) = \st(w')$ and the permutations $\fw{w}$ and $\fw{w'}$ differ by moving the balls in rows $\ol{k+1}$ up into row $\ol{k}$ and moving the balls in rows $\ol{k}$ down into row $\ol{k + 1}$, just as $w$ and $w'$ do (see Figure \ref{fig:knuth_ad_proof}).  It remains to show that this is a Knuth move.  Let $a = (k+2, w(k+2)) \in \B_w$ be the witness for the Knuth move between $w$ and $w'$.  Aside from $b$, every ball northwest of $a$ is also northwest of $c$; thus by continuity of $d$ we have $d(b) < d(a)$ while by monotonicity we have $d(a)\leqslant d(c)$.  Using this last inequality together with Proposition~\ref{prop:zig-zags}(b), $a$ is not the northeast ball in its zig-zag.  Then it follows from Proposition~\ref{prop:zig-zags}(a) that $(\fw{w})(k) < (\fw{w})(k+2) <(\fw{w})(k+1)$. Thus the ball in row $k + 2$ of $\fw{w}$ and $\fw{w'}$ is a witness for the fact that $\fw{w}$ differs from $\fw{w'}$ by a Knuth move in position $\ol{k}$.

If instead $c$ \emph{is} the northeast ball of its zig-zag, then 
the outer-corner posts of $Z_c$ and $Z_{c'}$ are equal, so $\fw{w'}$ differs from $\fw{w}$ by shifting the ball in row $k+1$ north by one cell, while the back-corner post of $Z_{c'}$ is in row $k + 1$ instead of row $k$, so $\st(w')$ is obtained from $\st(w)$ by shifting the element in row $k$ south by one cell.

\textbf{Case BD.}  This case is similar to {\bf AD}; the difference arises when choosing a witness in the case that $c$ is not the northeast ball in its zig-zag.  Let $a = (k-1, w(k-1)))$.  In this case we have $d(b) \leqslant d(a) < d(c)$.  The ball northeast of $c$ in its zig-zag is also north of $a$, so $a$ must not be the northeast ball in its own zig-zag.  This implies $(\fw{w})(k+1) < (\fw{w})(k - 1) < (\fw{w})(k)$, and so there is a ball in row $k - 1$ of $\fw{w}$ and $\fw{w'}$ that witnesses the claimed Knuth move.

\textbf{Case AS.} This case is more intricate because $f$ affects parenthood in a nontrivial way. Let $a$ be the ball (of both $w$ and $w'$) in row $k + 2$, which witnesses the Knuth move. It was shown in the proof of Lemma~\ref{lemma: a certain numbering is proper} that in this case $d(a) = d(b)+1 = d(c)+1$. We will now use this to show that $\fw{w}$ and $\fw{w'}$ differ at most in rows $\ol{k + 1}$ and $\ol{k + 2}$. 

Zig-zags that do not contain $a, b, c$ or their translates are unaffected by the Knuth move.  Thus, we may restrict our attention to the two pairs of affected zig-zags.  The first pair consists of the zig-zags $Z_{b,c}$, containing $b$ and $c$ induced by balls of $w$, and $Z_{b'}$, containing $b'$ induced by balls of $w'$.  These two zig-zags differ by removing two consecutive balls $(k, w(k))$ and $(k + 1, w(k + 1))$ and replacing them with a single ball $(k, w(k + 1))$.  This operation results in the deletion of the outer-corner post $(k + 1, w(k))$, while the back-corner post and every other outer-corner post stays the same.

The second pair consists of the zig-zags $Z_a$, containing $a$ induced by balls of $w$, and $Z_{a, c'}$, containing $a$ and $c'$ and induced by balls of $w'$.  We have two cases to consider.  
If $a$ is \emph{not} the northeast ball in $Z_a$, suppose that the next ball northeast of $a$ is $(j, w(j))$.  Necessarily $j < k$.  Thus $c'$ is not the northeast ball in $Z_{a, c'}$, and the back-corner posts of $Z_{a, c'}$ and $Z_a$ coincide.  Therefore in this case $\st(w) = \st(w')$.  Moreover, inserting the new inner-corner post $c'$ between $a$ and $(j, w(j))$ results in the deletion of their child at position $(k + 2, w(j))$ and the creation of two new outer-corner posts at positions $(k + 2, w(k))$ and $(k + 1, w(j))$.  Combining this we the previous paragraph, we have that $\fw{w'}$ differs from $\fw{w}$ by moving the ball in row $k + 1$ south one cell into row $k + 2$ and moving the ball in row $k + 2$ north one cell into row $k + 1$.  Finally, we must show that this is a Knuth move.  Since $(j, w(j)) \in Z_a$, we have by Proposition~\ref{prop:zig-zags}(b) that $c$ is not the northeast ball in $Z_{b, c}$, and so also $b'$ is not the northeast ball in $Z_{b'}$.  Thus $\fw{w}$ and $\fw{w'}$ contain some ball $(k, y)$ in row $k$, and by Proposition~\ref{prop:zig-zags}(a) we have $w(k) < y < w(j)$.  Thus this ball is a witness to the fact that $w$ and $w'$ differ by a Knuth move in position $\ol{k+1}$.

Alternatively, suppose that $a$ \emph{is} the northeast ball in its zig-zag in $\B_w$.  By an analysis similar to the preceding one, we see that $\fw{w} \smallsetminus \fw{w'}$ consists of all translates of $(k + 1, w(k))$, while $\fw{w'} \smallsetminus \fw{w}$ consists of all translates of $(k + 2, w(k))$, and that $\st(w)$ has an element in row $k + 2$ while in $\st(w')$ this element appears one cell north, in row $k + 1$.  This is precisely the second situation in the proposition statement.
\end{proof}

With these preliminary results in hand, we move to the proof of the main result of this section.

\begin{proof}[Proof of Theorem \ref{thm:Knuth move action}]
Fix a partial permutation $w$, related to the partial permutation $w'$ by a Knuth move.  Begin applying forward moves of AMBC to both $w$ and $w'$.  By Lemma~\ref{lem:first stage}, we may have for several forward steps that the streams of the two permutations remain equal while the resulting permutations differ by a Knuth move; thus, the corresponding rows of $P(w)$, $Q(w)$ and $\rho(w)$ are respectively equal to those of $P(w')$, $Q(w')$ and $\rho(w')$.  Since $w \neq w'$ and AMBC is a bijection, we must at some point reach a forward step where we fall into the second case of Lemma~\ref{lem:first stage}; without loss of generality, we may assume that this is the first step, and that for some $\ell\in\Z$, $\st(w)$ differs from $\st(w')$ by raising the elements in rows $\ol{\ell}$ by one cell, and $\fw{w}$ differs from $\fw{w'}$ by lowering the balls in rows $\ol{\ell-1}$ by one cell (as in Example~\ref{ex:Knuth move pre-example}).

In this case, the first rows of $P(w)$ and $P(w')$ are equal, the first rows of $Q(w)$ and $Q(w')$ differ by swapping $\ol{\ell - 1}$ with $\ol{\ell}$, and $\fw{w}$ differs from $\fw{w'}$ by moving balls between rows $\ol{\ell - 1}$ and $\ol{\ell}$ (leaving the other row empty).  If $\ol{\ell} \neq \ol{1}$ then the elements of $\st(w)$ have the same block diagonals as the corresponding elements of $\st(w)$, and so in this case the first rows of $\rho(w)$ and $\rho(w')$ are equal.  If instead $\ol{\ell} = \ol{1}$ then the element of the stream that moves also decreases its block diagonal by $1$, and so the first rows of $\rho(w)$ and $\rho(w')$ differ by $1$ in this case (as in Example~\ref{ex:Knuth move example}).

It is straightforward to check from the definition of AMBC that if $u$ and $u'$ are partial permutations such that $u'$ is obtained from $u$ by shifting the the balls in rows $\ol{\ell - 1}$ down by one cell into an empty row, then $P(u) = P(u')$ and $Q(u')$ is obtained from $Q(u)$ by replacing $\ol{\ell - 1}$ with $\ol{\ell}$.  If $\ol{\ell} \neq \ol{1}$ then the corresponding streams of both permutations have the same altitudes and so $\rho(u') = \rho(u)$; if instead $\ol{\ell} = \ol{1}$ then at the forward step that places $\ol{\ell}$ into $Q(u')$ and $\ol{\ell - 1}$ into $Q(u)$, the stream coming from $u'$ has altitude one less than the stream coming from $u$.

Finally, it follows from the preceding discussion that $P(w') = P(w)$, $Q(w')$ differs from $Q(w)$ be exchanging the two entries $\ol{\ell}$ and $\ol{\ell - 1}$, and $\rho(w')$ differs from $\rho(w)$ as described in the theorem. The fact that the change from $Q(w)$ to $Q(w')$ is a Knuth move is a consequence of Proposition \ref{prop:descents}.
\end{proof}

\subsection{Covering} In this short section we prove that any Knuth move on tabloids can be realized as the change in $Q$-tabloids under a Knuth move on permutations.

\begin{defn}
For a partition $\lambda$, the \emph{Kazhdan-Lusztig dual equivalence graph} (KL DEG) $\A_\lambda$ of shape $\lambda$ is the graph whose vertices are the tabloids of shape $\lambda$ and whose edges are the Knuth moves.  Say that a permutation $w$ has \emph{shape $\lambda$} if the tabloids in its image under AMBC are of shape $\lambda$.
\end{defn}

The KL DEG $\A_{\langle 4, 1, 1 \rangle}$ is shown in Figure \ref{fig:KL deg}. 

\begin{figure}
\centering
\resizebox{.9\textwidth}{!}{\input{figures/adeg.pspdftex}}
\caption{The KL DEG $\A_{\langle 4, 1, 1 \rangle}$.  The $\tau$-invariants are shown in red.}
\label{fig:KL deg}
\end{figure}

\begin{lemma}
\label{lem:graph covering}
Consider the graph on permutations of shape $\lambda$ whose edges are Knuth moves. The map $w\mapsto Q(w)$ is a graph covering of $\A_\lambda$. 
\end{lemma}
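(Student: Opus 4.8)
The plan is to verify the two defining properties of a covering for the map $\pi\colon w\mapsto Q(w)$: that it is a graph morphism from the Knuth graph $G_\lambda$ on permutations of shape $\lambda$ to $\A_\lambda$, and that for every $w$ of shape $\lambda$ it restricts to a bijection from the edges of $G_\lambda$ incident to $w$ onto the edges of $\A_\lambda$ incident to $Q(w)$. (Neither graph has loops or multiple edges, and $\pi$ is surjective on vertices since any tabloid $T$ of shape $\lambda$ equals $Q(\Phi^{-1}(P,T,\rho))$ for a suitable dominant $(P,\rho)$, so ``covering'' here has its naive meaning.) The morphism property is immediate from Theorem~\ref{thm:Knuth move action}: if $w\sim w'$ is a Knuth move then $P(w')=P(w)$ --- so $w'$ again has shape $\lambda$ --- and $Q(w')$ is a Knuth-move neighbour of $Q(w)$, with $Q(w')\neq Q(w)$, since a tabloid Knuth move exchanging two residues sharing a row would have $\tau(Q(w'))=\tau(Q(w))$, contradicting the incomparability in the definition.

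For injectivity of the local map I would argue as follows. Suppose $w\sim w_1$ and $w\sim w_2$ are Knuth moves with $Q(w_1)=Q(w_2)$. By Theorem~\ref{thm:Knuth move action}, $P(w_1)=P(w)=P(w_2)$, and the vector $\rho(w_j)$ is obtained from $\rho(w)$ by a rule depending only on which residues the tabloid Knuth move $Q(w)\to Q(w_j)$ exchanges and which rows of $Q(w)$ they occupy. Since this tabloid move is the same for $j=1$ and $j=2$, we get $\rho(w_1)=\rho(w_2)$; as $\Phi$ is a bijection, $w_1=w_2$.

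Surjectivity of the local map is the substantive step. Fix $w$ of shape $\lambda$ and put $T=Q(w)$, so $R(w)=\tau(T)$ by Proposition~\ref{prop:descents}; let $T\to T'$ be an edge of $\A_\lambda$ exchanging residues $\ol{i},\ol{i+1}$. Unwinding the definition of $\tau$, the incomparability of $\tau(T)$ and $\tau(T')$ shows that $\tau(T)\triangle\tau(T')$ is a subset of $\{\ol{i-1},\ol{i},\ol{i+1}\}$ containing $\ol{i}$ and at least one further element, and --- depending on the relative rows of $\ol{i-1},\ol{i},\ol{i+1},\ol{i+2}$ in $T$ --- that there is some $j\in\{i-1,i\}$ for which exactly one of $\ol{j},\ol{j+1}$ lies in $R(w)$. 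Proposition~\ref{prop:Knuth on descents} applied at position $j$ then produces a Knuth move $w\sim w'$; by Theorem~\ref{thm:Knuth move action}, $Q(w')$ is a Knuth-move neighbour of $T$ obtained by exchanging some pair $\ol{m},\ol{m+1}$, and comparing $R(w')=\tau(Q(w'))$ with $R(w)=\tau(T)$ on the window $\{\ol{i-1},\ol{i}\}$ --- using Proposition~\ref{prop:Knuth on descents}'s control of $R(w')$ --- pins down $m=i$, so that $Q(w')=T'$ (the exchange of $\ol{i},\ol{i+1}$ in $T$ being unique). I expect the wrap-around case $\ol{i}=\ol{n}$ to go through identically, the only extra point being the $\rho$-bookkeeping already recorded in Theorem~\ref{thm:Knuth move action}.

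The hard part will be this surjectivity argument, and specifically the reconciliation of two differently-flavoured conditions: whether an adjacent transposition of the \emph{values} of $w$ is a Knuth move is controlled by the witness condition, which sees the actual values of $w$ and not merely $R(w)$, whereas the validity of the tabloid move $T\to T'$ is a statement about the rows of $T$. One must check carefully that the row data making $T\to T'$ valid is exactly what makes the corresponding permutation Knuth move available at position $j$, and that the resulting move lands on $T'$ rather than on another Knuth-move neighbour of $T$; Proposition~\ref{prop:descents} and the bijectivity of $\Phi$ are the tools that translate between the two pictures, and the $\ol{i}=\ol{n}$ wrap-around is the case most prone to off-by-one errors.
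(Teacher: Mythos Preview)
Your plan is correct and uses the same ingredients as the paper (Proposition~\ref{prop:descents}, Proposition~\ref{prop:Knuth on descents}, Theorem~\ref{thm:Knuth move action}), but organized a bit differently. The paper does not split into injectivity and surjectivity; instead it observes that on both the permutation and the tabloid side the Knuth-edges at a vertex are indexed by those $\ol{i}$ for which exactly one of $\ol{i},\ol{i+1}$ lies in the descent set---Proposition~\ref{prop:Knuth on descents} gives this for permutations, and the paper notes that the tabloid analogue is an easy case-check on the rows of $\ol{i-1},\ol{i},\ol{i+1},\ol{i+2}$---so, since $R(w)=\tau(Q(w))$, the local map on edges is automatically a bijection. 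Your injectivity argument via the $\rho$-rule in Theorem~\ref{thm:Knuth move action} and bijectivity of $\Phi$ is a pleasant alternative the paper does not invoke; your surjectivity argument is an explicit unpacking of the paper's parametrization idea. One small slip to fix when you write it out: the ``window'' on which you compare $R(w')$ with $R(w)$ should be $\{\ol{j},\ol{j+1}\}$, which is $\{\ol{i-1},\ol{i}\}$ or $\{\ol{i},\ol{i+1}\}$ according to whether $j=i-1$ or $j=i$, not always $\{\ol{i-1},\ol{i}\}$ as written.
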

\begin{proof}
By Theorem~\ref{thm:Knuth move action}, this map is a surjective graph morphism; we only need to check that it is an isomorphism around each vertex. Consider a permutation $w$. Suppose for some $i$, exactly one of $\ol{i}$ and $\ol{i+1}$ is in $R(w)$. By Proposition~\ref{prop:Knuth on descents}, there exists a unique Knuth move to a permutation $w'$ with exactly the other of $\ol{i}$ and $\ol{i+1}$ is in $R(w')$. By considering the various possible relative locations of $\ol{i - 1}$, $\ol{i}$, $\ol{i + 1}$, and $\ol{i + 2}$, it is easy to show that the same is true for tabloids with respect to $\tau$. Combined with the fact that $R(w) = \tau(Q(w))$ (Proposition~\ref{prop:descents}), this finishes the proof.
\end{proof}

In fact, the following stronger statement follows from the proof of Lemma~\ref{lem:graph covering}.	
\begin{prop}
If tabloids $Q$ and $Q'$ are related by a Knuth move, then for any tabloid $P$ and any weight $\rho$, the permutation $w = \Psi(P,Q,\rho)$ is related by a Knuth move to a permutation $w'$ with $Q(w') = Q'$.
\end{prop}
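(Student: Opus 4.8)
The plan is to deduce the proposition directly from the covering property established in Lemma~\ref{lem:graph covering}, the one extra ingredient being that $\Psi$ followed by $\Phi$ never disturbs the $Q$-tabloid. First I would set $w := \Psi(P,Q,\rho)$ and observe that $Q(w) = Q$: by \cite[Thm.~6.3]{cpy} we have $\Phi(\Psi(P,Q,\rho)) = (P,Q,\rho')$, where $\rho'$ is the dominant representative of $\rho$, so the $Q$-tabloid of $w$ equals $Q$ whether or not $\rho$ itself is dominant. In particular, writing $\lambda$ for the common shape of $Q$ and $Q'$, the permutation $w$ has shape $\lambda$.

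Next I would invoke Lemma~\ref{lem:graph covering}: the map $u\mapsto Q(u)$ is a graph covering from the graph of Knuth moves on permutations of shape $\lambda$ onto $\A_\lambda$. By the defining local-bijection property of a graph covering, for each vertex $u$ of the source and each edge of $\A_\lambda$ incident to $Q(u)$ there is exactly one edge incident to $u$ mapping to it. Applying this with $u = w$ and the edge of $\A_\lambda$ recording the Knuth move $Q \rightarrow Q'$ produces a unique permutation $w'$ related to $w$ by a Knuth move with $Q(w') = Q'$, which is precisely the claim.

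Unwinding that argument gives the concrete recipe one should spell out. The Knuth move $Q\to Q'$ exchanges $\ol i$ and $\ol{i+1}$ for some $i$, and the case analysis over the relative rows of $\ol{i-1},\ol i,\ol{i+1},\ol{i+2}$ used in the proof of Lemma~\ref{lem:graph covering} shows that exactly one of $\ol i,\ol{i+1}$ lies in $\tau(Q)$; by Proposition~\ref{prop:descents}, $\tau(Q) = R(w)$, so exactly one of $\ol i,\ol{i+1}$ lies in $R(w)$. Since $w$ is a genuine (total) permutation, $w(i),w(i+1),w(i+2)$ are all defined, so Proposition~\ref{prop:Knuth on descents} yields a unique $w'$ obtained from $w$ by a Knuth move whose right descent set contains precisely the other of $\ol i,\ol{i+1}$. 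By Theorem~\ref{thm:Knuth move action}, $Q(w')$ differs from $Q$ by a Knuth move, and by Proposition~\ref{prop:descents}, $\tau(Q(w')) = R(w')$; the same case analysis then forces this Knuth move on tabloids to be the exchange of $\ol i$ and $\ol{i+1}$, i.e.\ $Q(w') = Q'$.

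The main obstacle I anticipate is precisely this last identification: distinct tabloids can share a $\tau$-invariant, so knowing only that $Q(w')$ is a Knuth-neighbour of $Q$ with $\tau(Q(w')) = R(w')$ does not by itself pin $Q(w')$ down to $Q'$. Overcoming it requires the ``local isomorphism'' half of Lemma~\ref{lem:graph covering} — the matching between the unique Knuth move on permutations supplied by Proposition~\ref{prop:Knuth on descents} and the unique Knuth move on tabloids realizing a prescribed change of $\tau$-invariant — so the proof must genuinely lean on that lemma rather than on Theorem~\ref{thm:Knuth move action} alone. Everything else is bookkeeping, the only point worth flagging being that $\rho$ need not be dominant, which is harmless because passing through $\Psi$ and back through $\Phi$ modifies only $\rho$, leaving $Q$ fixed.
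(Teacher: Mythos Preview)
Your proposal is correct and follows essentially the same approach as the paper: the paper simply asserts that the proposition follows from the proof of Lemma~\ref{lem:graph covering}, and you have unwound that argument in detail, including the additional observation (needed because $\rho$ is not assumed dominant) that $\Phi\circ\Psi$ preserves the $Q$-tabloid.
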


\section{Signs}
\label{sec:signs}

It is well known that recovering the length (i.e., number of inversions) of a finite permutation from its image under the Robinson-Schensted correspondence is hard.  However, Reifegerste showed how to recover the \emph{sign} of a permutation from its image \cite{Reifegerste}.  In this section, we prove an analogous result in the affine case.

Reifegerste's theorem statement requires two definitions.  First, given a standard Young tableau $T$ of size $n$, define the \emph{inversion number} $\inv(T)$ to be the number of pairs $(i, j)$ such that $i < j$ and $i$ appears strictly below $j$ in $T$.  Second, given a partition $\lambda$, define its \emph{inversion number} $\inv(\lambda)$ to be the sum $\lambda_2 + \lambda_4 + \ldots$ of its even-indexed parts.

\begin{thm}[Reifegerste]
\label{thm:reifegerste}
Given a finite permutation $w \in S_n$, let $(P, Q)$ be the pair of standard Young tableaux associated to $w$ by the Robinson-Schensted correspondence, both having shape $\lambda$.  The sign of $w$ is given by
\[
\sgn(w) = (-1)^{\inv(P) + \inv(Q) + \inv(\lambda)}.
\]
\end{thm}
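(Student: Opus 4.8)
The plan is to prove that the quantity
\[
\epsilon(w) := \sgn(w)\cdot(-1)^{\inv(P) + \inv(Q) + \inv(\lambda)}
\]
is identically equal to $1$, where $(P,Q)$ is the Robinson-Schensted image of $w$ and $\lambda$ is their common shape. First I would show that $\epsilon$ is invariant under Knuth moves. A Knuth move multiplies $w$ on the right by an adjacent transposition, so it flips $\sgn(w)$; it is classical (see \cite[Ch.~7 App.~1]{EC2}) that a Knuth move preserves $P$ — hence fixes both $\inv(P)$ and $\inv(\lambda)$ — while it changes $Q$ by exchanging two consecutive entries $j$ and $j+1$ lying in \emph{distinct rows}. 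A short parity count then shows that exchanging two entries $j, j+1$ in distinct rows changes $\inv(Q)$ by exactly one modulo $2$: the only pair whose relative vertical order is affected modulo $2$ is $(j, j+1)$ itself, and it flips precisely because the two cells lie in different rows (pairs involving some third entry $m$ have their two relevant contributions merely permuted by the swap). Thus $\sgn(w)$ and $(-1)^{\inv(Q)}$ change sign together while the rest is fixed, so $\epsilon$ is unchanged.

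Next I would upgrade this to: $\epsilon(w)$ depends only on the shape $\lambda$. By Sch\"utzenberger's theorem \cite{Schutzenberger}, RSK sends $w^{-1}$ to $(Q,P)$; since $\sgn$ and $\inv(\lambda)$ are unaffected, $\epsilon(w) = \epsilon(w^{-1})$. Combining this with the previous step and the fact that $w \mapsto Q(w)$ restricts to a bijection from the Knuth class of $P$ onto the set of standard Young tableaux of shape $\lambda$, the value of $\epsilon$ is constant as $Q$ ranges over all such tableaux with $P$ held fixed; passing through inverses, it is likewise constant as $P$ varies with $Q$ held fixed. Alternating these two kinds of moves connects any two permutations of a fixed shape, so $\epsilon \equiv \epsilon_\lambda$ is a function of $\lambda$ alone.

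Finally I would evaluate $\epsilon_\lambda$ on an involution $w$ of shape $\lambda$ (one exists for every $\lambda \vdash n$: take any standard tableau of that shape as $P = Q$). For an involution $\inv(P) = \inv(Q)$, so $\epsilon_\lambda = \sgn(w)\cdot(-1)^{\inv(\lambda)}$, and $\sgn(w) = (-1)^{(n-\fix(w))/2}$ because the non-fixed points pair into transpositions. By the classical refinement of Sch\"utzenberger's theorem \cite{EC2}, $\fix(w)$ equals the number of odd-length columns of $\lambda$; writing the column lengths as $\mu_1 \geqslant \mu_2 \geqslant \cdots$, this gives $n - \fix(w) = \sum_j \bigl(\mu_j - [\mu_j \text{ odd}]\bigr) = 2\sum_j \lfloor \mu_j/2 \rfloor$, while $\inv(\lambda) = \sum_{i \text{ even}} \lambda_i = \sum_{i \text{ even}} \#\{j : \mu_j \geqslant i\} = \sum_j \lfloor \mu_j/2 \rfloor$. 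Hence $(n-\fix(w))/2 = \inv(\lambda)$ on the nose, so $\epsilon_\lambda = (-1)^{\inv(\lambda)}(-1)^{\inv(\lambda)} = 1$, and therefore $\epsilon(w) = 1$ for every $w$, which is the claim.

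The step I expect to be the main obstacle is the first one: pinning down exactly how a Knuth move acts on the recording tableau — in particular the fact that the two swapped entries land in distinct rows — is where the genuine RSK combinatorics is needed. Everything after that is bookkeeping together with the short parity count and the column-length identity above.
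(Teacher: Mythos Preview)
Your proof is correct. Note, however, that the paper does not give its own proof of this statement: Theorem~\ref{thm:reifegerste} is quoted as Reifegerste's result, and the paper's contribution is the affine generalization Theorem~\ref{thm:main conjecture v2}, which specializes to it when $w\in S_n$ (since then $\rho=0$). Your argument is essentially Reifegerste's original one, which the paper summarizes in the remark following Example~\ref{ex:signs}: show invariance under Knuth moves, then verify on a representative of each Knuth class using Beissinger's involution result. You streamline the base case a bit by using Sch\"utzenberger's $w\mapsto w^{-1}$ symmetry to reduce all the way to a single representative per \emph{shape}, and your column-length identity $(n-\fix(w))/2=\sum_j\lfloor\mu_j/2\rfloor=\inv(\lambda)$ is a clean way to close it out. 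By contrast, the paper's proof of Theorem~\ref{thm:main conjecture v2} proceeds by a direct inversion count through one forward step of AMBC, partitioning inversions into internal and external pieces and tracking their change; this is heavier but has the advantage of working uniformly in the affine setting, where (as the same remark notes) it is not clear what would serve as the base-case representatives for your inductive approach. The point you flag as the main obstacle---that a Knuth move on $w$ acts on $Q$ by swapping two consecutive entries in distinct rows---is exactly the finite specialization of the paper's Theorem~\ref{thm:Knuth move action}, so within this paper that step is available off the shelf.
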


Define the \emph{length} $\ell(w)$ of a permutation $w$ in $\widetilde{S_n}$ to be the number of \emph{inversions}, that is, translation classes of pairs $(a, b)$ of balls of $w$ with $a$ northeast of $b$.  Define the \emph{sign} $\sgn(w)$ of $w$ to be $(-1)^{\ell(w)}$. If $w$ belongs to the (non-extended) affine Weyl group $\widetilde{S}_n^0$, these definitions agree with the usual Coxeter length and sign of $w$; moreover, they extend naturally to give the sign and the length of any partial permutation.  

To state our generalization of Theorem~\ref{thm:reifegerste}, we need some additional definitions.  
\begin{definition}
\label{def:broken order}
There is a natural cyclic order on $[\ol{n}]$. In this section it is convenient to ``break'' this order into a linear order, as follows:
\[
\ol{1}<\ol{2}<\dots<\ol{n}.
\]
We refer to this total ordering as the \emph{broken order}. (See Section~\ref{sec:symmetries} for more on this symmetry-breaking.)
\end{definition}
\begin{definition}
Given a tabloid $T$, define the \emph{inversion number} $\inv(T)$ to be the number of pairs $(\ol{i}, \ol{j})$ such that $\ol{1} \leqslant \ol{i} < \ol{j} \leqslant \ol{n}$ in the broken order and $\ol{i}$ appears strictly below $\ol{j}$ in $T$. Given a shape $\lambda$ and a weight $\rho\in\Z^{\ell(\lambda)}$, define the \emph{inversion number} $\inv_\lambda(\rho)$ to be the sum of the entries of $\rho$ that correspond to rows of $\lambda$ of odd length. The inversion number $\inv(\lambda)$ of a partition continues to have the same meaning as above.
\end{definition}

\begin{thm}
\label{thm:main conjecture v2}
Given $w \in \widetilde{S_n}$, let $(P, Q, \rho)$ be the triple associated to $w$ by AMBC, with $P$ and $Q$ of shape $\lambda$. The sign of $w$ is given by
\begin{equation}
\label{sign equation}
\sgn(w) \cdot (-1)^{\sum_{i = 1}^n (w(i) - i)} = (-1)^{\inv(P) + \inv(Q) + \inv(\lambda) + \inv_\lambda(\rho)}.
\end{equation}
\end{thm}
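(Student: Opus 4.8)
The plan is to prove this by induction, reducing the general statement to a combination of known base cases via the two operations whose effect on AMBC we already understand: Knuth moves (Theorem~\ref{thm:Knuth move action}) and the symmetry-breaking shifts that translate balls between adjacent rows of a single stream. First I would check both sides of \eqref{sign equation} are ``local'' enough to propagate: the left-hand side is a product of $\sgn(w) = (-1)^{\ell(w)}$ with the parity of $\sum_i (w(i)-i)$; both of these change in a controlled way under an elementary Knuth move, since such a move transposes two consecutive balls and (depending on whether the witness is ``before'' or ``after'' and on whether we wrap around the window) alters $\ell(w)$ by an odd amount and $\sum_i(w(i)-i)$ by $0$ or $\pm n$. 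The right-hand side changes by Theorem~\ref{thm:Knuth move action}: $P$ is fixed so $\inv(P)$ and $\inv(\lambda)$ are fixed; $Q$ changes by a Knuth move, which transposes $\ol{i},\ol{i+1}$ in the broken order and changes $\inv(Q)$ by $\pm 1$ \emph{except} when $\{\ol{i},\ol{i+1}\}=\{\ol{n},\ol{1}\}$, in which case $\inv(Q)$ is unchanged but $\rho$ shifts, changing $\inv_\lambda(\rho)$. The main bookkeeping step is to verify that in every one of these sub-cases the parity change on the two sides agrees.

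Next I would handle the ``vertical'' moves: given a triple $(P,Q,\rho)$, consider replacing it by $(P,Q,\rho')$ where $\rho'$ differs from $\rho$ by adding $1$ to one coordinate and subtracting $1$ from an adjacent coordinate corresponding to a row of the \emph{same} length (an offset-preserving move along a dominant chamber wall), or more simply consider how the various triples with the same $P,Q$ but different dominant $\rho$ are related — by \cite{cpy} these correspond to the same left and right cells, and one passes between consecutive permutations in such a family by Knuth moves, so this is actually subsumed in the previous paragraph via Theorem~\ref{thm:Knuth move action}. What genuinely remains is to pin down a set of base cases from which every triple is reachable by Knuth moves. A convenient choice: permutations whose $P$ and $Q$ tabloids are the reverse row superstandard tabloids of shape $\lambda$ (with some fixed start), together with the ``minimal'' dominant $\rho$. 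For these, the permutation should be explicitly computable — it is essentially a direct sum of cyclic shifts determined by the row lengths of $\lambda$ and the single remaining parameter — so one can compute $\ell(w)$, $\sum_i(w(i)-i)$, and compare directly with $\inv(P)+\inv(Q)+\inv(\lambda)+\inv_\lambda(\rho)$. In the finite case this is exactly (a reformulation of) Reifegerste's Theorem~\ref{thm:reifegerste}, which we may invoke; the genuinely affine content is the dependence on $\rho$, isolated in these base cases to a single scalar.

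The hard part, I expect, is twofold. First, establishing that Knuth moves act transitively enough on $\widetilde{S_n}$ — i.e., that from any permutation one can reach a standard base case by Knuth moves — is precisely the subject of the later sections (\S\ref{sec:charge} and \S\ref{sec:monodromy}), so either this theorem must be deferred until that machinery is available, or one must extract just enough connectivity here; I would structure the argument to use only the statement ``every permutation is Knuth-equivalent to one whose $P$ and $Q$ are reverse row superstandard,'' which is weaker and provable directly by repeatedly applying Proposition~\ref{prop:Knuth on descents} to reduce the $\tau$-invariants of $P$ and $Q$. Second, the sign bookkeeping in the wrap-around sub-case ($\{\ol{i},\ol{i+1}\} = \{\ol{n},\ol{1}\}$) is the delicate one: here $\sum_i(w(i)-i)$ jumps by $\pm n$, contributing $(-1)^n$ to the left side, while on the right $\inv(Q)$ is unchanged but $\inv_\lambda(\rho)$ changes according to whether the rows $k$ and $k'$ receiving $\pm 1$ have odd or even length — one must check that $(-1)^n$ matches the resulting parity change, which will come down to the fact that $\ell(\lambda)$ (the number of rows) and $n$ have the appropriate relationship modulo $2$ once one accounts for which rows are odd, together with how $\ell(w)$ itself changes across the window boundary. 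I would treat this case with an explicit small computation rather than trying to fold it into the generic argument.
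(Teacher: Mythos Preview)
Your approach is genuinely different from the paper's, and the paper itself flags exactly the difficulty you run into. The paper does \emph{not} argue by Knuth-move induction; instead it tracks $\ell(w)$ directly through a single forward step of AMBC. It splits inversions into ``internal'' (both balls in the same zig-zag) and ``external'' ones, computes $\operatorname{int}(w)-\operatorname{int}(\fw{w})$ and $\operatorname{ext}(w)-\operatorname{ext}(\fw{w})$ modulo $2$ in terms of block diagonals and row/column incidence data, and then sums these congruences over all steps of AMBC to assemble $\inv(P)$, $\inv(Q)$, $\inv(\lambda)$, and $\inv_\lambda(\rho)$. No connectivity statement about Knuth classes is needed. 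In a closing remark the authors explicitly observe that Reifegerste's original strategy (Knuth-move induction plus an explicit base case per class) would go through for the inductive step via Theorem~\ref{thm:Knuth move action}, but that ``it is not clear what Knuth class representatives could play the role'' of the base case --- which is precisely where your sketch breaks.

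Concretely, your reduction to a single base case per $(P,Q)$ rests on the sentence ``one passes between consecutive permutations in such a family by Knuth moves,'' and this is false. Section~\ref{sec:monodromy} computes the monodromy group $G_w$ and shows it equals $\Gup_\lambda$, a proper sublattice of the weight lattice whenever $\lambda$ has repeated part sizes; in particular $\rho_i-\rho_{i+1}$ is a Knuth invariant whenever $\lambda_i=\lambda_{i+1}$. So even after normalizing $P$ and $Q$ to reverse row superstandard via left and right Knuth moves, you are left with an infinite family of base cases indexed by cosets of $\Gup_\lambda$, not a ``single remaining parameter.'' Your description of those base permutations as ``direct sums of cyclic shifts'' is also not justified: the backward map $\Psi$ makes the streams interact nontrivially, and there is no obvious closed form for $\ell(w)$ on that family. (A smaller point: $\sum_i(w(i)-i)$ is invariant under \emph{all} Knuth moves, including the one at position $\ol{n}$ --- it is the homomorphism $\widetilde{S_n}\to n\Z$ with kernel $\widetilde{S}_n^0$ --- so your wrap-around bookkeeping is off, though the inductive step can still be salvaged since the changes in $\inv(Q)$ and $\inv_\lambda(\rho)$ do combine to the correct parity.)
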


\begin{rmk}
In the case that $w$ belongs to the (non-extended) affine Weyl group $\widetilde{S}^0_n$, we have that $\sum_{i=1}^n (w(i) - i) = 0$ and that $\sgn(w)$ agrees with the usual notion of sign in a Coxeter group.  In this case, Theorem~\ref{thm:main conjecture v2} becomes
\[
\sgn(w) = (-1)^{\inv(P) + \inv(Q) + \inv(\lambda) + \inv_\lambda(\rho)}.
\]
In the case that $w$ belongs to the finite symmetric group $S_n$, we have that $\rho$ is the all-zero vector, so $\inv_\lambda(\rho) = 0$, and we recover Theorem~\ref{thm:reifegerste}.
\end{rmk}

The proof makes use of a variation of the following result from \cite{cpy}.  Denote by $\sim$ the translation equivalence relation.
\begin{lemma}
\label{lemma:sum of diagonals}
For a cell $c = (c_1, c_2)$, let $\ol{D}(c)$ denote the diagonal of $c$, i.e., $\ol{D}(c) = c_2-c_1$. If $w\in\widetilde{S_n}$ with $\Phi(w) = (P,Q,\rho)$, $P$ and $Q$ having shape $\lambda$, then
\[
\sum_{i = 1}^n (w(i) - i) = 
\sum_{b\in\B_w/\sim} \ol{D}(b) = 
\sum_{i=1}^{\ell(\lambda)} \left(\sum_{c\in\str{\rho_i}{P_i,Q_i}/\sim} \ol{D}(c)\right) = 
n\left(\sum_{i=1}^{\ell(\lambda)}\rho_i\right).
\]
\end{lemma}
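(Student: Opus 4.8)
The plan is to verify the chain of four equalities one link at a time; the first and last are bookkeeping, and the middle one follows by running AMBC. For the first equality, recall that the balls of $w$ are the cells $(i,w(i))$, $i\in\Z$, and that translation by $(n,n)$ carries $(i,w(i))$ to $(i+n,w(i+n))$. Hence $\{(i,w(i)):1\le i\le n\}$ is a set of representatives of $\B_w/\!\sim$, and since $\ol D$ is invariant under translation by $(n,n)$ the sum $\sum_{b\in\B_w/\sim}\ol D(b)$ is well-defined and equals $\sum_{i=1}^n\bigl(w(i)-i\bigr)$.

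For the middle equality I would prove a single-forward-step identity and iterate it. Let $u$ be a partial permutation, number its balls by the southwest channel numbering $d$, and form the corresponding zig-zags. For one zig-zag $Z$: by monotonicity of $d$ its inner corner-posts are pairwise comparable in $\leqslant_{SW}$, so I may list them from southwest to northeast as $(r_1,s_1),\dots,(r_p,s_p)$ with $r_1>\dots>r_p$ and $s_1<\dots<s_p$. I would then check from the definitions that the back corner-post of $Z$ is $(r_p,s_1)$ and that its outer corner-posts are exactly $(r_1,s_2),(r_2,s_3),\dots,(r_{p-1},s_p)$: between two consecutive inner corner-posts the unique zig-zag must run east and then north, since any other monotone route would create a spurious inner corner-post. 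A one-line telescoping,
\[
\sum_{j=1}^p (s_j-r_j)=(s_1-r_p)+\sum_{j=1}^{p-1}(s_{j+1}-r_j),
\]
then says that the sum of the values $\ol D$ on the inner corner-posts of $Z$ equals $\ol D$ of its back corner-post plus the sum of the values $\ol D$ on its outer corner-posts. Summing this over one zig-zag from each translation class — so that the inner corner-posts range over representatives of $\B_u/\!\sim$, the back corner-posts over representatives of $\st(u)/\!\sim$, and the outer corner-posts over representatives of $\B_{\fw u}/\!\sim$ — gives
\[
\sum_{b\in\B_u/\sim}\ol D(b)=\sum_{c\in\st(u)/\sim}\ol D(c)+\sum_{b\in\B_{\fw u}/\sim}\ol D(b).
\]
Applying this at each of the $\ell(\lambda)$ forward steps of AMBC and telescoping (the final step produces the empty partial permutation, contributing $0$) yields $\sum_{b\in\B_w/\sim}\ol D(b)=\sum_{i=1}^{\ell(\lambda)}\sum_{c\in\st_{\rho_i}(Q_i,P_i)/\sim}\ol D(c)$, the stream recorded at step $i$ being the one with altitude $\rho_i$, rows $Q_i$, and columns $P_i$.

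For the last equality I would compute the diagonal sum of a single stream. For a cell $c=(c_1,c_2)$, writing $a,b\in\{1,\dots,n\}$ for the representatives of $\ol{c_1},\ol{c_2}$, one has $c_1-a=n(\ceil{c_1/n}-1)$ and likewise for $c_2$, so $\ol D(c)=c_2-c_1=nD(c)+(b-a)$. Summing over a set of representatives of $\st_r(A,B)/\!\sim$ — whose row-residues are exactly $A$ and column-residues exactly $B$, each once — and using $\sum_c D(c)=r$, one gets
\[
\sum_{c\in\st_r(A,B)/\sim}\ol D(c)=nr+\sum_{\ol b\in B}b-\sum_{\ol a\in A}a.
\]
Now sum over $i=1,\dots,\ell(\lambda)$. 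Because $P$ and $Q$ are tabloids filled with all of $[\ol n]$, the rows $P_1,\dots,P_{\ell(\lambda)}$ partition $[\ol n]$, and so do $Q_1,\dots,Q_{\ell(\lambda)}$; hence each of the two residue sums, summed over $i$, equals $1+2+\dots+n$, and they cancel — in particular the order of the two tabloid arguments is immaterial to the total. Thus $\sum_i\sum_{c\in\st_{\rho_i}(Q_i,P_i)/\sim}\ol D(c)=n\sum_i\rho_i$, which combined with the middle step gives $\sum_{b\in\B_w/\sim}\ol D(b)=n\sum_i\rho_i$; and likewise $\sum_i\sum_{c\in\st_{\rho_i}(P_i,Q_i)/\sim}\ol D(c)=n\sum_i\rho_i$, so all four quantities agree.

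I expect the only genuinely non-formal step to be locating the back and outer corner-posts of a zig-zag relative to its inner corner-posts; once that is pinned down, everything else is telescoping plus a little care in matching translation classes of zig-zags against those of balls and of stream cells.
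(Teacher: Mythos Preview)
Your proof is correct and follows essentially the same route as the paper's. The paper dispatches the second and third equalities by citing \cite[Lem.~10.3]{cpy} and \cite[Lem.~10.4]{cpy}, which are precisely your single-zig-zag telescoping identity and your single-stream diagonal computation; you have simply supplied self-contained proofs of these two facts. Your careful treatment of the $(P_i,Q_i)$ versus $(Q_i,P_i)$ ordering in the stream (observing that the row- and column-residue sums each total $1+\cdots+n$ over all $i$, so the discrepancy cancels) is a nice touch that the paper's terse proof does not make explicit.
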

\begin{proof}
The first equality is the definition of $\ol{D}$.  The second equality is a repeated use of \cite[Lem.~10.3]{cpy}, as described in the paragraph following its proof. The third equality is \cite[Lem.~10.4]{cpy}.
\end{proof}

The same exact arguments can be applied to block diagonals (the reader may wish to recall the relevant definition from Section~\ref{sec:streams}) to get the following result.
\begin{lemma}
\label{lemma:sum of block diagonals}
Suppose $w\in\widetilde{S_n}$ and $\Phi(w) = (P,Q,\rho)$, where $P$ and $Q$ have shape $\lambda$. Then 
\[
D(w) = \sum_{i=1}^{\ell(\lambda)} D(\str{\rho_i}{P_i, Q_i}) = \sum_{i=1}^{\ell(\lambda)} \rho_i.
\]
That is, the sum of block diagonals of translate classes of balls of a permutation is equal to the sum of block diagonals of translate classes of cells of all the streams involved in the application of AMBC to the permutation.  
\end{lemma}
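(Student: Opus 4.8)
The plan is to mirror the proof of Lemma~\ref{lemma:sum of diagonals}, following the hint that ``the same exact arguments'' apply, but keeping track of the block diagonal $D$ in place of the ordinary diagonal $\ol{D}$. First I would isolate the single-zig-zag identity underlying everything: fix a proper numbering of a partial permutation $u$, and for each zig-zag $Z$ (with inner corner-posts, outer corner-posts, and back corner-post as in Figure~\ref{fig:zig-zag posts}), claim that $\sum_{b} D(b) = \sum_{b'} D(b') + D(z)$, where $b$ ranges over the inner corner-posts of $Z$, $b'$ over the outer corner-posts, and $z$ is the back corner-post. This is the analogue of the fact invoked via \cite[Lem.~10.3]{cpy} in the proof of Lemma~\ref{lemma:sum of diagonals}. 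The one point requiring care is that $D$, unlike $\ol{D}$, is not a linear function of the two coordinates, so a ``linearity'' argument does not transfer verbatim; I would handle this by upgrading to a purely combinatorial statement: the multiset of row indices of the inner corner-posts of a zig-zag equals the multiset of row indices of its outer corner-posts together with the row index of its back corner-post, and likewise for column indices. Since $D(c_1, c_2) = \lceil c_2/n\rceil - \lceil c_1/n\rceil$ is a sum of a function of $c_1$ and a function of $c_2$, this multiset identity immediately yields the desired per-zig-zag identity for $D$ (and re-derives the one for $\ol{D}$).

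Granting that, I would sum the per-zig-zag identity over one representative zig-zag from each of the $m(u)$ translation classes attached to the southwest channel numbering of $u$. Since the inner corner-posts over all zig-zags form $\B_u$, the outer corner-posts form $\B_{\fw{u}}$, and the back corner-posts form $\st(u)$, and since $D$ is translation-invariant on cells (immediate from the definition), this gives
\[
D(u) = D(\fw{u}) + D(\st(u)).
\]
Applying this with $u = w, \fw{w}, \fw{\fw{w}}, \dots$ in turn, using that AMBC terminates at the empty partial permutation (for which $D = 0$) after $\ell(\lambda)$ steps and that the stream recorded at the $i$-th step is $\str{\rho_i}{P_i,Q_i}$, telescopes to
\[
D(w) = \sum_{i=1}^{\ell(\lambda)} D\bigl(\str{\rho_i}{P_i,Q_i}\bigr).
\]
The remaining equality $D\bigl(\str{\rho_i}{P_i,Q_i}\bigr) = \rho_i$ is then immediate from the definitions: the altitude of a stream $X$ is $D(X)$, and $\str{r}{A,B}$ is by construction the unique stream of altitude $r$ with the prescribed rows and columns.

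I expect the only real obstacle to be the per-zig-zag identity for $D$, i.e., making sure the combinatorics of corner-posts is handled without quietly using the coordinate-linearity that trivializes the $\ol{D}$ case; the multiset reformulation should be the cleanest fix, and everything after it is a routine repackaging of the argument in \cite{cpy}. As a shortcut that avoids zig-zags altogether, one can instead deduce the first equality directly from Lemma~\ref{lemma:sum of diagonals}: setting $g(x) := x - n\lceil x/n\rceil$, which depends only on $x \bmod n$ and satisfies $g(x+n) = g(x)$, one has $\ol{D}(c) - n\,D(c) = g(c_2) - g(c_1)$, so summing over the balls $(i, w(i))$ for $i = 1, \dots, n$ gives $\sum_{i=1}^{n}\bigl(g(w(i)) - g(i)\bigr) = 0$ because $w$ permutes the residues modulo $n$; hence $\sum_{b \in \B_w/\sim}\ol{D}(b) = n \sum_{b \in \B_w/\sim} D(b)$, and dividing the conclusion of Lemma~\ref{lemma:sum of diagonals} by $n$ yields $D(w) = \sum_i \rho_i$.
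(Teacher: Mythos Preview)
Your proposal is correct and takes essentially the same approach as the paper, which offers no proof beyond the sentence ``the same exact arguments can be applied to block diagonals.'' Your Approach~1 is precisely that fleshing-out, and your multiset observation (that the row indices of the inner corner-posts of a zig-zag coincide with those of its outer corner-posts together with its back corner-post, and likewise for columns) is exactly the right way to justify why the per-zig-zag identity survives the passage from $\ol{D}$ to $D$ despite $D$ not being coordinate-linear.

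Your Approach~2 is a genuine alternative not in the paper: rather than rerunning the zig-zag bookkeeping, you observe that $\ol{D}(c) - nD(c)$ depends only on the residues of the coordinates and hence telescopes to zero over $\B_w/\sim$, so $D(w) = \tfrac{1}{n}\sum_i(w(i)-i) = \sum_i \rho_i$ directly from Lemma~\ref{lemma:sum of diagonals}, while the middle term is immediate from the definition of altitude. This is a slicker route to the scalar identity, but it does not yield the step-by-step refinement $D(u) = D(\fw{u}) + D(\st(u))$ that your first approach gives and that is implicitly what the paper means by ``the same arguments.''
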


We now give the proof of the main theorem of this section.  The reader may wish to consult Example~\ref{ex:signs} below while reading the proof.

\begin{proof}[Proof of Theorem \ref{thm:main conjecture v2}]
In order to make an argument by induction, we carefully track how the two sides of \eqref{sign equation} change in the application of a single forward step of AMBC.

A monotone numbering of a partial permutation corresponds to a division of the balls into zig-zags.  This also induces a division of the inversions of the partial permutation into two classes: those that occur between balls in the same zig-zag and those that occur between balls in different zig-zags. We give the corresponding counts names, as follows: for the partial permutation $w$ with numbering $d: \B_w\to\Z$ and corresponding zig-zags $\{Z_i\}_{i \in \Z}$, the \emph{internal inversion number} is 
\[
\operatorname{int}(w) := \sum_{Z / \sim} \abs{\{a, b \in Z\cap \B_w : a \textrm{ is strictly northeast of } b \}},
\]
a sum over translation classes of zig-zags, and the \emph{external inversion number} is 
\[
\operatorname{ext}(w) := \sum_{(Z, Z') / \sim} \abs{\{a\in Z\cap \B_w, b \in Z'\cap \B_w : a \textrm{ is strictly northeast of } b \}},
\]
a sum over translation classes of pairs of zig-zags.

Fix a partial permutation $w$.  The balls of $w$ will be numbered by its southwest channel numbering $d$.  Let $w' = \fw{w}$ be the partial permutation that results from a single forward step of AMBC.  Let $d'$ be the numbering of balls of $w'$ induced from this step, i.e., the balls of $w'$ numbered $i$ are precisely the ones contained in $Z_i$.  (This numbering is monotone by Proposition~\ref{prop:zig-zags}(a).)  We count internal and external inversions of $w$ relative to $d$, and of $w'$ relative to $d'$.

First, we consider the internal inversions of $w$.  Choose a zig-zag $Z$ of $w$, and suppose it includes $k$ balls in $\B_w$.  Any pair of these balls forms an inversion.  Thus $Z$ contributes $\binom{k}{2}$ internal inversions to $w$.  Obviously $Z$ includes exactly $k - 1$ balls in $\B_{w'}$, and so the number of inversions of $w'$ in $Z$ is $\binom{k - 1}{2}$. The difference between these two numbers is $k - 1 = \abs{Z\cap\B_w} - 1$.  Let $\lambda$ be the shape of $w$.  The number of translation classes of balls in $w$ is $\abs{\lambda}$ and the number of translation classes of zig-zags is $\lambda_1$, so
\begin{align}
\operatorname{int}(w) & = \sum_{Z / \sim} \binom{\abs{Z\cap\B_w}}{2} \notag\\
                      & = \sum_{Z / \sim} (\abs{Z\cap\B_w} - 1) + \sum_{Z / \sim} \binom{\abs{Z \cap\B_{w'}}}{2} \notag\\
                      & = \abs{\lambda} - \lambda_1 + \operatorname{int}(w')\notag\\
                      & = \operatorname{int}(w') + \sum_{i>1}\lambda_i. \label{eq:internal}
\end{align}

Next, we consider external inversions.  For a pair of integers $i < j$, we partition the collection of inversions of $w$ between the $i$-th and $j$-th zig-zags into four parts. Recall that, by definition, to say that $(a, b)$ is an inversion is to say that $a$ is strictly northeast of $b$, and recall also the notation $a <_{SW} b$ means that $a$ is strictly southwest of $b$.  Define 
\begin{align*}
E^{i,j}_1 & = \{(a, b) \in \B_w^2 : b <_{SW} a, a \in Z_i, b\text{ is the southwest ball of } Z_j \},\\
E^{i,j}_{2} & = \{(a, b) \in \B_w^2 : b <_{SW} a, a \in Z_i, b\text{ is not the southwest ball of } Z_j \},\\
E^{i,j}_{3} & = \{(b, a) \in \B_w^2 : a <_{SW} b, a \in Z_i, b\text{ is the northeast ball of } Z_j \}, \textrm{ and}\\
E^{i,j}_{4} & = \{(b, a) \in \B_w^2 : a <_{SW} b, a \in Z_i, b\text{ is not the northeast ball of } Z_j \}.
\end{align*}
Note that this really is a partition, in that $\bigcup_{k=1}^4 E^{i,j}_k$ is the collection of all inversions between the $i$-th and $j$-th zig-zags and the $E^{i,j}_k$ pairwise do not intersect.

Suppose that $(b', a')$ is an inversion of $w'$ with $a' \in Z_i$, $b' \in Z_j$.  Let $a, b$ be the balls of $w$ in the same rows as $a', b'$, respectively, so $a$ is directly west of $a'$ and $b$ directly west of $b'$.  It follows from Proposition \ref{prop:zig-zags}(a) that $(b, a)$ is an inversion in $w$.  Moreover, $b$ is not the northeast ball of $Z_j$, so $(b,a)\in E^{i,j}_{4}$. Conversely, it follows from Proposition~\ref{prop:zig-zags} that every inversion $(b,a)\in E^{i,j}_{4}$ arises from an inversion of $w'$ in this way.

By the same argument, inversions $(a', b')$ of $w'$ with $a' \in Z_i$, $b' \in Z_j$ are in bijection with $E^{i,j}_{2}$. Thus the number of inversions of $w'$ with one ball in $Z_i$ and the other one in $Z_j$ is 
$\abs{E^{i,j}_{4}} + \abs{E^{i,j}_{2}}$.  Summing this over all translation classes of pairs $(Z_i, Z_j)$ of zig-zags with $i < j$ gives
\begin{equation}
\label{eq:external}
\operatorname{ext}(w') = \sum \left(\abs{E^{i,j}_{4}} + \abs{E^{i,j}_{2}} \right) 
   = \operatorname{ext}(w) - \sum \left(\abs{E^{i,j}_1} + \abs{E^{i,j}_{3}} \right). 
\end{equation}
We turn our attention to the summands in this last expression.

If $a, b$ are balls of $w$ and $d(a) < d(b)$ then, by monotonicity of $d$, $a$ cannot lie southeast of $b$.  Consequently $(a, b)$ is an inversion if and only if $a$ lies to the east of $b$.  When $b$ is the southwest ball of its zig-zag $Z_j$, a ball $a$ lies to the east of $b$ if and only if it lies to the east of the back-corner post $c_j$ of $Z_j$.  Thus, $\abs{E^{i,j}_1}$ is equal to the number of balls in $Z_i\cap\B_w$ lying strictly east of $c_j$.  Similarly, when $b$ is the northeast ball of $Z_j$, $(b, a)$ is an inversion if and only if $a$ lies to the south of $b$, and so $\abs{E^{i,j}_{3}}$ is equal to the number of balls of $Z_i\cap\B_w$ lying strictly south of $c_j$. Also note that $Z_i$ has no balls which share a row or a column with $c_j$ (since $i\neq j$). Combining the above observations, we have that $\abs{E^{i,j}_1} + \abs{E^{i,j}_{3}}$ is congruent modulo $2$ to the number of balls of $Z_i\cap\B_w$ that lie either strictly northeast or strictly southwest of $c_j$.

Now we sum this expression over all translation classes of pairs $(Z_i, Z_j)$ with $i < j$.  We choose as representatives the pairs $i < j$ such that $1 \leqslant j \leqslant m$, where $m$ is the number of translation classes of zig-zags (i.e., the number appearing in Proposition~\ref{prop: period of proper numbering}).  By the preceding paragraph, 
$\displaystyle\sum\left(\abs{E^{i,j}_1} + \abs{E^{i,j}_{3}}\right)$
is congruent modulo $2$ to the number of pairs $(j, b)$ such that $1 \leqslant j \leqslant m$, $b \in \B_w$, $d(b) < j$, and $b$ is either strictly northeast or strictly southwest of $c_j$.  By Proposition~\ref{prop:zig-zags}(b), if $i \geqslant j$ then $Z_i$ contains no balls either strictly northeast or strictly southwest of $c_j$.  Hence we may drop the condition $d(b) < j$ and, using the notation $\equiv$ for congruence modulo $2$, conclude that 
\[
\sum \left(\abs{E^{i,j}_1} + \abs{E^{i,j}_{3}}\right) \equiv \abs{ \{ (j, b) \in [m] \times \B_w \colon b <_{SW} c_j \textrm{ or } b >_{SW} c_j \}}.
\]

Now fix $j$ and a ball $b = (b_1, b_2)$ of $w$.  Let the coordinates of $c_j$ be given by $c_j = (c_{j, 1}, c_{j, 2})$.  We claim that the number of translates of $b$ that lie either strictly northeast or strictly southwest of $c_j$ is equal to $\abs{D(b) - D(c_j) + \fix(b, j)}$, where 
\begin{equation}
\label{eq:def fix}
\fix(b, j) := \left(\begin{cases} 1 & \text{if } \ol{b_1} < \ol{c_{j, 1}} \\ 0 & \text{if } \ol{b_1} \geqslant \ol{c_{j, 1}} \end{cases} \right) - \left(\begin{cases} 1 & \text{if } \ol{b_2} < \ol{c_{j, 2}} \\ 0 &\text{if } \ol{b_2} \geqslant \ol{c_{j, 2}} \end{cases} \right)
\end{equation}
and the inequalities between equivalence classes are in the broken order.  
(This is illustrated in Figure \ref{fig:fixing}: the coordinates of $b$ and $c_j$ are reduced modulo $n$ so that they lie in the same $n \times n$ square, and then the value of $\fix(b, j)$ is determined by the relative positions of the two.) Since each of row and column congruence class of $b$ can be either smaller, equal, or greater than the corresponding class for $c_j$, in principle there are nine cases of this claim to consider. The cases are very similar, so we only discuss two of them in detail.

\begin{figure}
\centering
\begin{minipage}[t]{0.4\textwidth}
\centering
\resizebox{!}{.19\textheight}{\input{figures/fixing.pspdftex}}
\caption{The dependence of $\fix(b, j)$ on the congruence classes of rows and columns of $b$ relative to $c_j$.}
\label{fig:fixing}
\end{minipage}\hfill
\begin{minipage}[t]{0.6\textwidth}
\centering
\resizebox{!}{.19\textheight}{\input{figures/how_many_translates.pspdftex}}
\caption{Counting translates of $b$ lying northeast of $c_j$ when $D(b)-D(c_j) = 1$ (solid), $D(b)-D(c_j) = 2$ (dashed), and $D(b)-D(c_j) = 3$ (dotted).}
\label{fig:how many translates does it take}
\end{minipage}
\end{figure}

First, consider the case when $\ol{b_1} > \ol{c_{j, 1}}$ and $\ol{b_2} > \ol{c_{j, 2}}$ in broken order.  Without loss of generality, assume that $D(b) \geqslant D(c_j)$, so that translates of $b$ can be northeast but not southwest of $c_j$ (see Figure \ref{fig:how many translates does it take} for examples with $D(b) - D(c_j)$ being $1,2$, and $3$).  The translate $b + k(n, n)$ is northeast of $c_j$ if and only if $c_{j, 1} > b_1 + kn$ and $c_{j, 2} < b_2 + kn$ (where now we are comparing integers, not residue classes).  Thus, the number of translates of $b$ northeast of $c_j$ is equal to the number of integer solutions $k$ to the inequalities
\[
\frac{c_{j, 1} - b_1}{n} > k > \frac{c_{j, 2} - b_2}{n}.
\]
Since $\ol{b_1} > \ol{c_{j, 1}}$, the largest integer smaller than $(c_{j, 1} - b_1)/n$ is $\ceil{c_{j, 1}/n} - \ceil{b_1/n} - 1$, while since $\ol{b_2} > \ol{c_{j, 2}}$ the smallest integer larger than $(c_{j, 2} - b_2)/n$ is $\ceil{c_{j, 2}/n} - \ceil{b_2/n}$; thus, the displayed inequalities are equivalent to
\[
\ceil{\frac{c_{j, 1}}{n}} - \ceil{\frac{b_1}{n}} - 1 \geqslant k \geqslant \ceil{\frac{c_{j, 2}}{n}} - \ceil{\frac{b_2}{n}}.
\]
The number of solutions $k$ is
\[
\left(\ceil{\frac{c_{j, 1}}{n}} - \ceil{\frac{b_1}{n}} - 1\right) - \left(\ceil{\frac{c_{j, 2}}{n}} - \ceil{\frac{b_2}{n}}\right) + 1 = D(b) - D(c_j) = \abs{D(b) - D(c_j) + 0}.
\]
Finally, in this case $\fix(b, c_j) = 0$, as needed.

Second, consider the case that $\ol{b_1} < \ol{c_{j, 1}}$ in broken order and $\ol{b_2} = \ol{c_{j, 2}}$.  Since $\ol{b_2} = \ol{c_{j, 2}}$, the ball $b$ is a translate of the ball in the same column as $c_j$.  By the definition of AMBC, this ball lies directly south of $c_j$, so necessarily $D(b) < D(c_j)$ and we seek translates of $b$ that are strictly southwest of $c_j$.  By the same analysis as in the previous case, these translates correspond to integer solutions of   
\[
\frac{c_{j, 1} - b_1}{n} < k < \frac{c_{j, 2} - b_2}{n}.
\]
By the hypotheses on the relative values of the $b_i$ and $c_{j, i}$, this is equivalent to
\[
\ceil{\frac{c_{j, 1}}{n}} - \ceil{\frac{b_1}{n} } + 1 \leqslant k \leqslant \ceil{\frac{c_{j, 2}}{n}} - \ceil{\frac{b_2}{n}} - 1.
\]
Thus in this case the number of solutions is $D(c_j) - D(b) - 1 = \abs{D(b) - D(c_j) + 1}$.  Since $\fix(b, c_j) = 1$, this gives the desired result.

The other seven cases are extremely similar to these two.

We now plug the claim in to \eqref{eq:external}.  Since $\abs{x} \equiv x\equiv -x$, we can drop absolute values and extra negative signs to get
\begin{align*}
\operatorname{ext}(w) - \operatorname{ext}(\fw{w})
 & \equiv  \sum_{b \in (\B_w/\sim)}  \sum_{j=1}^m \abs{D(b) - D(c_j) + \fix(b, j)}\\ 
 & =       \lambda_1 \cdot \sum_{b \in (\B_w/\sim)}  D(b) + \abs{\lambda} \cdot \sum_{j=1}^m D(c_j)  + \sum_{b \in (\B_w/\sim)}  \left(\sum_{j=1}^m \fix(b, j)\right).
\end{align*}

By Lemma~\ref{lemma:sum of block diagonals}, the first sum is equal to $\sum_i \rho_i$.  The second sum is the definition of the altitude of $\st(w)$, and so is equal to $\rho_1$.  For the third sum, we break it into two pieces as in~\eqref{eq:def fix}, one involving the row indices and one involving the column indices. The sum involving the row indices is equal to the number of pairs of a ball $b$ and a stream cell $c_j$ such that $\ol{b_1} < \ol{c_{j, 1}}$. Note that the numbers $\ol{c_{j, 1}}$ constitute the first row of $Q$, while the other rows of balls will necessarily appear in the lower rows of $Q$. Thus the desired number of pairs is precisely equal to $\inv_1(Q)$, where $\inv_i(T)$ is the number of inversions of a tabloid $T$ such that the top row involved is row $i$. The same analysis applies to the other piece, and so the last sum in the previous displayed equation simplifies to $\inv_1(Q) - \inv_1(P)$. Therefore
\[
\operatorname{ext}(w) - \operatorname{ext}(\fw{w}) \equiv \abs{\lambda}\rho_1  + \lambda_1\sum_{i\geqslant 1}\rho_i + \inv_1(Q) + \inv_1(P).
\]
Combining this with \eqref{eq:internal} yields
\begin{equation}
\label{diffcong}
\ell(w) - \ell(\fw{w})\equiv \sum_{i>1}\lambda_i + \lambda_1\sum_{i\geqslant 1}\rho_i 
+ \abs{\lambda}\rho_1  + \inv_1(Q) + \inv_1(P).
\end{equation}

Now suppose $w\in\widetilde{S_n}$. The congruence \eqref{diffcong} holds at each step of AMBC, with appropriate adjustments to the indexing (i.e., the first row of $P(\fw{w})$ should be numbered $2$, not $1$); after the last step of AMBC, we are left with the empty permutation, which has $0$ inversions. Thus the sum of the left side of \eqref{diffcong} over all the steps gives $\ell(w)$. By Lemma \ref{lemma:sum of diagonals}, we have $n\sum_{i\geqslant 1}\rho_i = \sum_{i=1}^n (w(i) - i)$, and so
\begin{align*}
 \ell(w) & \equiv 
    \sum_{j=1}^{\ell(\lambda)}\left(\sum_{i>j}\lambda_i + \lambda_j\sum_{i\geqslant j}\rho_i + \rho_j\sum_{i\geqslant j}\lambda_i  + \inv_j(Q) + \inv_j(P)\right)\\
	& =  \sum_{i=1}^{\ell(\lambda)} (i-1)\lambda_i + \sum_{j=1}^{\ell(\lambda)}\left(\lambda_j\sum_{i\geqslant j}\rho_i + \rho_j\sum_{i\geqslant j}\lambda_i\right) + \inv(Q) + \inv(P)\\
	& \equiv  
             \inv(\lambda) + \sum_{j=1}^{\ell(\lambda)}\left(\lambda_j\rho_j+\lambda_j\sum_{i\geqslant 1}\rho_i\right) + \inv(Q) + \inv(P)\\
	& = \inv(\lambda) + \sum_{j=1}^{\ell(\lambda)}\lambda_j\rho_j + n\sum_{i\geqslant 1}\rho_i + \inv(Q) + \inv(P)\\
	& \equiv \inv(\lambda) + \inv_\lambda(\rho) + \sum_{i=1}^n (w(i) - i) + \inv(Q) + \inv(P).
\end{align*}
Raising $-1$ to the power of each side yields the desired result. 
\end{proof}

\begin{ex}
\label{ex:signs}
The permutation $w = [7, 2, 4, 1]$ in $\widetilde{S_4}$ is shown in Figure~\ref{fig:inversions}.  For readability, we label inversions using only the column index, so that the seven inversions of $w$ are $(7, 2)$, $(7, 4)$, $(7, 1)$, $(7, 6)$, $(7, 5)$, $(2, 1)$, and $(4, 1)$.
Under AMBC, $w$ corresponds to the triple 
\[
P = \tableau[sY]{\ol{1}, \ol{4} \\ \ol{2} \\ \ol{3}} , \qquad 
Q = \tableau[sY]{\ol{1}, \ol{3} \\ \ol{2} \\ \ol{4}},  \qquad 
\rho = \begin{pmatrix} 0 \\ 0 \\ 1 \end{pmatrix}
,
\]
having $\inv(P) = 2$, $\inv(Q) = 1$, $\inv(\lambda) = 1$, and $\inv_\lambda(\rho) = 1$.  Thus in this case, Theorem~\ref{thm:main conjecture v2} asserts that $(-1)^7 \cdot (-1)^4 = (-1)^5$.
\begin{figure}
\centering
\resizebox{.45\textwidth}{!}{\input{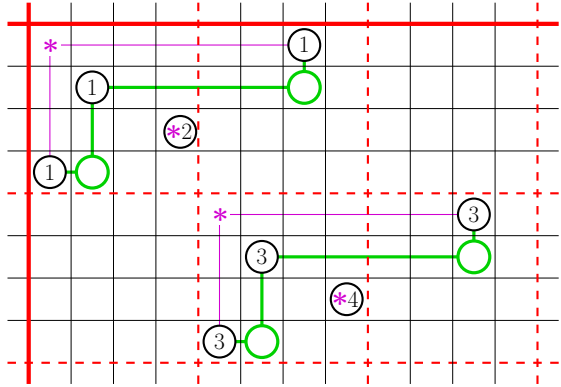}}
\caption{One forward step of AMBC, starting with the permutation $[7, 2, 4, 1]$.}
\label{fig:inversions}
\end{figure}
Of the seven inversions, $(7, 2)$, $(7, 1)$, and $(2, 1)$ are internal, while $w' = \fw{w}$ has a single internal inversion $(7, 2)$; the difference $3 - 1$ is equal to $|\lambda| - \lambda_1 = 2$.  The four external inversions of $w$ are partitioned as follows: $E_{1}^{1,2} = \{(7, 4)\}$, $E_2^{1, 3} = \{(7, 6)\}$, $E_1^{1, 3} = \{(7, 5)\}$, and $E_3^{1, 2} = \{(4, 1)\}$.  The inversion $(7, 6)$ corresponds to the unique external inversion $(7, 6)$ in $w'$.  Finally, we note that $\fix(b, j) = 0$ except for the case $b = (4, 1)$ and $j = 2$ (with $c_2 = (3, 4)$).
\end{ex}

\begin{rmk}
Reifegerste proves Theorem~\ref{thm:reifegerste} by induction, using a result of Beissinger \cite{Beissinger} to show that it holds for an element in each Knuth equivalence class and then proving that its validity is preserved under Knuth moves.  The second half of this argument (the inductive step) is straightforward using Theorem~\ref{thm:Knuth move action}; however, it is not clear what Knuth class representatives could play the role of Beissinger and Reifegerste's involutions for the base case.
\end{rmk}

\section{Charge}
\label{sec:charge intro}

The purpose of this section is to introduce a statistic on tabloids we call \emph{local charge}. Charge (originally defined in \cite{ls}; see also \cite{manivel} for an exposition) is a classical statistic for tableaux, which generalizes naturally to tabloids. What is most useful to us is a ``local'' version that depends on a pair of adjacent rows, rather than the entire tabloid, and in this section we only define it in this context. This statistic arises in two ways in our theory. First, the instance we deal with in this section, is that it plays a crucial role in understanding the offset constants the the definition of dominance (see Proposition~\ref{prop:inverses} and Remark~\ref{rmk:not obvious}).  This is captured in Theorem~\ref{thm:dominance constants}, the main result of this section.  Second, as will be described in Section \ref{sec:charge}, it appears in the description of the connected components of the KL DEG $\A_\lambda$.

\subsection{Definitions and basic properties}
\begin{defn}
Suppose $T$ is a tabloid of shape $\langle m, m \rangle$. An \emph{activation ordering} for $T$ is a bijection $o : [m] \to T_1 \subsetneq [\ol{n}]$. We think of activation orderings as linear orderings of the entries of the top row of $T$. The \emph{standard activation ordering} is the activation ordering in which the entries of the top row are arranged to increase in the broken order (see Definition~\ref{def:broken order}). 
\end{defn}
Whenever an activation ordering is needed but not specified, we assume that the standard activation ordering is used.

\begin{defn}
Suppose $T$ is a tabloid whose $k$-th and $(k+1)$-st rows have the same size $m$, and $o$ is an activation ordering for $T_{k,k+1}$ (the tabloid consisting of the $k$-th and $(k+1)$-st rows of $T$). Then the \emph{charge matching} between rows $k$ and $k+1$ of $T$ with ordering $o$ is the matching of entries in row $k$ with entries in row $k + 1$ defined as follows. Suppose
\begin{equation}
\label{eq:twoRowTabloid}
T_{k,k+1} = \tableau[sY]{\ol{a_1}, \ol{a_2}, \ldots, \ol{a_m} \\ \ol{b_1}, \ol{b_2}, \ldots, \ol{b_m}} \, ,
\end{equation}
where $\ol{a_i} = o(i)$ for all $i$. For $i = 1, 2, \ldots, m$, match $\ol{a_i}$ with the smallest (in broken order) unmatched $\ol{b_j}$ such that $\ol{b_j} > \ol{a_i}$, if such a $\ol{b_j}$ exists; otherwise, match $\ol{a_i}$ to the smallest unmatched $\ol{b_j}$.
\end{defn}

\begin{defn}
\label{def:local charge}
Suppose $T$ is a tabloid whose $k$-th and $(k+1)$-st rows have the same size $m$, and $o$ is an activation ordering of $T_{k,k+1}$. The \emph{local charge in row $k$} of $T$, denoted $\lch^{o}_k(T)$, is the number of elements $\ol{a}$ of the $k$-th row of $T$ such that $\ol{a}$ is matched to $\ol{b}$ and $\ol{a} > \ol{b}$.  We may say that such an entry $\ol{a}$ (or the corresponding $\ol{b}$, or the pair $(\ol{a}, \ol{b})$) \emph{contributes} to the charge.  If $o$ is the standard ordering, we omit $o$ from the notation and write $\lch_k(T)$.
\end{defn}

\begin{ex}
Consider the tabloid
\[
T = \tableau[sY]{\ol{2}, \ol{4}, \ol{6}, \ol{10} \\ \ol{3}, \ol{7}, \ol{8} \\ \ol{1}, \ol{5}, \ol{9}}.
\]
The local charge $\lch_1(T)$ is not defined since the first and second rows of $T$ have different lengths. With the standard activation ordering, the charge matching between the second and third rows is $\ol{3}\leftrightarrow \ol{5}$, $\ol{7}\leftrightarrow \ol{9}$, $\ol{8}\leftrightarrow \ol{1}$. The only pair that contributes to charge is $(\ol{8}, \ol{1})$, so $\lch_2(T) = 1$.

Suppose now we use non-standard activation ordering between the second and third rows, say, $o(1) = \ol{8}$, $o(2) = \ol{7}$, $o(3) = \ol{3}$. Then the charge matching is $\ol{8}\leftrightarrow \ol{9}$, $\ol{7}\leftrightarrow \ol{1}$, $\ol{3}\leftrightarrow \ol{5}$. For this $o$, the only pair that contributes to the charge is $(\ol{7}, \ol{1})$, and so $\lch^o_2(T) = 1$.
\end{ex}

We now show that the coincidence in the example above is not an accident: local charge does not depend on the choice of activation ordering.

\begin{lemma}
\label{lem:local independent of activation}
Suppose $T$ is a tabloid whose $k$-th and $(k+1)$-st rows have the same size $m$.  Then $\lch^o_k(T) = \lch_k(T)$ for every activation ordering $o$.
\end{lemma}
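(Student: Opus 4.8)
The plan is to localise the problem. Since any two activation orderings of $T_{k,k+1}$ differ by a sequence of transpositions of consecutive values, it suffices to show that $\lch^o_k(T)$ is unchanged when $o$ is replaced by the ordering $o'$ agreeing with $o$ except that $o'(i) = o(i+1) =: \ol y$ and $o'(i+1) = o(i) =: \ol x$; without loss of generality $\ol x < \ol y$ in the broken order. When we run the charge-matching algorithm with $o$ and with $o'$, the first $i-1$ steps are identical, because the entry matched to the $j$-th activated element depends only on that element and on the set of still-unmatched entries in row $k+1$; hence after those steps both runs share the same set $R \subseteq [\ol n]$ of unmatched bottom entries, and $|R| = m - i + 1 \geq 2$ since $i \leq m-1$. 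Once both runs have processed $\ol x$ and $\ol y$ the remaining steps are again identical, provided the set of unmatched bottom entries again coincides. So the whole statement follows from a two-element assertion: for any $R \subseteq [\ol n]$ with $|R|\geq 2$ and any $\ol x < \ol y$ in $[\ol n]\smallsetminus R$ (recall that rows $k$ and $k+1$ of a tabloid contain no common residue), matching $\ol x$ then $\ol y$ by the charge rule, and matching $\ol y$ then $\ol x$, use the same pair of bottom entries and produce the same number of matched pairs $(\ol a,\ol b)$ with $\ol a > \ol b$.

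To analyse this, picture $[\ol n]$ arranged on a cycle $\ol 1, \ol 2, \dots, \ol n, \ol 1, \dots$, and for a top entry $\ol z$ and an available set $S$ write $\nu(\ol z,S)$ for the entry assigned to $\ol z$ by the charge rule, i.e.\ the first element of $S$ reached by moving forward (clockwise) around the cycle from just after $\ol z$; the pair $(\ol z,\nu(\ol z,S))$ contributes to charge exactly when this walk passes $\ol n$ before reaching $S$, equivalently when $\nu(\ol z,S) < \ol z$. Set $r_1 = \nu(\ol x,R)$ and $r_2 = \nu(\ol y,R)$. If $r_1 \neq r_2$, deleting $r_1$ cannot disturb the first element of $R$ reached clockwise from $\ol y$ (that element is $r_2$, and the distinct element $r_1$ cannot precede it on that arc), and symmetrically deleting $r_2$ leaves $r_1$ as the first element reached clockwise from $\ol x$; thus both orders yield $\ol x \mapsto r_1$ and $\ol y \mapsto r_2$, hence the same pair $\{r_1,r_2\}$ and the same charge contribution.

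Suppose instead $r_1 = r_2 =: r$. Matching $\ol x$ first gives $\ol x \mapsto r$ and then $\ol y \mapsto s := \nu(\ol y, R\smallsetminus\{r\})$, while matching $\ol y$ first gives $\ol y \mapsto r$ and then $\ol x \mapsto \nu(\ol x, R\smallsetminus\{r\})$; since $r$ is the first element of $R$ reached clockwise from each of $\ol x$ and $\ol y$, both second matches equal the first element of $R$ reached clockwise from just after $r$, so both orders use $\{r,s\}$. It remains to compare, between the two orders, the number of matched pairs whose top entry exceeds its bottom entry: for $(\ol x,\ol y)$ this is the number of the two conditions $\ol x > r$ and $\ol y > s$ that are satisfied, and for $(\ol y,\ol x)$ it is the number of $\ol y > r$ and $\ol x > s$ that are satisfied. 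Using $\ol x < \ol y$ and $r,s \notin \{\ol x,\ol y\}$, a short computation shows these two counts are equal if and only if $r \in (\ol x,\ol y) \Leftrightarrow s \in (\ol x,\ol y)$ (open interval in the broken order). Finally, $r = r_1$ forces $R$ to avoid the clockwise arc from $\ol x$ up to $r$, and $r = r_2$ forces $R$ to avoid the clockwise arc from $\ol y$ up to $r$; intersecting these two constraints and splitting into the three positions $r < \ol x$, $\ol x < r < \ol y$, $r > \ol y$, one finds that $R \subseteq (\ol x,\ol y)$ in the middle case and $R \cap (\ol x,\ol y) = \varnothing$ in the other two. Since $s \in R$, this gives exactly $r \in (\ol x,\ol y) \Leftrightarrow s \in (\ol x,\ol y)$, completing the argument.

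I expect the main obstacle to be purely bookkeeping, and to lie in the last step: although the broken order is linear, the whole construction is cyclic, so the two arcs that $R$ is forced to avoid must be written as cyclic intervals and then intersected correctly, and the three-way case split on the position of $r$ is where one must be careful not to drop a wraparound. Everything else --- the reduction to adjacent transpositions, the common-prefix/common-suffix argument, and the $r_1 \neq r_2$ case --- is routine.
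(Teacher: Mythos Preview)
Your argument is correct and follows the same opening reduction as the paper: pass to an adjacent transposition of the activation order, note that the first $i-1$ matchings are common to both runs, and analyze what happens at the two swapped positions. Where you diverge is in the bookkeeping. The paper fixes the two matches $\ol{b_{i_1}},\ol{b_{i_2}}$ produced by the \emph{first} ordering and then runs through the six admissible linear orders of $\{\ol{a_1},\ol{a_2},\ol{b_{i_1}},\ol{b_{i_2}}\}$, checking in each that the second ordering hits the same pair with the same charge contribution. You instead work with the ``ambient'' first hits $r_1=\nu(\ol x,R)$ and $r_2=\nu(\ol y,R)$ before anything is removed, which buys you the clean dichotomy: if $r_1\neq r_2$ the two orderings literally produce the same matching, and only when $r_1=r_2=r$ is there anything to check, whereupon the cyclic description forces $R\subseteq(\ol x,\ol y)$ or $R\cap(\ol x,\ol y)=\varnothing$ according to the position of $r$, giving the three-case split you describe. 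This is a tidier organization than the paper's six cases, and your worry about the wraparound bookkeeping is well placed but not serious: the two avoided arcs (from $\ol x$ to $r$ and from $\ol y$ to $r$) are nested when $r\notin(\ol x,\ol y)$ and together cover the complement of $(\ol x,\ol y)$ when $r\in(\ol x,\ol y)$, exactly as you need.
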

\begin{proof}
Suppose $T_{k,k+1}$ is as in \eqref{eq:twoRowTabloid} with $o(i) = \ol{a_i}$ for $1\leqslant i\leqslant m$.  Since any permutation is a product of adjacent transpositions, it is sufficient to prove that $\lch^o_k(T) = \lch^{o'}_k(T)$, where $o'$ is the ordering defined by
\begin{align*}
o'(j) & = \ol{a_{j+1}}, \\
o'(j + 1) & = \ol{a_j}, &&\textrm{and} \\
o'(i) & = \ol{a_i} &&\textrm{if } i\neq j \textrm{ or } j+1.
\end{align*}
Furthermore, with this choice, the entries $\ol{a_1}, \ldots, \ol{a_{j - 1}}$ match to the same elements of the second row in the charge matchings with orderings $o, o'$.  Thus, we may as well remove these $j - 1$ entries from both the first and second rows, and consider the case $j = 1$.  As a final simplification, we may assume that $\ol{a_1} < \ol{a_{2}}$: if not, switch the roles of $o$ and $o'$. 

Let $\ol{b_{i_1}}$ and $\ol{b_{i_{2}}}$ be the elements of the second row with which $\ol{a_1}$ and $\ol{a_{2}}$ match in the charge matching with ordering $o$.  We seek to show that in the charge matching with ordering $o'$, $\ol{a_1}$ and $\ol{a_{2}}$ still match (in some order) to  $\ol{b_{i_1}}$ and  $\ol{b_{i_{2}}}$ in a way that preserves the number of contributing pairs.

\emph{A priori}, there are $12$ possible orders for the four values $\ol{a_1} , \ol{a_{2}} , \ol{b_{i_1}} , \ol{b_{i_{2}}}$ in broken order;
however, six of these
are incompatible with the fact that $\ol{a_1}$ matches to $\ol{b_{i_1}}$ in the matching with ordering $o$.  This leaves six cases to check; we present the two most complicated here, and leave the other four to the reader.  


Suppose $\ol{a_1} < \ol{b_{i_1}} < \ol{b_{i_{2}}} < \ol{a_{2}}$.  Since $\ol{a_{2}}$ matches to $\ol{b_{i_{2}}}$ in the matching with ordering $o$, there are no values in $T_{k + 1}$ larger than $\ol{a_{2}}$, and the only value in $T_{k + 1}$ smaller than $\ol{b_{i_{2}}}$ is $\ol{b_{i_1}}$.  It follows that in the matching with ordering $o'$, $\ol{a_{2}}$ matches to $\ol{b_{i_1}}$ and $\ol{a_1}$ matches to $\ol{b_{i_{2}}}$.  Then $\ol{a_1}$ does not contribute to charge in either matching and $\ol{a_{2}}$ contributes to the charge in both matchings.



Suppose $\ol{b_{i_{2}}} < \ol{a_1} < \ol{a_{2}} < \ol{b_{i_1}}$.  Since $\ol{a_{1}}$ matches to $\ol{b_{i_{1}}}$ in the matching with ordering $o$, $\ol{b_{i_{1}}}$ is the smallest value in $T_{k + 1}$ larger than $\ol{a_1}$, and hence is the smallest value in $T_{k + 1}$ larger than $\ol{a_{2}}$.  Moreover, since $\ol{a_{2}}$ matches to $\ol{b_{i_{2}}}$, there can be no other values in $T_{k + 1}$ larger than $\ol{a_{2}}$, and therefore none larger than $\ol{a_1}$; and $\ol{b_{i_{2}}}$ is the smallest value in $T_{k + 1}$.  Thus in the charge matching with ordering $o'$, $\ol{a_{2}}$ matches to $\ol{b_{i_1}}$ and $\ol{a_1}$ matches to $\ol{b_{i_{2}}}$.  Thus $\ol{b_{i_{2}}}$ contributes to the charge in both matchings, while $\ol{b_{i_1}}$ does not contribute to the charge in either.


Since the same two elements in the second row are paired to $\{a_1, a_2\}$ in the matchings corresponding to $o$ and $o'$, it follows that all pairs involving $\ol{a_i}$ for $i > 2$ are also the same in both matchings.  Thus the two local charges are equal, as claimed.
\end{proof}

\subsection{A formula for the offset constants}

Recall the discussion of the offset constants in Section \ref{sec:intro ars}. The goal of this section is to give a formula for these constants in terms of local charge. Before we can do that, we need to precisely define the constants. We use the following convention: whenever we have a stream $S$ and a proper numbering of its cells, we let $S^{(i)}$ denote the cell numbered $i$.  

\begin{propdef}
Suppose $S$ and $T$ are streams of the same density $m$ such that no cell of $S$ shares a row or a column with a cell of $T$, and $S$ is properly numbered.  Then there is a unique proper numbering of $T$ with the following properties:
\begin{enumerate}
\item for all $i$, $S^{(i)}$ is northwest of $T^{(i)}$, and
\item for some $j$, $S^{({j+1})}$ is not northwest of $T^{(j)}$.
\end{enumerate}
This numbering is called the \emph{backward numbering} of $T$ with respect to $S$.
\end{propdef}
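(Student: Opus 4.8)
The plan is to reduce the statement to a one-parameter problem. A proper numbering of the stream $T$, considered on its own, is forced to assign consecutive integers along the bi-infinite chain of cells of $T$ ordered by $\leqslant_{SE}$ (monotonicity and continuity leave no other option), so it is determined by a single integer shift, and -- since $T$ is invariant under translation by $(n,n)$ with $m$ translation classes -- it is semi-periodic with period $m$. I would fix one such numbering once and for all and write $T^{(k)}$ for its level-$k$ cell; then every proper numbering of $T$ is obtained by adding a constant $t\in\Z$, and under the numbering shifted by $t$ the level-$i$ cell is $T^{(i-t)}$. Recall also that the given numbering of $S$ is semi-periodic with period $m$, so $S^{(i+m)} = S^{(i)} + (n,n)$, and likewise $T^{(k+m)} = T^{(k)} + (n,n)$.

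The key auxiliary object is, for each $i$, the integer
\[
K(i) := \min\{k\in\Z : S^{(i)}\text{ is northwest of }T^{(k)}\}.
\]
I would first check this is well defined. Along the chain of $T$ both coordinates of $T^{(k)}$ are strictly increasing in $k$ (consecutive cells of a density-$m$ stream lie in distinct rows and distinct columns and are comparable in $\leqslant_{SE}$) and increase by $(n,n)$ over every block of $m$ steps, so they tend to $\pm\infty$; together with the hypothesis that no cell of $T$ shares a row or a column with a cell of $S$, this shows that $\{k : S^{(i)}\text{ NW of }T^{(k)}\}$ is exactly an up-set $[K(i),\infty)$ for a well-defined finite integer $K(i)$. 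From $S^{(i+m)} = S^{(i)} + (n,n)$ and $T^{(k+m)} = T^{(k)} + (n,n)$ one gets $K(i+m) = K(i) + m$, so the function $i\mapsto i - K(i)$ is periodic with period $m$ and attains a minimum; set $t^* := \min_i\,(i - K(i))$.

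Next I translate the two defining conditions into conditions on $t$. Condition (1), for the numbering shifted by $t$, says $S^{(i)}$ is northwest of $T^{(i-t)}$ for every $i$, i.e. $i - t\geqslant K(i)$ for all $i$, i.e. $t\leqslant t^*$. Condition (2) says that for some $j$, $S^{(j+1)}$ is not northwest of $T^{(j-t)}$, i.e. $j - t < K(j+1)$ for some $j$; writing $i = j+1$ this becomes $t > (i - K(i)) - 1$ for some $i$, i.e. $t > t^* - 1$, i.e. $t\geqslant t^*$. Hence conditions (1) and (2) hold simultaneously exactly when $t = t^*$. For $t = t^*$ condition (1) holds by definition of the minimum, and condition (2) holds with $j = i_0 - 1$ for any $i_0$ attaining the minimum; this establishes both existence and uniqueness, and the backward numbering is the numbering shifted by $t^*$.

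The argument is essentially bookkeeping, so there is no serious obstacle. The two points requiring care are getting the direction of the shift right -- increasing $t$ moves each $T^{(i)}$ to a more northwesterly cell, which makes condition (1) harder and condition (2) easier, so the two conditions squeeze $t$ from opposite sides -- and the verification that $K(i)$ is a well-defined finite integer, which is precisely where the non-sharing hypothesis and the translation-invariance of $S$ and $T$ are used.
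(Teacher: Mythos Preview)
Your proof is correct and takes a genuinely different route from the paper's. The paper does not argue directly: it invokes the explicit construction of the backward numbering from \cite{cpy}, checks via citations to several results there that this construction satisfies (1) and (2), and then shows that any proper $d'$ satisfying (1) and (2) must agree with it. Your argument is self-contained: you parametrize the proper numberings of $T$ by a single integer shift $t$, introduce $K(i)$ as the least $k$ with $S^{(i)}$ northwest of $T^{(k)}$, and observe that condition (1) becomes $t\le t^\ast:=\min_i(i-K(i))$ while condition (2) becomes $t\ge t^\ast$. This extremal characterization is cleaner and does not require knowing the \cite{cpy} construction at all; the paper's approach, on the other hand, has the virtue of establishing that the present axiomatic description agrees with the definition already in use in \cite{cpy}, which is what is needed later.

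One small point worth making explicit: the step ``both coordinates of $T^{(k)}$ are strictly increasing in $k$'' (and hence that a proper numbering of $T$ is unique up to shift) uses that distinct cells of $T$ occupy distinct rows and distinct columns. The paper's bare definition of \emph{stream} does not literally force this, but every stream to which the result is applied is of the form $\st_r(A,B)$ and so has this property; the paper's own proof implicitly relies on the same fact.
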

\begin{proof}
A different definition of the backward numbering is given in \cite{cpy}; the proof proceeds by showing they are equivalent.  The proof may be skipped without harming understanding of the rest of this section.

Let $d$ be the backward numbering of $T$ constructed in \cite[\S4.1]{cpy}.  By construction, $d$ is monotone and it satisfies point (1). As described in \cite[Rem.~13.3]{cpy}, it also satisfies point (2).  Thus such numberings exist.  Let $d'$ be any numbering satisfying (1) and (2).  By \cite[Rem.~13.1]{cpy}, $d'(x) \leqslant d(x)$ for every $x \in T$.  Since $d'$ satisfies (2), it must coincide with $d$ on at least one translation class of cells. However, \cite[Prop.~13.2]{cpy} states that $d$ is, in fact, proper (i.e., $T$ is numbered by consecutive integers). Thus any monotone numbering that coincides with it on a translate class must coincide with it everywhere, so $d' = d$.
\end{proof}

Condition (2) in the definition of the backward numbering may be met in two different ways: $T^{(j)}$ might be either north or west of $S^{(j + 1)}$.  The situation when both occur simultaneously is special.

\begin{defn}[{\cite[Def.~5.4]{cpy}}]
Suppose $S$ and $T$ are streams of the same density $m$, and no cell of $S$ shares a row or a column with a cell of $T$.  Number $S$ with a proper numbering and number $T$ with the backward numbering with respect to $S$. Then $T$ is said to be \emph{concurrent to $S$} if there exist $i$ and $j$ (possibly equal) such that $T^{(i)}$ is north of $S^{({i+1})}$ and $T^{(j)}$ is west of $S^{({j+1})}$.
\end{defn}

\begin{prop}[{\cite[Prop.~5.6]{cpy}}] 
Suppose $A,B,A',B'$ are equinumerous subsets of $[\ol{n}]$ with $A\cap A' = B\cap B' =\varnothing$. Then there exists a unique integer $r$ such that $\st_{r}(A',B')$ is concurrent to $\str{0}{A,B}$. 
\end{prop}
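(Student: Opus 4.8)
The plan is to fix $S = \st_0(A,B)$ together with a chosen proper numbering of it and, for each $r \in \Z$, to analyze the backward numbering $d_r$ of $T_r := \st_r(A',B')$ by decomposing the ``northwest'' relation into its row part and its column part. Since $S$ and $T_r$ share no rows or columns, ``$S^{(i)}$ northwest of $T_r^{(i)}$'' means ``$S^{(i)}$ strictly north of $T_r^{(i)}$'' \emph{and} ``$S^{(i)}$ strictly west of $T_r^{(i)}$''. For a stream $X$ disjoint from $S$ in rows and columns, let the \emph{row-maximal numbering} be the pointwise-largest proper numbering $e$ of $X$ with $S^{(i)}$ strictly north of $X^{(i)}$ for all $i$, and the \emph{column-maximal numbering} be the pointwise-largest proper numbering with $S^{(i)}$ strictly west of $X^{(i)}$ for all $i$. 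Both exist, because the set of numberings satisfying the row (resp.\ column) condition is downward closed under the pointwise order: decreasing a numbering moves each cell to the southeast, which preserves strict row (resp.\ column) inequalities. For $X = T_r$ write these as $d^{\mathrm{row}}$ (which involves only the rows $A$, $A'$, hence is ``the same'' for all $r$) and $d^{\mathrm{col}}_r$ (which involves only $B$, $B'$, $r$). A numbering of $T_r$ satisfies both conditions exactly when it lies below both $d^{\mathrm{row}}$ and $d^{\mathrm{col}}_r$; as these are proper numberings of the same chain, one is a global shift of the other, so their pointwise minimum is again a proper numbering and is the largest numbering below both. By the characterization in the Proposition-Definition above, this minimum is $d_r$. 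Writing $\theta(r) := d^{\mathrm{col}}_r - d^{\mathrm{row}} \in \Z$ for the constant difference, we get $d_r = d^{\mathrm{row}}$ when $\theta(r)\ge 0$ and $d_r = d^{\mathrm{col}}_r$ when $\theta(r)\le 0$.

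Next I would translate concurrency into a statement about $\theta$. By maximality of $d^{\mathrm{row}}$ there is always some $j$ with $(d^{\mathrm{row}})^{(j)}$ strictly north of $S^{(j+1)}$ (else $d^{\mathrm{row}}+1$ would also satisfy the row condition), and likewise some $j$ with $(d^{\mathrm{col}}_r)^{(j)}$ strictly west of $S^{(j+1)}$. If $\theta(r)=0$ then $d_r = d^{\mathrm{row}} = d^{\mathrm{col}}_r$, so both a ``north witness'' and a ``west witness'' exist for $d_r$, i.e.\ $T_r$ is concurrent to $S$. If $\theta(r) > 0$ then $d_r = d^{\mathrm{row}}$, so the north witness survives, but for every $j$ we have $d_r^{(j)} = (d^{\mathrm{col}}_r)^{(j+\theta(r))}$, which lies east of $S^{(j+\theta(r))}$, which in turn lies weakly east of $S^{(j+1)}$ because $j+\theta(r)\ge j+1$; thus no west witness exists and $T_r$ is not concurrent to $S$. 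The case $\theta(r) < 0$ is symmetric with the roles of rows and columns exchanged. Hence $T_r$ is concurrent to $S$ if and only if $\theta(r) = 0$.

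It remains to show $\theta(r) = 0$ for exactly one $r$. The stream $\st_{r+1}(A',B')$ is obtained from $\st_r(A',B')$ by a single rightward shift; transporting numberings along the obvious same-row identification, any numbering satisfying the column condition for $\st_r$ still satisfies it for $\st_{r+1}$, so $d^{\mathrm{col}}_{r+1}\ge d^{\mathrm{col}}_r$ and $\theta$ is weakly increasing, with $\theta(r)\to\pm\infty$ as $r\to\pm\infty$. I expect the main obstacle to be upgrading this to $\theta(r+1) = \theta(r)+1$, which is exactly what forces $\theta$ to be a bijection $\Z\to\Z$ and hence to vanish at a unique $r$. The difficulty is that the same-row identification does not shift columns uniformly: some cells jump to the next block and raise their block diagonal, so the increment of $\theta$ is not visibly $1$. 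The right tool should be a conservation-of-block-diagonals argument in the spirit of Lemma~\ref{lemma:sum of block diagonals} (and the corresponding lemma of \cite{cpy}): the phase of the column-maximal numbering relative to the fixed numbering of $S$ should equal $D(\st_r(A',B')) = r$ plus a correction depending only on $A'$, $B$, $B'$, giving $\theta(r)-\theta(r-1) = 1$. With that established, the unique $r$ with $\theta(r)=0$ is the one sought, and by the second paragraph it is precisely the one for which $\st_r(A',B')$ is concurrent to $\st_0(A,B)$.
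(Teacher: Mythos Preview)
The paper does not prove this proposition; it is quoted from \cite{cpy} and used as a black box (the proof of Theorem~\ref{thm:dominance constants} re-establishes existence by exhibiting the correct $r$ via local charge, but uniqueness is taken for granted). So there is nothing in the present paper to compare your argument against.

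Your decomposition into row-maximal and column-maximal numberings is correct and clean: since $S$ and $T_r$ share no rows or columns, ``northwest'' really does split as a conjunction, the two maximal numberings exist, and their minimum is the backward numbering. The translation of concurrency into $\theta(r)=0$ is also right, and your case analysis for $\theta(r)\gtrless 0$ is sound.

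The only soft spot is the last paragraph. You flag $\theta(r+1)=\theta(r)+1$ as the main obstacle and reach for a block-diagonal conservation law; that is a detour. The increment is visible directly once you parametrize everything by offsets. Index the rows and columns so that the cells of $T_r$ are $(a'_i,\,b'_{i+r})$ for $i\in\Z$ (this is exactly the description ``each row's cell moves to the next available column'' iterated $r$ times). A proper numbering of $T_r$ is determined by an integer offset $s$, sending $(a'_i,b'_{i+r})\mapsto i+s$. The row condition ``$a_j<a'_{j-s}$ for all $j$'' singles out the maximal $s=s^*$ depending only on $(a_i),(a'_i)$; the column condition ``$b_j<b'_{\,j-s+r}$ for all $j$'' singles out the maximal $s-r=t^*$ depending only on $(b_i),(b'_i)$, so $d^{\mathrm{col}}_r$ has offset $t^*+r$. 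Hence
\[
\theta(r)=\bigl(t^*+r\bigr)-s^*,
\]
which is affine in $r$ with slope $1$; in particular $\theta$ is a bijection $\Z\to\Z$ and vanishes at the unique $r=s^*-t^*$. Your worry that ``some cells jump to the next block'' is a red herring: the column-maximal numbering sees only the sequence $(b'_j)$, and the passage $r\mapsto r+1$ shifts the column index $j$ uniformly by one, regardless of block boundaries.
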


\begin{defn}[{\cite[Def.~5.8]{cpy}}] 
\label{def:dominance}
Suppose $P$ and $Q$ are tabloids of the same shape $\lambda$, and that $\lambda_i = \lambda_{i+1}$. The \emph{dominance constant}
$r_{i+1}(P, Q)$ is the unique integer $r$ such that $\str{r}{P_{i+1},Q_{i+1}}$ is concurrent to $\str{0}{P_{i},Q_{i}}$. The weight $\rho$ is \emph{dominant} for $P, Q$ if and only if 
\[
\rho_{i + 1} \geqslant \rho_{i} + r_{i + 1}(P, Q)
\]
for every $i$ such that $\lambda_{i} = \lambda_{i + 1}$.
\end{defn}

\begin{thm}
\label{thm:dominance constants}
Suppose that $P$ and $Q$ are two tabloids of shape $\lambda$ and $\lambda_i = \lambda_{i+1}$.  Then the dominance constants are given by 
\[
r_{i+1}(P,Q) = \lch_i(P) - \lch_i(Q).
\]
\end{thm}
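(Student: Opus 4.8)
The plan is to reduce the statement to a direct comparison of the two streams $\st_r(P_{i+1},Q_{i+1})$ and $\st_0(P_i,Q_i)$ in the plane, and to reinterpret the concurrency condition of Definition~\ref{def:dominance} combinatorially. The key observation is that the altitude-$0$ stream $\st_0(A,B)$ places, in each $n\times n$ block, the cells determined by ``matching'' the rows indexed by $A$ to the columns indexed by $B$ in increasing broken order — that is, the stream structure \emph{is} the charge matching, read geometrically. So the relative vertical positions of corresponding cells of the two streams (which is what concurrency measures) should be governed precisely by how many ``wrap-arounds'' occur in the two charge matchings, i.e.\ by $\lch_i(P)$ and $\lch_i(Q)$.

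Concretely, I would first set up notation: number $\st_0(P_i,Q_i)$ properly as $\{S^{(j)}\}$ and let $\{T^{(j)}_r\}$ denote the proper numbering of $\st_r(P_{i+1},Q_{i+1})$ for varying altitude $r$; increasing $r$ by $1$ shifts the whole stream one ``notch'' east (equivalently, up, after relabeling). The condition ``$T$ concurrent to $S$'' says there is a $j$ with $T^{(j)}$ north of $S^{(j+1)}$ and a $j'$ with $T^{(j')}$ west of $S^{(j'+1)}$; equivalently, the backward numbering is ``just barely'' northwest — raising the altitude by one would break property~(1), and lowering it makes property~(2) fail. The heart of the argument is then a clean formula for the critical altitude. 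I would compute, for a generic altitude $r$, the vertical displacement $D(S^{(j+1)}) - D(T^{(j)})$ (block-diagonal difference) and show that summing an appropriate local indicator over one period gives a count of ``descents'' in each stream that equals the local charge of the corresponding tabloid; the threshold value of $r$ at which concurrency holds is exactly $\lch_i(P) - \lch_i(Q)$. The cleanest route is probably via Lemma~\ref{lemma:sum of block diagonals}-style bookkeeping applied to each stream separately: express $D(\st_0(P_i,Q_i))$ and the analogous quantity for $\st_r(P_{i+1},Q_{i+1})$ in terms of the matchings, and extract from concurrency the equation $0 = r - \lch_i(P) + \lch_i(Q)$ (or its negative, depending on sign conventions).

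Alternatively — and this may be the smoothest presentation — I would prove the theorem by \emph{induction using Theorem~\ref{thm:Knuth move action}}. The idea: Knuth moves on the $Q$-tabloid act transitively enough (together with the known behavior of $\rho$) that one can check the formula $r_{i+1}(P,Q) = \lch_i(P) - \lch_i(Q)$ on a well-chosen base case — say when $P_i = P_{i+1}$ and $Q_i = Q_{i+1}$ are the reverse row superstandard rows, where both the dominance constant and both local charges can be computed by hand — and then verify that both sides change by the same amount under each Knuth move. By Remark~\ref{rmk:change of streams under Knuth}, a Knuth move exchanging $\ol{i}$ and $\ol{i+1}$ (with the $\ol{n},\ol{1}$ case being the interesting one) shifts one stream cell by one unit, changing $\rho$ by $\pm 1$ in the relevant rows; one then checks that $\lch_i(P) - \lch_i(Q)$ changes by the matching amount. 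The bookkeeping here is exactly the ``local'' charge matching at a single exchange of consecutive values, which is easy to analyze directly.

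The main obstacle, in either approach, is the sign/normalization and the treatment of the ``wrap-around'' contribution (the entries that match to something smaller in broken order). In the direct approach this shows up as carefully relating the block-diagonal of a stream cell to whether its entry ``wrapped'' in the matching; in the inductive approach it shows up in the base case and in the $\ol{i}=\ol{n}$ Knuth move, where the stream cell crosses a block boundary and contributes to $D$. I expect to spend most of the effort confirming that the concurrency condition (which mixes a ``north'' witness and a ``west'' witness) pins down the altitude to a single value and that this value is $\lch_i(P) - \lch_i(Q)$ rather than, say, its negative or an off-by-one variant; once the geometry of $\st_0(A,B)$ is identified with the charge matching, the rest is routine. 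I would also double-check consistency with Remark~\ref{rmk:not obvious} (that the operation $(P,Q,\rho)\mapsto(Q,P,(-\rho)')$ is an involution), since Theorem~\ref{thm:dominance constants} together with $r_{i+1}(P,Q) = -r_{i+1}(Q,P)$ is exactly what resolves that oddity — a good sanity check that the signs are right.
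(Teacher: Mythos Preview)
Your first approach has the right high-level shape --- the paper also argues by comparing the two streams cell by cell and showing that the backward numbering of $T=\st_r(P_{i+1},Q_{i+1})$ with respect to $S=\st_0(P_i,Q_i)$ exists precisely at altitude $r=\lch_i(P)-\lch_i(Q)$ --- but the technical mechanism you propose does not work. Concurrency is a \emph{pointwise} condition: property~(1) demands $S^{(j)}$ northwest of $T^{(j)}$ for \emph{every} $j$, and property~(2)/concurrency demands a witness in each direction where this just fails at $j+1$. A sum of block-diagonal differences, or ``Lemma~\ref{lemma:sum of block diagonals}-style bookkeeping,'' cannot detect this; two streams can have the same altitude yet wildly different cell-by-cell relationships, and the quantities $D(\st_0(P_i,Q_i))=0$ and $D(\st_r(P_{i+1},Q_{i+1}))=r$ are just the definitions of altitude, not something one can ``express in terms of matchings.'' What the paper actually does is \emph{explicitly construct} the backward numbering as $T^{(j)}=\bigl(b'_{\,j+\lch_i(Q)},\,b_{\,j+\lch_i(P)}\bigr)$ (indices extended periodically by $a_{j+m}=a_j+n$, etc.) and then verifies (1) and (2) directly. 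The key ingredient you are missing is a separate combinatorial lemma (Lemma~\ref{lemma:charge_decrement}): if $\ell(A,B)$ denotes the maximum length of a system $a_1<b_{i_1},\dots,a_\ell<b_{i_\ell}$ with increasing indices, then replacing the smallest element of $B$ by a new maximum increases $\ell(A,B)$ by exactly one. Iterating this $\lch_i(P)$ times yields $a_j<b_{j+\lch_i(P)}$ for all $j$ (so $S^{(j)}$ is west of $T^{(j)}$); iterating one fewer time produces the ``west'' witness for concurrency; the identical argument on the $Q$ side handles rows. This shift-by-one lemma is the real content, and neither of your sketches supplies a substitute for it.

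Your second approach has a more serious structural gap. Theorem~\ref{thm:Knuth move action} tells you how $\rho$ changes under a Knuth move, but $r_{i+1}(P,Q)$ is defined by concurrency of streams, not by any particular $\rho$; tracking how the concurrency threshold moves when one stream cell shifts (Remark~\ref{rmk:change of streams under Knuth}) is a separate computation you have not carried out, and it is essentially as hard as the direct argument. Moreover, Knuth moves are \emph{not} transitive on tabloids of a given shape --- Section~\ref{sec:charge} shows $\A_\lambda$ has $d_\lambda$ connected components --- so a single base case does not suffice, and the paper's proof that the components are distinguished by $\ch\bmod d_\lambda$ already uses the present theorem (via dominance preservation in Proposition~\ref{prop:monodromies act in blocks}), so there is some risk of circularity depending on how you set things up. The sanity check against Remark~\ref{rmk:not obvious} is a good instinct, but it confirms only the sign, not the argument.
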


Before proceeding with the proof we need a technical lemma.

\begin{lemma}
\label{lemma:charge_decrement}
Fix $m\in\Z^{>0}$. Suppose we have two sets of integers $A = \{a_1, a_2, \ldots, a_m\}$ and $B = \{b_1, b_2, \ldots, b_m\}$ with $1 \leqslant a_1 < \dots < a_m$ and $1\leqslant b_1 < \dots < b_m$. Define $\ell(A,B)$ to be the maximal integer such that there exist indices $1\leqslant i_1 < i_2 < \dots < i_{\ell(A,B)}\leqslant m$ satisfying 
\[
a_1 < b_{i_1}, \quad a_2 < b_{i_2}, \quad \ldots, \quad a_{\ell(A,B)} < b_{i_{\ell(A,B)}}.
\] 
Choose $b' > a_m, b_m$, and let $B' =  \{b_2, \ldots, b_m, b'\}$. If $\ell(A,B) < m$ then $\ell(A, B') = \ell(A, B) + 1$.
\end{lemma}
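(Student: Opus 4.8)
The plan is to work with a \emph{greedy} description of $\ell(A,B)$. Call a strictly increasing sequence of indices $i_1 < \dots < i_p$ with $a_j < b_{i_j}$ for all $j \le p$ a \emph{witness of length $p$} (local terminology for this proof), so that $\ell(A,B)$ is the maximum length of a witness. Build the greedy witness by processing $a_1, a_2, \dots$ in order, letting $i_k$ be the least index exceeding $i_{k-1}$ (with $i_0 := 0$) for which $b_{i_k} > a_k$, and stopping at the first $k$ for which no such index exists. I would first prove, by the standard exchange argument, that the greedy witness is at least as long as any other: if $(i'_1 < \dots < i'_p)$ is a witness, then by induction on $k$ the greedy indices satisfy $i_k \le i'_k$ for $k \le p$, so greedy does not terminate before step $p$. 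Hence the greedy witness realizes $\ell(A,B)$.

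Next I would record the structure of $B'$. Since $b' > b_m$, the increasing enumeration $b'_1 < \dots < b'_m$ of $B'$ satisfies $b'_j = b_{j+1}$ for $j < m$ and $b'_m = b'$; in particular $b'_j > b_j$ for every $j$. Therefore every witness for $(A,B)$ is also a witness for $(A,B')$, so $\ell(A,B') \ge \ell(A,B)$. For the bound $\ell(A,B') \le \ell(A,B) + 1$, take a witness $i_1 < \dots < i_{\ell'}$ for $(A,B')$: if $i_{\ell'} = m$, then $(i_1 + 1, \dots, i_{\ell'-1} + 1)$ is a witness of length $\ell' - 1$ for $(A,B)$ (using $b'_{i_k} = b_{i_k+1}$ for $k < \ell'$), while if $i_{\ell'} < m$ then $(i_1 + 1, \dots, i_{\ell'} + 1)$ is a witness of length $\ell'$ for $(A,B)$; either way $\ell' \le \ell(A,B) + 1$.

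The heart of the argument is the lower bound $\ell(A,B') \ge \ell(A,B) + 1$, and this is precisely where the hypothesis $\ell(A,B) < m$ is used: it ensures that $a_{\ell+1}$ exists (with $\ell := \ell(A,B)$), and then $a_{\ell+1} \le a_m < b' = b'_m$, so $a_{\ell+1}$ can always be paired with the new large element. Let $i_1 < \dots < i_\ell$ be the greedy witness for $(A,B)$. If $i_\ell < m$, I would simply append the index $m$: the pairings $a_k \mapsto b'_{i_k}$ for $k \le \ell$ remain valid since $b'_{i_k} = b_{i_k+1} > b_{i_k} > a_k$, and $a_{\ell+1} \mapsto b'_m$ is valid, giving a witness of length $\ell + 1$. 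If $i_\ell = m$, let $q$ be the least index with $i_q, i_{q+1}, \dots, i_\ell$ consecutive integers, so that $i_j = m - \ell + j$ for $q \le j \le \ell$ and $i_{q-1} < i_q - 1$ when $q > 1$. Then I claim
\[
a_k \mapsto b'_{i_k} \ (k < q), \qquad a_j \mapsto b'_{i_j - 1} = b_{i_j} \ (q \le j \le \ell), \qquad a_{\ell+1} \mapsto b'_m
\]
is a witness of length $\ell + 1$; verifying this reduces to checking that the indices used are strictly increasing (which is where the minimality of $q$, hence $i_{q-1} < i_q - 1$, is needed) and that $i_q - 1 \ge 1$ (which holds since $i_q - 1 = m - \ell + q - 1 \ge m - \ell \ge 1$). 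The degenerate case $\ell = 0$ is handled directly: $a_1 \mapsto b'$ is a witness of length $1$. Combined with the upper bound, this yields $\ell(A,B') = \ell(A,B) + 1$.

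I expect the main obstacle to be the case $i_\ell = m$ of the lower bound. The naive idea---decrease every index of the greedy witness by one and append $m$---fails at the left end (the first index can equal $1$, and $1 - 1 = 0$ is not an allowed index), so one must isolate the maximal block of indices forming a consecutive run ending at $m$ and shift only that block while leaving the earlier indices untouched. Pinning down this combinatorial structure, rather than the elementary arithmetic in $m$, $\ell$, and $q$ that follows, is the delicate part.
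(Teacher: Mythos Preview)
Your proof is correct and follows essentially the same strategy as the paper's: the upper bound is obtained by dropping the pairing with $b'$ and shifting indices, and the lower bound by taking a maximal witness for $(A,B)$, exploiting a gap in its index sequence to make room, and appending the pairing $a_{\ell+1} \mapsto b'$.

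There are two minor differences worth noting. First, the greedy-witness machinery you set up is never actually used in the lower-bound construction; any witness of length $\ell(A,B)$ works, so you could drop that paragraph entirely. Second, you and the paper exploit \emph{different} gaps: you locate the maximal consecutive block ending at $m$ and shift that block's $B'$-indices down by one, whereas the paper locates the maximal consecutive block starting at $1$ (the largest $k$ with $i_k = k$) and shifts everything \emph{after} it down by one. The paper's choice has the small advantage of not needing your case split on whether $i_\ell = m$: the single construction works uniformly. Both arguments are equally short once written out.
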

\begin{proof}
The proof is illustrated in Figure \ref{fig:charge_decrement}.
\begin{figure}
\resizebox{.8\textwidth}{!}{\input{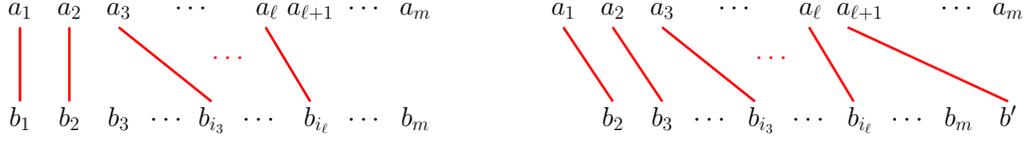}}
\caption{A figure illustrating the proof of Lemma~\ref{lemma:charge_decrement}. The red lines represent inequalities between an element $a_j$ of $A$ and a larger element $B$ (left) or $B'$ (right).  In this example, $i_1 =1$ and $i_2 = 2$, while $i_3>3$.}
\label{fig:charge_decrement}
\end{figure}
Let $k$ be the largest non-negative integer such that $i_k = k$ (and if $i_1 > 1$ then take $k = 0$). For $1\leqslant j\leqslant k$, define $b_{i_j}' = b_{i_j+1} ( = b_{j+1})$. For $k+1\leqslant j\leqslant \ell(A,B)$, define $b_{i_j}' = b_{i_j}$. Define $b_{i_{\ell(A,B)+1}}' = b'$. Since $i_{k + 1} > k + 1 = i_k + 1$, the $b'_{i_j}$ are distinct.  
For $1 \leqslant j \leqslant \ell(A,B)$, we have $a_j < b_{i_j} \leqslant b'_{i_j}$, and also $a_{\ell(A, B) + 1} \leqslant a_m < b_{i_{\ell(A, B)} + 1}'$.  
Thus $\ell(A, B')\geqslant \ell(A,B) + 1$.  

Conversely, if for some integer $k$ there are inequalities 
\[
a_1 < b'_{i_1}, \quad a_2 < b'_{i_2}, \quad \ldots, \quad a_{k} < b'_{i_{k}}
\]
such that the right sides are distinct elements of $B'$, then removing the inequality involving $b'_m = b'$, if it exists, leaves $k - 1$ inequalities between elements of $A$ and elements of $B$, so $\ell(A, B') \leqslant \ell(A, B) + 1$.  This completes the proof.
\end{proof}

We are now prepared for the proof of the main result of this section.

\begin{proof}[Proof of Theorem~\ref{thm:dominance constants}]
Both sides of the desired equality depend only on a pair of adjacent rows of the same size. Hence, in the remainder of the proof we assume that $P$ and $Q$ are tabloids of shape $\langle m, m\rangle$ with entries in $[\ol{n}]$ for some $m \leqslant n/2$.  

Consider $S := \str{0}{P_1,Q_1}$.  Number $S$ properly so that the north-most ball southeast of the origin is numbered $1$, and therefore $S^{(1)},\dots, S^{(m)}$ lie in $[n]\times [n]$. Let $r = \lch_1{P} - \lch_1{Q}$ and $T = \str{r}{P_2,Q_2}$. We must show that $T$ is concurrent to $S$.  First, we give a description of the backward numbering of $T$.

Denote $P_1 = \{\ol{a_1}, \ldots, \ol{a_m}\}$, $P_2 = \{\ol{b_1}, \ldots, \ol{b_m}\}$, $Q_1 = \{\ol{a'_1}, \ldots, \ol{a'_m}\}$, and $Q_2 = \{\ol{b'_1}, \ldots, \ol{b'_m}\}$, with representatives chosen so that $1\leqslant a_1 < a_2 < \dots < a_m \leqslant n$, and similarly for the other three rows.  Extend the indices to all of $\Z$ via
$
a_{i + m} = a_i + n,
$
so that $\bigcup P_1 = \{\dots < a_{-1} < a_0 < a_1 < \cdots \}$ and $a_0 \leqslant 0 < a_1$, and similarly for the other rows.  With this choice, we have $S^{(i)} = (a'_i, a_i)$ for all $i$.  Define a numbering of $T$ as follows: the ball numbered $i$ is 
\[
\left(b'_{i + \lch_1(Q)}, b_{i + \lch_1(P)}\right)
=: T^{(i)}. 
\]
This is clearly a proper numbering of $T$; we claim that this is the backward numbering of $T$ with respect to $S$.  To prove this, we must show that $S^{(i)}$ is northwest of $T^{(i)}$ for every $i$, and that $S^{(j + 1)}$ is not northwest of $T^{(j)}$ for some $j$.

Define the local charge on a pair of equinumerous sets of integers (rather than equivalence classes modulo $n$) in the obvious way, so that 
$\lch(\{a_1,a_2,\ldots, a_m\}, \{b_1, b_2,\ldots, b_m\}) = \lch_1(P)$. We apply Lemma~\ref{lemma:charge_decrement} repeatedly, each time replacing the smallest entry $b_i$ in the second row with $b_{i + m} = b_i + n$; this yields
\[
\lch(\{a_1,a_2,\ldots, a_m\}, \{b_{1 + \lch_1(P)}, \ldots, b_{m + \lch_1(P)}\}) = 0.
\]
Therefore $a_i < b_{i + \lch_1(P)}$ for all $i$, and so $S^{(i)}$ is west of $T^{(i)}$ for all $i$.  Applying the same argument to $Q$ (\emph{mutatis mutandis}) yields that $S^{(i)}$ is north of $T^{(i)}$ for all $i$.

For the second half of the claim, we apply Lemma~\ref{lemma:charge_decrement} one fewer time\footnote{There is a subtlety here in the case that the original local charge is $0$.  In this case, we need a variant of Lemma~\ref{lemma:charge_decrement} in which the \emph{largest} element in the bottom row is replaced by a new \emph{minimal} element; the proof is no harder.} to conclude that
\[
\lch(\{a_1, \ldots, a_m\}, \{b_{\lch_1(P)}, \ldots, b_{m - 1 + \lch_1(P)}\}) = 1.
\]
Therefore there is some $j$ such that $b_{j - 1 + \lch_1(P)} < a_j$, and so $S^{({j})}$ is not west (and \emph{a fortiori} not northwest) of $T^{({j - 1})}$.  This completes the proof that the numbering described is the backward numbering of $T$ with respect to $S$.

Moreover, applying the same argument as in the previous paragraph to $Q$ instead of $P$ yields that there is some $j$ such that $S^{(j)}$ is not north of $T^{({j-1})}$. Together, these two paragraphs establish that $T$ is concurrent to $S$.  
\end{proof}

\subsection{Resolving a conundrum}
As described in Section~\ref{sec:intro ars}, the composite map $\Phi \circ \Psi$ is a surjection from $\Omega$ to $\dom$, and the image of $(P, Q, \rho)$ is a triple $(P, Q, \rho')$ where $\rho'$ is the \emph{dominant representative} of $\rho$.  In this section, we use Theorem~\ref{thm:dominance constants} to resolve a mystery mentioned in Remark~\ref{rmk:not obvious}: why does applying the operation $(P, Q, \rho) \mapsto (Q, P, (-\rho)')$ twice return the original triple?

In this context, it is convenient to define the \emph{symmetrized offset constants}
\[
s_i(P, Q) := \sum_{j=i' + 1}^i r_j(P, Q),
\]
where $i'$ is the smallest integer such that $\lambda_{i'} = \lambda_{i}$.  We may reformulate Definition~\ref{def:dominance} in terms of these constants: $\rho$ is dominant if for each $1\leqslant i < \ell(\lambda)$ one of the following holds:
\begin{itemize}
\item $\lambda_i > \lambda_{i+1}$, or
\item $\lambda_i = \lambda_{i+1}$ and $(\rho - \mathbf{s})_{i+1}\geqslant (\rho - \mathbf{s})_i$,
\end{itemize}
where $\mathbf{s} = (s_i(P,Q))_{1\leqslant i\leqslant \ell(\lambda)}$.  Suppose that $\lambda$ has $k$ distinct part sizes, occurring with multiplicities $m_1, \ldots, m_k$; let $m_i' := m_1 + \ldots + m_i$.  According to \cite[\S6]{cpy}, the dominant representative of $\rho$ can be computed as follows:
\begin{itemize}
\item Let $\mathbf{x} = \rho - \mathbf{s}$.
\item For each $1\leqslant j \leqslant k$, let $(y_{m'_{j - 1} + 1}, y_{m'_{j - 1} + 2}, \dots, y_{m'_j})$ be the increasing rearrangement of $(x_{m'_{j - 1} + 1}, x_{m'_{j - 1} + 2}, \dots, x_{m'_j})$.
\item Let $\rho' = \mathbf{y} + \mathbf{s}$.
\end{itemize}

We now resolve the mystery: by Theorem~\ref{thm:dominance constants}, $r_i(Q, P) = - r_i(P, Q)$ for all $i$, and so also $s_k(Q, P) = -s_k(P, Q)$. If $\lambda_i = \lambda_{i+1} = \ldots = \lambda_j$ is a maximal string of equal part sizes then for any $i\leqslant k\leqslant j$, there exists $i\leqslant \ell\leqslant j$ such that $(-\rho)'_k = -\rho_\ell - s_\ell(P, Q) + s_k(P, Q)$. Therefore $(-(-\rho)')_k = \rho_\ell + s_\ell(P, Q) - s_k(P, Q)$.  To take the dominant representative of this vector, we subtract $s_k(Q, P) = -s_k(P, Q)$, giving a vector whose $k$-th entry is $\rho_\ell +s_\ell(P, Q)$; since $\rho$ is dominant, this is the $\ell$-th largest of these numbers, so after rearranging we add $s_\ell(Q, P) = -s_\ell(P, Q)$, returning the value $\rho_\ell$, as desired. 

\section{Symmetries}
\label{sec:symmetries}
The definition of the extended affine symmetric group (as in Section~\ref{sec:notational preliminaries}) gives no special standing to the interval $[n]$ over any other interval of $n$ consecutive integers.  This indifference can be seen in the expression $\widetilde{S_n} = \widetilde{S}_n^0 \rtimes \Z$ for $\widetilde{S_n}$ as a semidirect product of the non-extended affine Weyl group $\widetilde{S}_n^0$ and the infinite cyclic subgroup generated by the ``shift'' permutation $s=[2,3,\ldots,n+1]$. It may also be seen in the Dynkin diagram of the group $\widetilde{S}_n^0$, which is a cycle on $n$ vertices, each vertex representing a simple transposition: conjugation of $\widetilde{S}_n^0$ by $s$ is equivalent to rotating the Dynkin diagram by one position.

The broken order of Sections~\ref{sec:signs} and~\ref{sec:charge intro} breaks this symmetry by giving special standing to the link from $\ol{n}$ to $\ol{1}$.  This corresponds to other broken symmetries in our setting: the definition of altitude of a stream (and so everything that follows from it) gives special standing to the square $[n] \times [n]$.

The shift permutation $s$ has a natural action on tabloids, sending $\ol{i}$ to $\ol{i + 1}$ for all $i$.
\begin{prop}
\label{prop:symmetries}
Fix a partition $\lambda$.  The action of $s$ is a graph automorphism of the KL DEG $\A_\lambda$.
\end{prop}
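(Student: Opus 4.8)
The plan is to verify that applying the shift $s$ to a tabloid $T$ is compatible, in both directions, with the Knuth-move edge relation on tabloids. Recall that an edge of $\A_\lambda$ is a pair $(T, T')$ obtained by swapping two entries $\ol{i}$ and $\ol{i+1}$ of $T$, with the requirement that $\tau(T)$ and $\tau(T')$ be incomparable under inclusion. The map $s$ relabels each entry $\ol{j} \mapsto \ol{j+1}$, so it visibly takes the tabloid $T$ to another tabloid of the same shape $\lambda$, and it is a bijection on vertices with inverse $s^{-1}$; hence it suffices to check that $s$ sends edges to edges (the same argument applied to $s^{-1}$ gives the reverse containment, so $s$ is a graph automorphism rather than merely an endomorphism).

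The key observation is that $s$ interacts predictably with the $\tau$-invariant. For a tabloid $T$, one has $\tau(s \cdot T) = \{\ol{i+1} : \ol{i} \in \tau(T)\}$: indeed, $\ol{i}$ lies in a strictly higher row of $T$ than $\ol{i+1}$ if and only if $\ol{i+1} = s(\ol{i})$ lies in a strictly higher row of $s\cdot T$ than $\ol{i+2} = s(\ol{i+1})$. In other words, if we write $\sigma$ for the ``add one'' bijection on $[\ol n]$, then $\tau \circ s = \sigma \circ \tau$ as maps from tabloids to subsets of $[\ol n]$. Since $\sigma$ is a bijection of $[\ol n]$, it preserves the inclusion order on subsets: for any subsets $X, Y \subseteq [\ol n]$, $X \subseteq Y$ if and only if $\sigma(X) \subseteq \sigma(Y)$, and likewise $X$ and $Y$ are incomparable if and only if $\sigma(X)$ and $\sigma(Y)$ are incomparable.

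Now suppose $(T, T')$ is an edge of $\A_\lambda$, so $T'$ is obtained from $T$ by exchanging $\ol{i}$ and $\ol{i+1}$ and $\tau(T), \tau(T')$ are incomparable. Then $s\cdot T'$ is obtained from $s\cdot T$ by exchanging $\ol{i+1}$ and $\ol{i+2}$ — that is, by a swap of two cyclically consecutive residues, which is the correct form for a candidate Knuth move on tabloids. Moreover, by the preceding paragraph, $\tau(s\cdot T) = \sigma(\tau(T))$ and $\tau(s\cdot T') = \sigma(\tau(T'))$ are incomparable because $\tau(T)$ and $\tau(T')$ are. Hence $(s\cdot T, s\cdot T')$ is again an edge of $\A_\lambda$. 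Applying the identical reasoning to $s^{-1}$ shows $s^{-1}$ also preserves edges, so $s$ and $s^{-1}$ are mutually inverse graph morphisms; therefore $s$ is a graph automorphism of $\A_\lambda$.

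I do not anticipate a genuine obstacle here; the only point requiring a moment's care is the bookkeeping in the identity $\tau \circ s = \sigma \circ \tau$, where one must be attentive to the cyclic nature of $[\ol n]$ (in particular that the ``swap $\ol i, \ol{i+1}$'' allowed in a Knuth move includes the case $\ol i = \ol n$, $\ol{i+1} = \ol 1$, which is exactly why the symmetry-breaking of the broken order in Sections~\ref{sec:signs} and~\ref{sec:charge intro} is compatible with this rotation). Everything else is formal.
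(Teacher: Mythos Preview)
Your proof is correct and follows the same approach as the paper, which simply states that $s$ commutes with Knuth moves; you have spelled out in detail what the paper leaves as immediate from the definitions.
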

\begin{proof}
It is immediate from the definitions that $s$ commutes with Knuth moves.
\end{proof}

\section{Monodromy}
\label{sec:monodromy}

In this section and the next, we study the Kazhdan-Lusztig molecules of $\widetilde{S_n}$: given a permutation $w$ corresponding to a triple $(P, Q, \rho)$ under AMBC, we specify exactly which triples $(P', Q', \rho')$ correspond to a permutation $w'$ in the same Knuth equivalence class as $w$.  By Theorem~\ref{thm:Knuth move action}, it is necessary that $P' = P$.  In this section, we ask which values of $\rho'$ are possible if also $Q' = Q$; in the next section, we consider which tabloids $Q'$ are possible.  Together, the answers to these questions provide a complete classification.

Our approach is to use Lemma~\ref{lem:graph covering}: if there is a sequence $(w = w_1, w_2, \ldots, w_k = w')$ of permutations in which each consecutive pair is connected by a Knuth move and $Q(w) = Q(w')$, we may project this sequence  via $w_i \mapsto Q(w_i) =: T_i$ to a sequence $L = (T_1,\dots, T_k)$ of tabloids in the KL DEG $\A_\lambda$ for the associated shape $\lambda$.  By Lemma~\ref{lem:graph covering}, consecutive elements of $L$ are connected by Knuth moves, and by hypothesis $T_1 = T_k$, so this sequence is a loop (i.e., a closed walk on the graph).  Moreover, by  Theorem~\ref{thm:Knuth move action}, the \emph{weight change vector} $\rho(w') - \rho(w)$ depends only on the loop $L$.  This suggests the following definition.

\begin{figure}
\centering
\resizebox{.7\textwidth}{!}{\input{figures/block_action_example.pspdftex}}
\caption{A loop in $\A_{\langle 2, 1, 1\rangle}$ and the corresponding weight change vector.}
\label{fig:block action}
\end{figure}
\begin{defn}
Given a permutation $w \in \widetilde{S_n}$, the \emph{monodromy group} $G_w$ based at $w$ is the set 
\[
G_w := \{\rho(w_k) - \rho(w)\}_{(w = w_1,\dots, w_k)},
\]
where the indexing tuple $(w_1,\dots,w_k)$ runs over all sequences of permutations such that $Q(w_k) = Q(w)$ and $w_{i+1}$ is connected by a Knuth move to $w_{i}$ for all $i$.
\end{defn}
Implicit in this definition is the fact that $G_w$ is an (abelian) group under vector addition. This is easy to see using Theorem~\ref{thm:Knuth move action}: the concatenation of two loops in a KL DEG has a lift whose monodromy group element is the sum of the lifts of the two individual loops.

\begin{ex}
\label{ex:monodromies act in blocks}
Figure \ref{fig:block action} shows a loop in the KL DEG of shape $\langle 2, 1, 1\rangle$, beginning and ending at the column superstandard tabloid with start at $1$.  The red vectors give the weight change at each step of the loop. The monodromy element of any lift of the loop is $(2,-1,-1)$. 
\end{ex}

\begin{rmk}
\label{rmk:basepoint independent}
If $Q(w)$ and $Q(w')$ are connected in the KL DEG, then it follows by the standard topological argument (again, using Theorem~\ref{thm:Knuth move action}) that $G_w = G_{w'}$ as subsets of $\Z^{\ell(\lambda)}$. Thus monodromy is base-point independent on connected components of KL DEGs.  (In fact, one consequence of Theorem \ref{thm:monodromy description}, the main result of this section, is that $G_w$ only depends on the shape of $Q(w)$.)
\end{rmk}

The next lemma shows it suffices to understand $G_w$ when $Q(w)$ is column superstandard.

\begin{lemma}
\label{lem:component contains superstandard}
Every connected component of a KL DEG contains reverse row superstandard tabloid and a column superstandard tabloid.
\end{lemma}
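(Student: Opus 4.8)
The plan is to show that from any tabloid one can reach both a reverse row superstandard tabloid and a column superstandard tabloid by a sequence of Knuth moves, using the $\tau$-invariant characterization of Knuth moves together with Proposition~\ref{prop:Knuth on descents} and Proposition~\ref{prop:descents} to do the bookkeeping. By Lemma~\ref{lem:graph covering} it suffices to work entirely with permutations: pick any permutation $w$ of the given shape $\lambda$, and find Knuth moves (on permutations) leading to permutations whose $Q$-tabloids are of the two desired special forms; the corresponding $Q$-tabloids trace a path in $\A_\lambda$ by Lemma~\ref{lem:graph covering}. Actually it is cleaner to argue directly on tabloids: I will show that starting from an arbitrary tabloid $T$ of shape $\lambda$, there is a sequence of Knuth moves on tabloids reaching a target tabloid, by repeatedly applying the ``covering'' statement (the Proposition following Lemma~\ref{lem:graph covering}) which guarantees that any Knuth move available on tabloids can be realized, so I just need to exhibit, for a non-target tabloid, an available Knuth move that makes progress toward the target.

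First I would handle the column superstandard case. The column superstandard tabloid $C$ with start $i$ is characterized by the property that $\ol{j}$ lies in a strictly lower row than $\ol{j+1}$ for every $\ol{j}\neq\ol{i-1}$ — equivalently, $\tau(C) = [\ol n]\setminus\{\ol{i-1}\}$ is of maximal possible size $n-1$ among tabloids with all residues distinct (it cannot be all of $[\ol n]$ for parity/cyclic reasons). So the strategy is a greedy one: given $T$ with $\tau(T)\neq[\ol n]\setminus\{\ol j\}$ for any $j$, there must be some $i$ with $\ol i\notin\tau(T)$ and $\ol{i+1}\notin\tau(T)$ failing to be ``adjacent-maximal'', i.e. we can find $i$ with $\ol i\notin R$ and such that a Knuth move exchanging $\ol i,\ol{i+1}$ (or $\ol{i-1},\ol i$) strictly enlarges $\tau$ in an appropriate partial order; formally, I would use the fact that if exactly one of $\ol i,\ol{i+1}$ lies in $\tau(T)$ then by Proposition~\ref{prop:Knuth on descents} (transported to tabloids via Proposition~\ref{prop:descents} and Lemma~\ref{lem:graph covering}) there is a Knuth move flipping which one is in $\tau$. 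Choosing such moves to push residues into lower rows (never out of lower rows once correctly placed) gives a monovariant — e.g. the sum over $\ol j$ of the row index of $\ol j$, suitably weighted, or more robustly $|\tau(T)|$ itself, which strictly increases until it reaches $n-1$ — so the process terminates at a tabloid with $\tau$ of size $n-1$, which one then checks is forced to be column superstandard (given the shape $\lambda$ and the single missing residue).

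For the reverse row superstandard case, I would instead drive toward the \emph{minimal} possible $\tau$-invariant: the reverse row superstandard tabloid $R^{\mathrm{rss}}$ with start $i$ has $\ol i,\ldots,\ol{i+\lambda_k-1}$ in the last row (in order), $\ol{i+\lambda_k},\ldots$ in the next row up, etc., and its $\tau$-invariant is exactly $\{\ol{i-1},\ \ol{i+\lambda_k-1},\ \ol{i+\lambda_k+\lambda_{k-1}-1},\ \ldots\}$, i.e. $\ell(\lambda)$ elements marking the ``row boundaries.'' This is the minimum possible size of $\tau$ for shape $\lambda$ (each row break forces a descent; one cannot have fewer than $\ell(\lambda)$ descents, with the cyclic wraparound contributing one more than the naive count). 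So I run the analogous greedy argument in the other direction, using Knuth moves that flip an adjacent pair to \emph{remove} an element from $\tau$ whenever possible, with $|\tau(T)|$ as a strictly decreasing monovariant bounded below by $\ell(\lambda)$; at a local minimum, the shape constraint forces $T$ to be reverse row superstandard (with some start $i$).

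The main obstacle, and where I'd spend the most care, is verifying the two ``rigidity'' claims: that a tabloid of shape $\lambda$ whose $\tau$-invariant has the maximal size $n-1$ must be column superstandard, and that one whose $\tau$-invariant has the minimal size $\ell(\lambda)$ must be reverse row superstandard — together with checking that the greedy process genuinely has no local extrema other than these global ones (i.e. that whenever $|\tau(T)|$ is not extremal, some \emph{legal} Knuth move changes it in the desired direction). This requires a careful case analysis of the relative row positions of $\ol{i-1},\ol i,\ol{i+1},\ol{i+2}$, exactly as in the proof of Lemma~\ref{lem:graph covering}; the cyclic wraparound at $\ol n\to\ol 1$ is the delicate point, since it affects the parity count of forced descents and pins down the ``start'' index $i$. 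Everything else is routine: the existence of the needed Knuth moves is Proposition~\ref{prop:Knuth on descents} plus the Proposition after Lemma~\ref{lem:graph covering}, and termination is immediate from the monovariant $|\tau(T)|$ being integer-valued and bounded.
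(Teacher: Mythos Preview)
Your computation of the $\tau$-invariants of the two special tabloids is wrong, and this breaks both halves of the argument. For the reverse row superstandard tabloid $R$ with start $i$, consecutive entries $\ol{j},\ol{j+1}$ either lie in the same row or $\ol{j+1}$ lies in a \emph{higher} row (since smaller entries are lower); the only descent is the wraparound $\ol{i-1}\to\ol{i}$, so $\tau(R)=\{\ol{i-1}\}$ is a singleton, not of size $\ell(\lambda)$. (Your claim that ``each row break forces a descent'' confuses row adjacency with numerical adjacency.) Dually, for the column superstandard tabloid $C$ one gets $|\tau(C)|=n-\lambda_1$, not $n-1$: within each column of length $\lambda'_j$ there are $\lambda'_j-1$ descents, and the transitions between columns (including the wraparound) are non-descents. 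In particular $|\tau|=n-1$ is attainable only when $\lambda=\langle 1^n\rangle$, so your greedy maximization cannot reach a column superstandard tabloid in general; and even at the true maximum $n-\lambda_1$ (if that is the maximum) the tabloid need not be column superstandard.

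If you correct the reverse row superstandard target to $|\tau|=1$, your greedy idea is salvageable and amounts to reproving the Shi combing procedure that the paper simply cites: one must check that whenever $|\tau(T)|>1$ some Knuth move strictly decreases $|\tau|$, which is exactly the content of that procedure. For the column superstandard half, the paper does something quite different and much shorter: having shown every component contains a reverse row superstandard tabloid, it observes that the shift automorphism $s$ of Proposition~\ref{prop:symmetries} acts transitively on each of the two families of special tabloids while preserving Knuth adjacency, so the mere existence of \emph{some} path from a column superstandard tabloid to a reverse row superstandard tabloid (guaranteed by the first half) translates under powers of $s$ to give one in every component.
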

\begin{proof}
The Shi combing procedure (originally from \cite{shi}, and explained in our language in \cite[\S9]{cpy}) produces a sequence of Knuth moves from any permutation to a permutation whose right descent set is a singleton. By Theorem~\ref{thm:Knuth move action} and Proposition~\ref{prop:descents}, this can be used to produce a sequence of Knuth moves from any tabloid to a tabloid whose $\tau$-invariant is a singleton.  The tabloids with a unique descent are exactly the reverse row superstandard tabloids.  This completes the first claim.

By the first claim, each column superstandard tabloid lies in the same connected component as some reverse row superstandard tabloid. The automorphism $s$ of Proposition~\ref{prop:symmetries} acts transitively on the classes of column superstandard and reverse row superstandard tabloids; applying it to our sequence of Knuth move shows that, in fact, every reverse row superstandard tabloid has a column superstandard tabloid in its connected component.  This completes the second claim.
\end{proof}

\subsection{Bounding the monodromy group from above}

In this section, we establish some restrictions on the monodromy group which, in particular, imply that the monodromy group for rectangular shapes in trivial. In the next section, we will prove that these are the only restrictions. 

\begin{prop}
\label{prop:monodromies act in blocks}
Suppose $\lambda$ is a partition and $w$ is a permutation of shape $\lambda$. Suppose $\alpha\in G_w$. If $\lambda_i = \lambda_{i+1}$ then $\alpha_i = \alpha_{i+1}$.
\end{prop}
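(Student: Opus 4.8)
The plan is to exploit the fact, noted just after the definition of $G_w$, that $G_w$ is an abelian group under addition: hence for $\alpha \in G_w$ every integer multiple $t\alpha$ also lies in $G_w$. Each such $t\alpha$ is realized as a weight change along a sequence of Knuth moves, so it arises as $\rho(w_k) - \rho(w)$ for an honest permutation $w_k$; the triple $\Phi(w_k)$ therefore lies in $\dom$, which imposes a constraint that is linear in $t$ on $\alpha_{i+1} - \alpha_i$. Since that constraint must hold for every $t \in \Z$, it forces $\alpha_{i+1} = \alpha_i$.

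In detail, write $(P, Q, \rho) = \Phi(w)$ and fix $\alpha \in G_w$. Given $t \in \Z$, since $t\alpha \in G_w$ there is a sequence $(w = w_1, w_2, \ldots, w_k)$ of permutations, consecutive ones related by Knuth moves, with $Q(w_k) = Q(w)$ and $\rho(w_k) - \rho(w) = t\alpha$. By Theorem~\ref{thm:Knuth move action}, $P(w_k) = P(w) = P$, so $\Phi(w_k) = (P, Q, \rho + t\alpha)$; since $w_k \in \widetilde{S_n}$ and $\Phi$ maps into $\dom$, the weight $\rho + t\alpha$ is dominant for $(P, Q)$. Now suppose $\lambda_i = \lambda_{i+1}$. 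Applying the inequality of Definition~\ref{def:dominance} to $\rho + t\alpha$ yields
\[
\rho_{i+1} + t\,\alpha_{i+1} \;\geqslant\; \rho_i + t\,\alpha_i + r_{i+1}(P, Q),
\]
that is, $t(\alpha_{i+1} - \alpha_i) \geqslant \rho_i - \rho_{i+1} + r_{i+1}(P, Q)$. The right-hand side does not depend on $t$, so letting $t$ range over all of $\Z$ --- in particular over values of both signs with arbitrarily large absolute value --- forces $\alpha_{i+1} - \alpha_i = 0$.

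The argument has essentially no hard step: it reduces to Theorem~\ref{thm:Knuth move action} (to identify $\Phi(w_k)$ and to know that $G_w$ is a group) together with the definition of dominance. The one point worth emphasizing is that the group structure of $G_w$ is genuinely needed: knowing only that $\alpha$ and $-\alpha$ belong to $G_w$ would give just two inequalities, which do not pin down $\alpha_{i+1} - \alpha_i$; it is the availability of all multiples $t\alpha$ --- equivalently, of traversing a loop in the KL DEG arbitrarily many times --- that makes the bound sharp.
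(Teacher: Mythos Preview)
Your proof is correct and follows essentially the same approach as the paper: both use that all integer multiples $t\alpha$ lie in $G_w$ (what the paper calls ``repeating the monodromy or its inverse''), and then observe that dominance of $\rho + t\alpha$ for every $t$ forces $\alpha_i = \alpha_{i+1}$. Your write-up simply makes the inequality from Definition~\ref{def:dominance} explicit where the paper leaves it implicit.
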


For example, in Example \ref{ex:monodromies act in blocks} the coordinates of the monodromy element corresponding to the two rows of length $1$ are equal. 

\begin{proof}
If this were not the case then repeating the monodromy or its inverse would eventually lead to a weight $\rho$ for which the difference $\rho_i - \rho_{i + 1}$ is arbitrarily large.  From Definition~\ref{def:dominance}, such a weight is non-dominant. However, since the image of AMBC is $\dom$, it follows from Theorem~\ref{thm:Knuth move action} that Knuth moves preserve dominance.
\end{proof}

This result has an interesting corollary.
\begin{cor}
If $\lambda$ is a rectangle then for any $w$ of shape $\lambda$, the monodromy group $G_w$ is trivial and the Knuth equivalence class of $w$ is finite.
\end{cor}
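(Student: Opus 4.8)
The plan is to treat the two assertions in turn, proving triviality of $G_w$ first and then deducing finiteness of the Knuth class as a formal consequence.

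For the triviality of $G_w$, I would start from Proposition~\ref{prop:monodromies act in blocks}: since $\lambda$ is a rectangle we have $\lambda_i = \lambda_{i+1}$ for every $1 \leqslant i < \ell(\lambda)$, so any $\alpha \in G_w$ must be constant, say $\alpha = (c, c, \ldots, c)$. To pin down $c$, the key supplementary observation is that a Knuth move preserves the coordinate sum $\sum_i \rho_i$: by Theorem~\ref{thm:Knuth move action}, one Knuth move either leaves $\rho$ unchanged or changes it by subtracting $1$ from one coordinate and adding $1$ to another, and in both cases $\sum_i \rho_i$ is unaffected. Applying this along a loop witnessing a given $\alpha \in G_w$, I get $\sum_i \alpha_i = 0$, hence $\ell(\lambda)\, c = 0$, hence $\alpha = 0$. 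Thus $G_w = \{0\}$; note this conclusion uses only that $w$ has rectangular shape, so it holds verbatim for every permutation of shape $\lambda$.

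For finiteness, I would consider the map $w' \mapsto Q(w')$ from the Knuth equivalence class $\mathcal{K}$ of $w$ to the set of tabloids of shape $\lambda$. By Lemma~\ref{lem:graph covering} together with connectedness of walks, its image lies in the connected component $\mathcal{C}$ of $Q(w)$ in the KL DEG $\A_\lambda$, and $\mathcal{C}$ is finite since there are only finitely many tabloids of shape $\lambda$. The crucial point is that this map is injective on $\mathcal{K}$: if $w', w'' \in \mathcal{K}$ satisfy $Q(w') = Q(w'')$, then $P(w') = P(w'') = P(w)$ by Theorem~\ref{thm:Knuth move action}; moreover $w''$ lies in the Knuth class of $w'$ and has the same $Q$-tabloid, so $\rho(w'') - \rho(w') \in G_{w'}$, which equals $\{0\}$ by the first part applied to $w'$ (which also has shape $\lambda$). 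Hence $\rho(w'') = \rho(w')$, and so $w' = w''$ since $\Phi$ is a bijection. Therefore $|\mathcal{K}| \leqslant |\mathcal{C}| < \infty$.

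I do not expect a genuine obstacle here; the one point that requires care is recognizing that Proposition~\ref{prop:monodromies act in blocks} by itself does not kill the monodromy of a rectangle — it only forces $\alpha$ to be constant — and that the extra input needed to conclude $c = 0$ is the invariance of $\sum_i \rho_i$ under Knuth moves, which is immediate from the explicit description in Theorem~\ref{thm:Knuth move action}. The finiteness statement then follows purely formally from triviality of the monodromy and finiteness of $\A_\lambda$.
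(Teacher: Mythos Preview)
Your proposal is correct and matches the paper's proof essentially line for line: both combine Proposition~\ref{prop:monodromies act in blocks} (forcing $\alpha$ constant on a rectangle) with the observation from Theorem~\ref{thm:Knuth move action} that Knuth moves preserve $\sum_i \rho_i$ to kill $G_w$, and then deduce finiteness from the finiteness of tabloids together with the uniqueness of $\rho$ once $P$ and $Q$ are fixed. Your write-up of the finiteness step is slightly more explicit about why the map $w' \mapsto Q(w')$ is injective, but the content is identical.
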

\begin{proof}
Each Knuth move preserves the sum of the entries of $\rho$.  Thus, every element in $G_w$ has sum of coordinates $0$.  By Proposition~\ref{prop:monodromies act in blocks}, if $\lambda$ has parts of a unique size, then every element in $G_w$ has all coordinates equal.  Putting the two together gives the first half of the claim.  For the second half, there are only finitely many choices of a tabloid $Q$ of a given shape, and by the first half of the claim there is at most one choice of $\rho$ in the equivalence class once $P, Q$ are fixed.
\end{proof}

We will see as part of Theorem \ref{thm:monodromy description} that for any non-rectangular shape, there exists a non-trivial monodromy, so the corresponding Kazhdan-Lusztig molecule is infinite.

\begin{defn}
\label{defn:gup}
Suppose $\lambda = \langle r_1^{m_1}, \ldots, r_k^{m_k} \rangle$ is a partition whose distinct row-lengths are $r_1 > \dots > r_k > 0$, of multiplicities $m_1,\dots, m_k$, and let $m'_i = m_1 + \ldots + m_i$ be the distinct part-sizes of $\lambda'$.  For $1\leqslant i\leqslant k$, let $e_i\in\Z^{\ell(\lambda)}$ be the vector with $1$'s in its first $m'_i$ rows and $0$'s afterward.
Define
\begin{equation}
\label{eq:gup}
\Gup_\lambda = \left\{\sum_{i=1}^k a_i e_i \;\middle|\; a_1,\dots ,a_k\in\Z \text{ and } \sum_{i=1}^k a_i m'_i = 0\right\},
\end{equation}
\end{defn}

\begin{ex}
\label{ex:Gup}
For the shape $\lambda = \langle 3^2, 2^3, 1\rangle$, one has $e_1 = (1, 1, 0, 0, 0, 0)$, $e_2 = (1, 1, 1, 1, 1, 0)$, and $e_3 = (1, 1, 1, 1, 1, 1)$. The space $\Gup_\lambda$ consists of vectors in the $\Z$-span of the $e_i$ whose sum of coordinates is $0$, such as $3e_1 - 6e_2 + 4e_3 = (1, 1, -2, -2, -2, 4)$.
\end{ex}

\begin{prop}
\label{prop:easy containment}
We have $G_w\subseteq \Gup_\lambda$. 
\end{prop}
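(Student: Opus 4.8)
The plan is to show that every element of $G_w$ lies in $\Gup_\lambda$ by checking two things: that $G_w$ sits inside the $\Z$-span of the $e_i$, and that every element of $G_w$ has coordinate sum zero. The second point is immediate: each Knuth move preserves $\sum_j \rho_j$ by Theorem~\ref{thm:Knuth move action} (the weight change is either zero or subtracts $1$ from one row and adds $1$ to another), so any monodromy element $\alpha = \rho(w_k) - \rho(w)$ has $\sum_j \alpha_j = 0$. This is exactly the constraint $\sum_{i=1}^k a_i m_i' = 0$ once we know $\alpha$ is an integer combination of the $e_i$.

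For the first point, the key observation is Proposition~\ref{prop:monodromies act in blocks}: if $\lambda_j = \lambda_{j+1}$ then $\alpha_j = \alpha_{j+1}$ for every $\alpha \in G_w$. In other words, $\alpha$ is constant on each block of equal rows of $\lambda$. The blocks of $\lambda = \langle r_1^{m_1}, \dots, r_k^{m_k}\rangle$ are the index sets $B_i = \{m_{i-1}'+1, \dots, m_i'\}$ for $i = 1, \dots, k$. So $\alpha$ is determined by $k$ values, one per block; write $\alpha = \sum_{i=1}^k c_i \mathbf{1}_{B_i}$ where $\mathbf{1}_{B_i}$ is the indicator vector of block $B_i$. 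Now $\mathbf{1}_{B_i} = e_i - e_{i-1}$ (with $e_0 := 0$), so $\alpha = \sum_{i=1}^k c_i(e_i - e_{i-1})$, which after reindexing (Abel summation) is an integer combination $\sum_{i=1}^k a_i e_i$ with $a_i = c_i - c_{i+1}$ for $i < k$ and $a_k = c_k$. Hence $\alpha$ lies in the $\Z$-span of the $e_i$.

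Combining the two points: $\alpha \in \operatorname{span}_\Z\{e_1, \dots, e_k\}$ and $\sum_j \alpha_j = 0$. Since $\sum_j (e_i)_j = m_i'$, the condition $\sum_j \alpha_j = 0$ translates to $\sum_i a_i m_i' = 0$, which is precisely the defining condition of $\Gup_\lambda$ in \eqref{eq:gup}. Therefore $\alpha \in \Gup_\lambda$, and since $\alpha$ was arbitrary, $G_w \subseteq \Gup_\lambda$. I expect no real obstacle here; the only mild subtlety is making sure the two cited inputs (weight-sum preservation and block-constancy) are applied correctly, and that the change of basis between the "block indicator" description and the "$e_i$" description is handled cleanly — but this is just the elementary fact that $\{e_1, \dots, e_k\}$ and $\{\mathbf{1}_{B_1}, \dots, \mathbf{1}_{B_k}\}$ span the same lattice of block-constant integer vectors.

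\begin{proof}
By Theorem~\ref{thm:Knuth move action}, a single Knuth move changes $\rho$ either not at all or by subtracting $1$ from one entry and adding $1$ to another; in all cases the sum of the entries of $\rho$ is unchanged. Hence for any $\alpha = \rho(w_k) - \rho(w) \in G_w$ we have $\sum_{j=1}^{\ell(\lambda)} \alpha_j = 0$.

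Write $\lambda = \langle r_1^{m_1}, \ldots, r_k^{m_k}\rangle$ with $r_1 > \cdots > r_k > 0$, and set $m_0' = 0$ and $m_i' = m_1 + \cdots + m_i$. The rows of $\lambda$ of a fixed length $r_i$ are exactly those with indices in the block $B_i := \{m_{i-1}' + 1, \ldots, m_i'\}$. By Proposition~\ref{prop:monodromies act in blocks}, every $\alpha \in G_w$ is constant on each block $B_i$; write $\alpha = \sum_{i=1}^k c_i \mathbf{1}_{B_i}$, where $\mathbf{1}_{B_i} \in \Z^{\ell(\lambda)}$ is the indicator vector of $B_i$ and $c_i \in \Z$. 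Since $\mathbf{1}_{B_i} = e_i - e_{i-1}$ (with the convention $e_0 = 0$), we obtain
\[
\alpha = \sum_{i=1}^k c_i(e_i - e_{i-1}) = \sum_{i=1}^k a_i e_i, \qquad \text{where } a_i = \begin{cases} c_i - c_{i+1} & i < k, \\ c_k & i = k. \end{cases}
\]
In particular $\alpha$ lies in the $\Z$-span of $e_1, \ldots, e_k$.

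Finally, the sum of the coordinates of $e_i$ is $m_i'$, so the condition $\sum_{j} \alpha_j = 0$ becomes $\sum_{i=1}^k a_i m_i' = 0$. Comparing with \eqref{eq:gup}, we conclude that $\alpha \in \Gup_\lambda$. Since $\alpha \in G_w$ was arbitrary, $G_w \subseteq \Gup_\lambda$.
\end{proof}
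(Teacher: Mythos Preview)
Your proof is correct and follows essentially the same approach as the paper's: both identify $\Gup_\lambda$ with the lattice of block-constant integer vectors of coordinate sum zero (via $\mathbf{1}_{B_i} = e_i - e_{i-1}$), then invoke Proposition~\ref{prop:monodromies act in blocks} for block-constancy and the weight-sum preservation under Knuth moves for the zero-sum condition. The paper's version is terser, relying on the zero-sum observation already made in the proof of Corollary~6.4, while you spell out both ingredients explicitly.
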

\begin{proof}
Taking $e_0 = 0$ by convention, the vector $e_i - e_{i - 1}$ has $1$'s exactly in those indices for which the corresponding row of $\lambda$ has length $r_i$.  Thus, $\Gup_\lambda$ contains all integer vectors $\alpha$ with entry sum $0$ such that $\alpha_i = \alpha_{i + 1}$ whenever $\lambda_i = \lambda_{i + 1}$.  Then the result follows from Proposition~\ref{prop:monodromies act in blocks}.
\end{proof}
The next section is devoted to proving the reverse inclusion $G_w\supseteq \Gup_\lambda$, and thus equality.

\subsection{Explicit description of monodromy generators}

In this section, we construct an explicit set of monodromies that generate all of $\Gup_\lambda$. We start by describing a $\Z$-basis for the lattice of solutions of a linear Diophantine equation in many variables.

\subsubsection{Linear Diophantine equations in many variables}

Let $m'_1,\dots, m'_k\in\Z$. Consider the lattice (in $\Z^k$) of solutions to the equation
\begin{equation}
\label{eqn:sumzero}
\sum_{i=1}^k m'_ix_i = 0.
\end{equation}
Some solutions of this equation are obvious.

\begin{defn}
\label{def:x^ij}
For $1\leqslant i < j\leqslant k$, define $\mathbf{x}^{(i,j)}\in\Z^k$ by
\[
x^{(i,j)}_l = \begin{cases}
-\frac{m'_j}{\gcd(m'_i, m'_j)}  & \text{if } l = i,\\
\frac{m'_i}{\gcd(m'_i, m'_j)} & \text{if } l = j,\\
0                           & \text{otherwise}.
\end{cases}
\]
\end{defn}

The main result of this section is that these obvious solutions generate the full lattice.  (We suspect that this result is known, but have not been able to find it in the literature.)
\begin{prop}
\label{prop:spanning_set}
The lattice of solutions of \eqref{eqn:sumzero} is the $\Z$-span of $\mathbf{x}^{(i,j)}$.
\end{prop}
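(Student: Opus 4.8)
The plan is to prove both inclusions. One direction is trivial: each $\mathbf{x}^{(i,j)}$ visibly satisfies \eqref{eqn:sumzero}, since $m'_i\cdot(-m'_j/\gcd(m'_i,m'_j)) + m'_j\cdot(m'_i/\gcd(m'_i,m'_j)) = 0$; hence the $\Z$-span of the $\mathbf{x}^{(i,j)}$ is contained in the solution lattice. The work is in the reverse inclusion.

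For the reverse inclusion, I would argue by induction on $k$. The base case $k = 1$ gives $m'_1 x_1 = 0$, whose only solution is $x_1 = 0$ (when $m'_1 \neq 0$) or all of $\Z$ (when $m'_1 = 0$); in the degenerate case one should note that if some $m'_i = 0$ then $\mathbf{x}^{(i,j)} = \pm e_i$ or similar and the corresponding coordinate is free, so the claim still holds — I would either assume the $m'_i$ are nonzero (which is the case in the application, where $m'_i = m_1+\dots+m_i$ is a strictly increasing sequence of positive integers) or handle the zero coordinates separately at the outset. For the inductive step, let $g = \gcd(m'_1, \ldots, m'_{k-1})$ and take a solution $\mathbf{x} = (x_1, \ldots, x_k)$. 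The key move: write $d = \gcd(g, m'_k)$; since $\sum_{i<k} m'_i x_i = -m'_k x_k$ and the left side is divisible by $g$, we get $g \mid m'_k x_k$, hence $(g/d) \mid x_k$. Subtract off a suitable integer combination of the vectors $\mathbf{x}^{(i,k)}$ ($1 \le i \le k-1$) to reduce to the case $x_k = 0$: concretely, I would first show that the vector $\mathbf{y}$ with $y_k = g/d$, $y_i = -(\text{Bézout coefficients for } g \text{ in terms of the } m'_i \text{ for } i<k)\cdot (m'_k/d)$, and $y_j = 0$ otherwise, is a solution lying in the span of the $\mathbf{x}^{(i,k)}$; then $\mathbf{x} - (x_k d/g)\,\mathbf{y}$ has last coordinate $0$ and is a solution of the equation $\sum_{i=1}^{k-1} m'_i x_i = 0$, to which the inductive hypothesis applies, giving an expression in the $\mathbf{x}^{(i,j)}$ with $i<j\le k-1$.

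The one point requiring care — and the likely main obstacle — is verifying that $\mathbf{y}$ (or whatever explicit "bridging" solution one chooses to kill the $x_k$ coordinate) really does lie in the $\Z$-span of $\{\mathbf{x}^{(i,k)} : 1 \le i \le k-1\}$, rather than merely in their $\Q$-span. This is a statement about the $(k-1)$-variable problem $\sum_{i<k} m'_i t_i = m'_k \cdot (g/d)$ having an integer solution that is an integer combination of the $\mathbf{x}^{(i,k)}$; the natural approach is again induction, or a direct Bézout/Smith-normal-form computation showing that the projections of the $\mathbf{x}^{(i,k)}$ to the first $k-1$ coordinates, together with the relations among them, pin down exactly the sublattice $\{\mathbf{t} : \sum m'_i t_i \equiv 0 \bmod (m'_k/d)\}$ intersected with... — in any case this is a finite gcd bookkeeping argument. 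An alternative, cleaner route is to avoid the explicit $\mathbf{y}$ entirely: observe that the solution lattice $\Lambda$ has rank $k-1$ (assuming not all $m'_i$ vanish), so it suffices to show the $\mathbf{x}^{(i,j)}$ span a finite-index sublattice and that this index is $1$; the index can be computed via a determinant/Smith normal form of the $\binom{k}{2}\times k$ matrix of the $\mathbf{x}^{(i,j)}$, and one shows the relevant gcd of maximal minors equals that of $\Lambda$. I would present the inductive argument as the main proof and mention the lattice-index reformulation as a remark, since the induction keeps everything elementary and self-contained.
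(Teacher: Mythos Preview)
Your proposal is correct and follows essentially the same route as the paper: reduce by induction on $k$, observing that $x_k$ must be divisible by $\gcd(m'_1,\ldots,m'_{k-1})/\gcd(m'_1,\ldots,m'_k)$, then subtract an integer combination of the $\mathbf{x}^{(i,k)}$ to kill the last coordinate. The ``one point requiring care'' you flag is exactly what the paper isolates as a separate lemma: the identity
\[
\gcd\!\left(\frac{m'_1}{\gcd(m'_1,m'_k)},\ldots,\frac{m'_{k-1}}{\gcd(m'_{k-1},m'_k)}\right)=\frac{\gcd(m'_1,\ldots,m'_{k-1})}{\gcd(m'_1,\ldots,m'_k)},
\]
proved by comparing $p$-adic valuations; this shows directly that $x_k$ is an integer combination of the $k$-th coordinates $m'_i/\gcd(m'_i,m'_k)$ of the $\mathbf{x}^{(i,k)}$, so no separate construction of your vector $\mathbf{y}$ is needed.
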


\begin{ex}
Consider the equation 
\[
3x+2y+6z = 0.
\]
The three obvious solutions are $(-2, 3, 0)$, $(0, -3, 1)$, and $(-2, 0, 1)$.
The claim is that these three vectors span the (2-dimensional) lattice of solutions. 
\end{ex}

We begin the proof of Proposition~\ref{prop:spanning_set} with a technical lemma.
\begin{lemma}
\label{lemma:diophantine}
For any integers $m'_1, \ldots, m'_j$,  we have
\[
\gcd\left(\frac{m'_1}{\gcd(m'_1, m'_j)}, \dots, \frac{m'_{j-1}}{\gcd(m'_{j-1}, m'_j)}\right) = \frac{\gcd(m'_1,\dots, m'_{j-1})}{\gcd(m'_1,\dots, m'_j)}.
\]
\end{lemma}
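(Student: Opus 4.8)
The plan is to prove Lemma~\ref{lemma:diophantine} by a direct prime-by-prime valuation argument, since both sides are positive integers and a gcd is determined by its prime factorization. Fix a prime $p$, and write $\nu(x)$ for the $p$-adic valuation of an integer $x$. The $p$-adic valuation of the right-hand side is
\[
\min(\nu(m'_1),\dots,\nu(m'_{j-1})) - \min(\nu(m'_1),\dots,\nu(m'_j)),
\]
which equals $0$ if $\nu(m'_j) \geqslant \min_{i<j}\nu(m'_i)$, and equals $\min_{i<j}\nu(m'_i) - \nu(m'_j)$ otherwise. On the left-hand side, the $p$-adic valuation of $m'_i/\gcd(m'_i,m'_j)$ is $\nu(m'_i) - \min(\nu(m'_i),\nu(m'_j)) = \max(0, \nu(m'_i)-\nu(m'_j))$, so the valuation of the left-hand side is $\min_{i<j} \max(0, \nu(m'_i)-\nu(m'_j))$.

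The key step is then the elementary identity $\min_i \max(0, t_i) = \max(0, \min_i t_i)$, valid for any finite collection of integers $t_i$ (with $t_i = \nu(m'_i) - \nu(m'_j)$): if all $t_i \geqslant 0$ both sides agree at $\min_i t_i \geqslant 0$; if some $t_i < 0$ then the left side has a zero term so equals $0$, and $\min_i t_i < 0$ so the right side is also $0$. Applying this gives $\min_{i<j}\max(0,\nu(m'_i)-\nu(m'_j)) = \max(0, \min_{i<j}\nu(m'_i) - \nu(m'_j))$, which matches the valuation of the right-hand side computed above. Since this holds for every prime $p$, the two positive integers have the same factorization and are therefore equal.

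One boundary case deserves a remark: if all the $m'_i$ are zero the statement is vacuous or reduces to $\gcd$ of zeros (conventionally $0$) on both sides; if $m'_j = 0$ then $\gcd(m'_i, m'_j) = |m'_i|$ and the left side is $\gcd(1,\dots,1) = 1$ while the right side is $\gcd(m'_1,\dots,m'_{j-1})/\gcd(m'_1,\dots,m'_{j-1}) = 1$, so equality holds. These degenerate cases are also covered uniformly by the valuation argument once one adopts $\nu(0) = +\infty$ and interprets the min/max accordingly, so I would not belabor them. The main obstacle is simply bookkeeping the three-way interaction of the valuations $\nu(m'_i)$ ($i<j$) against $\nu(m'_j)$ cleanly; the $\min\text{-}\max$ identity is the conceptual crux and makes the whole computation transparent rather than a case explosion.
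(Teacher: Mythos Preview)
Your proof is correct and takes essentially the same approach as the paper's: a prime-by-prime valuation argument. The paper splits explicitly into the two cases $v_j > v_i$ for some $i$ versus $v_j \leqslant \min_i v_i$, whereas you package that same dichotomy into the identity $\min_i \max(0,t_i) = \max(0,\min_i t_i)$; the underlying reasoning is identical.
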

\begin{proof}
Consider a prime $p$. For each $1\leqslant i \leqslant j$, denote by $v_i$ the maximal power of $p$ dividing $m'_i$.  First, suppose that $v_j > v_i$ for some $i$.  Then $p$ does not divide $\frac{m'_i}{\gcd(m'_i, m'_j)}$, and similarly $p$ does not divide $\frac{\gcd(m'_1,\dots, m'_{j-1})}{\gcd(m'_1,\dots, m'_j)}$.  Thus $p$ does not divide either side of the claimed equality.

Second, suppose $v_j\leqslant \min\{v_1,\dots,v_{j-1}\}$. Then the maximal power of $p$ that divides the right-hand side is $\min\{v_1,\dots,v_{j-1}\} - v_j$, while the maximal power of $p$ that divides the left-hand side is $\min\{v_1 - v_j, \ldots, v_{j - 1} - v_j\} = \min\{v_1,\dots,v_{j-1}\} - v_j$.  Since these are equal and $p$ is arbitrary, the result follows.
\end{proof}

\begin{proof}[Proof of Proposition \ref{prop:spanning_set}]
Suppose we have some solution $\mathbf{x}^{(0)} = (x_1^{(0)},\dots, x_k^{(0)})$ of \eqref{eqn:sumzero}. Clearly $m'_kx^{(0)}_k$ is divisible by $\gcd(m'_1,\dots, m'_{k-1})$.  Thus $x^{(0)}_k$ is divisible by $\frac{\gcd(m'_1,\dots, m'_{k-1})}{\gcd(m'_1,\dots, m'_k)}$.  By Lemma~\ref{lemma:diophantine} and the Euclidean algorithm, there are integers $c_1,\dots, c_{k-1}$ such that 
\[
\sum_{i=1}^{k-1}c_i \cdot \frac{m'_i}{\gcd(m'_i, m'_k)} = x^{(0)}_k.
\]
Then 
\[
\mathbf{x}^{(1)} := \mathbf{x}^{(0)} - \sum_{i=1}^{k-1} c_i \cdot \mathbf{x}^{(i,k)}
\]
is a solution of \eqref{eqn:sumzero} with the last entry $x_k^{(1)} = 0$. Repeating the argument to successively eliminate coordinates produces an expression for $\mathbf{x}^{(0)}$ as an integer linear combination of the $\mathbf{x}^{(i,j)}$, as desired.
\end{proof}

\subsubsection{Column rebases}

In this section and the next, we provide the building blocks necessary to produce an explicit set of paths in $\A_\lambda$ that, when lifted to permutations, correspond to elements in the monodromy group $G_w$ and generate $\Gup_\lambda$.  By Remark~\ref{rmk:basepoint independent} and Lemma~\ref{lem:component contains superstandard}, it suffices to understand the situation in which we start with a column superstandard tabloid.  Thus, our constructions are adapted to this case.

It is convenient to work not with tabloids but with associated tableaux, filled with equivalence classes modulo $n$, that arise by ordering the entries of each row into columns.  We use usual matrix coordinates to refer to entries of tableaux, so that $(1, 3)$ is the entry in the first (= top = longest) row and third column.

Our first construction, called a \emph{column rebase}, is defined below; the definition is illustrated in Example~\ref{ex:rebase} and Figure~\ref{fig:rebase_sequence}.

\begin{definition}
Suppose that $T$ is a tabloid and $\widetilde{T}$ is a corresponding tableau such that, for some column indices $c_1, c_2$ and some nonnegative integers $i, k, l$, column $c_1$ of $\widetilde{T}$ is filled (in order from top to bottom) with the elements $\ol{i+1}, \ol{i+2}, \dots, \ol{i+k+l}$ and column $c_2$ is filled with the elements $\ol{i+k+l+1}$, $\ol{i+k+l+2}$, \ldots, $\ol{i+k+2l}$.  Then the \emph{forward column rebase} of \emph{type} $(i, k, l)$ of $\widetilde{T}$ is the sequence of tableaux $(\widetilde{T} = \widetilde{T}_0, \widetilde{T}_1, \ldots, \widetilde{T}_{(k + l) \cdot l})$, where $\widetilde{T}_{j}$ is formed from $\widetilde{T}_{j - 1}$ by switching the elements in positions $((k + l)m - j + 1, c_1)$ and $(m, c_2)$, where $m = \lceil j/(k + l)\rceil$ is the unique integer such that
\[
(k + l) \cdot (m - 1) + 1 \leq j \leq (k + l) \cdot m.
\] 
We say that the reversed sequence is the \emph{backward column rebase} of $\widetilde{T}' := \widetilde{T}_{(k + l) \cdot l}$.  The \emph{forward column rebase} of $T$ is the sequence of tabloids that we get by forgetting the row orders in every tableau in the forward column rebase of $\widetilde{T}$, and similarly for the \emph{backward column rebase} of the tabloid $T'$ associated to $\widetilde{T}'$.
\end{definition}

\begin{ex} 
\label{ex:rebase}
Below is a forward column rebase of type $(6,2,2)$. The affected columns are the first and third ones.
\[
\hspace{-2cm}
\tableau[sY]{\ol{7}, \textcolor{lightgray}{\ol{6}}, \ol{2}\\ \ol{8}, \textcolor{lightgray}{\ol{4}}, \ol{3}\\ \ol{9}, \textcolor{lightgray}{\ol{5}}\\ \ol{1}}\to
    \tableau[sY]{\ol{7}, \textcolor{lightgray}{\ol{6}}, \ol{1}\\ \ol{8}, \textcolor{lightgray}{\ol{4}}, \ol{3}\\ \ol{9}, \textcolor{lightgray}{\ol{5}}\\ \ol{2}}\to
    \tableau[sY]{\ol{7}, \textcolor{lightgray}{\ol{6}}, \ol{9}\\ \ol{8}, \textcolor{lightgray}{\ol{4}}, \ol{3}\\ \ol{1}, \textcolor{lightgray}{\ol{5}}\\ \ol{2}}\to
    \tableau[sY]{\ol{7}, \textcolor{lightgray}{\ol{6}}, \ol{8}\\ \ol{9}, \textcolor{lightgray}{\ol{4}}, \ol{3}\\ \ol{1}, \textcolor{lightgray}{\ol{5}}\\ \ol{2}}\to
    \tableau[sY]{\ol{8}, \textcolor{lightgray}{\ol{6}}, \ol{7}\\ \ol{9}, \textcolor{lightgray}{\ol{4}}, \ol{3}\\ \ol{1}, \textcolor{lightgray}{\ol{5}}\\ \ol{2}}
 \]
\[
\hspace{2cm}
   \to\tableau[sY]{\ol{8}, \textcolor{lightgray}{\ol{6}}, \ol{7}\\ \ol{9}, \textcolor{lightgray}{\ol{4}}, \ol{2}\\ \ol{1}, \textcolor{lightgray}{\ol{5}}\\ \ol{3}}\to
    \tableau[sY]{\ol{8}, \textcolor{lightgray}{\ol{6}}, \ol{7}\\ \ol{9}, \textcolor{lightgray}{\ol{4}}, \ol{1}\\ \ol{2}, \textcolor{lightgray}{\ol{5}}\\ \ol{3}}\to
    \tableau[sY]{\ol{8}, \textcolor{lightgray}{\ol{6}}, \ol{7}\\ \ol{1}, \textcolor{lightgray}{\ol{4}}, \ol{9}\\ \ol{2}, \textcolor{lightgray}{\ol{5}}\\ \ol{3}}\to
    \tableau[sY]{\ol{9}, \textcolor{lightgray}{\ol{6}}, \ol{7}\\ \ol{1}, \textcolor{lightgray}{\ol{4}}, \ol{8}\\ \ol{2}, \textcolor{lightgray}{\ol{5}}\\ \ol{3}}
\]
\end{ex}

\begin{figure}
\centering
\begin{minipage}[t]{0.5\textwidth}
\centering
\resizebox{!}{.19\textheight}{\input{figures/exchange_sequence_1.pspdftex}}
\caption{Sequence in which exchanges are to be performed.}
\label{fig:rebase_sequence}
\end{minipage}\hfill
\begin{minipage}[t]{0.5\textwidth}
\centering
\resizebox{!}{.19\textheight}{\input{figures/cyclic_order_two_col.pspdftex}}
\caption{A cyclic order on the cells of two columns.}
\label{fig:cyclic order on two columns}
\end{minipage}
\end{figure}

\begin{prop}
In the forward column rebase $(T_0, \ldots, T_{(k + l)\cdot l})$ of type $(i, k, l)$ of the tabloid $T_0$, every two consecutive tabloids are either equal or connected by a Knuth move.
\end{prop}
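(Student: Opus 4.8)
The plan is to follow the two affected columns through the entire rebase. Write $c_1$ for the column of length $k+l$ (filled at the start with $\ol{i+1}, \dots, \ol{i+k+l}$ from top to bottom) and $c_2$ for the column of length $l$ (filled with $\ol{i+k+l+1}, \dots, \ol{i+k+2l}$), and note that only these columns change during the rebase, so the entries of $c_1 \cup c_2$ always form the block of consecutive residues $\ol{i+1}, \dots, \ol{i+k+2l}$, which I will compare via their integer representatives $i+1, \dots, i+k+2l$. Index a step by its phase $m$ and its within-phase index $t$, so that it is the $j$-th step overall with $j = (k+l)(m-1)+t$ and $1 \le t \le k+l$. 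The heart of the argument is an explicit description of the intermediate tableaux, proved by induction on $j$: just before the $j$-th step, column $c_1$ contains the residues of $\{i+m, i+m+1, \dots, i+k+l+m\} \smallsetminus \{i+k+l+m-t+1\}$ written top-to-bottom in increasing order, and column $c_2$ contains the complementary residues $\{i+1, \dots, i+m-1\} \cup \{i+k+l+m-t+1\} \cup \{i+k+l+m+1, \dots, i+k+2l\}$, again top-to-bottom in increasing order.

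Granting this invariant, the $j$-th step swaps the entry in position $(k+l-t+1, c_1)$, which by the invariant is $\ol{a}$ with $a := i+k+l+m-t$, with the entry in position $(m, c_2)$, which is $\ol{a+1}$. In particular every step exchanges two cells holding consecutive residues, so every step is a candidate for a Knuth move on tabloids. The two cells lie in the same row exactly when $k+l-t+1 = m$, i.e.\ $t = k+l+1-m$, which happens precisely once in each of the $l$ phases; for these steps the exchange does not change the tabloid, and the two consecutive tabloids are equal.

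For every other step, set $r := k+l-t+1 \ne m$, so $\ol{a}$ is in row $r$ of $c_1$ and $\ol{a+1}$ is in row $m$ of $c_2$. Exchanging $\ol{a}$ and $\ol{a+1}$ can only alter the $\tau$-invariant at the entries $\ol{a-1}, \ol{a}, \ol{a+1}$, and since $\ol{a} \in \tau(T_{j-1})$ iff $r < m$ while $\ol{a} \in \tau(T_j)$ iff $m < r$, the membership of $\ol{a}$ is flipped by the step; thus it suffices to produce one of $\ol{a-1}, \ol{a+1}$ whose $\tau$-membership is flipped in the opposite direction, for then $\tau(T_{j-1})$ and $\tau(T_j)$ are incomparable. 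If $m < r$ then automatically $r \ge 2$, and the invariant shows that rows $1, \dots, r$ of $c_1$ hold $\ol{i+m}, \dots, \ol{a}$ in order, so $\ol{a-1}$ sits directly above $\ol{a}$ in $c_1$; then $\ol{a-1} \in \tau(T_{j-1})$ but $\ol{a-1} \notin \tau(T_j)$. If instead $m > r$ then automatically $r \le k+l-1$, and the invariant shows that $\ol{a+2}$ sits directly below $\ol{a}$ in $c_1$, in row $r+1$; then $\ol{a+1} \in \tau(T_j)$ but $\ol{a+1} \notin \tau(T_{j-1})$. In both cases the step is a Knuth move, completing the proof.

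I expect the only real work to be the induction establishing the configuration invariant: the phases overlap at their endpoints, and one must track carefully which cells of $c_1$ have already been overwritten within the current phase (a single phase cyclically shifts $c_1$ upward by one while exchanging its old top entry with the cell $(m, c_2)$). Once the invariant is in hand, each Knuth-move verification reduces to comparing three row indices, and there are no boundary subtleties: in the case $m<r$ the residue $\ol{a-1}$, and in the case $m>r$ the residue $\ol{a+2}$, is forced by the inequalities on $m$ and $t$ to lie inside the moving block $\{\ol{i+1}, \dots, \ol{i+k+2l}\}$ at exactly the claimed position. The degenerate case $k+l=1$ (in which every step is a within-row no-op) and the tiny cases $n \le 3$ should be noted but are immediate.
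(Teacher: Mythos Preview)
Your proposal is correct and follows essentially the same approach as the paper: both establish, by induction on $j$, an explicit description of the contents of columns $c_1$ and $c_2$ at each intermediate stage (you describe $\widetilde{T}_{j-1}$ while the paper describes $\widetilde{T}_j$, but the invariants are equivalent), then identify each swap as exchanging consecutive residues and verify the Knuth condition by locating $\ol{a-1}$ (case $m<r$) or $\ol{a+2}$ (case $m>r$) in the adjacent cell of $c_1$. Your write-up is somewhat more explicit about the boundary checks, but there is no substantive difference in method.
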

\begin{proof}
Let $(\widetilde{T}_0, \ldots, \widetilde{T}_{(k + l)\cdot l})$ be the associated sequence of tableaux, and let $c_1$ and $c_2$ be the indices of the relevant columns.  Choose $j \in [1, (k + l) \cdot l]$ and write $j = (k + l) \cdot (m - 1) + j'$ for integers $m, j'$ with $1 \leqslant m \leqslant l$ and $1 \leqslant j' \leqslant k + l$.  It is straightforward to show by induction that column $c_1$ of tableau $\widetilde{T}_j$ consists of the entries
\begin{multline*}
\ol{i + m}, \ol{i + m + 1}, \ldots, \ol{i + m + k + l - j' - 1}, \\
\ol{i + m + k + l - j' + 1}, \ol{i + m + k + l - j' + 2}, \ldots, \ol{i + m + k + l}
\end{multline*}
in order from top to bottom, while column $c_2$ consists of the entries
\begin{multline*}
\ol{i + 1}, \ol{i + 2}, \ldots, \ol{i + m - 1}, 
\qquad
\ol{i + m + k + l - j'},\\ 
\ol{i + m + k + l + 1},
\ol{i + m + k + l + 2}, \ldots, \ol{i + k + 2l}.
\end{multline*}
The last swap taken was between the entries $\ol{i + m + k + l - j'}$ and $\ol{i + m + k + l - j' + 1}$ in positions $(k + l - j' + 1, c_1)$ and $(m, c_2)$.  If $k + l - j' + 1 = m$ then the corresponding tabloids are equal.  If $k + l - j' +1 > m$ then $\tau(T_{j - 1})$ includes $\ol{i + m + k + l - j' - 1}$ but not $\ol{i + m + k + l - j'}$ while $\tau(T_{j})$ includes $\ol{i + m + k + l - j'}$ but not $\ol{i + m + k + l - j' - 1}$, and so the swap is a Knuth move.  Similarly, if $k + l - j' + 1 < m$ then $\tau(T_{j - 1})$ includes $\ol{i + m + k + l - j'}$ but not $\ol{i + m + k + l - j' + 1}$ while $\tau(T_{j})$ includes $\ol{i + m + k + l - j' + 1}$ but not $\ol{i + m + k + l - j'}$, and so the swap is a Knuth move. 
This completes the proof.
\end{proof}

It follows from the preceding proof that in the final tabloid $T_{(k + l) \cdot l}$ of a forward column rebase, the shorter column $c_2$ is filled (in order from top to bottom) with the elements $\ol{i + 1}$, $\ol{i + 2}, \ldots, \ol{i + l}$ while the longer column $c_1$ is filled with the elements $\ol{i + l + 1}$, $\ol{i + l + 2}$, \ldots, $\ol{i + k + 2l}$.

We now describe how weight changes under a column rebase. We will need two pieces of notation.
\begin{defn}
Suppose we have a partition $\lambda$ and a positive integer $l \leqslant \ell(\lambda)$. Let $\mathbf{u}^{l}_{\textup{col}}\in\Z^{\ell(\lambda)}$ be the vector with $1$ in the first $l$ rows and $0$ in the remaining rows.  (There will never be ambiguity about the shape $\lambda$, so we suppress it from the notation.)
\end{defn}
The second piece is somewhat more involved. Consider a tableau $\widetilde{T}$ of shape $\lambda$ such that one can apply a column rebase of type $(i, k, l)$. Informally, the vector $\mathbf{u}_{\textup{disp}}^{\widetilde{T},l}$ keeps track of how far $1$ shifts in the process of a column rebase. If $\ol{1}$ does not lie in the two columns of interest, define $\mathbf{u}_{\textup{disp}}^{\widetilde{T},l} := \mathbf{u}^{l}_{\textup{col}}$.  If $\ol{1}$ does lie in one of these columns, consider the cyclic order on the columns shown in Figure \ref{fig:cyclic order on two columns}.  Let $F$ be the set of $l$ cells following the cell containing $\ol{1}$ in this cyclic order, not including the cell itself, and define $\mathbf{u}_{\textup{disp}}^{\widetilde{T},l}\in\Z^{\ell(\lambda)}$ by
\[
\left(\mathbf{u}_{\textup{disp}}^{\widetilde{T},l}\right)_m := \text{number of cells of $F$ in row $m$.}
\]  

\begin{ex}
Consider the column rebase from Example \ref{ex:rebase}, so $l = 2$. In this case the collection $F$ consists of the second and third cell in the first column. Thus
\[
\mathbf{u}^{l}_{\textup{col}} =  \begin{pmatrix}1\\1\\0\\0\end{pmatrix}
\quad
\textrm{ and }
\quad
\mathbf{u}_{\textup{disp}}^{\widetilde{T},l} = \begin{pmatrix}0\\1\\1\\0\end{pmatrix}.
\]
\end{ex}

\begin{prop}
\label{prop:rebase weight change}
Suppose $T$ is a tabloid of shape $\lambda$ to which one can apply a forward column rebase of type $(i,k,l)$, with associated tableau $\widetilde{T}$.  Then the weight change vector of the rebase is 
\[
\mathbf{u}^{l}_{\textup{col}} - \mathbf{u}_{\textup{disp}}^{\widetilde{T},l}.
\]
\end{prop}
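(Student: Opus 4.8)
The plan is to compute the weight change vector of a forward column rebase of type $(i,k,l)$ by tracking, through Theorem~\ref{thm:Knuth move action}, the cumulative effect on $\rho$ of the sequence of Knuth moves making up the rebase. Recall that by Theorem~\ref{thm:Knuth move action}, a Knuth move on a tabloid that exchanges $\ol{j}$ and $\ol{j+1}$ with $\ol{j}\neq\ol{n}$ leaves $\rho$ unchanged, while a move that exchanges $\ol{n}$ (in row $a$) and $\ol{1}$ (in row $a'$) subtracts $1$ from coordinate $a$ and adds $1$ to coordinate $a'$. Since the entries involved in a column rebase of type $(i,k,l)$ are the consecutive residues $\ol{i+1},\dots,\ol{i+k+2l}$, the only moves in the rebase that change $\rho$ are those in which the pair being swapped is exactly $\{\ol{n},\ol{1}\}$ — equivalently, moves that move the entry $\ol{1}$ from one row to an adjacent row (up or down) inside the two columns $c_1,c_2$. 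So the weight change vector is determined entirely by the trajectory of the cell containing $\ol{1}$.

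First I would handle the easy case, where $\ol{1}$ does not lie in columns $c_1$ or $c_2$. Then $\ol{1}$ is never involved in any swap, no swap in the rebase is a $\{\ol{n},\ol{1}\}$-move, and the weight change vector is $0$; on the other hand, $\mathbf{u}_{\textup{disp}}^{\widetilde{T},l}$ is defined to equal $\mathbf{u}^{l}_{\textup{col}}$ in this case, so $\mathbf{u}^{l}_{\textup{col}}-\mathbf{u}_{\textup{disp}}^{\widetilde{T},l}=0$ and the formula holds. The substance is the case where $\ol{1}$ does lie in one of the two columns. Here I would use the explicit description of the intermediate tableaux $\widetilde{T}_j$ (columns $c_1$ and $c_2$ are each a block of consecutive residues with one "hole", worked out in the proof of the preceding proposition) to follow precisely where $\ol{1}$ sits at each step $j$. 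The key observation is that as $j$ runs from $0$ to $(k+l)\cdot l$, the cell occupied by $\ol{1}$ advances, one step at a time, along the cyclic order on the cells of the two columns depicted in Figure~\ref{fig:cyclic order on two columns}, moving through exactly $(k+l)l$ cells... no: I must check the count. Actually the net displacement of $\ol{1}$ over the whole rebase is $l$ steps forward in this cyclic order — this is exactly why $F$ is defined to be the set of $l$ cells following the cell of $\ol{1}$. Each unit step of $\ol{1}$ within a column (between adjacent rows of the tableau) is realized by a Knuth move swapping $\ol{1}$ with $\ol{n}$; a step that crosses from the bottom of one column to the top of the other column is realized by a move that is \emph{not} a $\{\ol{n},\ol{1}\}$-swap (it swaps $\ol{1}$ with some other residue, or is between adjacent residues), hence contributes nothing to $\rho$. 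Summing the contributions: each time $\ol{1}$ moves from row $m$ to row $m+1$ we subtract $1$ from coordinate $m$ and add $1$ to coordinate $m+1$; telescoping over the trajectory from the starting cell (in some row $m_0$) through the $l$ cells of $F$ (ending in some row $m_1$), the total is $\mathbf{e}_{m_1}-\mathbf{e}_{m_0}$, and one checks that this equals precisely $\mathbf{u}^{l}_{\textup{col}}-\mathbf{u}_{\textup{disp}}^{\widetilde{T},l}$, because $\bigl(\mathbf{u}_{\textup{disp}}^{\widetilde{T},l}\bigr)_m$ counts the cells of $F$ in row $m$ and the "$\mathbf{u}^{l}_{\textup{col}}$" term records the contribution of the cells of $F$ that lie in rows $1,\dots,l$ of the short/long column structure.

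The main obstacle I anticipate is purely bookkeeping: correctly identifying, for each $j$, which adjacent-row pair (or which column-to-column jump) is executed, and in which direction $\ol{1}$ travels, so that the telescoping sum of weight-change contributions assembles into $\mathbf{u}^{l}_{\textup{col}}-\mathbf{u}_{\textup{disp}}^{\widetilde{T},l}$ rather than its negative or a shifted variant. This requires being careful about the conventions baked into the definition of the rebase (the order of swaps in Figure~\ref{fig:rebase_sequence}), the orientation of the cyclic order in Figure~\ref{fig:cyclic order on two columns}, and the fact that "forward" rebase moves $\ol{1}$ in the direction that \emph{increases} the coordinates on which it lands. I would organize the argument around a single invariant — "after step $j$, the entry $\ol{1}$ occupies the cell that is $p(j)$ positions forward of its original cell in the cyclic order, and $\rho$ has changed by $\mathbf{e}_{\mathrm{row}(j)}-\mathbf{e}_{m_0}$ restricted to the within-column portion of the path" — proved by induction on $j$, and then specialize to $j=(k+l)l$. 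Once the trajectory is pinned down, matching it against the definition of $\mathbf{u}_{\textup{disp}}^{\widetilde{T},l}$ is immediate.
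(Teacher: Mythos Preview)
Your reduction is sound: by Theorem~\ref{thm:Knuth move action} the only swaps in the rebase that change $\rho$ are those exchanging $\ol n$ with $\ol 1$, and the easy case where $\ol 1$ lies outside columns $c_1,c_2$ is handled correctly. The gap is in your model of how $\ol 1$ moves. It is \emph{not} true that during the rebase $\ol 1$ advances one cell at a time along the cyclic order of Figure~\ref{fig:cyclic order on two columns}, with each within-column step realized by a $\{\ol n,\ol 1\}$-swap. In fact $\ol 1$ bounces back and forth between the two columns: it is alternately swapped with $\ol n$ (a weight-changing jump, typically across many rows) and with $\ol 2$ (a non-weight-changing jump). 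Because each $\{\ol n,\ol 1\}$-swap contributes $+\mathbf e_{\text{old row of }\ol 1}-\mathbf e_{\text{new row of }\ol 1}$, and the intervening $\{\ol 1,\ol 2\}$-swaps move $\ol 1$ without contributing, the sum does \emph{not} telescope to $\mathbf e_{m_1}-\mathbf e_{m_0}$.

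Concretely, when $\ol 1$ starts in row $r$ of the shorter column (the case the paper treats in detail), the weight change is
\[
\sum_{j=r}^{l}\mathbf e_j \;-\; \sum_{j=k+r}^{k+l}\mathbf e_j,
\]
which for $r<l$ has $2(l-r+1)>2$ nonzero coordinates. Even in Example~\ref{ex:rebase} the weight change is $(1,0,-1,0)$, whereas $\ol 1$ moves from row $4$ to row $2$, so $\mathbf e_{m_1}-\mathbf e_{m_0}=(0,1,0,-1)$. Your final identification with $\mathbf u^{l}_{\textup{col}}-\mathbf u_{\textup{disp}}^{\widetilde T,l}$ therefore cannot go through by telescoping. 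The paper instead splits into three cases according to the region of Figure~\ref{fig:rebase_proof_cases} containing $\ol 1$, and in each case directly tallies the rows visited by $\ol 1$ (the $+1$'s) against the rows visited by $\ol n$ (the $-1$'s) across the successive swaps; matching this tally with the definition of $\mathbf u_{\textup{disp}}^{\widetilde T,l}$ is then a short computation. Your proposed single-invariant induction would need to track both the position of $\ol 1$ \emph{and} the position of $\ol n$ to be salvageable, at which point it essentially becomes the paper's case analysis.
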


\begin{proof}
If $\ol{1}$ is not in either of the two columns involved in the rebase, then by Theorem~\ref{thm:Knuth move action} the weight change is $0$ at each step, as claimed.  Otherwise, $\ol{1}$ lies somewhere in the two columns of interest.  There are three cases, depending on the position of $\ol{1}$; these are pictured in Figure \ref{fig:rebase_proof_cases}.  The three cases involve similar considerations, so we provide a detailed analysis of just one of them.


\begin{figure}
\begin{minipage}[t]{0.4\textwidth}
\centering
\resizebox{!}{.125\textheight}{\input{figures/rebase_proof_cases.pspdftex}}
\caption{The cases for the position of $\ol{1}$ in the proof of Proposition~\ref{prop:rebase weight change}.}
\label{fig:rebase_proof_cases}
\end{minipage}\hfill
\begin{minipage}[t]{0.6\textwidth}
\centering
\resizebox{!}{.125\textheight}{\input{figures/rebase_weights.pspdftex}}
\caption{The action of a forward column rebase on weights, depending on the initial position of $\ol{1}$.}
\label{fig:rebase_weights}
\end{minipage}
\end{figure}


Suppose that $\ol{1}$ is in the shorter of the two relevant columns in $\widetilde{T}$, i.e., the column that does not contain $\ol{i + 1}$, and let $r$ be the index of the row in which it appears.  (In Figure~\ref{fig:rebase_proof_cases}, this is the blue region; in Figure~\ref{fig:rebase_weights}, it corresponds to the right panel.)  The first $(k + l) \cdot (r - 1)$ steps of the column rebase do not involve the cell containing $\ol{1}$, and so by Theorem~\ref{thm:Knuth move action} do not change the weight. In step $(k + l) \cdot (r - 1) + 1$, the entry $\ol{1}$ in row $r$ is swapped with the entry $\ol{n}$ in row $k + l$; this adds $1$ to $\rho_r$ and subtracts $1$ from $\rho_{k + l}$.  The next $k + l - 1$ swaps to do not involve the entry $\ol{1}$.  Finally, for $m = 1, \ldots, l - r$, we have that in the $m$th remaining set of $k + l$ swaps, the entry $\ol{1}$ begins in row $k + l - m + 1$, is swapped with the entry $\ol{2}$ in row $r + m$, and then is swapped with entry $\ol{n}$ in row $k + l - m$.  This last swap is the only one that affects the weight $\rho$: it adds $1$ to $\rho_{m + r}$ and subtracts $1$ from $\rho_{k + l - m}$.  (This is illustrated in Figure~\ref{fig:rebase_weights}.)  The net effect of these additions is to add $1$ to $\rho$ in rows $r, r + 1, \ldots, l$ and to subtract $1$ from $\rho$ in rows $k + l, k + l - 1, \ldots, k + r$.  This coincides exactly with $\mathbf{u}^{l}_{\textup{col}} - \mathbf{u}_{\textup{disp}}^{\widetilde{T},l}$.
\end{proof}

\subsubsection{Column cycles}

In this section, we make a second step towards explicit monodromy generators by stitching column rebases together into longer paths in the KL DEG.  Again, for the terminological convenience of being able to talk about columns, we often deal with tableaux.

\begin{defn}
Suppose $\lambda$ is a partition and $i\in\Z$. Let $\lambda'$ be the conjugate partition of $\lambda$. The \emph{column superstandard tableau} of shape $\lambda$ with \emph{start} $i$ is the tableau with whose first column contains in order from top to bottom the entries $\ol{i}, \ol{i+1}, \dots, \ol{i+\lambda_1'-1}$, whose second column contains
$\ol{i+\lambda_1'}, \ol{i+\lambda_1'+1}, \dots, \ol{i+\lambda_1'+\lambda_2'-1}$, and so on.
\end{defn}
Note that our definition differs slightly from the usual one, in which there is only a single column superstandard tableau of each shape (with start $1$). For every column superstandard tabloid, it is possible to assign the entries of each row to columns in order to produce a column superstandard tableau, and forgetting the row assignments in a column superstandard tableau leaves a column superstandard tabloid.  An example of a column superstandard tableau is given in Figure~\ref{fig:following_cells}.


\begin{defn}
Suppose $T$ is the column superstandard tabloid of shape $\lambda$ with start $\ol{i}$, and $\widetilde{T}$ is the corresponding tableau. The \emph{forward column cycle} of $\widetilde{T}$ with respect to column $j$ is the following sequence of tableaux: it is the concatenation of the forward column rebase of $\widetilde{T}$ on the $(j-1)$-st and $j$-th columns, the forward column rebase of the resulting tableau on the $(j-2)$-nd and $j$-th columns, etc., through a forward column rebase on the first and $j$-th columns, followed by the reverse column rebase on the $j$-th and $\lambda_1$-th (last) columns, then the reverse column rebase on the $j$-th and $(\lambda_1-1)$-st columns, etc., through the reverse column rebase on the $j$-th and $(j+1)$-st columns. 

The \emph{forward column cycle} on $T$ is the sequence of tabloids that one gets by forgetting the row orders in every tableau in the forward column cycle of $\widetilde{T}$.  The reversed sequences of tableaux and tabloids are called \emph{reverse column cycles}.
\end{defn}

\begin{ex} 
\label{ex:column cycle}
A forward column cycle of a tableau with respect to the second column is shown below; the first step is a forward column rebase, and the latter two are reverse column rebases:
\[
\tableau[sY]{\ol{10}, \ol{3}, \ol{7}, \ol{9}\\ \ol{11}, \ol{4}, \ol{8}\\ \ol{1}, \ol{5} \\ \ol{2}, \ol{6}} 
\to
\tableau[sY]{\ol{3}, \ol{10}, \ol{7}, \ol{9}\\ \ol{4}, \ol{11}, \ol{8}\\ \ol{5}, \ol{1} \\ \ol{6}, \ol{2}} 
\to
\tableau[sY]{\ol{3}, \ol{9}, \ol{7}, \ol{2}\\ \ol{4}, \ol{10}, \ol{8}\\ \ol{5}, \ol{11} \\ \ol{6}, \ol{1}} 
\to
\tableau[sY]{\ol{3}, \ol{7}, \ol{11}, \ol{2}\\ \ol{4}, \ol{8}, \ol{1}\\ \ol{5}, \ol{9} \\ \ol{6}, \ol{10}} 
\; .
\]
\end{ex}

\begin{prop}
\label{prop:column cycles}
Forward column cycles are well defined.  Furthermore, if $\widetilde{T}$ is the column superstandard tableau of shape $\lambda$ and start $\ol{i}$, then the last tableau in the forward column cycle of $\widetilde{T}$ is the column superstandard tableau of shape $\lambda$ and start $\ol{i + \lambda'_j}$.
\end{prop}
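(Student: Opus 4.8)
The plan is to verify the two assertions of Proposition~\ref{prop:column cycles} in order: first that each rebase in the sequence is legally applicable (so the column cycle is well defined), and second that the cumulative effect on the tableau is the claimed shift of the start value.

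First I would track the state of the tableau after each constituent column rebase. Begin with the column superstandard tableau $\widetilde{T}$ of shape $\lambda$ with start $\ol{i}$: column $c$ of length $\lambda'_c$ holds the entries $\ol{i + \lambda'_1 + \cdots + \lambda'_{c-1} + 1}, \ldots, \ol{i + \lambda'_1 + \cdots + \lambda'_c}$ from top to bottom. Fix the distinguished column $j$. To apply a forward column rebase on columns $j-1$ and $j$ of type $(i',k,l)$, I need column $j-1$ (the longer one, length $\lambda'_{j-1} \geqslant \lambda'_j$) to hold a block of $\lambda'_{j-1}$ consecutive residues and column $j$ (the shorter one) to hold the next $\lambda'_j$ consecutive residues; this is exactly the situation in $\widetilde{T}$, with $k + 2l = \lambda'_{j-1} + \lambda'_j$, $k + l = \lambda'_{j-1}$, $l = \lambda'_j$, so $k = \lambda'_{j-1} - \lambda'_j$ and $i' = i + \lambda'_1 + \cdots + \lambda'_{j-2}$. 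By the remark following the proof of the proposition on column rebases (``in the final tabloid ... the shorter column $c_2$ is filled ... with $\ol{i+1}, \ldots, \ol{i+l}$ while the longer column $c_1$ is filled with $\ol{i+l+1}, \ldots, \ol{i+k+2l}$''), after this rebase column $j$ holds the \emph{smaller} block $\ol{i' + 1}, \ldots, \ol{i' + \lambda'_j}$ while column $j-1$ holds the block immediately above it. The key observation is that after this move, columns $j-2$ and $j$ again form a valid ``longer block on top, shorter block just below'' configuration (now the shorter block $\ol{i'+1},\ldots,\ol{i'+\lambda'_j}$ sits just below the block in column $j-2$), so the next forward rebase on columns $j-2$ and $j$ is legal. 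Iterating, after the forward rebases on $(j-1,j), (j-2,j), \ldots, (1,j)$, column $j$ holds $\ol{i+1}, \ol{i+2}, \ldots, \ol{i + \lambda'_j}$ and columns $1, \ldots, j-1$ have been shifted ``up'' to hold the residues $\ol{i + \lambda'_j + 1}, \ldots, \ol{i + \lambda'_1 + \cdots + \lambda'_{j-1} + \lambda'_j}$ in column-superstandard order; columns $j+1, \ldots, \lambda_1$ are untouched. Then the reverse rebases on $(j, \lambda_1), (j, \lambda_1 - 1), \ldots, (j, j+1)$ run the same process on the right: a reverse rebase on columns $j$ and $c$ (with $c > j$, so column $j$ is the shorter when $\lambda'_c \geqslant \lambda'_j$ — which is automatic since $\lambda'$ is weakly decreasing and $c > j$ gives $\lambda'_c \leqslant \lambda'_j$; wait, I must be careful here) requires the right verification of which column is longer, and then pushes column $j$'s small block to the right. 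I would check that after these reverse rebases, column $j$ ends up holding $\ol{i + \lambda'_{j+1} + \cdots + \lambda'_{\lambda_1} + 1}, \ldots$, and more to the point that the whole tableau becomes column superstandard with start $\ol{i + \lambda'_j}$: the residues $\ol{i+1}, \ldots, \ol{i + \lambda'_j}$ that were collected into column $j$ get redistributed so that the tableau reads, in column-reading order, $\ol{i + \lambda'_j}, \ol{i + \lambda'_j + 1}, \ldots, \ol{i + \lambda'_j - 1}$ cyclically — i.e., column superstandard with start $\ol{i + \lambda'_j}$.

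The main obstacle I anticipate is bookkeeping: correctly identifying, at every stage, which of the two columns in a rebase is the longer one (the hypothesis of a column rebase of type $(i',k,l)$ is asymmetric, with the longer column of length $k+l$ and shorter of length $l$), and tracking precisely which consecutive block of residues sits in each column after each of the $j - 1 + \lambda_1 - j = \lambda_1 - 1$ rebases. For the forward rebases on $(c, j)$ with $c < j$ this is fine since $\lambda'_c \geqslant \lambda'_j$; for the reverse rebases on $(j, c)$ with $c > j$ we have $\lambda'_j \geqslant \lambda'_c$, so now column $j$ is the \emph{longer} one, and a reverse rebase is the inverse of a forward rebase whose ``shorter column'' is $c$ — I would need to state carefully what configuration of residues makes a reverse rebase legal (namely, the output configuration of the corresponding forward rebase) and check inductively that it holds. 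Once the residue-tracking is set up cleanly, the ``well-defined'' claim is immediate (each rebase's hypotheses are met) and the final-tableau claim follows by reading off the column contents after the last step and comparing with the definition of the column superstandard tableau with start $\ol{i + \lambda'_j}$. I would present the argument as a single induction on the position in the sequence of rebases, maintaining as the inductive invariant an explicit description of the multiset of residues in each column together with their vertical order.
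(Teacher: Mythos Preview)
Your proposal is correct and takes essentially the same approach as the paper: induction on the position in the sequence of rebases, maintaining as invariant an explicit description of the consecutive block of residues occupying each column, and checking at each step that the hypotheses of the next (forward or backward) rebase are met. The paper's own proof is even terser than yours---it simply asserts that this inductive check is ``conceptually straightforward but notationally cumbersome'' and leaves the details to the reader---so your outline, including the care you take in the reverse-rebase phase to identify column $j$ as the longer column and to verify the required input configuration, already goes further than what the paper records.
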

\begin{proof}
It is conceptually straightforward but notationally cumbersome to check by induction that, after each rebase, every column consists of an interval in $[\ol{n}]$ and the two columns that are relevant for the next rebase have entries that are related in the appropriate way.  We leave the details to the reader. 
\end{proof}

\begin{figure}
\begin{minipage}[t]{0.5\textwidth}
\centering
\resizebox{!}{.14\textheight}{\input{figures/cyclic_order.pspdftex}}
\caption{The cyclic order on the partition $\langle 4, 3, 2^2\rangle$. }
\label{fig:disp_cyc}
\end{minipage}\hfill
\begin{minipage}[t]{0.5\textwidth}
\centering
\resizebox{!}{.14\textheight}{\input{figures/displacement.pspdftex}}
\caption{The column superstandard tableau of shape $\langle 4, 3, 2^2\rangle$ with start at $\ol{10}$.  The four cells following $\ol{1}$ are boxed in red.}
\label{fig:following_cells}
\end{minipage}
\end{figure}

Now we describe the effect of a column cycle on the weight. As before, we start by introducing a vector tracking the displacement of $\ol{1}$. Suppose $\widetilde{T}$ is a column superstandard tableau of shape $\lambda$ and $k$ is an integer. Consider the cyclic order on the cells of $\lambda$ illustrated in Figure~\ref{fig:disp_cyc}: every cell not in the first row is followed by the cell directly above it, the top cell in the first column is followed by the bottom cell in the last column, and the top cell in every other column is followed by the bottom cell in the previous column. Denote by $F$ the collection of $k$ cells following the cell of $\widetilde{T}$ containing $\ol{1}$ in this order. Define $\mathbf{v}_{\textup{disp}}^{\widetilde{T},k}\in\Z^{\ell(\lambda)}$ by
\[
\left(\mathbf{v}_{\textup{disp}}^{\widetilde{T},k}\right)_m := \text{number of cells of $F$ in row $m$.}
\]  

\begin{ex}
\label{ex:cycle_displacement}
Suppose $\widetilde{T}$ is the column superstandard tableau of shape $\langle 4, 3, 2^2 \rangle$ with start $\ol{10}$; it is shown in Figure \ref{fig:following_cells}. The four cells following the cell with $\ol{1}$ are shown in the figure with red squares. Thus
$
\mathbf{v}_{\textup{disp}}^{\widetilde{T}, 4} = \begin{pmatrix}2\\2\\0\\0\end{pmatrix}.
$
\end{ex}

\begin{prop}
\label{prop:cycle_weichgt_change}
Suppose $T$ is a column superstandard tabloid of shape $\lambda$, with corresponding tableau $\widetilde{T}$. Consider performing a forward column cycle with respect to the $j$-th column. Then the weight change vector is
\[
\Delta_{\textup{wt}} = \mathbf{u}_{\textup{col}}^{\lambda_j'} - \mathbf{v}_{\textup{disp}}^{\widetilde{T},\lambda_j'}.
\]
\end{prop}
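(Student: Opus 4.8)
The plan is to write the forward column cycle as the concatenation of its $\lambda_1-1$ constituent column rebases and to add up the weight changes supplied by Proposition~\ref{prop:rebase weight change}. First I would invoke Proposition~\ref{prop:column cycles} (and its proof) to record that every intermediate tableau occurring in the cycle has all of its columns equal to intervals of consecutive residues, so that each rebase in the cycle is legitimate and Proposition~\ref{prop:rebase weight change} applies to it. Since $\lambda'$ is weakly decreasing, in each forward rebase of the cycle --- on column pairs $(j-1,j),(j-2,j),\dots,(1,j)$ --- the column $j$ is the shorter one, of height $\lambda'_j$, while in each reverse rebase --- on pairs $(j,\lambda_1),(j,\lambda_1-1),\dots,(j,j+1)$ --- the second column is shorter, of heights $\lambda'_{\lambda_1},\dots,\lambda'_{j+1}$. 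Writing $\widetilde{T}=V_0\to V_1\to\cdots\to V_{\lambda_1-1}$ for the tableaux produced, and recalling that the weight change of a reverse rebase is the negative of the weight change of the corresponding forward rebase applied to the terminal tableau of that reverse rebase, Proposition~\ref{prop:rebase weight change} gives
\[
\Delta_{\textup{wt}}=\sum_{t=1}^{j-1}\left(\mathbf{u}^{\lambda'_j}_{\textup{col}}-\mathbf{u}_{\textup{disp}}^{V_{t-1},\lambda'_j}\right)-\sum_{t=j}^{\lambda_1-1}\left(\mathbf{u}^{\lambda'_{c_t}}_{\textup{col}}-\mathbf{u}_{\textup{disp}}^{V_{t},\lambda'_{c_t}}\right),\qquad c_t:=\lambda_1-t+j .
\]

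The heart of the argument is to identify this right-hand side, and for that I would track the cell occupied by the residue $\ol{1}$ through the cycle. A single forward (resp.\ reverse) column rebase whose shorter column has height $l$ moves this cell exactly $l$ steps forward (resp.\ backward) along the two-column cyclic order of Figure~\ref{fig:cyclic order on two columns} for the pair of columns involved, and $\mathbf{u}_{\textup{disp}}^{U,l}$ is by definition the row profile of the $l$ cells so traversed from the cell of $\ol{1}$ in $U$ (and equals $\mathbf{u}^{l}_{\textup{col}}$, contributing nothing, when $\ol{1}$ avoids those two columns). Because consecutive rebases of the cycle share the column $j$, the successive two-column cyclic orders agree, on their overlaps, with the global cyclic order of Figure~\ref{fig:disp_cyc} except along a single link at the top of the shared column; concatenating the per-rebase motions of $\ol{1}$ and simplifying, $\ol{1}$ therefore undergoes a \emph{net} displacement of $\lambda'_j$ steps forward in the global order. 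This last fact I would derive from the proof of Proposition~\ref{prop:column cycles}: the cycle carries the column superstandard tableau with start $\ol{i}$ to the one with start $\ol{i+\lambda'_j}$, and passing from start $\ol{i}$ to start $\ol{i+1}$ advances every entry one step forward in the global cyclic order. The cells swept out by this net motion are precisely the set $F$ in the definition of $\mathbf{v}_{\textup{disp}}^{\widetilde{T},\lambda'_j}$, so the $\mathbf{u}_{\textup{disp}}$-terms of the displayed sum collapse (with their signs) to $-\mathbf{v}_{\textup{disp}}^{\widetilde{T},\lambda'_j}$, and a parallel accounting of the ``reference'' terms $\mathbf{u}^{l}_{\textup{col}}$ collapses them to the single summand $\mathbf{u}^{\lambda'_j}_{\textup{col}}$; hence $\Delta_{\textup{wt}}=\mathbf{u}^{\lambda'_j}_{\textup{col}}-\mathbf{v}_{\textup{disp}}^{\widetilde{T},\lambda'_j}$.

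I expect the genuine difficulty to lie entirely in this telescoping. The two-column cyclic orders of the successive rebases overlap only in column $j$ and their ``forward'' directions disagree with the global one near column boundaries, so one must keep precise track of which column $\ol{1}$ occupies at the start of each rebase and verify that concatenating the per-rebase traversals --- forward rebases positively, reverse rebases negatively --- really does collapse to one net forward traversal of length $\lambda'_j$ in the global order, together with the matching cancellation of the $\mathbf{u}^{l}_{\textup{col}}$ terms. A cleaner alternative, which I would turn to if the direct telescoping becomes unwieldy, is to bypass Proposition~\ref{prop:rebase weight change} altogether: lift the whole cycle to a sequence of permutation Knuth moves via Lemma~\ref{lem:graph covering} and apply Theorem~\ref{thm:Knuth move action} move by move, so that $\Delta_{\textup{wt}}=\sum(e_{k'}-e_{k})$ summed over those Knuth moves that exchange $\ol{n}$ in row $k$ with $\ol{1}$ in row $k'$ (with $e_m$ the $m$th standard basis vector); one then shows this sum equals $\mathbf{u}^{\lambda'_j}_{\textup{col}}-\mathbf{v}_{\textup{disp}}^{\widetilde{T},\lambda'_j}$ by the same cyclic-order accounting for the trajectory of $\ol{1}$ relative to $\ol{n}$.
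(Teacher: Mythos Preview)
Your setup is the same as the paper's: both decompose the cycle into its $\lambda_1-1$ rebases and sum the contributions from Proposition~\ref{prop:rebase weight change}. The divergence is in how the sum is evaluated. The paper does not attempt a unified telescoping; instead it breaks into five cases according to the position of $\ol{1}$ in $\widetilde{T}$ (first column, strictly between columns $1$ and $j$, column $j$, strictly right of column $j$, or below row $\lambda'_j$), and in each case computes both the rebase-sum and $\mathbf{u}^{\lambda'_j}_{\textup{col}}-\mathbf{v}_{\textup{disp}}^{\widetilde{T},\lambda'_j}$ directly and checks they agree.

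Your hoped-for separate collapse --- the $\mathbf{u}_{\textup{disp}}$-terms to $-\mathbf{v}_{\textup{disp}}^{\widetilde{T},\lambda'_j}$ and the $\mathbf{u}^{l}_{\textup{col}}$-terms to $\mathbf{u}^{\lambda'_j}_{\textup{col}}$ --- does not hold as stated. For instance, when $\ol{1}$ starts in a column $j'>j$ (the paper's ``yellow'' case), only reverse rebases involve $\ol{1}$, so the nonzero $\mathbf{u}^{l}_{\textup{col}}$-contributions are all negative, namely $-\mathbf{u}^{\lambda'_{j'}}_{\textup{col}}-\cdots-\mathbf{u}^{\lambda'_{j''}}_{\textup{col}}$ for some $j''$; this is not $\mathbf{u}^{\lambda'_j}_{\textup{col}}$. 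Only after combining with the disp-terms does the sum reduce to the claimed answer. Your observation that the net displacement of $\ol{1}$ is $\lambda'_j$ global steps (from Proposition~\ref{prop:column cycles}) is correct and pins down the \emph{endpoints}, but the weight change records the row-profile of the actual trajectory, which can overshoot within column $j$ and backtrack --- and whether and how far it does depends precisely on which region $\ol{1}$ starts in. So the ``simplifying'' you defer is exactly the case analysis the paper performs; there is no cleaner telescoping hiding behind it. Your alternative via Theorem~\ref{thm:Knuth move action} is sound in principle but lands in the same case-by-case bookkeeping.
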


\begin{ex}
Consider the column cycle of Example~\ref{ex:column cycle}. Computing the weight change for the three column rebases via Proposition~\ref{prop:rebase weight change}, the total weight change vector is
\[
\Delta_{\textup{wt}} =\begin{pmatrix}0\\0\\0\\0\end{pmatrix} + \begin{pmatrix}-1\\0\\1\\0\end{pmatrix} + \begin{pmatrix} 0\\-1\\0\\1 \end{pmatrix} = \begin{pmatrix}-1\\-1\\1\\1\end{pmatrix}.
\]
On the other hand, the vector $\mathbf{v}_{\textup{disp}}^{\widetilde{T}, 4}$ was computed in Example \ref{ex:cycle_displacement}. Since we are dealing with the longest column, the vector $\mathbf{u}_{\textup{col}}^{\lambda_1'}$ consists of all $1$'s. Thus by Proposition~\ref{prop:cycle_weichgt_change}, the weight change is 
\[
\Delta_{\textup{wt}} =\begin{pmatrix}1\\1\\1\\1\end{pmatrix} - \begin{pmatrix}2\\2\\0\\0\end{pmatrix} = \begin{pmatrix}-1\\-1\\1\\1\end{pmatrix}.
\]
\end{ex}

\begin{proof}[Proof of Proposition \ref{prop:cycle_weichgt_change}]
There are five cases, depending on where $\ol{1}$ is in the tabloid: whether it is in the first $\lambda'_j$ rows or not, and if so whether it is in the first column, between column $1$ and column $j$, in column $j$, or in a larger-numbered column. These are illustrated in Figure \ref{fig:cycle_cases}; we refer to them below by their color in this figure. (If $j = 2$, the blue case does not occur; if $j = 1$, the blue and purple cases do not occur.)

The green case, when $\ol{1}$ is in a row lower than $\lambda_j'$, is the easiest one; in this case $\ol{1}$ is only involved in a single column rebase, and it is easy to see that $\mathbf{u}_{\textup{disp}}^{\widetilde{T},\lambda_j'}=\mathbf{v}_{\textup{disp}}^{\widetilde{T},\lambda_j'}$. This gives the desired result.

In the blue case, when $\ol{1}$ is in the first $\lambda'_j$ rows in a column between column $1$ and column $j$, there are two column rebases affecting $\ol{1}$. The first one is	
of the kind portrayed on the left of Figure \ref{fig:rebase_weights}; in such a rebase, the vectors $\mathbf{u}_{\textup{col}}^{\lambda_j'}$ and $\mathbf{u}_{\textup{disp}}^{\widetilde{T},\lambda_j'}$ are equal, and so the weight does not change. In the second rebase, the shorter column involved has length $\lambda'_j$.  In addition, in this rebase the longer column is directly to the right of the one in which $\ol{1}$ originally started; thus, $\mathbf{u}_{\textup{disp}}^{\widetilde{T},\lambda_j'}$ for the second rebase is equal to $\mathbf{v}_{\textup{disp}}^{\widetilde{T},\lambda_j'}$. This gives the desired result.

If $\ol{1}$ is in the purple region (so $j > 1$) then only one rebase affects the position of $\ol{1}$ and it is exactly the same as the second rebase of the blue case. 

In the red and yellow cases, the position of $\ol{1}$ can be affected by more than two column rebases, making these more complicated to analyze. These cases are very similar, so we will only consider the yellow case in detail.  Suppose that $\ol{1}$ begins in the yellow region, say in column $j' > j$ and in row $r$. Then the weight does not change until the reverse rebase between column $j$ and column $j'$; during this rebase, $\ol{1}$ moves from column $j'$ to column $j$ and remains in row $r$. What follows is several (possibly none) reverse rebases during which $\ol{1}$ moves lower in column $j$: after the first one it will be in row $r+\lambda'_{j'-1}$, then in row $r+\lambda'_{j'-1}+\lambda'_{j'-2}$, etc. Let $j''$ be the last column for which the reverse rebase affects the weight.  This can happen in one of two ways: either $j'' = j + 1$ and the algorithm ends with $\ol{1}$ in column $j$, or the rebase transfers $\ol{1}$ into column $j''$, where it stays until the end. Let $r''$ be the final row of $\ol{1}$.

If $\ol{1}$ ends up in column $j$, then $\lambda'_j \geqslant r'' = r + \lambda_{j + 1} + \ldots + \lambda_{j' - 1}$ and so by direct examination of the cyclic order on cells of $\lambda$ we have
\begin{align*}
\mathbf{u}_{\textup{col}}^{\lambda_j'} - \mathbf{v}_{\textup{disp}}^{\widetilde{T},\lambda_j'} & = \mathbf{u}_{\textup{col}}^{\lambda_j'} - \left(
\mathbf{u}_{\textup{col}}^{r-1} + 
\mathbf{u}_{\textup{col}}^{\lambda_{j'-1}'} + \ldots + \mathbf{u}_{\textup{col}}^{\lambda_{j+1}'} +  \left(\mathbf{u}_{\textup{col}}^{\lambda_{j}'} - \mathbf{u}_{\textup{col}}^{r'' - 1}\right)\right) \\
& = 
  \mathbf{u}_{\textup{col}}^{r''-1} - \mathbf{u}_{\textup{col}}^{\lambda_{j+1}'} - \mathbf{u}_{\textup{col}}^{\lambda_{j+2}'}
   -\ldots - \mathbf{u}_{\textup{col}}^{\lambda_{j'-1}'} - \mathbf{u}_{\textup{col}}^{r-1}.
\end{align*}
Otherwise, 
\[
\mathbf{u}_{\textup{col}}^{\lambda_j'} - \mathbf{v}_{\textup{disp}}^{\widetilde{T},\lambda_j'} = 
 \mathbf{u}_{\textup{col}}^{\lambda_j'} - (\mathbf{u}_{\textup{col}}^{\lambda_{j''}'}-\mathbf{u}_{\textup{col}}^{r''-1}) - \mathbf{u}_{\textup{col}}^{\lambda_{j''+1}'} - \mathbf{u}_{\textup{col}}^{\lambda_{j''+2}'}
   -\ldots - \mathbf{u}_{\textup{col}}^{\lambda_{j'-1}'} - \mathbf{u}_{\textup{col}}^{r-1}.
\]

On the other hand, we can track the weight change at each stage of the column cycle using Proposition~\ref{prop:rebase weight change}.  Since we are doing reverse column rebases, the contributions from the \textup{col}-vectors are negated relative to the proposition, and they sum to
\[
-\mathbf{u}_{\textup{col}}^{\lambda_{j'}'} - \mathbf{u}_{\textup{col}}^{ \lambda_{j'-1}'}-\ldots -\mathbf{u}_{\textup{col}}^{ \lambda_{j''}'}.
\]
By tracking the position of $\ol{1}$ after each column rebase, we have that if $\ol{1}$ ends up in column $j$, then the total contribution from the \textup{disp}-vectors is 
\[
\mathbf{u}_{\textup{col}}^{ r''-1} + \left(\mathbf{u}_{\textup{col}}^{ \lambda_{j'}'} - \mathbf{u}_{\textup{col}}^{ r-1}\right).\] If instead $\ol{1}$ ends up in column $j'' \neq j$, this contribution is $\mathbf{u}_{\textup{col}}^{ r''-1} + \left(\mathbf{u}_{\textup{col}}^{ \lambda_{j'}'} - \mathbf{u}_{\textup{col}}^{ r-1}\right) + \mathbf{u}_{\textup{col}}^{ \lambda_j'}$. In both cases, the resulting weight change matches the computed value $\mathbf{u}_{\textup{col}}^{ \lambda_j'} - \mathbf{v}_{\textup{disp}}^{\widetilde{T},\lambda_j'}$.
\end{proof}

\begin{figure}
\centering
\resizebox{!}{.2\textheight}{\input{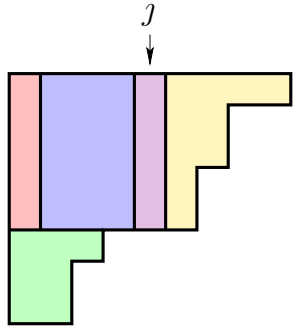}}
\caption{The cases in the proof of Proposition \ref{prop:cycle_weichgt_change}.}
\label{fig:cycle_cases}
\end{figure}

\begin{cor}
\label{cor:doesn't matter which column}
The weight change vectors associated to column cycles with respect to different columns of the same length are equal.
\end{cor}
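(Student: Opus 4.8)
The corollary follows almost immediately from Proposition~\ref{prop:cycle_weichgt_change} once one observes that the formula given there depends on the column only through its length. The plan is as follows. Fix a shape $\lambda$ and the column superstandard tabloid $T$ of shape $\lambda$ with some start $\ol{i}$, together with its corresponding tableau $\widetilde{T}$; let $j$ and $j''$ be two column indices with $\lambda'_j = \lambda'_{j''}$. By Proposition~\ref{prop:cycle_weichgt_change}, the weight change vector of the forward column cycle of $T$ with respect to the $j$-th column is $\mathbf{u}_{\textup{col}}^{\lambda'_j} - \mathbf{v}_{\textup{disp}}^{\widetilde{T},\lambda'_j}$, and with respect to the $j''$-th column it is $\mathbf{u}_{\textup{col}}^{\lambda'_{j''}} - \mathbf{v}_{\textup{disp}}^{\widetilde{T},\lambda'_{j''}}$.

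\textbf{Key step.} The two terms on the right are visibly functions of the integer $\lambda'_j$ (respectively $\lambda'_{j''}$) and of $\widetilde{T}$ alone: the vector $\mathbf{u}_{\textup{col}}^{k}$ is defined purely from the integer $k$ (it has $1$'s in the first $k$ rows), and the displacement vector $\mathbf{v}_{\textup{disp}}^{\widetilde{T},k}$ records, for each row $m$, how many of the $k$ cells following the cell of $\widetilde{T}$ containing $\ol{1}$ (in the cyclic order of Figure~\ref{fig:disp_cyc}) lie in row $m$ — again a count determined by $k$ and $\widetilde{T}$, with no reference to which column is being cycled. Hence $\lambda'_j = \lambda'_{j''}$ forces $\mathbf{u}_{\textup{col}}^{\lambda'_j} = \mathbf{u}_{\textup{col}}^{\lambda'_{j''}}$ and $\mathbf{v}_{\textup{disp}}^{\widetilde{T},\lambda'_j} = \mathbf{v}_{\textup{disp}}^{\widetilde{T},\lambda'_{j''}}$, so the two weight change vectors are literally equal. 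For good measure one notes, using Proposition~\ref{prop:column cycles}, that both column cycles are loops based at the same pair of tabloids: each ends at the column superstandard tableau of start $\ol{i + \lambda'_j} = \ol{i + \lambda'_{j''}}$, so the comparison of their monodromy contributions is meaningful (and, by Remark~\ref{rmk:basepoint independent}, base-point independent once we pass to the associated elements of $G_w$).

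\textbf{Expected obstacle.} There is essentially no obstacle: all the real work — in particular the delicate case analysis tracking the trajectory of $\ol{1}$ through the successive column rebases that make up a column cycle — has already been absorbed into Proposition~\ref{prop:cycle_weichgt_change}, whose conclusion is stated in a form manifestly insensitive to the identity of the cycled column beyond its length. The only care needed is to make sure the two proposition invocations are applied to the same tableau $\widetilde{T}$ (and hence to the same occurrence of $\ol{1}$), which is automatic since we fix $T$ at the outset.
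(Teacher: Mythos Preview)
Your argument is correct and is exactly the approach the paper intends: the corollary is stated without proof immediately after Proposition~\ref{prop:cycle_weichgt_change} because the formula $\mathbf{u}_{\textup{col}}^{\lambda'_j} - \mathbf{v}_{\textup{disp}}^{\widetilde{T},\lambda'_j}$ visibly depends on the column $j$ only through its length $\lambda'_j$. Your observation that both cycles end at the same tabloid (via Proposition~\ref{prop:column cycles}) is a nice sanity check but not required for the bare statement.
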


\subsubsection{Building monodromy group generators from column cycles}

Consider the partition $\lambda = \langle r_1^{m_1}, \ldots, r_k^{m_k} \rangle$.  For $1 \leqslant i \leqslant k$, let $m'_i = m_1 + \ldots + m_i$ be the distinct part-sizes of $\lambda'$.  Recall the space $\Gup_\lambda$ from Definition \ref{defn:gup}. 
%

\begin{thm}
\label{thm:monodromy description}
Suppose $\lambda$ is a partition and $w$ is a permutation of shape $\lambda$. Then $G_w = \Gup_\lambda$.
\end{thm}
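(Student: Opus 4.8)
The plan is to prove $G_w = \Gup_\lambda$ by combining the upper bound already established in Proposition~\ref{prop:easy containment} with a matching lower bound obtained from the explicit column cycles constructed above. By Remark~\ref{rmk:basepoint independent} and Lemma~\ref{lem:component contains superstandard}, it suffices to fix $w$ so that $Q(w)$ is a column superstandard tabloid (of some start $\ol{i}$); this is exactly the case for which column cycles were designed. First I would observe that a forward column cycle with respect to column $j$ is, by Proposition~\ref{prop:column cycles}, a path in $\A_\lambda$ from the column superstandard tabloid with start $\ol{i}$ to the column superstandard tabloid with start $\ol{i+\lambda'_j}$; composing $n/\gcd$-many such cycles (using that there are only finitely many starts modulo $n$, or more precisely iterating until the start returns to $\ol{i}$) produces a genuine \emph{loop} based at $Q(w)$, and by Lemma~\ref{lem:graph covering} this loop lifts to a sequence of Knuth moves on permutations. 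The weight change vector of the lifted loop therefore lies in $G_w$, and by Proposition~\ref{prop:cycle_weichgt_change} it equals a known sum of vectors of the form $\mathbf{u}_{\textup{col}}^{\lambda'_j} - \mathbf{v}_{\textup{disp}}^{\widetilde{T},\lambda'_j}$.

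The heart of the argument is then a linear-algebra computation: I would show that as $j$ ranges over column indices, the weight-change vectors produced this way span $\Gup_\lambda$ over $\Z$. Here the displacement vectors $\mathbf{v}_{\textup{disp}}$ are a nuisance — they depend on where $\ol{1}$ sits — but the point is that over the course of a full loop, $\ol{1}$ returns to its starting cell, so the $\mathbf{v}_{\textup{disp}}$ contributions telescope and cancel in the total weight change of the loop; what remains is a combination of the $\mathbf{u}_{\textup{col}}^{\lambda'_j}$ vectors. Concretely, a loop that performs a column cycle on column $j$ exactly once and then "undoes" the change of start by some compensating cycles contributes (up to the cancellation just described) a vector built from the $e_i = \mathbf{u}_{\textup{col}}^{m'_i}$ of Definition~\ref{defn:gup}, and Corollary~\ref{cor:doesn't matter which column} ensures only the column \emph{length} $m'_i$ matters. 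Using Proposition~\ref{prop:spanning_set} applied to the equation $\sum_i m'_i a_i = 0$ (whose solution lattice is spanned by the obvious vectors $\mathbf{x}^{(i,j)}$), I would exhibit, for each basis solution $\mathbf{x}^{(i,j)}$, an explicit loop whose lifted weight change is $\sum_l x^{(i,j)}_l e_l$; namely, run $-m'_j/\gcd(m'_i,m'_j)$ copies of the column cycle on a column of length $m'_i$ and $m'_i/\gcd(m'_i,m'_j)$ copies of the cycle on a column of length $m'_j$, arranged so that the net displacement of $\ol{1}$ (equivalently, the net change of start) is zero, which is possible precisely because $\sum_l x^{(i,j)}_l m'_l = 0$. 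Since these vectors span $\Gup_\lambda$, we get $\Gup_\lambda \subseteq G_w$.

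Combining with Proposition~\ref{prop:easy containment} gives $G_w = \Gup_\lambda$, proving the theorem. I expect the main obstacle to be the bookkeeping in the lower-bound step: verifying that one can actually close up the column-cycle paths into loops with the prescribed net weight change while keeping $\ol{1}$'s total displacement under control, and confirming that the telescoping of the $\mathbf{v}_{\textup{disp}}$ terms works out exactly. This requires carefully chaining Proposition~\ref{prop:cycle_weichgt_change} over a composite of cycles and tracking the position of $\ol{1}$ through each one — conceptually routine but notationally heavy, much in the spirit of the proof of Proposition~\ref{prop:column cycles}. A secondary point to be careful about is the base-point independence: one should confirm via Remark~\ref{rmk:basepoint independent} that proving the equality for $w$ with column superstandard $Q(w)$ really does yield it for all $w$ of shape $\lambda$, which is immediate once we know every connected component of $\A_\lambda$ contains a column superstandard tabloid (Lemma~\ref{lem:component contains superstandard}) and that $G_w$ is constant on connected components.
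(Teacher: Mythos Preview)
Your proposal is correct and follows essentially the same approach as the paper: reduce to a column superstandard basepoint via Remark~\ref{rmk:basepoint independent} and Lemma~\ref{lem:component contains superstandard}, then for each generator $\mathbf{x}^{(i,j)}$ of the Diophantine solution lattice (Proposition~\ref{prop:spanning_set}) run $m'_j/\gcd(m'_i,m'_j)$ reverse column cycles on a column of length $m'_i$ followed by $m'_i/\gcd(m'_i,m'_j)$ forward column cycles on a column of length $m'_j$, and observe that the $\mathbf{v}_{\textup{disp}}$ contributions cancel because both runs traverse the same multiset of $m'_i m'_j/\gcd(m'_i,m'_j)$ consecutive cells in the cyclic order, leaving exactly $\mathbf{v}^{(i,j)}$. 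The only point where you are vaguer than the paper is the mechanism of the cancellation: it is not really a telescoping sum but rather that the forward and reverse portions sweep the identical set of cells following $\ol{1}$, so their $\mathbf{v}_{\textup{disp}}$ terms are literally equal and subtract off.
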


\begin{proof}
By Proposition \ref{prop:easy containment}, $G_w\subseteq \Gup_\lambda$. We now prove the opposite containment. 
By \eqref{eq:gup}, there is a natural bijection between elements of $\Gup_\lambda$ and integer solutions $(a_i)_{1 \leq i \leq k}$ of the equation $\sum_{i=1}^k a_i m_i' = 0$. By Proposition \ref{prop:spanning_set}, the solution set of this equation is the $\Z$-span of the vectors $\mathbf{x}^{(i,j)}\in\Z^k$ 
defined in Definition~\ref{def:x^ij}.
The map associating an element of $\Gup_\lambda$ to a solution is obviously $\Z$-linear, so it is sufficient to show that 
\[
\mathbf{v}^{(i,j)} := \frac{m_i'}{\gcd(m_i', m_j')} e_j' - \frac{m_j'}{\gcd(m_i', m_j')} e_i'
\]
is an element of $G_w$.

As explained in Remark \ref{rmk:basepoint independent}, by Lemma \ref{lem:component contains superstandard}, we may restrict ourselves to the case when $Q(w)$ is column superstandard.  Given such a tabloid $T$, consider the following concatenation of column cycles in $\A_\lambda$:
\begin{enumerate}
\item a sequence of $\frac{m_j'}{\gcd(m_i', m_j')}$ reverse column cycles with respect to a column of size $m_i'$, followed by
\item a sequence of $\frac{m_i'}{\gcd(m_i', m_j')}$ forward column cycles with respect to a column of size $m_j'$. 
\end{enumerate} 
(By Corollary~\ref{cor:doesn't matter which column}, it doesn't matter which columns of the given lengths are selected.)  By Proposition~\ref{prop:column cycles}, the final tabloid of this sequence of tabloids is equal to the initial tabloid $T$, i.e., this sequence is a loop in $\A_\lambda$.  By Proposition~\ref{prop:cycle_weichgt_change}, the weight change vector associated to this loop is equal to
\begin{multline*}
\Delta_{\textup{wt}} = 
- \frac{m_j'}{\gcd(m_i', m_j')} \mathbf{u}_{\textup{col}}^{m_i'} + \sum_{k = 0}^{m_j'/\gcd(m_i', m_j') - 1}\mathbf{v}_{\textup{disp}}^{\widetilde{T} + km_i', m_i'} 
+ {} \\
\left(
\frac{m_i'}{\gcd(m_i', m_j')} \mathbf{u}_{\textup{col}}^{m_j'} - \sum_{k = 0}^{m_i'/\gcd(m_i', m_j') - 1}\mathbf{v}_{\textup{disp}}^{\widetilde{T} + m_i'm_j'/\gcd(m_i', m_j') - k m_j', m_j'}\right),
\end{multline*}
where by $\widetilde{T} + m$ we mean the result of adding $m$ to every entry of the tableau $\widetilde{T}$.  Observe that the two sums of \textup{disp}-vectors cancel exactly: the reverse cycles and the forward cycles deal with the same (multi)set of $\frac{m_i'm_j'}{\gcd(m_i', m_j')}$ consecutive cells in the cyclic order.  Thus
\[
\Delta_{\textup{wt}} = \frac{m_i'}{\gcd(m_i', m_j')} \mathbf{u}_{\textup{col}}^{m_j'} - \frac{m_j'}{\gcd(m_i', m_j')} \mathbf{u}_{\textup{col}}^{m_i'} = 
\frac{m_i'}{\gcd(m_i', m_j')} e'_{j} - \frac{m_j'}{\gcd(m_i', m_j')} e'_i = \mathbf{v}^{(i,j)}.
\]
Lifting this cycle from $\A_\lambda$ to the graph on permutations shows that $\mathbf{v}^{(i,j)} \in G_w$ for every $1 \leqslant i < j \leqslant k$. This completes the proof.
\end{proof}

\section{Components of KL DEGs}
\label{sec:charge}

In this section, we finish the description of the Knuth equivalence classes of permutations by describing which tabloids can be reached from a given one by Knuth moves. Using the machinery from the previous section, it is easy to see that certain column superstandard tabloids are connected by Knuth moves; together with the fact that every Knuth class of tabloids contains a superstandard tabloid, this gives an upper bound on the number of connected components. Then we use the charge statistic to produce a matching lower bound.

\begin{defn}
For a partition $\lambda$, define 
\[d_\lambda:=\gcd(\lambda_1', \lambda_2',\dots),\] 
where $\lambda'$ is the conjugate partition of $\lambda$. Equivalently, $d_\lambda$ is the greatest common divisor of the multiplicities of parts of $\lambda$.
\end{defn}

\begin{prop}
\label{prop:upper bound}
The number of connected components of $\A_\lambda$ is no larger than $d_\lambda$.
\end{prop}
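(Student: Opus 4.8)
The plan is to exhibit, for each residue class $\ol{i}$, a single connected component of $\A_\lambda$ that contains all column superstandard tabloids whose starting residue is congruent to $\ol{i}$ modulo $d_\lambda$; since every Knuth class of tabloids contains a column superstandard tabloid (Lemma~\ref{lem:component contains superstandard}), this will immediately give the bound of $d_\lambda$ on the number of components.

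First I would recall that the forward column cycle with respect to a column of length $\lambda'_j$ takes the column superstandard tableau of shape $\lambda$ with start $\ol{m}$ to the column superstandard tableau with start $\ol{m + \lambda'_j}$ (Proposition~\ref{prop:column cycles}), and that every step of a column cycle is either trivial or a Knuth move on the underlying tabloids. Hence the column superstandard tabloids with starts $\ol{m}$ and $\ol{m + \lambda'_j}$ lie in the same connected component of $\A_\lambda$, for every distinct part-size $\lambda'_j$ of $\lambda'$; and since we may also run cycles backward, the same is true for $\ol{m}$ and $\ol{m - \lambda'_j}$. It follows that the starting residues of column superstandard tabloids that are mutually connected by Knuth moves are closed under adding or subtracting any of the numbers $\lambda'_1, \lambda'_2, \ldots$ — equivalently, under adding or subtracting any element of the subgroup of $\Z$ generated by the $\lambda'_j$, which is precisely $d_\lambda\Z$. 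Therefore all column superstandard tabloids whose starts are congruent modulo $d_\lambda$ lie in one connected component of $\A_\lambda$.

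Next I would combine this with Lemma~\ref{lem:component contains superstandard}: every connected component of $\A_\lambda$ contains a column superstandard tabloid. Two components can therefore be distinguished only if no column superstandard tabloid in one is Knuth-connected to a column superstandard tabloid in the other, and by the previous paragraph this forces their starting residues to be incongruent modulo $d_\lambda$. Since there are exactly $d_\lambda$ residue classes modulo $d_\lambda$ (and the column superstandard tabloid with start $\ol{i}$ is a genuine tabloid of shape $\lambda$ for every $\ol{i}\in[\ol n]$), the number of connected components is at most $d_\lambda$.

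I do not anticipate a serious obstacle here; the technical work — that column cycles are concatenations of Knuth moves and that they shift the start by $\lambda'_j$ — is already packaged in Propositions~\ref{prop:column cycles} and~\ref{prop:cycle_weichgt_change} and the surrounding discussion, so the argument is essentially a bookkeeping reduction. The one point requiring a little care is making sure that the shift by $\lambda'_j$ is available for \emph{every} distinct part-size of $\lambda'$ (so that one really gets the full subgroup $d_\lambda\Z$ and not merely the subgroup generated by some of the $\lambda'_j$); this is exactly what Proposition~\ref{prop:column cycles} provides, since a forward column cycle can be performed with respect to a column of any length occurring in $\lambda'$. The matching lower bound — that $\A_\lambda$ has at least $d_\lambda$ components, via the charge statistic — is the subject of the subsequent results and is not needed for this proposition.
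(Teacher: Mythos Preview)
Your proposal is correct and follows essentially the same approach as the paper: use column cycles (via Proposition~\ref{prop:column cycles}) to connect column superstandard tabloids whose starts differ by any $\lambda'_j$, hence by any multiple of $d_\lambda = \gcd(\lambda'_1,\lambda'_2,\ldots)$, and then invoke Lemma~\ref{lem:component contains superstandard} to conclude. One small citation quibble: the fact that column cycles are walks in $\A_\lambda$ comes from the proposition showing column rebases consist of Knuth moves, not from Proposition~\ref{prop:cycle_weichgt_change} (which concerns weight change vectors and is irrelevant here).
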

\begin{proof}
Column cycles are walks in $\A_\lambda$ that take a column superstandard tabloid to another one, related by adding $\lambda'_j$ to each entry for some $j$. Since $d_\lambda$ is a $\Z$-linear combination of the $\lambda'_j$'s, two column superstandard tabloids that differ by $d_\lambda$ are in the same connected component.  By Lemma~\ref{lem:component contains superstandard}, every connected component of $\A_\lambda$ contains a column superstandard tabloid, so there are at most $d_\lambda$ connected components.
\end{proof}

We seek to prove that this upper bound is in fact the correct value.  Our proof makes use of the charge statistic, which was partially introduced in Section~\ref{sec:charge intro}.


\begin{defn}
Given a tabloid $T$ of shape $\lambda$, extend Definition~\ref{def:local charge} of local charge by defining $\lch_i(T) = 0$ if $\lambda_i \neq \lambda_{i + 1}$.  The \emph{charge} of $T$ is defined by
\[
\ch(T) := \sum_{i = 1}^{\ell(\lambda) - 1} i \cdot \lch_{i}(T).
\]
\end{defn}

\begin{rmk}
The definition of charge given here is not the conventional one (as in, e.g., \cite[\S5]{DalalMorse}). However, it is straightforward to see using Lemma~\ref{lem:local independent of activation} that our definition of the charge of a tabloid $T$ differs by a multiple of $d_\lambda$ from the conventional charge of the word whose $i$-th letter is the index of the row of $T$ containing $\ol{i}$. In light of the statement of Theorem~\ref{thm:connected components} below, this is sufficient for our purposes, and the present definition is easier to work with.
\end{rmk}

Before finishing the description of the connected components of $\A_\lambda$, we need a lemma regarding the charges of column superstandard tabloids.

\begin{lemma}
\label{lem:all charges are non-empty}
Suppose $\lambda$ is a partition of $n$ and $1\leqslant i\leqslant n$. Let $T$ be the column superstandard tabloid of shape $\lambda$ starting at $\ol{i}$, and $T'$ be the column superstandard tabloid of shape $\lambda$ starting at $\ol{i+1}$. Then $\ch(T')\equiv \ch(T) - 1 \pmod{d_\lambda}$.
\end{lemma}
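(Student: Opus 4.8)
The plan is to reduce everything to an explicit formula for the local charge of a column superstandard tabloid, and then to a short divisibility computation. The key claim is:

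\emph{For a column superstandard tabloid $T$ of shape $\lambda$ and any $k$ with $\lambda_k=\lambda_{k+1}=:m$, one has $\lch_k(T)=1$ if $\ol{n}$ lies in row $k$ of $T$, and $\lch_k(T)=0$ otherwise.}

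To prove this, take the column superstandard tableau $\widetilde T$ with start $\ol i$; then cell $(k,c)$ holds $\ol{a_c}:=\ol{i+\Lambda_{c-1}+k-1}$ and cell $(k+1,c)$ holds $\ol{a_c+1}$, where $\Lambda_{c-1}=\lambda_1'+\dots+\lambda_{c-1}'$. Since column $c$ has at least $k+1\geqslant 2$ cells for $1\leqslant c\leqslant m$, these $2m$ residues are pairwise distinct, and listed by increasing integer value they read $\ol{a_1},\ol{a_1+1},\ol{a_2},\ol{a_2+1},\dots,\ol{a_m},\ol{a_m+1}$, strictly alternating between rows $k$ and $k+1$. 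Passing to the broken order amounts to cutting this cyclic list at the point where $\ol n$ is followed by $\ol 1$. Using Lemma~\ref{lem:local independent of activation} to compute the charge matching with the activation ordering $c\mapsto\ol{a_c}$ (which keeps the alternating structure visible), I would check the three possibilities directly: if the cut does not occur immediately after a top-row entry equal to $\ol n$ (no wrap among the $2m$ residues; or the wrapping residue $\ol n$ lies in row $k+1$; or $\ol n$ lies in neither row), then every $\ol{a_c}$ is matched to $\ol{a_c+1}$ and nothing contributes, so $\lch_k(T)=0$; whereas if $\ol n=\ol{a_{c_0}}$ for some (unique) $c_0$, then all pairs $\ol{a_c}\leftrightarrow\ol{a_c+1}$ with $c\neq c_0$ persist while $\ol{a_{c_0}}=\ol n$ is forced to match the leftover $\ol 1$, the single contributing pair, so $\lch_k(T)=1$.

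Granting the claim, the formula $\ch(T)=\sum_{k=1}^{\ell(\lambda)-1}k\cdot\lch_k(T)$ collapses: if $\ol n$ lies in row $r$ of $T$, then $\ch(T)=r$ when $\lambda_r=\lambda_{r+1}$ (with the convention $\lambda_{\ell(\lambda)+1}=0$) and $\ch(T)=0$ otherwise. The lemma now follows from bookkeeping, since in $T'$ the residue $\ol n$ occupies the cell immediately preceding (in column-reading order) the cell of $\ol n$ in $T$. If $\ol n$ lies in row $r\geqslant 2$ of $T$, it lies in row $r-1$ of $T'$ in the same column, so with $A=[\lambda_{r-1}=\lambda_r]$ and $B=[\lambda_r=\lambda_{r+1}]$ we get $\ch(T')-\ch(T)=(r-1)A-rB$; checking the four cases $(A,B)\in\{0,1\}^2$, this is $-1$, $r-1$, $-r$, or $0$, and in the latter three cases either row $r$ (resp.\ $r-1$) is the last row of its length and hence its index equals a part $\lambda_j'$ of $\lambda'$, so is divisible by $d_\lambda$, or the part $\lambda_r$ of $\lambda$ has multiplicity one, forcing $d_\lambda=1$; either way $(r-1)A-rB\equiv -1\pmod{d_\lambda}$. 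If instead $\ol n$ lies in row $1$ of $T$ (say in column $c$), then in $T'$ it drops to the bottom of column $c-1$ (or of the last column if $c=1$), which is the last row of its length, so $\ch(T')=0$, while $\ch(T)=[\lambda_1=\lambda_2]$; when $\lambda_1>\lambda_2$ the largest part has multiplicity one and $d_\lambda=1$, so again $\ch(T')-\ch(T)\equiv -1\pmod{d_\lambda}$.

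The main obstacle is the first step: establishing the local-charge formula for column superstandard tabloids requires careful tracking of how the block of $2m$ consecutive-valued residues wraps around $\ol n$, and all three wrapping scenarios must be verified (each is a short, explicit matching computation). Once that formula is in hand, the rest is elementary arithmetic with $d_\lambda=\gcd(\lambda_1',\lambda_2',\dots)$, using only the two facts that a part of $\lambda$ of multiplicity one forces $d_\lambda=1$ and that the index of the last row of a given length is a part of $\lambda'$, hence a multiple of $d_\lambda$.
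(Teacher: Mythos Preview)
Your proposal is correct and follows essentially the same approach as the paper: both arguments first establish that a column superstandard tabloid has all local charges zero except possibly $\lch_r=1$ when $\ol{n}$ sits in row $r$ with $\lambda_r=\lambda_{r+1}$, and then do a short case analysis on the position of $\ol{n}$ in $T$ versus $T'$ using that $r$ or $r-1$ is a part of $\lambda'$ (hence divisible by $d_\lambda$). Your write-up is somewhat more explicit in justifying the local-charge formula via the alternating structure of the two rows, and your $(A,B)$ bookkeeping is a clean way to organize the four subcases; the paper's proof is terser (and in fact contains a small typo, writing $\ch(T)=r+1$ where it should be $r$), but the substance is identical.
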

\begin{proof}
By construction, all the local charges for a column superstandard tabloid are $0$, except if $\ol{n}$ is located in the row directly above the row containing $\ol{1}$ and the two rows have the same size, in which case the local charge is $1$.  If $\ol{n}$ is in row $r$ of $T$ and $\lambda_{r - 1} = \lambda_r = \lambda_{r + 1}$ then $\ch(T) = r + 1$ and $\ch(T') = r$, and the result holds.  If $\lambda_{r - 1} = \lambda_{r} > \lambda_{r + 1}$ or if $r = \ell(\lambda)$ then $\ch(T) = 0$ and $\ch(T') = r - 1$.  In this case, $r = \lambda'_i$ for some $i$, so $r \equiv 0 \pmod{d_\lambda}$ and the result holds.  Similar considerations apply if $\lambda_{r - 1} < \lambda_r = \lambda_{r + 1}$ or if $r = 1$.  Finally, if $\lambda_{r - 1} < \lambda_r < \lambda_{r + 1}$ then $d_\lambda = 1$ and the result is trivial.
\end{proof}

\begin{thm}
\label{thm:connected components}
Let $\lambda$ be a partition and $T, T'$ be tabloids of shape $\lambda$. Then $T$ and $T'$ are in the same connected component of $\A_\lambda$ if and only if 
\[
\ch(T)\equiv\ch(T')\pmod{d_\lambda}.
\]
\end{thm}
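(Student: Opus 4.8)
The plan is to prove the two implications separately. For the ``only if'' direction, I would first observe that a single Knuth move on tabloids changes the charge in a controlled way modulo $d_\lambda$, and then conclude by transitivity along the edges of a path in $\A_\lambda$. For the ``if'' direction, I would use the upper bound from Proposition~\ref{prop:upper bound} together with Lemma~\ref{lem:all charges are non-empty} to show that the $d_\lambda$ connected components are distinguished \emph{exactly} by the residue of $\ch$ modulo $d_\lambda$, so that no two components can share a charge residue and no more than $d_\lambda$ residues can occur.

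\textbf{Step 1: Knuth moves change charge by a multiple of $d_\lambda$.} The first and main step is to verify that if $T$ and $T'$ differ by a Knuth move, then $\ch(T) \equiv \ch(T') \pmod{d_\lambda}$. Here I would use the realization of tabloid Knuth moves via permutation Knuth moves (the Proposition following Lemma~\ref{lem:graph covering}): a Knuth move on tabloids exchanges $\ol{i}$ and $\ol{i+1}$, and by Theorem~\ref{thm:Knuth move action} the weight changes either not at all (when $\ol{i} \neq \ol{n}$) or by moving $\pm 1$ between two rows (when $\ol{i} = \ol{n}$). Since $\ch$ is built from the local charges $\lch_k$, and $\lch_k$ is independent of activation ordering (Lemma~\ref{lem:local independent of activation}), I would analyze how the charge matchings in the two affected pairs of rows change when $\ol{i}$ and $\ol{i+1}$ are swapped. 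The cleanest route is probably to relate $\ch$ to the conventional charge of the associated word (the row-index word), as noted in the Remark after the definition of $\ch$: the conventional charge is a well-known Knuth-class invariant in finite type, and the cyclic/affine wraparound only introduces ambiguity that is a multiple of $d_\lambda$. So I would argue: conventional charge is genuinely invariant under the finite-type Knuth moves (those not involving the $\ol{n}\leftrightarrow\ol{1}$ wraparound), and the wraparound moves shift it by a multiple of $d_\lambda$; combining with the Remark's comparison between our $\ch$ and conventional charge gives the claim. This step — pinning down precisely why the wraparound contributes only multiples of $d_\lambda$ — is where I expect the real work to be, and it is the main obstacle.

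\textbf{Step 2: the components are exactly the charge-residue classes.} Given Step 1, tabloids in the same component have congruent charge mod $d_\lambda$, which is the ``only if'' direction. For ``if'', by Lemma~\ref{lem:component contains superstandard} every component contains a column superstandard tabloid; by Lemma~\ref{lem:all charges are non-empty}, as the start index ranges over $\ol{1}, \ol{2}, \dots, \ol{n}$ the charges of these column superstandard tabloids run through \emph{every} residue mod $d_\lambda$ (they decrease by $1$ at each step, cyclically). Hence all $d_\lambda$ residues are actually realized, so there are at least $d_\lambda$ components; combined with Proposition~\ref{prop:upper bound} there are exactly $d_\lambda$, and by Step 1 each component sits inside a single residue class, so the components \emph{are} the residue classes. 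In particular, if $\ch(T) \equiv \ch(T') \pmod{d_\lambda}$, then $T$ and $T'$ lie in the same component. This finishes the proof.

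\textbf{Remark on the argument's shape.} Note that Step 2 is essentially a counting/pigeonhole argument that leans on everything already assembled (the superstandard-tabloid lemmas and the upper bound), so the entire weight of the theorem rests on Step 1. An alternative to going through conventional charge would be a direct case analysis: when $T'$ is obtained from $T$ by swapping $\ol{i}, \ol{i+1}$, one checks by hand how $\lch_k(T)$ and $\lch_k(T')$ compare for the (at most two) affected values of $k$, using the position of $\ol{i-1}, \ol{i}, \ol{i+1}, \ol{i+2}$ relative to the row structure — this is parallel to the case analysis already done in Lemma~\ref{lem:graph covering} for the $\tau$-invariant. I would keep both options open and use whichever yields the cleaner bookkeeping; the conventional-charge route is likely shorter to write but requires citing the finite-type invariance, while the direct route is self-contained but tedious.
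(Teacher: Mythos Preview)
Your overall architecture is correct and matches the paper exactly: prove that a single Knuth move preserves $\ch$ modulo $d_\lambda$ (your Step~1), then close with the pigeonhole argument combining Lemma~\ref{lem:component contains superstandard}, Lemma~\ref{lem:all charges are non-empty}, and Proposition~\ref{prop:upper bound} (your Step~2). The paper carries out Step~1 via your route~(b), the direct case analysis, not via conventional charge.

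Two comments on Step~1. First, the opening sentence invoking Theorem~\ref{thm:Knuth move action} and the weight change of $\rho$ is a red herring here: the weight vector lives on the permutation side and plays no role in comparing $\ch(T)$ with $\ch(T')$; you should drop it. Second, the paper's direct analysis is more involved than your sketch of route~(b) suggests. The case $\ol{i}\neq\ol{n}$ is handled by choosing a clever activation ordering (via Lemma~\ref{lem:local independent of activation}) so that $\ol{i}$ is never matched to $\ol{i+1}$; then the local charge is literally unchanged. The case $\ol{i}=\ol{n}$ is the hard one: up to four local charges $\lch_{r_1-1},\lch_{r_1},\lch_{r_2-1},\lch_{r_2}$ can change, and the paper controls each using Lemma~\ref{lemma:charge_decrement} (and a variant with the roles of rows reversed), with a further split according to whether $r_2=r_1+1$. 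The individual changes are not multiples of $d_\lambda$, but their weighted sum $\sum j\cdot\Delta\lch_j$ is, and verifying this uses that whenever $\lambda_{r-1}<\lambda_r$ or $\lambda_r>\lambda_{r+1}$ the row index $r$ is congruent to $0$ or $1$ modulo $d_\lambda$.

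As for your preferred route~(a): it is plausible but does not obviously save work. The Remark's comparison between $\ch$ and conventional charge already leans on Lemma~\ref{lem:local independent of activation}, and the wraparound move (swapping $\ol{n}$ and $\ol{1}$) has no off-the-shelf citation---you would still need a computation essentially equivalent to the paper's $\ol{i}=\ol{n}$ case. So route~(a) shifts the bookkeeping rather than eliminating it.
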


\begin{proof}
We will show that if 
$T$ and $T'$ are two tabloids of shape $\lambda$ connected by a Knuth move, then $\ch(T)\equiv\ch(T')\pmod{d_\lambda}$.
The ``only if'' direction of the claimed result follows immediately by induction. The ``if'' direction then follows using Proposition \ref{prop:upper bound} and Lemma \ref{lem:all charges are non-empty}.

Suppose the Knuth move between $T$ and $T'$ exchanges the entries $\ol{i}$ and $\ol{i+1}$; without loss of generality, we may assume that $\ol{i}$ is in a higher row of $T$ than $\ol{i+1}$.  We consider two cases, depending on the value of $\ol{i}$.

\textbf{Case $\ol{i} \neq \ol{n}$.}  In this case, $\ol{i}$ and $\ol{i + 1}$ are either both larger or both smaller in broken order than each other entry in the tabloid.  Thus, if these entries lie in non-adjacent rows then the local charge between each pair of adjacent rows is the same in $T$ and $T'$, and so $\ch(T) = \ch(T')$.  Suppose instead that $\ol{i}$ and $\ol{i + 1}$ lie in adjacent rows $r$, $r + 1$.  The only local charge that could differ between $T$ and $T'$ is $\lch_r$, and so the only interesting case is when $\lambda_r = \lambda_{r + 1}$. Moreover, the only way the local charge $\lch_r$ could differ between $T$ and $T'$ is if $\ol{i}$ is matched with $\ol{i+1}$ in the charge matching in $T$.  By Lemma~\ref{lem:local independent of activation}, the local charge does not depend on the activation ordering, so it suffices to produce an activation ordering for $T$ in which $\ol{i}$ is not matched with $\ol{i + 1}$.

In order for the swap of $\ol{i}$ and $\ol{i + 1}$ to be a Knuth move, the descent sets $\tau(T)$ and $\tau(T')$ must be incomparable.  This is possible only if $T_r$ contains $\ol{i-1}$ or $T_{r+1}$ contains $\ol{i+2}$ (or both). Suppose first that $T_r$ contains $\ol{i-1}$.  
In this case, take an activation ordering that begins with $\ol{i - 1}$: if $\ol{i} \neq \ol{1}$, then $\ol{i + 1}$ is the smallest value larger than $\ol{i - 1}$ in broken order, so $(\ol{i - 1}, \ol{i + 1})$ belongs to the charge matching; if $\ol{i} = \ol{1}$ then $\ol{i - 1} = \ol{n}$ matches to the smallest element of $T_2$, which is $\ol{2} = \ol{i + 1}$.  Since $\ol{i - 1}$ matches with $\ol{i + 1}$ in both cases, $\ol{1}$ does not, as needed.

Now suppose instead that $T_{r + 1}$ contains $\ol{i+2}$. In this case, take an activation ordering that ends with $\ol{i}$: if $\ol{i} \neq \ol{n-1}$ then $\ol{i+2}$ gets matched strictly after $\ol{i+1}$ does, so $\ol{i+1}$ is matched to something earlier that $\ol{i}$; if $\ol{i} = \ol{n-1}$ then the second row contains $\ol{i + 2} = \ol{1}$, which again gets matched strictly after $\ol{i + 1} = \ol{n}$.  In both cases, $\ol{i}$ does not match to $\ol{i + 1}$, as needed.

\textbf{Case $\ol{i} = \ol{n}$.} Suppose that $\ol{n}$ is in row $r_1$ in $T$ and $\ol{1}$ is in row $r_2 > r_1$. Then $\lch_j(T) = \lch_j(T')$ for every $j \not \in \{r_1 - 1, r_1, r_2 - 1, r_2\}$, and so the charge difference $\ch(T) - \ch(T')$ depends only on those four or fewer local charges.  For convenience, define $\Delta_{j} := j \lch_{j}(T) - j\lch_{j}(T')$.  We have numerous possible cases to consider: whether or not $\lambda_j = \lambda_{j + 1}$ for each $j \in \{r_1 - 1, r_1, r_2 - 1, r_2\}$, and whether $r_2 = r_1 + 1$ or not.

Suppose that $\lambda_{r_1 - 1} = \lambda_{r_1}$. Then we have by Lemma~\ref{lemma:charge_decrement} that $\lch_{r_1 - 1}(T) = \lch_{r_1 - 1}(T') - 1$, and so $\Delta_{r_1 - 1} = 1 - r_1$.  If instead $\lambda_{r_1 - 1} < \lambda_{r_1}$ or $r_1 = 1$ then $r_1 \equiv 1 \pmod{d_\lambda}$ and so $\Delta_{r_1 - 1} = 0 \equiv 1 - r_1 \pmod{d_\lambda}$ in this case as well.

It is not hard to modify the proof of Lemma~\ref{lemma:charge_decrement} to the situation in which the largest and smallest elements are swapped out of the \emph{top}, rather than bottom, row; it follows from this modified lemma and an argument identical to the preceding paragraph that $\Delta_{r_2} \equiv -r_2 \pmod{d_\lambda}$.

Suppose now that rows $r_1$ and $r_2$ are not adjacent, so that $T_{r_2 - 1} = T'_{r_2 - 1}$.  If $\lambda_{r_2 - 1} = \lambda_{r_2}$ then by Lemma~\ref{lemma:charge_decrement} we have $\lch_{r_2 - 1}(T') = \lch_{r_2 - 1}(T) - 1$, and so $\Delta_{r_2 - 1} = r_2 - 1$.  As before, if $\lambda_{r_2 - 1} > \lambda_{r_2}$ then $r_2 \equiv 1 \pmod{d_\lambda}$ and so $\Delta_{r_2 - 1} = 0 \equiv r_2 - 1 \pmod{d_\lambda}$ in this case as well.  Similarly, as in the preceding paragraph we have $\Delta_{r_1} \equiv r_1 \pmod{d_\lambda}$.

Combining the results of the previous three paragraphs, when $r_1 < r_2 - 1$ we have that
\[
\ch(T) - \ch(T') = \Delta_{r_1 - 1} + \Delta_{r_1} + \Delta_{r_2 - 1} + \Delta_{r_2} \equiv (1 - r_1) + r_1 - r_2 + (r_2 - 1) \equiv 0 \pmod{d_\lambda},
\]
as claimed.

In the remainder of the proof, we consider the case that $r := r_1$ is equal to $r_2 - 1$, so that $\ol{1}$ and $\ol{n}$ lie in adjacent rows in $T$ and $T'$.  In this case, there are (at most) three contributions to the charge difference: $\ch(T) - \ch(T') = \Delta_{r - 1} + \Delta_{r} + \Delta_{r + 1}$.  The analyses of the second and third paragraphs of the case $\ol{i} = \ol{n}$ still apply in this setting, giving $\Delta_{r - 1} \equiv 1 - r$ and $\Delta_{r + 1} \equiv -1 - r$ modulo $d_\lambda$.

First, suppose that $\lambda_r > \lambda_{r + 1}$.  Then $\Delta_r = 0$, and so $\Delta_{r - 1} + \Delta_{r} + \Delta_{r + 1} \equiv -2r \equiv 0 \pmod{d_\lambda}$, as needed.  

Second (and last), suppose that $\lambda_r = \lambda_{r + 1} =: k$.  We claim that $\lch_r(T) - \lch_r(T') = 2$.  Since the swap of $\ol{n}$ and $\ol{1}$ is a Knuth move, either $\ol{n-1}$ is in row $r$ of $T$ or $\ol{2}$ is in row $r+1$. We use different activation orders depending on which situation we are in; they are illustrated in Figure \ref{fig:funny activation}. We analyze the case when $\ol{n-1}$ is in row $r$ (the top half of the figure). Since the other case is similar, we omit the details, but we do provide the relevant illustration in the bottom half of the figure.

\begin{figure}
\begin{center}
\resizebox{.9\textwidth}{!}{\input{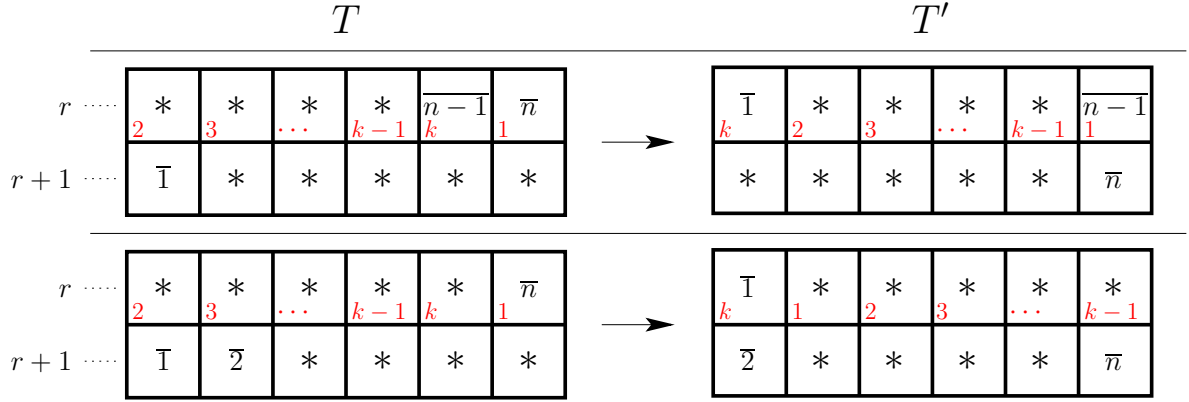}}
\end{center}
\caption{Activation orders used in the proof of Theorem \ref{thm:connected components} in the case that $\ol{n}$ and $\ol{1}$ are swapped and lie in adjacent rows.  Black numbers are tabloid entries; black $*$'s represent entries whose values are irrelevant; and red numbers represent the activation order.}
\label{fig:funny activation}
\end{figure}

In $T$, we use the activation order that begins with $\ol{n}$ first, then proceeds through the other values in some order, then ends with $\ol{n - 1}$.  With this ordering, $\ol{n}$ matches with $\ol{1}$ in row $r + 1$ and contributes $1$ to the local charge.  The next $k - 2$ entries match in some order to $k - 2$ of the elements in row $r + 1$.  Finally, in the last activation in $T$, $\ol{n-1}$ contributes $1$ to the local charge (since the only entry of $T$ larger than $\ol{n-1}$ is $\ol{n}$, and it is not in row $r+1$).  In $T'$, we use the order that begins with $\ol{n - 1}$, then proceeds through the other values in the same order as in $T$, then ends with $\ol{1}$.  With this activation ordering, $\ol{n - 1}$ matches to $\ol{n}$ in row $r + 1$ and does not contribute to the local charge.  At this point, the unmatched entries in row $r + 1$ of $T'$ are exactly the same as the unmatched entries in row $r + 1$ of $T$ after the first pair are matched; hence the next $k - 2$ edges of the charge matching are exactly the same in $T'$ as in $T$, with the same contribution to charge.  Finally, the last match is between $\ol{1}$ and something larger in broken order, with no contribution to local charge.  Thus indeed the local charge between these rows is $2$ less for $T'$. 

Putting everything together, we have in this case that $\Delta_r = 2r$ and so 
\[
\ch(T) - \ch(T') = \Delta_{r - 1} + \Delta_r + \Delta_{r + 1} 
\equiv (1 - r) + 2r + (-1 - r) = 0 \pmod{d_\lambda},
\]
as claimed.  This completes all cases.
\end{proof} 

One interesting corollary is that we can determine exactly when a Kazhdan-Lusztig cell consists of a single Knuth equivalence class.
\begin{cor}
The Kazhdan-Lusztig cell containing the permutation $w$ consists of a single Knuth equivalence class if and only if the shape $\lambda$ of $w$ has all rows distinct.
\end{cor}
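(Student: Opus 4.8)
The relevant cell is the right cell of $w$, which by \cite{cpy} is the set of permutations $w'$ with $P(w') = P(w)$; since Knuth moves fix the $P$-tabloid (Theorem~\ref{thm:Knuth move action}), the right cell decomposes into Knuth equivalence classes, and the plan is to show this decomposition is trivial exactly when the shape $\lambda$ of $w$ has all parts distinct. Two facts about Knuth moves will be used repeatedly: they preserve $P$, and they preserve the coordinate sum of $\rho$, since that sum equals $\frac{1}{n}\sum_{i=1}^{n}(w(i)-i)$ by Lemma~\ref{lemma:sum of diagonals} and this vanishes for $w$ in the affine Weyl group.

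For the ``if'' direction, assume all parts of $\lambda$ are distinct. Then there are no offset constants, so $\dom = \Omega$; all multiplicities of parts of $\lambda$ equal $1$, so $d_\lambda = 1$ and $\A_\lambda$ is connected by Theorem~\ref{thm:connected components}; and $\Gup_\lambda$ is precisely the lattice of integer vectors of coordinate sum zero, because the block-constancy requirement in Definition~\ref{defn:gup} is then vacuous. Now take $u,u'$ in the right cell, with AMBC images $(P,Q,\rho)$ and $(P,Q',\rho')$. First lift a path in $\A_\lambda$ from $Q$ to $Q'$, using the covering of Lemma~\ref{lem:graph covering}, to a sequence of Knuth moves from $u$ to a permutation $w''$ with $Q(w'') = Q'$, whose image is $(P,Q',\rho'')$ for some $\rho''$. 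By coordinate-sum invariance, $\rho'-\rho''$ has coordinate sum zero, hence lies in $\Gup_\lambda = G_{w''}$ (Theorem~\ref{thm:monodromy description}); so by the very definition of the monodromy group there is a sequence of Knuth moves from $w''$ to a permutation whose image is $(P,Q',\rho')$, and this permutation is $u'$ because $\Phi$ is a bijection. Thus $u$ and $u'$ are Knuth equivalent, so the right cell is a single Knuth equivalence class.

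For the ``only if'' direction I will argue the contrapositive. Suppose $\lambda$ has a repeated part, and let $a<b$ be the first and last row-indices of some maximal run of equal parts of $\lambda$ (which exists since some part is repeated), so that $\lambda_a = \lambda_{a+1} = \dots = \lambda_b$ while the dominance inequalities of Definition~\ref{def:dominance} indexed by $a-1$ and by $b$ are absent. Fix $u$ in the right cell with image $(P,Q,\rho)$ and let $\rho^\ast$ be obtained from $\rho$ by subtracting $1$ from entry $a$ and adding $1$ to entry $b$. The only dominance inequalities that $\rho^\ast$ could fail are those indexed by $a-1,a,b-1,b$; the first and last do not exist, while the other two remain satisfied because the modification only increases their slack, so $(P,Q,\rho^\ast) \in \dom$. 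Since $\rho^\ast$ still has coordinate sum zero, $u^\ast := \Psi(P,Q,\rho^\ast) = \Phi^{-1}(P,Q,\rho^\ast)$ lies in the right cell, and $u^\ast \ne u$. If $u^\ast$ and $u$ were Knuth equivalent, then since $Q(u^\ast) = Q(u)$ the vector $\rho^\ast - \rho$ would be an element of the monodromy group $G_u$, and hence, by Proposition~\ref{prop:monodromies act in blocks}, would be constant on the run $a,a+1,\dots,b$; but it equals $-1$ in row $a$ and $+1$ in row $b$, a contradiction. Hence the right cell contains more than one Knuth equivalence class.

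The plan thus reduces the corollary to results already established. The step needing the most care is the identification of the cell together with the bookkeeping of the coordinate-sum condition (this is why one works in the affine Weyl group and not its extended version, in which the right cell is never a single Knuth class because $\sum_i\rho_i$ varies over it), while the small case-analysis verifying that $\rho^\ast$ remains dominant is the place where the hypothesis on $\lambda$ genuinely enters the ``only if'' direction.
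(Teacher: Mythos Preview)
Your proof is correct and follows essentially the same approach as the paper's, deducing both directions from Theorems~\ref{thm:monodromy description} and~\ref{thm:connected components}. Your treatment is in fact more careful: you explicitly construct the second Knuth-class representative $u^*$, verify that $\rho^*$ remains dominant, and keep track of the coordinate-sum condition needed to stay in $\widetilde{S}_n^0$, whereas the paper's terse phrasing (``the difference $\rho(w')_r - \rho(w')_{r + 1}$ is the same for every $w'$ in the Knuth equivalence class'') is literally correct only for $w'$ with $Q(w') = Q(w)$.
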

\begin{proof}
Choose a permutation $w \in \widetilde{S_n}$ of shape $\lambda$.  If $\lambda$ has a repeated part, say $\lambda_r = \lambda_{r + 1}$, then by Theorem~\ref{thm:monodromy description}, the difference $\rho(w')_r - \rho(w')_{r + 1}$ is the same for every $w'$ in the Knuth equivalence class of $w$.  However, from the definition of $\dom$, this is not the case for every $w'$ in the same Kazhdan-Lusztig cell as $w$.  Thus distinct rows is a necessary condition.  On the other hand, if all rows of $\lambda$ are distinct then by Theorem~\ref{thm:connected components}, the Knuth class contains permutations with every possible $Q$-tabloid of shape $\lambda$, and by Theorem~\ref{thm:monodromy description}, for each $Q$-tabloid the Knuth class contains permutations with every possible weight vector $\rho$.  Thus in this case the Knuth class is equal to the Kazhdan-Lusztig cell.
\end{proof}

\begin{rmk}
The results of this section combined with Proposition \ref{prop:symmetries} imply that the action of the shift permutation $s = [2,3,\ldots, n, n + 1]$ on tabloids of shape $\lambda$ provides graph isomorphisms between the $d_\lambda$ connected components of $\A_\lambda$.
\end{rmk}

\section{Crystals}
\label{sec:crystals}

In this section, we briefly describe a connection to affine crystals that is suggested by the importance of the charge statistic; the finite version of the connection has been developed by Assaf \cite{assaf}. We use notation for affine crystals from \cite[\textsection 4]{crysdumb}, except we label crystal operators $f_i$ and $e_i$ by residue classes modulo $n$ as opposed to by the representatives of these classes from $\{0,1,\ldots,n-1\}$. 

There is a \emph{reading word map} $RW$ from the set of tabloids of shape $\lambda = \langle\lambda_1,\lambda_2,\ldots,\lambda_\ell\rangle$ to the set of vertices of $B := B^{1,\lambda_\ell}\otimes B^{1,\lambda_{\ell-1}}\otimes\cdots\otimes B^{1,\lambda_1}$: it reads the rows from bottom to top and takes representatives in $[n]$ for all residue classes. For example,
\[
RW\left(\quad\tableau[sY]{\ol{2}, \ol{3}, \ol{7}\\ \ol{1}, \ol{4}\\ \ol{5}\\ \ol{6}}\quad\right)
\quad=\quad
\tableau[sY]{6}\otimes\tableau[sY]{5}\otimes\tableau[sY]{1,4}\otimes\tableau[sY]{2,3,7}\;.
\]
The image of this map is precisely the set of vertices of $B$ where each element of $[n]$ is used exactly once; call this set of vertices $V$. For $\ol{i}\in[\ol{n}]$, let $K_{\ol{i}}$ be the composition of crystal operators $e_{\ol{i}}e_{\ol{i+1}}f_{\ol{i}}f_{\ol{i+1}}$.

\begin{prop}
\label{prop:crystal Knuth moves}
Suppose $T$ and $T'$ are tabloids of the same shape. Then $T$ and $T'$ are connected by a Knuth move if and only if $RW(T)\neq RW(T')$ and the two reading words are connected by some $K_{\ol{i}}$.
\end{prop}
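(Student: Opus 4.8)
The plan is to compare the combinatorial definition of a Knuth move on tabloids (via incomparability of $\tau$-invariants) with the action of the composite operator $K_{\ol{i}} = e_{\ol{i}} e_{\ol{i+1}} f_{\ol{i}} f_{\ol{i+1}}$ on the reading word, working residue class by residue class. The central observation is that when one restricts a tensor product of single-row Kirillov--Reshetikhin crystals $B^{1,m}$ to the $\mathfrak{sl}_2$-string generated by $e_{\ol{i}}, f_{\ol{i}}$, only the letters equal to $i$ or $i+1$ (equivalently, the residues $\ol{i}$, $\ol{i+1}$) in each tensor factor matter, and the standard signature rule tells us exactly how $f_{\ol{i}}$ and $e_{\ol{i}}$ act: one applies the bracketing rule to the word of $i$'s and $(i+1)$'s read off the factors, cancels adjacent $\ol{i+1}\,\ol{i}$ pairs, and changes the leftmost uncancelled $i$ (for $f_{\ol{i}}$) or rightmost uncancelled $i+1$ (for $e_{\ol{i}}$). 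Since $RW$ reads rows bottom to top and the tabloid entries are distinct residues, within a single tensor factor $B^{1,\lambda_j}$ there is at most one $i$ and at most one $i+1$; thus the relevant data is: which row of $T$ contains $\ol{i}$, which contains $\ol{i+1}$, and their relative order. I would first record this signature/bracketing description precisely as a preliminary, citing the signature rule for $\otimes$ in \cite{crysdumb}.

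Next I would carry out the case analysis on the positions of $\ol{i}$ and $\ol{i+1}$ in $T$. Write $r$ for the row of $\ol{i}$ and $s$ for the row of $\ol{i+1}$; because of the bottom-to-top reading, the $i$ appears to the right of the $i+1$ in $RW(T)$ iff $r < s$ (with the convention that each is read with its own factor, and factors for lower rows come first). There are essentially four regimes: (1) $\ol{i}$ and $\ol{i+1}$ in rows with $\lambda_r = \lambda_s$ and no ``blocking'' entry $\ol{i-1}$ in row of $\ol i$ or $\ol{i+2}$ in row of $\ol{i+1}$; (2) $\lambda_r = \lambda_s$ but a blocking entry exists; plus the degenerate sub-situations where $r = s$, or one of the wraparound endpoints $\ol{i}=\ol{n}$ is involved so that $\ol{i}$ and $\ol{i+1}$ are the maximal and minimal residues. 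The key computation in each case: $K_{\ol{i}}$ is the identity on $RW(T)$ precisely when applying $f_{\ol{i+1}}$ then $f_{\ol{i}}$ then undoing them with $e_{\ol{i}}, e_{\ol{i+1}}$ returns the original word, which happens exactly when the $i$ and $i+1$ ``pass through each other'' cleanly — and one checks this fails (so $K_{\ol i}$ moves the word, swapping the positions of $i$ and $i+1$, i.e. realizing the tabloid swap) exactly in the cases where $\tau(T)$ and $\tau(T')$ are incomparable. Conversely, when $K_{\ol i}$ does move the word one reads off that $RW^{-1}$ of the new word is the tabloid obtained from $T$ by exchanging $\ol i$ and $\ol{i+1}$, and the incomparability of $\tau$-invariants is forced. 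I would also handle the ``wrong $\ol i$'' direction: if $RW(T) \neq RW(T')$ are connected by $K_{\ol i}$ then the underlying multiset of entries in each row changes only in that $\ol i$ and $\ol{i+1}$ trade rows, which is exactly the definition of the tabloid swap; then feed this into the analysis above to conclude it is a Knuth move.

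The main obstacle I expect is bookkeeping in the boundary/degenerate cases: the crystals here are affine, so the residue $\ol n$ and $\ol 1$ play the role of ``largest'' and ``smallest'' in the relevant $\mathfrak{sl}_2$-string for $i = n$, and one must be careful that the signature rule with the cyclic labeling still reduces to the linear bracketing of $i$'s and $(i+1)$'s inside $[n]$-representatives — this is where a sign error or an off-by-one in the bottom-to-top reading convention would be easy to make. A secondary subtlety is the case $r = s$ (both $\ol i$ and $\ol{i+1}$ in the same row): then they are never in a Knuth move (the swap does nothing to the tabloid) and simultaneously $K_{\ol i}$ fixes the reading word because the $i$ and $i+1$ lie in the same tensor factor and cancel immediately — one should note this explicitly so the ``if and only if'' is not vacuously violated. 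Once the signature description is set up carefully, each individual case is a short verification, so I would organize the proof as: (i) signature-rule preamble; (ii) the ``Knuth move $\Rightarrow$ $K_{\ol i}$ moves $RW$'' direction via the case analysis; (iii) the converse, first extracting that $K_{\ol i}$ moving $RW$ forces the row-swap, then invoking the same case analysis to get incomparability of $\tau$-invariants.
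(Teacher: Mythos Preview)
There is a genuine gap in your setup: you have miscounted which letters $K_{\ol{i}}$ touches.  Because $K_{\ol{i}} = e_{\ol{i}}\,e_{\ol{i+1}}\,f_{\ol{i}}\,f_{\ol{i+1}}$ uses both the $\ol{i}$-string and the $\ol{i+1}$-string, the operator involves the \emph{three} residues $\ol{i},\ \ol{i+1},\ \ol{i+2}$, not just $\ol{i}$ and $\ol{i+1}$.  Concretely, $f_{\ol{i+1}}$ sends a letter $i{+}1$ to $i{+}2$ and $e_{\ol{i+1}}$ sends a letter $i{+}2$ back to $i{+}1$, so the position of $\ol{i+2}$ in $T$ enters essentially, not merely as a ``blocking'' entry.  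Your proposed case analysis (``the relevant data is: which row contains $\ol{i}$, which contains $\ol{i+1}$, and their relative order'') is therefore insufficient: the row of $\ol{i+2}$ must be tracked as well, and the bracketing for $f_{\ol{i+1}}$ and $e_{\ol{i+1}}$ depends on it.

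A second, related issue is that $K_{\ol{i}}$ does not always realise the tabloid swap of $\ol{i}$ and $\ol{i+1}$.  Depending on the relative positions of the three residues, the nontrivial action of $K_{\ol{i}}$ can instead swap $\ol{i+1}$ and $\ol{i+2}$.  In the paper's argument this shows up as an index shift: starting from a Knuth move exchanging $\ol{j}$ and $\ol{j+1}$, the operator that implements it is $K_{\ol{j-1}}$ when the witness is $\ol{j-1}$, and $K_{\ol{j}}$ when the witness is $\ol{j+2}$.  So your claim that ``$K_{\ol{i}}$ moves the word, swapping the positions of $i$ and $i{+}1$'' is not correct in general, and the equivalence cannot be established by looking only at the two letters $\ol{i},\ \ol{i+1}$.  (Incidentally, the hypothesis $\lambda_r=\lambda_s$ in your regimes is irrelevant here; Knuth moves on tabloids make no reference to row lengths.)  Once you enlarge the case analysis to the triple $\ol{i},\ \ol{i+1},\ \ol{i+2}$ and allow that $K_{\ol{i}}$ may effect either of the two adjacent swaps, the argument goes through along the lines the paper sketches.
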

\begin{proof}[Proof sketch]
Suppose $T$ and $T'$ are connected by a Knuth move that exchanges $\ol{i}$ and $\ol{i+1}$; without loss of generality $\ol{i}\notin\tau(T)$. Suppose $\ol{i}$ is in row $k$ of $T$ and $\ol{i+1}$ is in row $\ell$; so $k > \ell$. Since the exchange of $\ol{i}$ and $\ol{i+1}$ is a Knuth move, we know that either $\ol{i-1}$ lies in a row weakly below $\ell$ and strictly above $k$, or $\ol{i+2}$ lies in a row strictly below $\ell$ and weakly above $k$. We treat the first of these cases; the second is identical. One easily checks by definition of the crystal operators that $K_{\ol{i-1}}(RW(T)) = RW(T')$ (the computation depends on whether $i = 1$, $i = n$, or neither).

Now suppose that $RW(T)\neq RW(T')$ and $K_{\ol{i}}(RW(T)) = RW(T')$. One needs to check that whatever the positions of (the representatives in $[n]$ of) $\ol{i}$, $\ol{i+1}$ and $\ol{i+2}$ in $RW(T)$, the result of the action by $K_{\ol{i}}$ yields either the reading word of a tabloid related to $T$ by a Knuth move, or $RW(T)$ again, or some vertex of $B$ that is not in $V$. The computation depends on whether $\ol{i} = \ol{n}$, $\ol{i} = \ol{n-1}$, or neither.
\end{proof}

Finally, we give a simple application of the crystal connection. Notice that the definition of KL DEGs $\A_\mu$ does not rely on the shape $\mu$ being a partition.

\begin{prop}
Suppose $\mu$ is a composition and $\lambda$ is the partition formed from $\mu$ by sorting. Then there is a graph isomorphism $\varphi:\A_\lambda\to\A_\mu$.
\end{prop}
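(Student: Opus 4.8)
The plan is to exploit the crystal description of Knuth moves from Proposition~\ref{prop:crystal Knuth moves}, which is insensitive to the order of the rows. The key observation is that the set $B = B^{1,\lambda_\ell}\otimes\cdots\otimes B^{1,\lambda_1}$ and the crystal operators $e_{\ol i}, f_{\ol i}$ depend only on the \emph{multiset} of row-lengths of $\lambda$ (equivalently, of $\mu$), not on the order in which the tensor factors are arranged — indeed, rearranging the tensor factors of a crystal of the form $B^{1,a_1}\otimes\cdots\otimes B^{1,a_m}$ yields an isomorphic abstract crystal (this is a standard fact, but in any case all we use is that the underlying set of vertices and the action of the $K_{\ol i}$ are combinatorially the same). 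Since both $\lambda$ and $\mu$ have the same multiset of parts, there is a bijection between the vertex sets $V_\lambda$ and $V_\mu$ of the two crystals (given by permuting tensor factors) that intertwines the operators $K_{\ol i}$.

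More concretely, let $\sigma$ be a permutation of $\{1,\dots,\ell\}$ with $\mu_j = \lambda_{\sigma(j)}$ for all $j$. Define $\varphi:\A_\lambda\to\A_\mu$ by sending a tabloid $T$ of shape $\lambda$ to the tabloid $\varphi(T)$ of shape $\mu$ whose $j$-th row equals the $\sigma(j)$-th row of $T$ (this makes sense because the lengths match). This is clearly a bijection on vertex sets. To see it is a graph isomorphism, I would argue as follows: the reading word $RW(T)$ reads the rows of $T$ from bottom to top; applying $\varphi$ reorders the rows, hence reorders the tensor factors of the reading word by the corresponding permutation, landing in the vertex set $V_\mu$ of the reordered crystal $B^{1,\mu_\ell}\otimes\cdots\otimes B^{1,\mu_1}$. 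Under the tensor-factor-permutation isomorphism of crystals, $RW(T)$ is carried to $RW(\varphi(T))$, and this isomorphism commutes with all crystal operators $e_{\ol i}, f_{\ol i}$, hence with every $K_{\ol i}$. Therefore $RW(T) \ne RW(T')$ and $K_{\ol i}(RW(T)) = RW(T')$ if and only if $RW(\varphi(T)) \ne RW(\varphi(T'))$ and $K_{\ol i}(RW(\varphi(T))) = RW(\varphi(T'))$. By Proposition~\ref{prop:crystal Knuth moves}, this says exactly that $T$ and $T'$ are connected by a Knuth move if and only if $\varphi(T)$ and $\varphi(T')$ are, i.e., $\varphi$ is a graph isomorphism $\A_\lambda \to \A_\mu$.

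I expect the main obstacle to be making precise the claim that permuting the tensor factors of $B^{1,a_1}\otimes\cdots\otimes B^{1,a_m}$ induces a crystal isomorphism that is compatible with the reading-word description — in particular, checking that the combinatorial formulas for $e_{\ol i}$ and $f_{\ol i}$ on tensor products (via the signature/bracketing rule) transform correctly under this reordering. This is standard crystal theory, but it needs to be invoked carefully, especially since the paper uses a slightly nonstandard indexing of crystal operators by residue classes. An alternative, entirely self-contained route that sidesteps crystals: one could instead define $\varphi$ by the row-permutation $\sigma$ as above and verify \emph{directly} from the definition of a Knuth move on tabloids (Definition preceding Example~\ref{ex:Knuth move example}, i.e., exchanging $\ol i$ and $\ol{i+1}$ with $\tau(T), \tau(T')$ incomparable) that $\varphi$ preserves the relation; the point is that the $\tau$-invariant condition only refers to \emph{which rows} contain $\ol{i-1},\ol i,\ol{i+1},\ol{i+2}$ relative to each other (higher/lower), and relabeling the row indices by an order-irrelevant permutation $\sigma$ preserves all such relative comparisons precisely because the Knuth condition is about strict inequalities between row indices that are themselves permuted consistently. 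Either way the proof is short; the crystal route is cleaner to state given that Proposition~\ref{prop:crystal Knuth moves} is already available.
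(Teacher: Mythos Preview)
Your proof has a genuine gap. The map you propose --- permuting the rows of $T$ according to $\sigma$ --- is \emph{not} a graph isomorphism, and the crystal ``fact'' underlying it is false: naively permuting tensor factors of $B^{1,a_1}\otimes\cdots\otimes B^{1,a_m}$ does \emph{not} give a crystal isomorphism. The tensor product rule (signature/bracketing) is sensitive to the order of the factors; the crystal operators $e_{\ol i}, f_{\ol i}$ do not commute with swapping factors.

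Here is an explicit counterexample. Take $n=4$, $\lambda=\langle 2,1,1\rangle$, $\mu=\langle 1,2,1\rangle$, with $\sigma$ swapping rows $1$ and $2$. Let
\[
T=\tableau[sY]{\ol 1,\ol 3\\\ol 2\\\ol 4},\qquad T'=\tableau[sY]{\ol 2,\ol 3\\\ol 1\\\ol 4}.
\]
Then $\tau(T)=\{\ol 1,\ol 3\}$ and $\tau(T')=\{\ol 3\}$, which are comparable, so $T,T'$ are \emph{not} joined by a Knuth move in $\A_\lambda$. But $\varphi(T)=\tableau[sY]{\ol 2\\\ol 1,\ol 3\\\ol 4}$ and $\varphi(T')=\tableau[sY]{\ol 1\\\ol 2,\ol 3\\\ol 4}$ have $\tau$-invariants $\{\ol 2,\ol 3\}$ and $\{\ol 1,\ol 3\}$, which are incomparable, so $\varphi(T),\varphi(T')$ \emph{are} joined by a Knuth move in $\A_\mu$. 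Thus your $\varphi$ is not edge-preserving. Your ``self-contained'' alternative fails for exactly the same reason: the $\tau$-invariant depends on which row is \emph{higher}, and an arbitrary row permutation destroys this.

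The paper's proof uses the correct crystal isomorphism: the \emph{combinatorial $R$-matrix} $B^{1,r}\otimes B^{1,s}\xrightarrow{\ \sim\ }B^{1,s}\otimes B^{1,r}$, which is a nontrivial map (not the naive swap). One reduces to swapping two adjacent parts, applies the $R$-matrix to those two factors, checks it preserves the multiset of letters (hence sends $V_\lambda$ to $V_\mu$), and then invokes Proposition~\ref{prop:crystal Knuth moves}. The resulting bijection on tabloids is genuinely more complicated than a row permutation.
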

\begin{proof}
It is sufficient to show that $A_\mu$ is isomorphic to $A_{\mu'}$ where $\mu'$ is obtained from $\mu$ by switching two adjacent parts. By \cite[Thm.~4.8]{crysdumb}, there is an isomorphism between $B^{1,r}\otimes B^{1,s}$ and  $B^{1,s}\otimes B^{1,r}$ for any $r$ and $s$ (it is called the \emph{combinatorial $R$-matrix}). Moreover, the explicit description of this morphism in the same section shows that it preserves the number of times each letter appears in the tableaux. Applying this morphism to two adjacent factors of $B$ and pulling back the results using Proposition~\ref{prop:crystal Knuth moves} (which also does not rely on the shape being a partition) finishes the proof.  
\end{proof}

\bibliographystyle{alpha}
\bibliography{affine}

\begin{thebibliography}{HRT15}

\bibitem[Ari99]{A}
S.~Ariki.
\newblock {Robinson-Schensted correspondence and left cells}.
\newblock arXiv:math/9910117, 1999.

\bibitem[Ass08]{assaf}
S.~H. Assaf.
\newblock {A combinatorial realization of Schur-Weyl duality via crystal graphs
  and dual equivalence graphs}.
\newblock {\em {Discrete Mathematics \& Theoretical Computer Science (FPSAC
  2008)}}, 2008.

\bibitem[Ass15]{assaf2}
S.~H. Assaf.
\newblock Dual equivalence graphs {I}: {A} new paradigm for {S}chur positivity.
\newblock {\em Forum Math. Sigma}, 3:e12, 33, 2015.

\bibitem[Bei87]{Beissinger}
J.~S. Beissinger.
\newblock Similar constructions for {Y}oung tableaux and involutions, and their
  application to shiftable tableaux.
\newblock {\em Discrete Math.}, 67(2):149--163, 1987.

\bibitem[BF17]{blasiak-fomin}
J.~Blasiak and S.~Fomin.
\newblock Noncommutative {S}chur functions, switchboards, and {S}chur
  positivity.
\newblock {\em Selecta Math. (N.S.)}, 23(1):727--766, 2017.

\bibitem[BV82]{BV}
D.~Barbasch and D.~Vogan.
\newblock {Primitive ideals and orbital integrals in complex classical groups}.
\newblock {\em Math. Ann.}, 259:153--199, 1982.

\bibitem[Chm15]{Program}
M.~Chmutov.
\newblock {Affine Matrix-Ball Construction Java program}.
\newblock \url{https://sites.google.com/site/affiners/} (also included in the
  arXiv distribution), 2015.

\bibitem[CPY15]{cpy}
M.~Chmutov, P.~Pylyavskyy, and E.~Yudovina.
\newblock {Matrix-Ball Construction of affine Robinson-Schensted
  correspondence}.
\newblock arXiv:1511.05861v1, 2015.

\bibitem[DM13]{DalalMorse}
A.~J. Dalal and J.~Morse.
\newblock A {$t$}-generalization for {S}chubert representatives of the affine
  {G}rassmannian.
\newblock In {\em 25th {I}nternational {C}onference on {F}ormal {P}ower
  {S}eries and {A}lgebraic {C}ombinatorics ({FPSAC} 2013)}, Discrete Math.
  Theor. Comput. Sci. Proc., AS, pages 1125--1136. Assoc. Discrete Math. Theor.
  Comput. Sci., Nancy, 2013.

\bibitem[Ful97]{fulton_yt}
W.~Fulton.
\newblock {\em {Young tableaux: with applications to representation theory and
  geometry}}, volume~35 of {\em London Mathematical Society Student Texts}.
\newblock Cambridge University Press, 1997.

\bibitem[GM88]{GM}
A.~M. Garsia and T.~J. McLarnan.
\newblock {Relations between Young's natural and the Kazhdan-Lusztig
  representations of $S_n$}.
\newblock {\em Advances in Mathematics}, 69(1):32--92, 1988.

\bibitem[HRT15]{hrt}
M.~Housley, H.~M. Russell, and J.~Tymoczko.
\newblock The {R}obinson-{S}chensted correspondence and {$A_2$}-web bases.
\newblock {\em J. Algebraic Combin.}, 42(1):293--329, 2015.

\bibitem[Hum90]{humphreys}
J.~E. Humphreys.
\newblock {\em Reflection groups and {C}oxeter groups}, volume~29 of {\em
  Cambridge Studies in Advanced Mathematics}.
\newblock Cambridge University Press, Cambridge, 1990.

\bibitem[KL89]{KL}
D.~Kazhdan and G.~Lusztig.
\newblock {Representations of Coxeter groups and Hecke algebras}.
\newblock {\em Invent. Math.}, 53:165--184, 1989.

\bibitem[LS79]{ls}
A.~Lascoux and M.-P. Sch{\"u}tzenberger.
\newblock Croissance des polyn{\^o}mes de {F}oulkes-{G}reen.
\newblock {\em CR Acad. Sci. Paris}, 228:95--98, 1979.

\bibitem[Man01]{manivel}
L.~Manivel.
\newblock {\em Symmetric functions, {S}chubert polynomials and degeneracy
  loci}, volume~6 of {\em SMF/AMS Texts and Monographs}.
\newblock American Mathematical Society, Providence, RI; Soci\'et\'e
  Math\'ematique de France, Paris, 2001.
\newblock Translated from the 1998 French original by John R. Swallow, Cours
  Sp\'ecialis\'es [Specialized Courses], 3.

\bibitem[Rei04]{Reifegerste}
A.~Reifegerste.
\newblock {Permutation sign under the Robinson-Schensted correspondence}.
\newblock {\em Annals of Combinatorics}, 8(1):103--112, 2004.

\bibitem[Sch63]{Schutzenberger}
M.~P. Sch{\"u}tzenberger.
\newblock Quelques remarques sur une construction de {S}chensted.
\newblock {\em Math. Scand.}, 12:117--128, 1963.

\bibitem[Shi91]{shi}
J.~Y. Shi.
\newblock {The generalized Robinson-Schensted algorithm on the affine Weyl
  group of type $A_{n-1}$}.
\newblock {\em Journal of Algebra}, 139(2):364--394, 1991.

\bibitem[Shi05]{crysdumb}
M.~Shimozono.
\newblock {Crystals for Dummies}.
\newblock http://www.aimath.org/WWN/kostka/crysdumb.pdf, 2005.

\bibitem[Sta99]{EC2}
R.~P. Stanley.
\newblock {\em Enumerative combinatorics. {V}ol. 2}, volume~62 of {\em
  Cambridge Studies in Advanced Mathematics}.
\newblock Cambridge University Press, Cambridge, 1999.
\newblock With a foreword by Gian-Carlo Rota and appendix 1 by Sergey Fomin.

\bibitem[Ste08]{stembridge}
J.~R. Stembridge.
\newblock More {$W$}-graphs and cells: Molecular components and cell synthesis.
\newblock In {\em Notes from the AIM workshop, Palo Alto}. The Atlas of Lie
  Groups, July 2008.

\bibitem[Vie77]{viennot_shadow}
G.~Viennot.
\newblock {Une forme g{\'e}om{\'e}trique de la correspondance de
  Robinson-Schensted}.
\newblock {\em Combinatoire et repr{\'e}sentation du groupe sym{\'e}trique},
  pages 29--58, 1977.

\bibitem[Vog79]{vogan-tau}
D.~A. Vogan, Jr.
\newblock A generalized {$\tau $}-invariant for the primitive spectrum of a
  semisimple {L}ie algebra.
\newblock {\em Math. Ann.}, 242(3):209--224, 1979.

\end{thebibliography}

\end{document}